\documentclass{cas-sc}

\makeatletter
\def\ps@pprintTitle{%
  \let\@oddhead\@empty
  \let\@evenhead\@empty
  \def\@oddfoot{}%
  \let\@evenfoot\@oddfoot}
\makeatother

\usepackage[authoryear,longnamesfirst]{natbib}
\usepackage[shortlabels]{enumitem}
\usepackage{amsthm}
\usepackage{mathtools}
\usepackage{bbm}
\usepackage{stmaryrd}
\usepackage{tikz}
\usetikzlibrary{decorations.pathreplacing}
\usepackage{hyperref}
\usepackage[numbered]{bookmark}

\renewcommand\th{{}^\text{th}}
\newcommand\dmu{\ d\mu}
\newcommand\dl{\ d\lambda}
\newcommand\prefix{\!\restriction}
\DeclareMathOperator\dom{dom}

\DeclareMathOperator\Int{int}
\DeclareMathOperator\supp{supp}
\DeclareMathOperator\diam{diam}
\newcommand\zero{\emptyset}
\newcommand\one{\mathbbm 1}
\newcommand\cyl[1]{\left\llbracket #1 \right\rrbracket}
\newcommand\SIGMA{\mathbf{\Sigma}}
\newcommand\cs[1]{\left\langle{#1}\right\rangle}
\newcommand\cl[1]{\overline{#1}}
\newcommand\floor[1]{\left\lfloor #1 \right\rfloor}
\newcommand{\paren}[1]{\hspace{-1pt}\left( {#1} \right)}
\newcommand{\abs}[1]{\left| {#1} \right|}
\newcommand{\set}[1]{\left\{ {#1} \right\}}
\renewcommand\l{\ell}
\newcommand\eps\varepsilon
\newcommand\LA\Leftarrow
\newcommand\RA\Rightarrow
\newcommand\upto{\nearrow}
\newcommand\myimplies[1]{\stackrel{\mathclap{\!\!\text{\footnotesize\mbox{#1}}}}{\implies}}
\newcommand{\downimplies}{\mathbin{\rotatebox[origin=c]{-90}{$\implies$}}}
\newcommand\N{\mathbb N}
\newcommand\Q{\mathbb Q}
\renewcommand\P{\mathbb P}
\newcommand\R{\mathbb R}
\newcommand\B{\mathcal B}
\renewcommand\H{\mathcal H}
\newcommand\PP{\mathcal P}
\newcommand\U{\mathcal U}
\let\oldtextbf=\textbf
\renewcommand\textbf[1]{{\boldmath\oldtextbf{#1}}}
\newtheorem{thm}{Theorem}[section]
\newtheorem{lem}[thm]{Lemma}
\newtheorem{cor}[thm]{Corollary}
\newtheorem{conj}[thm]{Conjecture}
\newtheorem{qn}[thm]{Question}
\theoremstyle{definition}
\newtheorem{defn}[thm]{Definition}

\makeatletter
\newcommand{\proofstep}[1]{%
  \par\vspace{3.25ex plus 1ex minus .2ex}% Space above
  \noindent{\normalsize\itshape #1}\par% Italic text on its own line
  \vspace{1.5ex plus .2ex}% Space below
  \@afterheading% Prevents a page break right after the heading
}
\makeatother

\IfFileExists{latexml.sty}{
    \usepackage{latexml}
}{
    \newif\iflatexml
    \latexmlfalse
}

\iflatexml
  \DeclareMathSymbol{\restriction}{\mathrel}{AMSa}{"16}
  \renewcommand\prefix{\!\restriction}
  \usepackage[capitalize,noabbrev]{cleveref}
  \let\oldcref\cref
  \RenewDocumentCommand{\cref}{o m}{%
    \oldcref{#2}%
  }
  \restriction
\else
  	\usepackage{zref-clever}
  	\let\cref\zcref
  
  	\zcRefTypeSetup{thm}{name-sg=Theorem, name-pl=Theorems}
	\zcRefTypeSetup{lem}{name-sg=Lemma, name-pl=Lemmas}
	\zcRefTypeSetup{cor}{name-sg=Corollary, name-pl=Corollaries}
	\zcRefTypeSetup{conj}{name-sg=Conjecture, name-pl=Conjectures}
	\zcRefTypeSetup{qn}{name-sg=Question, name-pl=Questions}
	\zcRefTypeSetup{defn}{name-sg=Definition, name-pl=Definitions}

	\AddToHook{env/lem/begin}{\zcsetup{countertype={thm=lem}}}
	\AddToHook{env/cor/begin}{\zcsetup{countertype={thm=cor}}}
	\AddToHook{env/conj/begin}{\zcsetup{countertype={thm=conj}}}
	\AddToHook{env/qn/begin}{\zcsetup{countertype={thm=qn}}}
	\AddToHook{env/defn/begin}{\zcsetup{countertype={thm=defn}}}
	
\fi

\begin{document}
\let\WriteBookmarks\relax
\def\floatpagepagefraction{1}
\def\textpagefraction{.001}

\shorttitle{Effective recurrence for computable measure-preserving transformations}
\shortauthors{Joey Veltri}

\title[mode = title]{Effective recurrence for computable measure-preserving transformations}  

\iflatexml
	\author{Joey Veltri}
	\affiliation{Penn State University}
\else
	\author{Joey Veltri}[orcid=0000-0002-6904-9852]
	\ead{jveltri@psu.edu}
	\affiliation{organization={Penn State University},
            addressline={416 McAllister Building, 54 McAllister St}, 
            city={State College},
          citysep={},
            postcode={16801}, 
            state={PA},
            country={United States}}
\fi

\begin{abstract}
	We prove several necessary and sufficient conditions under which a point satisfies the Poincar\'e Recurrence Theorem for all computable (ergodic) measure-preserving transformations and all sets of a particular complexity. The necessary conditions are obtained by constructing specific measure-preserving transformations which violate recurrence.
	
	While some of these conditions pertain to standard notions of algorithmic randomness, others involve new notions of genericity developed in the context of a computable probability space, which we call $\Pi^0_n$-genericity and quasi-$\Pi^0_n$-genericity. We also provide conditions under which points recur at a positive frequency in sets containing them, regarding both simple and multiple recurrence.
\end{abstract}

\begin{keywords}
Algorithmic randomness \sep Genericity \sep Poincar\'e recurrence theorem \sep Birkhoff's ergodic theorem \sep Computable probability space
\end{keywords}

\maketitle

\section{Introduction}

Let $(X, \B, \mu)$ be a probability space and $T: X \to X$ be a \emph{measure-preserving transformation (m.p.t.)}, i.e., $\mu(T^{-1}(A)) = \mu(A)$ for every set $A$ in the $\sigma$-algebra $\B$.

The Poincar\'e Recurrence Theorem states that for each set $A$ with positive measure, almost every point in $A$ will eventually return to $A$ under the action of $T$. If $T$ is moreover \emph{ergodic}, i.e., $T^{-1}(B) = B$ implies $\mu(B) \in \{0, 1\}$ for any measurable set $B$, then almost every point in $X$ will eventually map into $A$ under the action of $T$. This is one of the most basic results of ergodic theory.

Another is the Birkhoff Ergodic Theorem, which says that in the same setting as above, for any integrable function $f: X \to \R$, the \emph{ergodic averages}
\[\frac{1}{N}\sum_{n < N} f \circ T^n\]
converge almost everywhere as $N \to \infty$. Moreover, this limit will in fact be the constant $\int f \dmu$ when the transformation $T$ is ergodic.

Many such ``almost everywhere'' theorems have been translated to a computability-theoretic setting in which one can identify precisely the full-measure set of points with the desired property. This requires imposing certain computability assumptions on the objects in question and typically guarantees the property for all sufficiently ``random'' points, using the tools of algorithmic randomness to describe exactly what this means.

\subsection{Prior Work}

Over the last few decades, the Birkhoff Ergodic Theorem has been effectivized in various ways, which are listed below. Some parts of these have been proven in the full generality of computable probability spaces, but in order to consider them altogether, we work over $2^\N$ (Cantor space) with the Lebesgue measure $\lambda$.
\begin{enumerate}[(i)]
    \item\label{prior1} \cite{FranklinTowsner2012} showed that if $x$ is Schnorr random, then the ergodic averages converge to $\int f \dl$ at $x$ for every computable ergodic m.p.t.\ $T$ and computable function $f$. \cite{Gacs2011} showed the converse.
    \item\label{prior2} \cite{Bienvenu2012}, as well as \cite{Franklin2012} independently, showed that if $x$ is Martin-L\"of random, then the ergodic averages converge to $\int f \dl$ at $x$ for every computable ergodic m.p.t.\ $T$ and nonnegative lower-semicomputable function $f$. (\cite{Hoyrup2012} slightly generalized this result.) The converse follows from the existence of a universal Martin-L\"of test and the fact that if $x$ is not Martin-L\"of random, then neither is any of its shifts.
    \item \cite{Vyugin1998} showed that if $x$ is Martin-L\"of random, then the ergodic averages converge at $x$ for every computable m.p.t.\ $T$ and computable function $f$. \cite{FranklinTowsner2012} showed the converse.
    \item \cite{Miyabe2016} showed that if $x$ is Oberwolfach random, then the ergodic averages converge at $x$ for every computable m.p.t.\ $T$ and nonnegative lower-semicomputable function $f$. 
\end{enumerate}
From these results, we can see that in order for the ergodic averages to converge for broader and broader classes of transformations (m.p.t.'s to ergodic m.p.t.'s) or functions (computable to lower-semicomputable), a point must be more and more random (Schnorr to Martin-L\"of to Oberwolfach). Therefore, the notion of randomness needed to effectivize the Birkhoff Ergodic Theorem must be calibrated appropriately to whatever class of transformations and functions one considers.

As for the simpler Poincar\'e Recurrence Theorem, surprisingly little has been said about effective versions. One thing we do know is that if $x$ is Martin-L\"of random, then $x$ is recurrent for every computable ergodic m.p.t.\ $T$ and $\Pi^0_1$ set $A$ with positive measure. (To see why, simply take $f$ to be the characteristic function of the complement of $A$, which is lower-semicomputable. Then by \ref{prior2}, the ergodic averages will converge to $\int f \dmu = 1 - \lambda(A) < 1$, so we must have $\paren{f \circ T^n}(x) < 1$ for some $n$, i.e., $T^n(x) \in A$.) The converse also holds by \ref{prior2}.

Similarly, it follows from \ref{prior1} that if $x$ is Schnorr random, then $x$ is recurrent for every computable ergodic m.p.t.\ $T$ and clopen set $A$ with positive measure. (The characteristic function of $A$ will be computable in this case, so we can argue as above.)

This is the extent of the recurrence properties one can conclude from effective versions of the Birkhoff Ergodic Theorem. Note that ergodicity of $T$ is crucial in both cases since they rely on the limit being equal to $\int f \dl$.

\cite{Downey2016} studied effective recurrence in its own right---multiple recurrence, in fact, which refers to several different measure-preserving transformations $T_1, \dots, T_k$ mapping a point into a set after the same number of iterations.

Consider in particular $2^{\N^k}$ with the uniform measure, and take $T_i$ to be the left shift in the $i\th$ coordinate while keeping the other coordinates fixed. In this setting, they showed that Kurtz random points are multiply recurrent for clopen sets, Schnorr random points are multiply recurrent for $\Pi^0_1$ sets with computable measure, and Martin-L\"of random points are multiply recurrent for $\Pi^0_1$ sets (all with positive measure). That the transformations were all shift maps (in particular, ergodic) was a key component of the proofs.

Thus, it seems that there is a significant gap in the literature when it comes to studying effective recurrence for measure-preserving transformations which are not necessarily ergodic. That is the primary goal of this paper, although we will also have more to say in the ergodic setting.

\subsection{Main results}

We attempt as complete an investigation as possible of which conditions are necessary and sufficient to guarantee recurrence for all computable (ergodic) measure-preserving transformations and all sets of a particular complexity.

As it turns out, both randomness and genericity have a role to play when it comes to effective recurrence, due to the importance of the set containing the point. To this end, some new notions of genericity will be developed, which we call (weak) $\Pi^0_n$-genericity and (weak) quasi-$\Pi^0_n$-genericity. (See \cref{pi0n_generic_definition,quasi_pi0n_generic_definition}, along with \cref[S]{genericity_picture}.)

\cref[S]{sufficient_table} displays the sufficient conditions we obtain in order for a point to recur in all sets containing it of a particular complexity for all computable (ergodic) measure-preserving transformations. These hold in any computable probability space. We also show in \cref{CPS_weak_recurrence_frequency} that every weakly $(n + 1)$-random point recurs at a positive frequency in any $\Pi^0_n$ set containing it.

\setlength{\extrarowheight}{1ex}
\begin{table}
    \centering
    \begin{tabular}{c !{\vrule width 1.5pt} c !{\vrule width 1.5pt} c}
        Complexity of the set & Sufficient to recur for & Sufficient to recur for \\
        \textbf{containing} the point & \textbf{all} transformations & \textbf{ergodic} transformations \\[1ex]
        \midrule[1.5pt]
        $\Sigma^0_1$ & \hyperref[CPS_weak_n_Poincare]{Kurtz random} & \hyperref[CPS_weak_n_Poincare_ergodic]{Kurtz random} \\[1ex]
        \hline
        $\Pi^0_n$ with $\zero^{(n - 1)}$- & \hyperref[CPS_Schnorr_n_Poincare]{Schnorr $n$-random} & \hyperref[CPS_Schnorr_n_Poincare_ergodic]{Schnorr $n$-random} \\[1ex]
        computable measure & \textbf{or} \hyperref[CPS_pi0n_generic_Poincare]{$\Pi^0_n$-generic} & \textbf{or} \hyperref[CPS_weak_pi0n_generic_Poincare_ergodic]{weakly $\Pi^0_n$-generic} \\[1ex]
        \hline
         \multirow{4}{*}[-0.1cm]{$\Pi^0_n$} & \hyperref[CPS_weak_n_Poincare]{Weakly $(n + 1)$-random} & \hyperref[CPS_n_Poincare_ergodic]{$n$-random} \\[1ex]
         & \textbf{or (}\hyperref[CPS_Schnorr_n_quasi_pi0n_generic_Poincare]{Schnorr $n$-random} & \textbf{or (}\hyperref[CPS_Schnorr_n_weak_quasi_pi0n_generic_Poincare_ergodic]{Schnorr $n$-random} \\[1ex]
    & \hyperref[CPS_Schnorr_n_quasi_pi0n_generic_Poincare]{\emph{and} quasi-$\Pi^0_n$-generic}\textbf{)} & \hyperref[CPS_Schnorr_n_weak_quasi_pi0n_generic_Poincare_ergodic]{\emph{and} weakly quasi-$\Pi^0_n$-generic}\textbf{)} \\[1ex]
    & \textbf{or} \hyperref[CPS_pi0n_generic_Poincare]{$\Pi^0_n$-generic} & \textbf{or} \hyperref[CPS_weak_pi0n_generic_Poincare_ergodic]{weakly $\Pi^0_n$-generic}\\[1ex]
    \end{tabular}
    \caption{Sufficient conditions for recurrence in any computable probability space}
    \label{sufficient_table}
\end{table}

\cref[S]{necessary_table} gives the necessary conditions we obtain for the same recurrence property when working in Cantor space with the Lebesgue measure. Proving these requires some careful constructions of computable measure-preserving transformations that violate recurrence.

\begin{table}
    \centering
    \begin{tabular}{c !{\vrule width 1.5pt} c}
        Complexity of the set \textbf{containing} the point & Necessary to recur for \textbf{all} transformations \\[1ex]
        \midrule[1.5pt]
        $\Sigma^0_1$ & \hyperref[Kurtz_Poincare_characterization]{Kurtz random} \\[1ex]
        \hline
        $\Pi^0_1$ with computable measure & \hyperref[Schnorr_converse]{Schnorr random \textbf{or} weakly 1-generic}\\[1ex]
    \end{tabular}
    \caption{Necessary conditions for recurrence in $\paren{2^\N, \lambda}$}
    \label{necessary_table}
\end{table}

\cref[S]{characterization_table} shows that when we do not require sets to contain the point, we can characterize when a point recurs for all sets of a particular complexity and all computable ergodic measure-preserving transformations when working in Cantor space with the Lebesgue measure. This characterization of $n$-randomness is already known (see \cref{n_Poincare_ergodic_characterization}), but the others are new.

\begin{table}
    \centering
    \begin{tabular}{c !{\vrule width 1.5pt} c}
        Complexity of the set & Necessary and sufficient to recur \\
        (\textbf{not necessarily containing} the point) & for \textbf{ergodic} transformations \\[1ex]
        \midrule[1.5pt]
        $\Sigma^0_1$ & \hyperref[Kurtz_Poincare_characterization]{Kurtz random} \\[1ex]
        \hline
        $\Pi^0_n$ with $\zero^{(n - 1)}$-computable measure & \hyperref[Schnorr_n_Poincare_ergodic_characterization]{Schnorr $n$-random} \\[1ex]
        \hline
        $\Pi^0_n$ & \hyperref[n_Poincare_ergodic_characterization]{$n$-random}\\[1ex]
    \end{tabular}
    \caption{Characterizations of ergodic recurrence in $\paren{2^\N, \lambda}$}
    \label{characterization_table}
\end{table}

\subsection{Organization}

The paper will be organized as follows.

\cref[S]{section_Kurtz} will contain effective recurrence theorems for weakly $n$-random points, a framework for defining computable measure-preserving transformations on Cantor space, and a converse to these theorems in the case $n = 1$.

\cref[S]{section_Schnorr} will contain effective recurrence theorems for Schnorr $n$-random points, similar results for $\Pi^0_n$-generic points, and a partial converse to these theorems in the case $n = 1$.

\cref[S]{section_other} will contain effective recurrence theorems for $n$-random points, similar results for quasi-$\Pi^0_n$-generic points, generalizations of some previously proven effective versions of the Birkhoff Ergodic Theorem for higher notions of randomness, and a pair of classical and effective results on the frequency of simple and multiple recurrence for weakly $n$-random points.

Within these sections, we refer to many definitions and auxiliary results about computable metric spaces, computable probability spaces, algorithmic randomness, and computable measure-preserving systems. An exposition of these is relegated to \cref[S]{section_background}, where we include proofs of the less standard results needed for our purposes.

Throughout the paper, we will also make note of several other questions that have come to light in the process of obtaining these results, which the author hopes to answer in the future.

\subsection{Notation}

Unless otherwise noted, all sums, unions, and intersections will range over the natural numbers, starting from 0. We also use $\neg A$ to denote the complement of a set $A$, $\PP_{<\N}(S)$ to denote the collection of finite subsets of a set $S$, $\one_A$ to denote the characteristic function of a set $A$, and $\alpha_k \upto \alpha$ as $k \to \infty$ to mean that $\paren{\alpha_k}_k$ is an increasing sequence of numbers/sets/functions that converges to $\alpha$.

When it comes to Cantor space, we use the following terminology and notation:
\begin{itemize}
    \item A \textbf{(binary) string} is an element of $2^{<\N}$, i.e., a finite sequence of 0's and 1's. We use lowercase Greek letters like $\rho$, $\sigma$, $\tau$, etc.\ for strings.
    
    \item The length of a string $\sigma$ is denoted by $|\sigma|$. $2^n$ is the set of all strings with length $n$, and the empty string is identified with the empty set $\zero$.

    \item A \textbf{(binary) sequence} is an element of $2^\N$, i.e., infinite sequences of 0's and 1's. We use lowercase Latin letters like $x$, $y$, $z$, etc.\ for sequences.
    
    \item We write $\sigma \subseteq \tau$ if $\sigma$ is an initial segment of $\tau$ and $\sigma \subset \tau$ if $\sigma$ is a proper initial segment of $\tau$. We also write $\sigma \subset x$ if $\sigma$ is a proper initial segment of a sequence $x$.

    \item The length-$n$ initial segment of a string $\sigma$ or sequence $x$ is denoted by $\sigma \prefix n$ or $x \prefix n$, respectively.

    \item The concatenation of two strings $\sigma$ and $\tau$ or a string $\sigma$ with a sequence $x$ is denoted by $\sigma\tau$ or $\sigma x$, respectively. We also write $0^k$ and $1^k$ denote the $k$-fold concatenation of the bit 0 or 1, respectively.
    
    \item \textbf{Cantor space} is the computable metric space $2^\N$ whose basic open sets are the \textbf{cylinders} $\cyl\sigma := \{x \in 2^\N: \sigma \subset x\}$.

    \item When $W \subseteq 2^{<\N}$, we use $\cyl W$ to denote the open set $\bigcup_{\sigma \in W} \cyl\sigma$.

    \item The \textbf{Lebesgue measure} on Cantor space is the computable Borel probability measure $\lambda$ defined by $\lambda\cyl\sigma = 2^{-|\sigma|}$ for every string $\sigma$.
\end{itemize}

\section[Effective recurrence in Σ0n sets]{Effective recurrence in $\Sigma^0_n$ sets}\label{section_Kurtz}

In this section we seek to describe precisely for which points the Poincar\'e Recurrence Theorem (\cref{poincare_recurrence_theorem}) holds when the set is $\Sigma^0_n$.

\subsection{Recurrence for weakly $n$-random points}

Our first observation is that weakly $n$-random points recur in $\Sigma^0_n$ sets.

\begin{thm}\label{CPS_weak_n_Poincare}
    Let $(X, \mu)$ be a computable probability space and $n \geq 1$. If $x \in X$ is weakly $n$-random, then for any $\Sigma^0_n$ set $A \ni x$ with $\mu(A) > 0$ and any computable measure-preserving transformation $T: \ \subseteq \! X \to X$ with $x \in \dom T$, we have $T^k(x) \in A$ for some $k > 0$.
\end{thm}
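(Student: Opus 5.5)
The idea is to show that the set of points in $A$ that never return to $A$ is a $\Pi^0_n$ set of measure zero. A weakly $n$-random point avoids every null $\Pi^0_n$ set, so it will be excluded. Concretely, I would consider
\[
B := A \cap \bigcap_{k > 0} \set{y \in \dom T : T^k(y) \in \neg A}.
\]
By hypothesis $x \in A$ and $x \in \dom T$. If $x$ never returned to $A$, then $x \in B$. It therefore suffices to show that $B$ is contained in a $\Pi^0_n$ set $C$ with $\mu(C) = 0$, since weak $n$-randomness of $x$ then gives the contradiction.

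For the measure, I would use the classical Poincaré Recurrence Theorem (\cref{poincare_recurrence_theorem}), which gives $\mu(B) = 0$. If one prefers to see it directly: the sets $T^{-j}(B)$ for $j \geq 0$ are pairwise disjoint and all have measure $\mu(B)$, which forces $\mu(B)=0$. This requires $T$ to be measure-preserving on its domain, which I would take to have full measure, as in the paper's conventions for computable m.p.t.'s.

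The main obstacle is the complexity, because $T$ is only partial and $T^{-k}$ of a set is only effectively open relative to $\dom T$. The plan is as follows.
\begin{itemize}
\item Since $A$ is $\Sigma^0_n$, its complement $\neg A$ is $\Pi^0_n$.
\item Since $T$ is computable on its domain, I would use the standard fact from \cref{section_background} that preimages under $T$ of $\Sigma^0_n$ (resp.\ $\Pi^0_n$) sets are $\Sigma^0_n$ (resp.\ $\Pi^0_n$) relative to $\dom T$. Equivalently, there is a $\Pi^0_n$ set $P_k$, uniformly in $k$, with $P_k \cap \dom T^k = T^{-k}(\neg A)$. For $n=1$ this is done by writing $\neg A$ as a complement of an effective union of basic balls and pulling back the enumeration. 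For general $n$ I would induct by relativizing, or use that $T^{-k}$ commutes with countable unions and intersections.
\item Taking $C := A \cap \bigcap_{k>0} P_k$ gives a set that is not obviously $\Pi^0_n$, because $A$ is $\Sigma^0_n$.
\end{itemize}

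The last point is the step to handle. For $n=1$, $A$ is open, so I would instead pick a basic open ball $U$ with $x \in U \subseteq A$. Replacing $A$ by $U$ costs nothing, since a return to $U$ is a return to $A$. This is legitimate for weak randomness provided $\mu(U)>0$, and that holds automatically: a weakly 1-random (Kurtz random) point lies in no null closed set, and in particular not in a null basic ball. For general $n$, I would write $A = \bigcup_i Q_i$ with each $Q_i$ a $\Pi^0_{n-1}$ set and choose $i$ with $x \in Q_i$. Then $C_i := Q_i \cap \bigcap_{k>0} P_k$ is $\Pi^0_n$ and null, because it is contained in $\{y\in A: y \text{ never returns to } A\}$ up to the null set $X\setminus\bigcap_k \dom T^k$. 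It also contains $x$. That null complement-of-domain set needs care: since $x \in \dom T$, and I would assume the convention that $\dom T$ is $T$-invariant or that the iterates of $x$ are defined, $x$ lies in $\bigcap_k \dom T^k$. Either way $x$ lies in the null $\Pi^0_n$ set $C_i$, which contradicts weak $n$-randomness.
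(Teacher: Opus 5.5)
Your argument for $n \geq 2$ is correct and is essentially the paper's. The paper takes a $\Pi^0_{n-1}$ set $B$ with $x \in B \subseteq A$, intersects it with the $\Pi^0_n$ sets coming from $T^{-k}(\neg B)$, and applies recurrence to $B$. You use $T^{-k}(\neg A)$ and apply recurrence to $A$, which works equally well.

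The gap is in your case $n = 1$. You correctly noted that $A \cap \bigcap_{k>0} P_k$ is not visibly $\Pi^0_1$ because $A$ is open. But shrinking $A$ to an ideal ball $U \ni x$ does not remove that obstacle. $U$ is still only $\Sigma^0_1$, so $U \cap \bigcap_{k>0} P_k$ is again an effectively open set intersected with an effectively closed one. Your reduction to ``a null $\Pi^0_1$ set containing $x$'' is therefore never actually carried out. That step works only where ideal balls are clopen, as cylinders are in Cantor space; in $[0,1]$ an ideal ball such as $(1/4,3/4)$ is not closed. The same slip appears in your positivity remark, which treats an open ball as a closed set. The relevant fact there is also about null $\Pi^0_1$ sets, not null closed sets. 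In any case $\mu(U)>0$ is not needed.

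The repair is short. Given $x \in B(s_i,q_j) \subseteq A$, choose a rational $r$ with $d(x,s_i) < r < q_j$. The closed ball $F = \{y : d(y,s_i) \leq r\}$ is $\Pi^0_1$ by \cref{closed_balls_closed}, and $x \in F \subseteq A$. Then $C = F \cap \bigcap_{k>0} P_k$ is $\Pi^0_1$ and contains $x$. Moreover $C \cap \dom T$ consists of points of $A$ that never return to $A$, so $C$ is null by Poincar\'e recurrence applied to $A$.

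Equivalently, \cref{approximate_sigma01_or_pi01_cms} writes $A$ as a union of $\Pi^0_1$ sets, which lets your general-$n$ argument start at $n=1$. Put differently, your original set $A \cap \bigcap_{k>0} P_k$ is a null $\Sigma^0_{n+1}$ set. Weakly $n$-random points avoid such sets, since each of their $\Pi^0_n$ pieces is null; this is the paper's own remark after the theorem.

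With this fix your proof is uniform in $n$ and simpler than the paper's treatment of $n=1$. The paper goes through the almost decidable basis (\cref{almost_decidable_basis}). It takes an almost decidable $B$ with $x \in B \subseteq A$ and witnesses $U \subseteq B$, $V \subseteq \neg B$. It then uses $\neg V$ as the starting set and $\neg U$ in the return condition. To see that the resulting $\Pi^0_1$ set is null, it needs $\mu(U)=\mu(B)=\mu(\neg V)$ from \cref{almost_decidable_has_computable_measure}. That bookkeeping arises only because recurrence is applied to the subset $B$. Applying it to $A$ itself, whose complement is already $\Pi^0_1$, avoids it.
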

\begin{proof}
    Suppose there are a $\Sigma^0_n$ set $A \ni x$ with $\mu(A) > 0$ and a computable measure-preserving transformation $T: \ \subseteq\! X \to X$ with $x \in \dom T =: D$ such that $T^k(x) \notin A$ for all $k > 0$. We have two cases:
    \begin{itemize}
        \item $n = 1$. By \cref{almost_decidable_basis}, let $B \subseteq A$ be an almost decidable set containing $x$ with $\mu(B) > 0$. Take $\Sigma^0_1$ sets $U \subseteq B$ and $V \subseteq \neg B$ such that $U \cup V$ has full measure. Since $\neg U$ is $\Pi^0_1$, we know by \cref{iterated_preimage_of_sigma0n} that there exist uniformly $\Pi^0_1$ sets $P_k$ such that $T^{-k}(\neg U) = D \cap P_k$. The hypotheses then imply
        \[x \in B \cap \bigcap_{k > 0} T^{-k}(\neg B) \subseteq \neg V \cap \bigcap_{k > 0} T^{-k}(\neg U) \subseteq \neg V \cap \bigcap_{k > 0} P_k =: P.\]
        By \cref{almost_decidable_has_computable_measure}, $\neg V$ has the same measure as $B$, and $\neg U$ has the same measure as $\neg B$. Since $T$ is measure-preserving, $T^{-k}(\neg U)$ also has the same measure as $T^{-k}(\neg B)$ for all $k$. Thus, the first inclusion above does not change the measure, and neither does the second since $D$ has full measure. By the Poincar\'e Recurrence Theorem, it follows that the $\Pi^0_1$ set $P$ is null. Thus, $x \in P$ is not Kurtz random.

        \item $n \geq 2$. Let $B \subseteq A$ be a $\Pi^0_{n - 1}$ set containing $x$ with $\mu(B) > 0$. Since $\neg B$ is $\Sigma^0_{n - 1}$ and hence $\Pi^0_n$, we know by \cref{iterated_preimage_of_sigma0n} that there exist uniformly $\Pi^0_n$ sets $P_k$ such that $T^{-k}(\neg B) = D \cap P_k$. The hypotheses then imply
        \[x \in B \cap \bigcap_{k > 0} T^{-k}(\neg B) \subseteq B \cap \bigcap_{k > 0} P_k =: P.\]
        The inclusion above does not change the measure since $D$ has full measure. By the Poincar\'e Recurrence Theorem, it follows that the $\Pi^0_n$ set $P$ is null. Thus, $x \in P$ is not weakly $n$-random. \qedhere
    \end{itemize}
\end{proof}

Observe by \cref{preservation_of_randomness} that a weakly $n$-random point will return to the set $A$ infinitely often, not just once. Also, we do not technically need to assume that $\mu(A) > 0$ here because no weakly $n$-random point would ever lie in a null $\Sigma^0_n$ set. (Such a set would also be $\Sigma^0_{n + 1}$, so it would have a null $\Pi^0_n$ subset containing the point.) Nonetheless, we include it here and elsewhere in order to maintain a parallel structure to those theorems where we do not require the set to contain the point.

In addition, note that the proof above did not make full use of $T$'s computability, only that preimages of $\Pi^0_n$ sets were uniformly $\Pi^0_n$ in the domain (or equivalently, that preimages of $\Sigma^0_n$ sets were uniformly $\Sigma^0_n$ in the domain). Thus, we have the following question, which may depend on the computable probability space $(X, \mu)$.

\begin{qn}
    For $n > 1$, do there exist non-computable measure-preserving transformations $T: \ \subseteq \! X \to X$ under which the preimages of $\Sigma^0_n$ sets are uniformly $\Sigma^0_n$ in the domain?
\end{qn}

By not intersecting with the set containing the point, a similar proof to the one given above would show the following.

\begin{thm}\label{CPS_weak_n_Poincare_ergodic}
    Let $(X, \mu)$ be a computable probability space and $n \geq 1$. If $x \in X$ is weakly $n$-random, then for any $\Sigma^0_n$ set $A$ with $\mu(A) > 0$ and any computable ergodic measure-preserving transformation $T: \ \subseteq \! X \to X$ with $x \in \dom T$, we have $T^k(x) \in A$ for some $k > 0$.
\end{thm}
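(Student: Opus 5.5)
We argue by contraposition, following the proof of \cref{CPS_weak_n_Poincare} but without intersecting with the set containing the point. Suppose $A$ is $\Sigma^0_n$ with $\mu(A) > 0$ and $T: \ \subseteq\! X \to X$ is computable, ergodic and measure-preserving, with $x \in \dom T =: D$ and $T^k(x) \notin A$ for all $k > 0$. We will exhibit a null $\Pi^0_n$ set containing $x$, contradicting weak $n$-randomness. The key classical input is ergodicity: $\bigcup_{k>0} T^{-k}(B)$ is, up to a null set, $T$-invariant and of positive measure whenever $\mu(B) > 0$. It therefore has full measure, i.e.\ $\bigcap_{k>0} T^{-k}(\neg B)$ is null.

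\emph{Case $n = 1$.} By \cref{almost_decidable_basis}, we choose an almost decidable set $B \subseteq A$ with $\mu(B) > 0$; we no longer need $x \in B$. We take $\Sigma^0_1$ sets $U \subseteq B$ and $V \subseteq \neg B$ with $U \cup V$ of full measure. By \cref{iterated_preimage_of_sigma0n}, there are uniformly $\Pi^0_1$ sets $P_k$ with $T^{-k}(\neg U) = D \cap P_k$. Since $x \notin T^{-k}(A)$, we get $x \in T^{-k}(\neg B) \subseteq T^{-k}(\neg U) \subseteq P_k$ for every $k > 0$. Hence $x \in P := \bigcap_{k>0} P_k$, which is $\Pi^0_1$. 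By \cref{almost_decidable_has_computable_measure} and measure preservation, $T^{-k}(\neg U)$ and $T^{-k}(\neg B)$ differ by a null set, and $D$ has full measure. So $\mu(P) = \mu\paren{\bigcap_{k>0} T^{-k}(\neg B)} = 0$ by ergodicity, and $x$ is not Kurtz random.

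\emph{Case $n \geq 2$.} We write $A$ as a union of $\Pi^0_{n-1}$ sets and pick one of them, $B \subseteq A$, with $\mu(B) > 0$. Then $\neg B$ is $\Sigma^0_{n-1} \subseteq \Pi^0_n$, so \cref{iterated_preimage_of_sigma0n} gives uniformly $\Pi^0_n$ sets $P_k$ with $T^{-k}(\neg B) = D \cap P_k$. As before, $x \in P := \bigcap_{k>0} P_k$, which is $\Pi^0_n$. Since $D$ has full measure, $\mu(P) = \mu\paren{\bigcap_{k>0} T^{-k}(\neg B)} = 0$ by ergodicity. So $x$ is not weakly $n$-random.

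The only point needing care is the ergodic step. Let $E = \bigcup_{k>0} T^{-k}(B)$. Then $T^{-1}(E) \subseteq E$ and $\mu(T^{-1}E) = \mu(E)$, so $E$ is invariant mod null sets. Replacing $E$ by the strictly invariant set $\bigcap_m T^{-m}(E)$, which differs from $E$ by a null set, ergodicity gives $\mu(E) \in \{0,1\}$. Moreover $\mu(E) \geq \mu(T^{-1}B) = \mu(B) > 0$, so $\mu(E) = 1$. The partial domain causes no trouble, since all preimages are taken inside the full-measure set $D$.
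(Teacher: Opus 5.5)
Your proposal is correct and is essentially the paper's argument: the proof of \cref{CPS_weak_n_Poincare} with the intersection with the set containing $x$ dropped, and the ergodic form of the Poincar\'e Recurrence Theorem used in place of the plain one (you verify that form directly). One minor simplification: once you no longer intersect with a set containing $x$, you do not need to pass to an almost decidable or $\Pi^0_{n-1}$ subset $B$, because $\neg A$ is already $\Pi^0_n$ and the same argument works with $B = A$.
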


Note that in the ergodic setting, the set $A$ need not contain the point $x$. We will see later on in \cref[S]{section_Schnorr,section_other} just how crucial a subtlety this is because considering only sets containing the point will in general result in a strictly larger class of recurrent points.

The rest of this section will be devoted to showing that when considering Cantor space with the Lebesgue measure, both of these theorems are optimal for $n = 1$. We will return to the case $n \geq 2$ in \cref[S]{section_other}.

\subsection{Constructing computable measure-preserving transformations}

Proving the converses to \cref{CPS_weak_n_Poincare,CPS_weak_n_Poincare_ergodic} for $n = 1$ will involve building computable measure-preserving transformations with certain properties, a framework for which we now describe in terms of monotone maps. These provide a clean method of moving from strings to sequences when defining a transformation on Cantor space.

\begin{defn}
    Let $\paren{\l_k}_k$ be a strictly increasing sequence in $\N$. A function $t: \bigcup_k 2^{\l_k} \to 2^{<\N}$ is \textbf{monotone} if $\sigma \subseteq \tau$ implies $t(\sigma) \subseteq t(\tau)$ for all $\sigma, \tau \in \bigcup_k 2^{\l_k}$.
\end{defn}

\begin{defn}
    The \textbf{limit} of a monotone map $t: \bigcup_k 2^{\l_k} \to 2^{<\N}$ is the map $T: \ \subseteq \! 2^\N \to 2^\N$ given by
    \[T(x) = \lim_{k \to \infty} t\paren{x \prefix \l_k}\quad\text{whenever}\quad\abs{t\paren{x \prefix \l_k}} \to \infty \text{ as } k \to \infty.\]
\end{defn}

\begin{lem}\label{limit_of_computable_monotone_map}
    Let $\paren{\l_k}_k$ be a strictly increasing computable sequence in $\N$ ,and $t: \bigcup_k 2^{\l_k} \to 2^{<\N}$ be a computable monotone map. Then its limit $T: \ \subseteq \! 2^\N \to 2^\N$ is computable with $\Pi^0_2$ domain.
\end{lem}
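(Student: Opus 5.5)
The proof splits into two parts: first identify the domain of $T$ as an explicit $\Pi^0_2$ set, then show $T$ is computable on that domain in the sense of computable metric spaces. Throughout, we use that $t$ is computable and $\paren{\l_k}_k$ is a computable sequence, so given $x$ (as an oracle) and $k$ we can compute $x \prefix \l_k$ and hence $t\paren{x \prefix \l_k}$.

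\textbf{Domain.} By definition, $x \in \dom T$ iff $\abs{t\paren{x \prefix \l_k}} \to \infty$. Monotonicity of $t$ gives $t\paren{x \prefix \l_k} \subseteq t\paren{x \prefix \l_{k+1}}$, since $x \prefix \l_k \subseteq x \prefix \l_{k+1}$. So the lengths are nondecreasing in $k$, and the limit is a well-defined sequence once the lengths are unbounded. Therefore
\[\dom T = \bigcap_{m} \bigcup_{k} \set{x : \abs{t\paren{x \prefix \l_k}} \geq m} = \bigcap_m \bigcup_k \cyl{\set{\sigma \in 2^{\l_k} : \abs{t(\sigma)} \geq m}}.\]
For each $m$, the inner union is $\cyl{W_m}$, where $W_m = \set{\sigma \in \bigcup_k 2^{\l_k} : \abs{t(\sigma)} \geq m}$. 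The sets $W_m$ are uniformly computable: given $\sigma$, we decide whether $\abs{\sigma}$ equals some $\l_k$ (possible since $\l_k$ is strictly increasing and computable), and if so compute $t(\sigma)$. Hence the open sets $\cyl{W_m}$ are uniformly $\Sigma^0_1$, and $\dom T$ is $\Pi^0_2$.

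\textbf{Computability.} Let $U_\tau = \cyl\tau$ range over the basic open sets of Cantor space. We must show that $T^{-1}\cyl\tau$ is uniformly $\Sigma^0_1$ relative to $\dom T$, that is, $T^{-1}\cyl\tau = \dom T \cap \cyl{V_\tau}$ for uniformly c.e.\ sets $V_\tau$. Take
\[V_\tau = \set{\sigma \in \bigcup_k 2^{\l_k} : \tau \subseteq t(\sigma)},\]
which is computable uniformly in $\tau$.

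\emph{Inclusion $T^{-1}\cyl\tau \subseteq \dom T \cap \cyl{V_\tau}$.} Suppose $x \in \dom T$ and $T(x) \in \cyl\tau$. Since the lengths $\abs{t\paren{x \prefix \l_k}}$ are unbounded, some $k$ has $\abs{t\paren{x \prefix \l_k}} \geq \abs\tau$. Every $t\paren{x \prefix \l_k}$ is an initial segment of $T(x)$, so for this $k$ we get $\tau \subseteq t\paren{x \prefix \l_k}$, which puts $x$ in $\cyl{V_\tau}$.

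\emph{Inclusion $\dom T \cap \cyl{V_\tau} \subseteq T^{-1}\cyl\tau$.} Suppose $x \in \dom T$ and $\sigma \subset x$ with $\sigma \in V_\tau$. Then $\sigma = x \prefix \l_k$ for some $k$, and $\tau \subseteq t(\sigma) \subseteq T(x)$. So $T(x) \in \cyl\tau$.

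Equivalently, viewed as an oracle machine, the algorithm on input $x$ outputs the successive values $t\paren{x \prefix \l_k}$. These extend one another and converge to $T(x)$ exactly when $x \in \dom T$.

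There is no real obstacle. The only point needing care is matching the precise definition of a computable partial map in the background section, namely uniform $\Sigma^0_1$ preimages within the domain. The monotonicity of $t$ carries both inclusions above, since it guarantees that every $t\paren{x \prefix \l_k}$ is an initial segment of the limit $T(x)$.
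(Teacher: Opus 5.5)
Your proposal is correct and is essentially the paper's proof: the paper uses the same sets $\set{\sigma \in \bigcup_k 2^{\l_k} : \rho \subseteq t(\sigma)}$ to get $T^{-1}\cyl\rho = \cyl{W_\rho} \cap \dom T$, and the same sets $\set{\sigma \in \bigcup_k 2^{\l_k} : \abs{t(\sigma)} \geq j}$ to write $\dom T$ as the $\Pi^0_2$ intersection $\bigcap_j \cyl{V_j}$. You additionally make explicit what the paper leaves implicit, namely the two inclusions and the role of monotonicity: the lengths are nondecreasing, so unboundedness is a $\Pi^0_2$ condition, and each $t(x \prefix \l_k)$ is an initial segment of $T(x)$.
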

\begin{proof}
    Given $\rho \in 2^{<\N}$, define 
    \[W_\rho = \set{\sigma \in \bigcup_k 2^{\l_k}: \rho \subseteq t(\sigma)},\]
    which is uniformly c.e.\ in $\rho$ and satisfies $T^{-1}\cyl\rho = \cyl{W_\rho} \cap \dom T$, so $T$ is computable. Moreover, defining uniformly c.e.\ sets
    \[V_j = \set{\sigma \in \bigcup_k 2^{\l_k}: |t(\sigma)| \geq j},\]
    we see that $\dom T = \bigcap_j \cyl{V_j}$ is $\Pi^0_2$.
\end{proof}

\begin{lem}\label{limit_is_measure_preserving}
    Let $\paren{\l_k}_k$ be a strictly increasing sequence in $\N$, $t: \bigcup_n 2^{\l_k} \to 2^{<\N}$ be a monotone map, and
    \[S_k = \{\sigma \in 2^{\l_k}: |t(\sigma)| = \l_k\}.\]
    Assume the following:
    \begin{itemize}
        \item The sequence $\paren{\cyl{S_k}}_k$ is increasing.
        \item $t$ is injective on $S_k$ for all $k$.
        \item $t(\sigma) = \zero$ for all $\sigma \in 2^{\l_k} \setminus S_k$.
        \item $\lambda\cyl{S_k} \to 1$ as $n \to \infty$.
    \end{itemize}
    Then the limit $T: \ \subseteq \! 2^\N \to 2^\N$ of $t$ is injective and measure-preserving with domain $\bigcup_k \cyl{S_k}$.
\end{lem}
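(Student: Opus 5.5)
The plan is to verify the three claims (domain, injectivity, measure preservation) directly from the four hypotheses. The basic observation is that if $\sigma \in S_k$, then every string of length $\l_j$ ($j \geq k$) extending $\sigma$ lies in $S_j$. This holds because $\cyl{S_k} \subseteq \cyl{S_j}$ and all strings in $S_j$ have length $\l_j \geq \l_k$.

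\emph{Domain.} Suppose $x \in \cyl{S_k}$. Then $x \prefix \l_j \in S_j$ for all $j \geq k$, so $\abs{t\paren{x \prefix \l_j}} = \l_j \to \infty$, and hence $x \in \dom T$. Conversely, suppose $x \notin \bigcup_k \cyl{S_k}$. Then $x \prefix \l_k \in 2^{\l_k} \setminus S_k$ for every $k$, so $t\paren{x \prefix \l_k} = \zero$ for all $k$ and $x \notin \dom T$. Hence $\dom T = \bigcup_k \cyl{S_k}$. By monotonicity, for $x \in \dom T$ the string $t\paren{x \prefix \l_k}$ is an initial segment of $T(x)$ for every $k$.

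\emph{Injectivity.} Let $x \neq y$ be in $\dom T$. Since the cylinders $\cyl{S_k}$ increase, we may choose $k$ large enough that $x, y \in \cyl{S_k}$ and $x \prefix \l_k \neq y \prefix \l_k$. Both restrictions then lie in $S_k$. Because $t$ is injective on $S_k$, their images are distinct strings of the same length $\l_k$. These images are initial segments of $T(x)$ and $T(y)$ respectively, so $T(x) \neq T(y)$.

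\emph{Measure preservation.} Because cylinders form a $\pi$-system generating the Borel sets, it suffices (by uniqueness of measures, after noting $T^{-1}$ commutes with countable unions and complements within the full-measure domain) to show $\lambda\paren{T^{-1}\cyl\rho} = 2^{-|\rho|}$ for every string $\rho$. Fix $\rho$ and take $k$ with $\l_k \geq |\rho|$. Every $x \in \cyl{S_k}$ has $T(x)$ extending $t\paren{x \prefix \l_k}$, a string of length at least $|\rho|$. Hence
\[T^{-1}\cyl\rho \cap \cyl{S_k} = \cyl{\set{\sigma \in S_k : \rho \subseteq t(\sigma)}},\]
and these sets increase to $T^{-1}\cyl\rho$ as $k \to \infty$. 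It remains to estimate $N_k := \#\set{\sigma \in S_k : \rho \subseteq t(\sigma)}$, which I expect to be the only real step.

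\emph{Counting $N_k$.} Since $t$ maps $S_k$ injectively into $2^{\l_k}$, we get $N_k \leq 2^{\l_k - |\rho|}$. For the lower bound, the image $t(S_k)$ misses exactly $2^{\l_k} - |S_k|$ strings of length $\l_k$. Therefore at least $2^{\l_k - |\rho|} - \paren{2^{\l_k} - |S_k|}$ of the extensions of $\rho$ of length $\l_k$ are hit. Dividing by $2^{\l_k}$ gives
\[2^{-|\rho|} - \paren{1 - \lambda\cyl{S_k}} \leq \lambda\paren{T^{-1}\cyl\rho \cap \cyl{S_k}} \leq 2^{-|\rho|}.\]
Letting $k \to \infty$ and using $\lambda\cyl{S_k} \to 1$ together with continuity from below yields $\lambda\paren{T^{-1}\cyl\rho} = 2^{-|\rho|}$.
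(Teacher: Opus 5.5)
Your proposal is correct and follows essentially the same route as the paper. You identify the domain by the same two cases, get injectivity by passing to a common large $k$ (slightly more carefully than the paper, since you also insist $x\prefix\l_k \neq y\prefix\l_k$ and note that the images have equal length), and compute $\lambda\paren{T^{-1}\cyl\rho}$ as the increasing limit of $2^{-\l_k}\abs{\set{\sigma\in S_k : \rho\subseteq t(\sigma)}}$, with the upper bound coming from injectivity. Your lower bound, which counts the $2^{\l_k}-\abs{S_k}$ strings missed by $t(S_k)$, is just a repackaging of the paper's bound on $\abs{S_k\setminus W_{\rho,k}}$ obtained by summing over the other strings $\tau\in 2^{\abs\rho}\setminus\set\rho$, and it gives the identical inequality.
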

\begin{proof}
    First we show that $\dom T = \bigcup_k \cyl{S_k}$:

    \begin{itemize}
        \item $\subseteq$. If $x \notin \cyl{S_k}$ for all $k$, then $t(x \prefix \l_k) = \zero$ for all $k$, which implies that $x \notin \dom T$.

        \item $\supseteq$. If $x \in \cyl{S_k}$ for some $k$, then because $\paren{\cyl{S_k}}_k$ is increasing, for all sufficiently large $k$ we have $x \in \cyl{S_k}$ and hence $|t(x \prefix \l_k)| = \l_k \to \infty$, i.e., $x \in \dom T$.
    \end{itemize}

    Next, if $x, y \in \dom T$ are distinct, then taking $k$ large enough so that $x, y \in \cyl{S_k}$, we have $t\paren{x \prefix \l_k} \neq t\paren{y \prefix \l_k}$ and hence $T(x) \neq T(y)$. Thus, $T$ is injective.

    Finally, we will show that $T$ is measure-preserving. Consider any string $\rho$. If $\rho = \zero$, then $T^{-1}\cyl\rho = \dom T$ and hence trivially $\lambda\paren{T^{-1}\cyl\rho} = 1 = \lambda\cyl\rho$.
    
    Now assume $\rho \neq \zero$. Define
    \[W_{\rho, k} = \set{\sigma \in 2^{\l_k}: \rho \subseteq t(\sigma)}.\]
    This is the same as $\set{\sigma \in S_k: \rho \subseteq t(\sigma)}$ since $t(\sigma) = \zero \not\supseteq \rho$ for all $\sigma \in 2^{\l_k} \setminus S_k$. Thus, $\cyl{W_{\rho, k}} \subseteq \dom T$ for all $k$, and in fact,
    \[T^{-1}\cyl\rho = \bigcup_k \cyl{W_{\rho, k}}.\]
    Given that $\paren{\cyl{W_{\rho, k}}}_k$ is increasing by monotonicity of $t$, it follows that
    \[\lambda\paren{T^{-1}\cyl\rho} = \lim_{k \to \infty} \lambda\cyl{W_{\rho, k}}.\]
    Now for each $k$, since $t$ injectively maps $S_k$ into $2^{\l_k}$ and $\rho \subseteq \tau$ for every $\tau \in t\paren{W_{\rho, k}}$, we have
    \[\lambda\cyl{W_{\rho, k}} = 2^{-\l_k}\abs{W_{\rho, k}} = 2^{-\l_k}\abs{t\paren{W_{\rho, k}}} = \lambda\cyl{t\paren{W_{\rho, k}}} \leq \lambda\cyl\rho.\]
     Thus, $\lambda\paren{T^{-1}\cyl\rho} \leq \lambda\cyl\rho$.
    
    To get the other inequality, note that
    \[S_k \setminus W_{\rho, k} = \bigcup_{\tau \in 2^{|\rho|} \setminus \{\rho\}} W_{\tau, k}.\]
    Therefore, we have
    \[\lambda\cyl{S_k} - \lambda\cyl{W_{\rho, k}} = \lambda\cyl{S_k \setminus W_{\rho, k}} \leq \sum_{\tau \in 2^{|\rho|} \setminus \{\rho\}} \lambda\cyl{W_{\tau, k}} \leq \sum_{\tau \in 2^{|\rho|} \setminus \{\rho\}} \lambda\cyl\tau = 1 - \lambda\cyl\rho,\]
    and since $\lambda\cyl{S_k} \to 1$ as $k \to \infty$, we get
    \[\lambda\paren{T^{-1}\cyl\rho} = \lim_{k \to \infty} \lambda\cyl{W_{\rho, k}} \geq \lim_{k \to \infty}\lambda\cyl{S_k} - 1 + \lambda\cyl\rho = \lambda\cyl\rho.\]
    Consequently, $\lambda\paren{T^{-1}\cyl\rho} = \lambda\cyl\rho$ for all strings $\rho$. As the cylinders $\cyl\rho$ generate the Borel $\sigma$-algebra on $2^\N$, it follows that $T$ is measure-preserving.
\end{proof}

\subsection{Characterizing Kurtz randomness}

Working in Cantor space with the Lebesgue measure, we now characterize Kurtz randomness in terms of recurrence in clopen sets. This serves as a converse to \cref{CPS_weak_n_Poincare,CPS_weak_n_Poincare_ergodic} in the case $n = 1$.

\begin{thm}
    Consider Cantor space with the Lebesgue measure $\lambda$. Then the following are equivalent for a point $x \in 2^\N$:
    \begin{enumerate}[(a)]
        \item $x$ is Kurtz random.
        \item For every clopen set $A \ni x$ with $\lambda(A) > 0$ and every computable measure-preserving transformation $T: \ \subseteq \! 2^\N \to 2^\N$, we have $T^k(x) \in A$ for some $k > 0$.
        \item For every clopen set $A$ with $\lambda(A) > 0$ and every computable ergodic measure-preserving transformation $T: \ \subseteq \! 2^\N \to 2^\N$, we have $T^k(x) \in A$ for some $k > 0$.
        \item For every clopen set $A \ni x$ with $\lambda(A) > 0$ and every computable ergodic measure-preserving transformation $T: \ \subseteq \! 2^\N \to 2^\N$, we have $T^k(x) \in A$ for some $k > 0$.
    \end{enumerate}
\end{thm}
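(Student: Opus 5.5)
The implications (a)$\Rightarrow$(b) and (a)$\Rightarrow$(c) are the case $n = 1$ of \cref{CPS_weak_n_Poincare} and \cref{CPS_weak_n_Poincare_ergodic}, since clopen sets are $\Sigma^0_1$. The implications (b)$\Rightarrow$(d) and (c)$\Rightarrow$(d) are trivial specializations. So the real work is (d)$\Rightarrow$(a), which I prove contrapositively. Suppose $x$ is not Kurtz random, so $x \in P$ for some null $\Pi^0_1$ set $P$. I will build a computable ergodic m.p.t.\ $T$ and a clopen $A \ni x$ such that $T^k(x) \notin A$ for all $k > 0$.

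\emph{Normalizing $P$.} By symmetry, assume $x(0) = 0$ and take $A = \cyl{0}$. Replace $P$ by the closure of $\set{1^j y : j \in \N,\ y \in P}$. This set is again $\Pi^0_1$ and null: it only adds the single point $1^\infty$, and its measure is at most $\sum_j 2^{-j}\lambda(P) = 0$. It is now closed under $y \mapsto 1y$. Write $P = \bigcap_s \cyl{P_s}$, where the $P_s$ are uniformly computable finite sets of strings with $\lambda\cyl{P_s} \to 0$ computably fast. This last property can be arranged because a null $\Pi^0_1$ set has measure bounded effectively by the computable clopen approximations: their measures are computable and decrease to $0$, so we can search for stages where the measure drops below $2^{-s}$.

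\emph{Constructing $T$.} I will define a computable monotone map $t$ on $\bigcup_k 2^{\l_k}$ whose limit $T$ agrees with the prefix-insertion map $y \mapsto 1y$ on $P$. Then the orbit of $x$ is $x, 1x, 11x, \dots$, which lies in $\cyl 1 = \neg A$ after the first step. Concretely, while a string $\sigma \in 2^{\l_k}$ remains in $\cyl{P_k}$, $t$ outputs a short, lagging prefix of $1\sigma$. Once $\sigma$ leaves $P$ at stage $k$, $t$ switches and maps $\cyl\sigma$ (via extensions of length $\l_{k'}$) measure-preservingly onto a ``free'' region compatible with the prefix already output. The aim is to meet the hypotheses of \cref{limit_is_measure_preserving}: $S_k$ consists of the strings that have left $P$ and been fully assigned, $t$ is injective on $S_k$, and $\lambda\cyl{S_k} \to 1$ because $\lambda\cyl{P_k} \to 0$. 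Then \cref{limit_of_computable_monotone_map} gives computability. To obtain ergodicity, I will make the off-$P$ part a computable perturbation of a known ergodic map, such as the odometer, arranging the bookkeeping so that $T$ differs from the odometer only on a set of small and shrinking measure. Alternatively, I would build the assignment directly as a cutting-and-stacking (rank-one) tower, whose ergodicity is standard. Since $T$ differs from the base map only on null-relevant pieces, invariant sets are preserved up to measure zero.

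\emph{Main obstacle.} The main difficulty is the bookkeeping in this construction. The outputs committed to $P$-points, namely prefixes beginning with $1$ that track $1\sigma$, reserve image mass that must be exactly compensated when those strings later exit $P$. At the same time, injectivity and $\lambda\cyl{S_k} \to 1$ must hold, and ergodicity must survive. My first attempt will be to let the output lag by a growing number of bits, chosen using the computable bound $\lambda\cyl{P_k} \le 2^{-k}$. This leaves enough free image space at every stage for exiting strings to be matched bijectively to unused strings of the same length. I would then verify ergodicity via the rank-one/odometer structure of the resulting tower.
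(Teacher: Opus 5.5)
Your handling of (a)$\RA$(b), (a)$\RA$(c), (b)$\RA$(d) and (c)$\RA$(d) is exactly the paper's, and your normalization of $P$ (closing it under $y \mapsto 1y$) is correct. But (d)$\RA$(a) is the whole content of the theorem. There you give only a plan: the bookkeeping you yourself call the main obstacle is never resolved, and the two steps the plan relies on fail as described.

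First, \cref{limit_is_measure_preserving} cannot be applied to your design. That lemma requires $t(\sigma) = \zero$ for every $\sigma \notin S_k$, and it concludes $\dom T = \bigcup_k \cyl{S_k}$. Your strings still in $P_k$ lie outside $S_k$ yet carry nonempty lagging outputs, so the hypothesis is violated. Moreover, with $S_k$ consisting of strings that have left $P$, the conclusion would put every point of $P$, including $x$, outside $\dom T$. Moving those strings into $S_k$ is not a cosmetic fix. For $y \in P$ with $y \prefix \l_k \in S_k$, monotonicity and $T(y) = 1y$ force $t(y \prefix \l_k) = 1\paren{y \prefix (\l_k - 1)}$. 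This collides with the sibling string's output whenever that sibling also lies in $S_k$ and extends into $P$.

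Second, the ergodicity argument is false. $\dom T$ has full measure and every cylinder has positive measure, so any full-measure subset of $\dom T$ is dense in it. Hence a computable $T$ (which is continuous on its domain) that agrees with the odometer off a null set agrees with it on all of $\dom T$. But the odometer sends $1x \in P$ into $\cyl 0$, whereas you need $T(1x) = 11x$. So $T$ must differ from the odometer on a set of positive measure, and ergodicity does not transfer across such a perturbation. The rank-one alternative is only named. You give no account of how towers can coexist with $T$ acting as $y \mapsto 1y$ near every point of $P$ at every scale.

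The paper avoids all of this by never prescribing $T$ on $P$: it only needs $T^k(x) \in \cyl 1$, and it arranges $T^k(x) \in \cyl{1^k0}$.

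\begin{itemize}
\item \emph{Setup.} Write $P = \bigcap_k \cyl{F_k}$ with nested $F_k \subseteq 2^{\l_k}$, $\lambda\cyl{F_k} = 2^{-k}$ exactly, and $F_2 = \set{01}$.
\item \emph{Stacks.} At stage $k$ there is a Group II stack of height $k$ over each string of $F_k$. Only the stack tops and $\cyl{1^k}$ are uncommitted, so $x$, sitting at a base, lies in $\bigcup_k \cyl{S_k}$ and \cref{limit_is_measure_preserving} genuinely applies.
\item \emph{Routing.} The tops over $F_{k+1}$ are sent onto the $\abs{F_{k+1}} = 2^{\l_{k+1}-(k+1)}$ strings extending $1^k0$. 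The stacks over the rejected part $F'_{k+1}$, which has the same size, are appended to a single Group I tower.
\item \emph{Counting.} The identity $h_{k+1} = d_k h_k + k\abs{F_{k+1}}$ makes the count close exactly.
\item \emph{Ergodicity.} It follows from $\lambda(I_k) = 1 - (k+1)2^{-k} \to 1$, injectivity (\cref{injective_invariance}), and a Lebesgue density argument.
\end{itemize}

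The exact dyadic measures of the $F_k$, and a single absorbing tower whose measure tends to $1$, are the ideas your sketch is missing.
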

\begin{proof}
    Implications (a) $\RA$ (b) and (a) $\RA$ (c) come from \cref{CPS_weak_n_Poincare,CPS_weak_n_Poincare_ergodic} when $n = 1$. Implications (b) $\RA$ (d) and (c) $\RA$ (d) are also clear since they merely restrict to ergodic transformations and to sets containing the point, respectively.
    
    Thus, it suffices to prove that (d) $\RA$ (a), which we accomplish in the following theorem. (Note that we cannot simply take the shift map because there exist computable---and hence non-Kurtz-random---sequences which contain every possible string within them. Thus, their orbits under the shift would be dense, intersecting every nonempty clopen set.)
\end{proof}

\begin{thm}\label{Kurtz_Poincare_characterization}
    Consider Cantor space with the Lebesgue measure $\lambda$. Let $x \in 2^\N$ not be Kurtz random. Then there are a clopen set $A \ni x$ with $\lambda(A) > 0$ and an injective computable ergodic measure-preserving transformation $T: \ \subseteq\! 2^\N \to 2^\N$ with $\dom T$ a $\Sigma^0_1$ set containing $A$ such that $T^k(x) \notin A$ for all $k > 0$.
\end{thm}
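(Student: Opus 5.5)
The plan is to build $T$ as the limit of a computable monotone map $t$, so that \cref{limit_of_computable_monotone_map,limit_is_measure_preserving} give computability, injectivity and measure preservation for free. All the work then goes into choosing $t$ so that two things hold: the orbit of $x$ is forced to avoid $A$, and $T$ is ergodic.

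\textbf{Setup.} Since $x$ is not Kurtz random, it lies in a null $\Pi^0_1$ set. So there are uniformly computable finite sets $C_k \subseteq 2^{m_k}$ with $\cyl{C_{k+1}} \subseteq \cyl{C_k}$, $\lambda\cyl{C_k} \leq 2^{-k}$ and $x \in P := \bigcap_k \cyl{C_k}$. Composing everything with the computable bit-flip isometry if necessary, we may assume $x(0) = 1$. Take $A = \cyl{1}$, which is clopen, has measure $1/2$ and contains $x$.

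\textbf{The trap.} The idea is to send $P \cap \cyl 1$ into a null "trap" inside $\cyl 0$ that $T$ maps into itself. Concretely, $T$ should act on $P$-like points by
\[
1y \mapsto 01y, \qquad 0^j1y \mapsto 0^{j+1}1y \quad (j \geq 1),
\]
for $y$ in the tail of $P$, and should fix $0^\infty$. Then every $T^k(x)$ with $k > 0$ begins with $0$, so it lies outside $A$. The trap $\bigcup_j 0^j1(\text{tail of }P)$ is null, so it does not interfere with measure preservation or ergodicity.

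\textbf{Building $t$ and checking the lemma's hypotheses.} At level $\l_k$ (chosen computably with $\l_k \geq m_k$), the strings $\sigma$ whose cylinders meet the current approximation to the trap are handled as follows:
\begin{itemize}
\item[(i)] $\sigma$ is kept out of $S_k$;
\item[(ii)] it is given a short output $t(\sigma)$ consisting of the prefix $0^{j+1}1$ already determined, or the empty string when consistency demands it;
\item[(iii)] it is released into $S_{k'}$ at some later level $k'$, once it has left $C_{k'}$.
\end{itemize}
Since $\lambda\cyl{C_k} \to 0$, the measure of the strings not in $S_k$ tends to $0$, so $\lambda\cyl{S_k} \to 1$. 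Points outside $P$ eventually enter some $S_k$, and on $S_k$ the map $t$ acts as a length-preserving bijection onto its image. A point of $P$ itself never enters any $S_k$, but its outputs still grow without bound through the prefix-shifting rule (ii). We therefore cannot invoke \cref{limit_is_measure_preserving} verbatim. Instead we reuse its counting argument, noting that the extra domain points form a null set. To get $\dom T$ to be $\Sigma^0_1$ and contain $A$, we arrange that every $\sigma \in 2^{\l_k}$ receives an output of length at least some $k$-dependent bound. Then $\dom T = 2^\N$ minus a small computable exceptional set, for instance the orbit of $0^\infty$ under a partially defined branch, which we simply exclude.

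\textbf{Ergodicity and the main obstacle.} On the bulk $S_k$, $t$ should follow a fixed computable ergodic model, the dyadic odometer (adding $1$ with carry), with the strings of $S_k \setminus S_{k-1}$ spliced into the odometer's tower in a cut-and-stack fashion. The delicate step is to reconcile three requirements:
\begin{itemize}
\item monotonicity of $t$, across the strings that are delayed and later released;
\item injectivity, which must hold simultaneously for the odometer part and the trap part;
\item ergodicity of the limit.
\end{itemize}
For ergodicity I would show that every cylinder of level $\l_k$ in $S_k$ is carried, within one tower of height about $2^{\l_k}$, onto all other level-$\l_k$ cylinders up to an error of measure $2^{-k}$. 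Then any invariant set $B$ is approximately a union of full towers, and since there is essentially a single tower this forces $\lambda(B) \in \{0,1\}$. I expect this bookkeeping, keeping a single rank-one tower while carving out the shrinking trap neighbourhoods, to be the main obstacle. The other verifications are routine applications of \cref{limit_of_computable_monotone_map} and the measure computation in \cref{limit_is_measure_preserving}.
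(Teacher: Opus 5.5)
There is a genuine gap. The construction of $t$, which is the whole content of the theorem, is never carried out, and the rules you do state are inconsistent.

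Taken literally, rule (ii) gives every trap string the fixed output $0^{j+1}1$. So $\abs{t(x\prefix\l_k)}\le 2$ for all $k$ and $x\notin\dom T$, contrary to your claim that outputs along $P$ grow.

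Rule (ii) also breaks measure preservation. If $T(E)\subseteq F$ then $E\subseteq T^{-1}(F)$, so a measure-preserving $T$ can never push a set into one of smaller measure. But the single string $0^{\l_k-1}1$ meets the trap approximation, and (ii) commits all of $\cyl{0^{\l_k-1}1}$ into $\cyl{0^{\l_k}1}$, which has half its measure.

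The same inequality shows that prefix shifting along your geometrically shrinking trap must lose precision. The neighbourhood $\cyl{C_k}\cap\cyl{1}$ of $x$ can be kept inside $\cyl{0^i1}$ only while $2^{-(i+1)}\ge\lambda(\cyl{C_k}\cap\cyl{1})$. That is only finitely many steps, about $k$ of them when $\lambda\cyl{C_k}=2^{-k}$. After that its orbit must be routed into fresh space, and that space has to be budgeted exactly across all stages. Your plan has no such budget. It does not say where released strings go (monotonicity already pins their images inside $\cyl{0^{j+1}1}$), what maps onto them, or why $t$ stays injective across trap and bulk.

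The odometer cannot serve as the bulk model either. With the first bit least significant, it maps $\cyl{0\tau}$ onto $\cyl{1\tau}\subseteq A$ and $\cyl{1^j0\tau}$ onto $\cyl{0^j1\tau}$. So it collides with the trap on both the domain side and the range side, and would have to be cut and re-spliced at every stage. That re-splicing is exactly the bookkeeping you defer as ``the main obstacle''.

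Finally, a lower bound on output lengths tending to infinity would make $T$ total, which contradicts your own exceptional set. Totality here is posed as an open question in the paper, so it cannot be waved through.

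Your overall picture is right: the orbit of $x$ climbs through a null trap, $T$ is the limit of a monotone map, and ergodicity comes from a single rank-one tower. It is the paper's picture with bits flipped, since there $T^k(x)\in\cyl{1^k0}$. What you are missing is to arrange the neighbourhood of the null set as stacks whose levels all have equal measure. Then every non-top level is a length-preserving prefix replacement, and only tops are left uncommitted.

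The paper reduces to $01\subset x$ with $A=\cyl{0}$, and normalises $\lambda\cyl{F_k}=2^{-k}$ with all strings of $F_k$ of length $\l_k$. Stage $k$ then has three groups:
\begin{itemize}
\item Group I, a single tower $I_k$ of height $h_k=2^{\l_k}-(k+1)\abs{F_k}$;
\item Group II, stacks of height $k$ over $F_k$ with tops in $\cyl{1^{k-1}0}$;
\item Group III, which is $\cyl{1^k}$.
\end{itemize}
All tops and Group III are mapped to $\zero$.

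At stage $k+1$, the stacks over $F_{k+1}$ grow by one level into the fresh cylinder $\cyl{1^k0}$. Its measure is exactly $\lambda\cyl{F_{k+1}}$, and this is the budget your trap lacks. The stacks over $F_k\setminus F_{k+1}$ are appended intact to the top of $I_k$. The recursion $h_{k+1}=d_k h_k+k\abs{F_{k+1}}$, with $d_k=2^{\l_{k+1}-\l_k}$, keeps the partition exact.

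Since $x\prefix\l_k$ is a non-top level, $x\in\cyl{S_k}$ and \cref{limit_is_measure_preserving} applies verbatim, with no extra null part of the domain to treat separately. The invariants give $T^k(x)\in\cyl{1^k0}$ and $\dom T=\neg\{1^\infty\}$. Ergodicity follows from $\lambda(I_k)=1-(k+1)2^{-k}\to1$ together with Lebesgue density and \cref{injective_invariance}. Without an explicit scheme of this kind, your proposal establishes none of monotonicity, injectivity, measure preservation or ergodicity.
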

\begin{proof}
    The proof is organized in several steps.

    \proofstep{Step 1: Sketching the construction}

    Before diving into technical details, we give an intuitive idea of how the construction will work. Suppose $01 \subset x$, in which case we want to construct $T$ so that $1 \subset T^k(x)$ for all $k > 0$. We achieve this via a standard technique in ergodic theory called ``cutting and stacking'' to build what are known as rank-one maps. Introductions to this sort of construction are given in section 5 of \cite{Chen} and section 6 of \cite{Friedman}.
    
    The basic idea is that at each stage, we divide Cantor space into several ``stacks'' of cylinders. Each cylinder in a given stack is mapped by prefix replacement to the cylinder above it in the stack, except for the topmost cylinder, whose image is not yet determined. Then we ``cut up'' the stacks (so that their constituent cylinders are given by longer strings) and map some of the tops of these stacks to the bases of other stacks. This creates new stacks to manage during the next stage, which are now taller and narrower than before. By repeating this process, we end up with an ergodic measure-preserving transformation.

    In our case, we begin by dividing Cantor space into three groups:
    \begin{itemize}
        \item Group I consists of the cylinder $\cyl{00}$.
        \item Group II consists of a stack with the cylinder $\cyl{01}$ at the base and $\cyl{10}$ on top. (This means that $\cyl{01}$ is mapped via prefix replacement to $\cyl{10}$.)
        \item Group III consists of the cylinder $\cyl{11}$.
    \end{itemize}
    At this point, we do not yet decide where anything else will map. This completes the setup, as illustrated in \cref[S]{Kurtz_setup}.

    \begin{figure}
        \centering
        \begin{tikzpicture}[line width=1.25pt]
            \node[draw] at (0.5,0) {I};
            \node[draw] at (3.5,0) {II};
            \node[draw] at (6.5,0) {III};
            
            % Group I
            
            \node at (-0.5,1) {00};
            \draw (0,1) -- (1,1);
            
            % Group II
            
            \node at (2.5,1) {01};
            \draw (3,1) -- (4,1);
            
            \draw[-stealth] (3.5, 1) -- (3.5, 2);
            
            \node at (2.5,2) {10};
            \draw (3,2) -- (4,2);
            
            % Group III
            
            \node at (5.5,1) {11};
            \draw (6,1) -- (7,1);
        \end{tikzpicture}
        \caption{The initial setup of the transformation}
        \label{Kurtz_setup}
    \end{figure}
    
    Since $x$ is not Kurtz random, there exists a computable sequence $\paren{F_k}_k$ of finite sets of strings of some common length $\l_k$ such that $x \in \cyl{F_k}$ and $\lambda\cyl{F_k} = 2^{-k}$ for all $k$. Assume that $F_2 = \{01\}$. Then at stage $k \geq 2$, Cantor space is partitioned into three groups:
    \begin{itemize}
        \item Group I consists of a single stack of some large height $h_k$ with base $\cyl{0^{\l_k}}$.
        \item Group II consists of several stacks of height $k$ with bases coming from $\cyl{F_k}$.
        \item Group III consists of the single cylinder $\cyl{1^k}$.
    \end{itemize}

    In moving from stage $k$ to stage $k + 1$, we do the following, as illustrated in \cref[S]{Kurtz_next_stage}:
    \begin{enumerate}
        \item Refine the stacks in Groups I and II so that each cylinder corresponds to a string of length $\l_{k + 1}$.

        \item For each refined stack in Group I except the last, map its top to the bottom of the next stack. For the last of these stacks, map its top to the bottom of a refined stack in Group II with base coming from $\cyl{F_k} \setminus \cyl{F_{k + 1}}$. Then map the top of this stack to the bottom of the next one, and so on until we have one tall stack of height $h_{k + 1}$ consisting of everything that was originally in Group I, along with everything that was originally in Group II whose base came from $\cyl{F_k} \setminus \cyl{F_{k + 1}}$.
        
        \item Map the tops of the stacks in Group II with bases coming from $\cyl{F_{k + 1}}$ to equal-measure portions of $\cyl{1^k0}$ from Group III. Leave $\cyl{1^{k + 1}}$ from Group III alone for now.
    \end{enumerate}

    \begin{figure}
        \centering
        \begin{tikzpicture}[line width=1.25pt, scale=1.5, transform shape]
            \node[draw] at (0.5,0) {I};
            \node[draw] at (3.75,0) {II};
            \node[draw] at (8,0) {III};
            
            % Group I
            
            % Bottom
            \node at (-0.5,2.5) {$0^{\ell_k}$};
            \draw (0,2.5) -- (1,2.5);
            
            \draw[-stealth, blue] (0.25, 2.5) -- (0.25, 3);
            \draw[-stealth, blue] (0.75, 2.5) -- (0.75, 3);
            
            \draw (0,3) -- (1,3);
            
            % ...
            
            \draw[-stealth, blue] (0.25, 3) -- (0.25, 4);
            \draw[-stealth, blue] (0.75, 3) -- (0.75, 4);
            
            \node[blue] at (0.25,4.33) {$\vdots$};
            \node[blue] at (0.75,4.33) {$\vdots$};
            
            \draw[-stealth, blue] (0.25, 4.5) -- (0.25, 5.5);
            \draw[-stealth, blue] (0.75, 4.5) -- (0.75, 5.5);
            
            \draw (0,5.5) -- (1,5.5);
            
            % Other
            
            \draw[dashed, darkgray] (0.5,5.75) -- (0.5,2.25);
            
            \draw [thick, decorate, decoration={brace, amplitude=5pt}] (1.25,5.5) -- (1.25,2.5) node [midway, right=4pt] {$h_k$};
            
            % Group II
            
            % F_k
            
            \node at (3.75,0.75) {$F_k$};
            \draw [thick, decorate, decoration={brace, amplitude=10pt}] (5.125,1.25) -- (2.375,1.25);
            
            % F_k+1 and F_k\F_k+1
            
              % Stack 1
            
              \node[rotate=-90, orange] at (2.75,1.8125) {$\in F_{k + 1}$};
            
              \node[rotate=-90, blue] at (3.25,1.8125) {$\notin F_{k + 1}$};
            
              % Stack 2    
            
              \node[rotate=-90, blue] at (4.25,1.8125) {$\notin F_{k + 1}$};
            
              \node[rotate=-90, orange] at (4.75,1.8125) {$\in F_{k + 1}$};
            
            % Bottom
            
              % Stack 1    
            
              \draw (2.5,2.5) -- (3.5,2.5);
            
              \draw[-stealth, orange] (2.75, 2.5) -- (2.75, 3);
              \draw[-stealth, blue] (3.25, 2.5) -- (3.25, 3);
            
              \draw (2.5,3) -- (3.5,3);
            
              % Stack 2
            
              \draw (4,2.5) -- (5,2.5);
            
              \draw[-stealth, blue] (4.25, 2.5) -- (4.25, 3);
              \draw[-stealth, orange] (4.75, 2.5) -- (4.75, 3);
            
              \draw (4,3) -- (5,3);
            
            % ...
            
              % Stack 1
            
              \draw[-stealth, orange] (2.75, 3) -- (2.75, 3.25);
              \node[orange] at (2.75,3.625) {$\vdots$};
              \draw[-stealth, orange] (2.75, 3.75) -- (2.75, 4);
            
              \draw[-stealth, blue] (3.25, 3) -- (3.25, 3.25);
              \node[blue] at (3.25,3.625) {$\vdots$};
              \draw[-stealth, blue] (3.25, 3.75) -- (3.25, 4);
            
              \draw (2.5,4) -- (3.5,4);
            
              % Stack 2
            
              \draw[-stealth, blue] (4.25, 3) -- (4.25, 3.25);
              \node[blue] at (4.25,3.625) {$\vdots$};
              \draw[-stealth, blue] (4.25, 3.75) -- (4.25, 4);
            
              \draw[-stealth, orange] (4.75, 3) -- (4.75, 3.25);
              \node[orange] at (4.75,3.625) {$\vdots$};
              \draw[-stealth, orange] (4.75, 3.75) -- (4.75, 4);
            
              \draw (4,4) -- (5,4);
            
            % Other
            
            \draw[dashed, darkgray] (3,4.25) -- (3,2.25);
            \draw[dashed, darkgray] (4.5,4.25) -- (4.5,2.25);
            
            \draw[-stealth] (0.25, 5.5) -- (0.75, 2.5);
            
            \draw [thick, decorate, decoration={brace, amplitude=5pt}] (1.25,5.5) -- (1.25,2.5) node [midway, right=4pt] {$h_k$};
            
            \draw [thick, decorate, decoration={brace, amplitude=5pt}] (5.25,4) -- (5.25,2.5) node [midway, right=4pt] {$k$};
            
            % Group III
            
            \node at (6.5,2.5) {$1^k$};
            \draw (7,2.5) -- (9,2.5);
            
            \node at (7.5,2) {$1^k0$};
            \node at (8.5,2) {$1^{k + 1}$};

            \draw[dashed, darkgray] (7.5,2.75) -- (7.5,2.25);
            \draw[dashed, darkgray] (8,2.75) -- (8,2.25);
            
            % Stacking up I
            
            \draw[-stealth, blue] (0.25, 5.5) -- (0.75, 2.5);
            
            % Putting II on top of I
            
            \draw[-stealth, blue] (0.75, 5.5) -- (3.25, 2.5);
            \draw[-stealth, blue] (3.25, 4) -- (4.25, 2.5);
            
            % Putting III on top of II
            
            \draw[-stealth, orange] (2.75, 4) -- (7.25, 2.5);
            \draw[-stealth, orange] (4.75, 4) -- (7.75, 2.5);
            
            % Legend
            
            \draw[-stealth, blue] (4.25, 5) -- (4.75, 5) node [midway, right=10pt] {Trajectory of $\cyl{0^{\l_{k + 1}}}$};
            \draw[-stealth, orange] (4.25, 5.5) -- (4.75, 5.5) node [midway, right=10pt] {Trajectory of $\cyl{F_{k + 1}}$};
            
            \draw[draw=black] (4,4.625) rectangle ++(5,1.25);
        \end{tikzpicture}
        \caption{Refining the transformation from stage $k$ to stage $k + 1$. In this example, there are two stacks in Group II at stage $k$, and $\l_{k + 1} = \l_k + 1$. In general, there may be many stacks in Group II, and $\l_{k + 1}$ may be much larger than $\l_k$.}
        \label{Kurtz_next_stage}
    \end{figure}

    Observe that with each stage of the construction, another iterate of $x$ under the map $T$ is guaranteed to avoid $\cyl 0$. Moreover, almost all points will eventually enter Group I, the cutting-and-stacking process behind which ensures ergodicity of the transformation.

    With this picture in mind, we will now proceed to flesh out the details of the construction with full rigor.

    \proofstep{Step 2: Reducing to the case $01 \subset x$}
    
    Purely for ease of notation, we will first reduce to the case where $01 \subset x$. To see why we can do this, assume the theorem holds when $x$ begins with the bits 01. To handle the general case, we define $x' = 01x_2x_3\dots$, which is simply $x$ but with the first two bits replaced with 01. Then $x'$ is likewise not Kurtz-random, so we may take $A \ni x'$ and $T$ as in the theorem such that $T^k\paren{x'} \notin A$ for all $k > 0$.
    
    Now define $S: 2^\N \to 2^\N$ by $S(01y) = x_0x_1y$ on $\cyl{01}$, $S(x_0x_1y) = 01y$ on $\cyl{x_0x_1}$, and $S(y) = y$ elsewhere. Note that $S = S^{-1}$ is a computable measure-preserving transformation.
    
    Then $S(A)$ is a clopen set containing $S\paren{x'} = x$, and $STS: \ \subseteq \! 2^\N \to 2^\N$ is an injective computable measure-preserving transformation with $\dom STS = S(\dom T)$ a $\Sigma^0_1$ set containing $S(A)$ such that $(STS)^k(x) = ST^kS(x) = ST^k\paren{x'} \notin S(A)$ for all $n > 0$. It is also ergodic since
    \begin{align*}
        ST^{-1}S(B) = (STS)^{-1}(B) = B \quad&\implies\quad T^{-1}S(B) = S(B) \\
        &\implies\quad \lambda(S(B)) \in \{0, 1\} \quad\implies\quad \lambda(B) \in \{0, 1\}
    \end{align*}
    for any Borel set $B$, where we use that $T$ is ergodic and $S = S^{-1}$ is measure-preserving. Thus, we may indeed assume without loss of generality that $01 \subset x$.

    \proofstep{Step 3: Describing the null set containing $x$}
    
    Since $x$ is not Kurtz random, there is a null $\Pi^0_1$ set $P \ni x$, which can be written as $\bigcap_k C_k$ for some nested sequence $\paren{C_k}_k$ of uniformly clopen sets. By uniformly clopen, we mean that there is a computable sequence $\paren{F_k}_k$ in $\PP_{<\N}\paren{2^{<\N}}$ such that $C_k = \cyl{F_k}$ for all $k$. We make the following assumptions without loss of generality:
    \begin{itemize}
        \item $\lambda\paren{C_k} = 2^{-k}$ for all $k$.
        \item All strings in $F_k$ have the same length $\l_k$, where $\paren{\l_k}_k$ is a strictly increasing computable sequence.
        \item For $k \leq 2$, we have $C_k = \cyl{x \prefix k}$. That is, $C_0 = 2^\N$, $C_1 = \cyl 0$, and $C_2 = \cyl{01}$.
    \end{itemize}
    This has the following consequences:
    \begin{itemize}
        \item Any string in $F_{k + 1}$ has a proper initial segment in $F_k$.

        To see why, consider any $\sigma \in F_{k + 1}$. Since $\sigma 0^\infty \in \cyl{F_{k + 1}} \subseteq \cyl{F_k}$, the sequence $\sigma 0^\infty$ must have an initial segment $\tau \in F_k$. Moreover, given that $|\tau| = \l_k < \l_{k + 1} \leq |\sigma|$, we must have $\tau \subset \sigma$.

        \item $\abs{F_k} = 2^{\l_k - k}$.

        This holds because
        \[\abs{F_k}2^{-\l_k} = \sum_{\sigma \in F_k} 2^{-|\sigma|} = \lambda\paren{C_k} = 2^{-k}.\]
    \end{itemize}
    \proofstep{Step 4: Setting up the notation}

    Before proceeding, we establish the following notation:
    \begin{itemize}[listparindent=1.5em]
        \item $F'_{k + 1} = \set{\rho\tau \in 2^{\l_{k + 1}} \setminus F_{k + 1}: \rho \in F_k}$.

        Note that $\abs{F'_{k + 1}} = \abs{F_{k + 1}}$ since $\cyl{F_{k + 1}}$ and $\cyl{F'_{k + 1}}$ form a partition of $\cyl{F_k}$, whose measure is twice as large as $\cyl{F_{k + 1}}$.
        
        \item $\rho_n\tau_n$ is the $n\th$ string lexicographically in $F'_{k + 1}$ with $\rho_n \in F_k$, indexing from 0.
        
        \item $\theta_n$ is the $n\th$ string lexicographically of length $\l_{k + 1} - \l_k$, indexing from 0.

        \item $h_k = 2^{\l_k} - (k + 1)\abs{F_k}$. This will represent the height at stage $k$ of the Group I stack discussed in the sketch. This sequence satisfies the recurrence relation
        \[h_{k + 1} = d_kh_k + k\abs{F_{k + 1}}, \quad\text{where}\quad d_k = 2^{\l_{k + 1} - \l_k}, \tag{$*$}\label{hk_recurrence}\]
        since
        \begin{align*}
            h_{k + 1} - k\abs{F_{k + 1}} &= 2^{\l_{k + 1}} - (k + 2)\abs{F_{k + 1}} - k\abs{F_{k + 1}} \\
            &= 2^{\l_{k + 1}} - (2k + 2)\abs{F_{k + 1}} \\
            &= 2^{\l_{k + 1}} - (2k + 2)2^{\l_{k + 1} - (k + 1)} \\
            &= 2^{\l_{k + 1}} - (k + 1)2^{\l_{k + 1} - k} \\
            &= 2^{\l_{k + 1} - \l_k}\paren{2^{\l_k} - (k + 1)2^{\l_k - k}} \\
            &= 2^{\l_{k + 1} - \l_k}\paren{2^{\l_k} - (k + 1)\abs{F_k}} \\
            &= d_kh_k.
        \end{align*}
        Note that $h_0 = h_1 = 0$ while $h_k > 0$ for all $k \geq 2$. Also, $h_{k + 1} > d_kh_k$ for all $k \geq 1$.
    \end{itemize}

    \proofstep{Step 5: Stating the conditions for recursion}

    Our transformation $T: \ \subseteq \! 2^\N \to 2^\N$ will be constructed as the limit of a computable monotone function $t: \bigcup_k 2^{\l_k} \to 2^{<\N}$. The $k$-fold iterate of $t$ (wherever it is defined) is denoted by $t^k$, and $t^0$ is the identity.

    We define $t$ recursively, starting with $t(\sigma) = \zero$ for each string $\sigma$ of length $\l_0$. Now suppose for some $k$ that $t(\sigma)$ has been defined for all strings $\sigma \in \bigcup_{j \leq k} 2^{\l_j}$ and that the following conditions hold (which are all trivial when $k = 0$):
    \begin{enumerate}[label=(\Alph*), ref=\Alph*]
        \item\label{monotone} (Monotonicity) $t\paren{\sigma \prefix \l_j} \subseteq t(\sigma)$ whenever $|\sigma| = \l_k$ and $j < k$.

        \item\label{partition} (Partitioning Cantor space into stacks) Every string $\sigma$ of length $\l_k$ falls into one of the following cases:
        \begin{enumerate}[label=\Roman*., ref=\Roman*]
            \item\label{I} $\sigma = t^j\paren{0^{\l_k}}$ for some $j < h_k$, and $1^{k - 2}0 \subseteq \sigma$ if $j = h_k - 1$.
            \item\label{II} $\sigma = t^j(\rho)$ for some $\rho \in F_k$ and $j < k$, and $1^{k - 1}0 \subseteq \sigma$ if $j = k - 1$.
            \item\label{III} $\sigma = 1^k\eta$ for some $\eta$ with length $\l_k - k$.
        \end{enumerate}
        
        \item\label{top_unmapped} (Leaving the top of each stack unmapped) $t$ maps every string $\sigma$ of the following form to $\zero$:
        \begin{enumerate}[label=\roman*., ref=(\theenumi.\roman*)]
            \item\label{top_i} $\sigma = t^{h_k - 1}\paren{0^{\l_k}}$, where $h_k > 0$.
            \item\label{top_ii} $\sigma = t^{k - 1}(\rho)$ for some $\rho \in F_k$, where $k > 0$.
            \item\label{top_iii} $\sigma = 1^k\eta$ for some $\eta$ with length $\l_k - k$.
        \end{enumerate}
    \end{enumerate}

    By condition (\ref{partition}), observe that all strings of the form \ref{I}, \ref{II}, or \ref{III} are unique because
    \[2^{\l_k} \leq \#(\text{I}) + \#(\text{II}) + \#(\text{III}) \leq h_k + k\abs{F_k} + 2^{\l_k - k} = h_k + (k + 1)\abs{F_k} = 2^{\l_k}.\]
    \proofstep{Step 6: Defining $t$ on strings of length $\l_{k + 1}$}

    Consider any string $\sigma$ with length $\l_{k + 1}$. We define $t(\sigma)$ according to the form (\ref{I}, \ref{II}, or \ref{III}) of $\sigma \prefix \l_k$, in each case verifying the monotonicity condition (\ref{monotone}) for $k + 1$, i.e.,  $t\paren{\sigma \prefix \l_k} \subseteq t(\sigma)$. 

    \begin{enumerate}[label=\Roman*., ref=\Roman*]
        \item $\sigma \prefix \l_k = t^j\paren{0^{\l_k}}$ for some $j < h_k$.

        Take $\theta$ with length $\l_{k + 1} - \l_k$ such that $\sigma = t^j\paren{0^{\l_k}}\theta$. We have two subcases:

        \begin{enumerate}[listparindent=1.5em, label=\alph*., ref=\theenumi.\alph*]
            \item\label{I_not_top} $j < h_k - 1$. \emph{(This means that $\sigma$ lies in one of the refined Group I stacks but is not at the top. We simply map $\sigma$ to the string one level up.)}

            Define $t(\sigma) = t^{j + 1}\paren{0^{\l_k}}\theta$. Then $t\paren{\sigma \prefix \l_k} = t^{j + 1}\paren{0^{\l_k}} \subseteq t^{j + 1}\paren{0^{\l_k}}\theta = t(\sigma)$.

            \item\label{I_top} $j = h_k - 1$. \emph{(This means that $\sigma$ lies at the top of one of the refined Group I stacks. If it is not the last of these stacks, we map $\sigma$ to the bottom of the next stack. Otherwise, we map $\sigma$ to the bottom of the first refined Group II stack with base coming from $F'_{k + 1}$.)}
            
            Take $n$ such that $\theta = \theta_n$ and define
            \[t(\sigma) = \begin{cases}
                0^{\l_k}\theta_{n + 1} & \text{if } n < d_k - 1 \\
                \rho_0\tau_0 & \text{if } n = d_k - 1
            \end{cases}.\]
            Then we have $t(\sigma \prefix \l_k) = t^{h_k}\paren{0^{\l_k}} = \zero \subseteq t(\sigma)$ by condition \ref{top_i}.
        \end{enumerate}
        
        \item $\sigma \prefix \l_k = t^j(\rho)$ for some $\rho \in F_k$ and $j < k$.

        Take $\tau$ with length $\l_{k + 1} - \l_k$ such that $\sigma = t^j(\rho)\tau$. We have three subcases:
        \begin{enumerate}[listparindent=1.5em, label=\alph*., ref=\theenumi.\alph*]
            \item\label{II_not_top} $j < k - 1$. \emph{(This means that $\sigma$ lies in one of the refined Group II stacks but is not at the top. We simply map $\sigma$ to the string one level up.)}

            Define $t(\sigma) = t^{j + 1}(\rho)\tau$. Then $t\paren{\sigma \prefix \l_k} = t^{j + 1}(\rho) \subseteq t^{j + 1}(\rho)\tau = t(\sigma)$.

            \item\label{II_top_continue} $j = k - 1$ and $\rho\tau \in F_{k + 1}$. \emph{(This means that $\sigma$ lies at the top of one of the refined Group II stacks with base coming from $F_{k + 1}$. We map $\sigma$ to an appropriate portion of $1^k0$ from Group III.)}
        
            Take $n$ such that $\rho\tau$ is the $n\th$ string lexicographically in $F_{k + 1}$, and define $t(\sigma)$ to be the $n\th$ string lexicographically of length $\l_{k + 1}$ extending $1^k0$.
            
            This is well-defined because there are exactly $2^{\l_{k + 1} - (k + 1)} = \abs{F_{k + 1}}$ strings of length $\l_{k + 1}$ extending $1^k0$. Also, $t\paren{\sigma \prefix \l_k} = t^k(\rho) = \zero \subseteq t(\sigma)$ by condition \ref{top_ii}.

            \item\label{II_top_stop} $j = k - 1$ and $\rho\tau \notin F_{k + 1}$. \emph{(This means that $\sigma$ lies at the top of one of the refined Group II stacks with base coming from $F'_{k + 1}$. If it is not the last of these stacks, we map $\sigma$ to the bottom of the next stack. Otherwise, we leave $\sigma$ unmapped because it will be at the top of the new Group I stack.)}

        Take $n$ such that $\rho\tau = \rho_n\tau_n$ and define
        \[t(\sigma) = \begin{cases}
            \rho_{n + 1}\tau_{n + 1} & \text{if } n < \abs{F_{k + 1}} - 1 \\
            \zero & \text{if } n = \abs{F_{k + 1}} - 1
        \end{cases}.\]
        Then we have $t\paren{\sigma \prefix \l_k} = t^k(\rho) = \zero \subseteq t(\sigma)$ by condition \ref{top_ii}.
            
        \end{enumerate}

        \item\label{III_top} $\sigma \prefix \l_k = 1^k\eta$ for some $\eta$ with length $\l_k - k$. \emph{(This means that $\sigma$ comes from Group III. We leave $\sigma$ unmapped for now because it will either remain in Group III or be at the top of a stack in the new Group II.)}

        Define $t(\sigma) = \zero$. Then $t\paren{\sigma \prefix \l_k} = t\paren{1^k\eta} = \zero = t(\sigma)$ by condition \ref{top_iii}.
    \end{enumerate}

    \proofstep{Step 7: Verifying where strings end up after many iterates}

    The following facts regarding the definition of $t$ on strings of length $\l_{k + 1}$ will help us complete the induction. The first shows that the iterates of $t$ in the refined Group II stacks merely extend the iterates of $t$ in the old Group II stacks as intended.

    \begin{lem}\label{II_refined}
        $t^j(\rho\tau) = t^j(\rho)\tau$ whenever $\rho \in F_k$, $|\tau| = \l_{k + 1} - \l_k$, and $j < k$.
    \end{lem}
    \begin{proof}
        Suppose this holds for some $j < k$, which it trivially does for $j = 0$. Then if $j < k - 1$, $t^j(\rho\tau) = t^j(\rho)\tau$ falls into case \ref{II_not_top} above, and we have
        \begin{align*}
            t^{j + 1}(\rho\tau) &= t\paren{t^j(\rho)\tau} = t^{j + 1}(\rho)\tau. \qedhere
        \end{align*}
    \end{proof}

    The next fact demonstrates how the old Group I stack with base $0^{\l_k}$ was cut up and stacked, now with base $0^{\l_{k + 1}}$.

    \begin{lem}\label{I_refined}
        $t^{nh_k + j}\paren{0^{\l_{k + 1}}} = t^j\paren{0^{\l_k}}\theta_n$ whenever $n < d_k$ and $j < h_k$.
    \end{lem}
    \begin{proof}
        Suppose this holds for some $n < d_k$ and $j < h_k$, which it trivially does for $n = j = 0$ since $\theta_0 = 0^{\l_{k + 1} - \l_k}$. We must consider two cases:
        \begin{itemize}[listparindent=1.5em]
            \item $j < h_k - 1$.

            Then $t^{nh_k + j}\paren{0^{\l_{k + 1}}} = t^j\paren{0^{\l_k}}\theta_n$ falls into case \ref{I_not_top} above, and we have
            \[t^{nh_k + (j + 1)}\paren{0^{\l_{k + 1}}} = t\paren{t^j\paren{0^{\l_k}}\theta_n} = t^{j + 1}\paren{0^{\l_k}}\theta_n.\]
            Thus, the claim holds for $n$ and $j + 1$.
            \item $n < d_k - 1$ and $j = h_k - 1$.

            Then $t^{nh_k + j}\paren{0^{\l_{k + 1}}} = t^{h_k - 1}\paren{0^{\l_k}}\theta_n$ falls into case \ref{I_top} above, and we have
            \[t^{(n + 1)h_k}\paren{0^{\l_{k + 1}}} = t\paren{t^{h_k - 1}\paren{0^{\l_k}}\theta_n} = 0^{\l_k}\theta_{n + 1}.\]
            Thus, the claim holds for $n + 1$ and $0$. \qedhere
        \end{itemize}
    \end{proof}

    The next fact demonstrates how the portion of the old Group II stacks with bases coming from $F'_{k + 1}$ were cut up and stacked.

    \begin{lem}\label{II_rejected}
        $t^{nk + j}\paren{\rho_0\tau_0} = t^j\paren{\rho_n}\tau_n$ whenever $n < \abs{F_{k + 1}}$ and $j < k$.
    \end{lem}
    \begin{proof}
        Suppose this holds for some $n < \abs{F_{k + 1}}$ and $j < k$, which it trivially does for $n = j = 0$. We must consider two cases:
        \begin{itemize}[listparindent=1.5em]
            \item $j < k - 1$.

            Then $t^{d_kh_k + nk + j}\paren{0^{\l_{k + 1}}} = t^j\paren{\rho_n}\tau_n$ falls into case \ref{II_not_top} above, and we have
            \[t^{d_kh_k + nk + j + 1}\paren{0^{\l_{k - 1}}} = t\paren{t^j\paren{\rho_n}\tau_n} = t^{j + 1}\paren{\rho_n}\tau_n.\]
            Thus, the claim holds for $n$ and $j + 1$.

            \item $n < \abs{F_{k + 1}} - 1$ and $j = k - 1$.

            Then $t^{d_kh_k + nk + j}\paren{0^{\l_{k + 1}}} = t^{k - 1}\paren{\rho_n}\tau_n$ falls into case \ref{II_top_stop} above, and we have
            \[t^{d_kh_k + (n + 1)k}\paren{0^{\l_{k + 1}}} = t\paren{t^{k - 1}\paren{\rho_n}\tau_n} = \rho_{n + 1}\tau_{n + 1}.\]
            Thus, the claim holds for $n + 1$ and $0$. \qedhere
        \end{itemize}
    \end{proof}

    The last fact demonstrates how the portion of the old Group II stacks with bases coming from $F'_{k + 1}$ are now at the top of the new Group I stack.

    \begin{lem}\label{I_to_II}
        $t^{d_kh_k + nk + j}\paren{0^{\l_{k + 1}}} = t^j\paren{\rho_n}\tau_n$ whenever $n < \abs{F_{k + 1}}$ and $j < k$.
    \end{lem}
    \begin{proof}
        Let $n < \abs{F_{k + 1}}$ and $j < k$. (Thus, $k \geq 1$.) We first claim that $t^{d_kh_k}\paren{0^{\l_{k + 1}}} = \rho_0\tau_0$, for which we must consider two cases:
        \begin{itemize}
            \item $k = 1$. Then $\cyl{F'_2} = \cyl{F_1} \setminus \cyl{F_2} = \cyl{0} \setminus \cyl{01} = \cyl{00}$, so the least string lexicographically in $F'_2$ is $\rho_0\tau_0 = 0^{\l_2}$. As $h_1 = 0$, this gives the claim.

            \item $k \geq 2$. Then $h_k > 0$, so we know by \cref{I_refined} that
            \[t^{d_kh_k - 1}\paren{0^{\l_{k + 1}}} = t^{(d_k - 1)h_k + (h_k - 1)}\paren{0^{\l_{k + 1}}} = t^{h_k - 1}\paren{0^{\l_k}}\theta_{d_k - 1}.\]
            This falls into case \ref{I_top} above, which implies
            \[t^{d_kh_k}\paren{0^{\l_{k + 1}}} = t\paren{t^{h_k - 1}\paren{0^{\l_k}}\theta_{d_k - 1}} = \rho_0\tau_0.\]
        \end{itemize}
        Then since $t^{nk + j}\paren{\rho_0\tau_0} = t^j\paren{\rho_n}\tau_n$ by \cref{II_rejected}, the proof is complete.
    \end{proof}

    \proofstep{Step 8: Completing the induction}

    Now we verify each of the conditions above for $k + 1$:
    \begin{enumerate}[listparindent=1.5em, label=(\Alph*)]
        \item Let $|\sigma| = \l_{k + 1}$. Then for all $j < k$, we have
        \[t\paren{\sigma \prefix \l_j} \subseteq t\paren{\sigma \prefix \l_k} \subseteq t(\sigma),\]
        where the former $\subseteq$ holds by the inductive hypothesis, and the latter $\subseteq$ was shown by cases in the definition of $t$ for strings of length $\l_{k + 1}$.

        \item Let $\sigma$ be a string with length $\l_{k + 1}$. By the inductive hypothesis, $\sigma \prefix \l_k$ has one of the following forms:
        \begin{enumerate}[listparindent=1.5em, label=\Roman*.]
            \item $\sigma \prefix \l_k = t^j\paren{0^{\l_k}}$ for some $j < h_k$.

            Take $n$ such that $\sigma = t^j\paren{0^{\l_k}}\theta_n$. Then \cref{I_refined} tells us that $\sigma = t^{nh_k + j}\paren{0^{\l_{k + 1}}}$, which has the form \ref{I} for length $\l_{k + 1}$.
            
            Also, since $n < d_k$ and $j < h_k$, we have $nh_k + j < d_kh_k \leq h_{k + 1} - 1$. (Thus, we need not satisfy $1^{k - 1}0 \subseteq \sigma$.)

            \item $\sigma \prefix \l_k = t^j(\rho)$ for some $\rho \in F_k$ and $j < k$.

            Take $\tau$ with length $\l_{k + 1} - \l_k$ such that $\sigma = t^j(\rho)\tau$. We have two subcases:
            \begin{itemize}[listparindent=1.5em]
                \item $\rho\tau \in F_{k + 1}$.

                Then \cref{II_refined} tells us that $\sigma = t^j(\rho\tau)$, which has the form \ref{II} for length $\l_{k + 1}$.

                \item $\rho\tau \notin F_{k + 1}$.

                Then taking $n$ such that $\rho\tau = \rho_n\tau_n$, \cref{I_to_II} tells us that $\sigma = t^{d_kh_k + nk + j}\paren{0^{\l_{k + 1}}}$, which has the form \ref{I} for length $\l_{k + 1}$.

                Now suppose we have $d_kh_k + nk + j = h_{k + 1} - 1$. By (\ref{hk_recurrence}), we know that $h_{k + 1} = d_kh_k + k\abs{F_{k + 1}}$, which implies
                \[nk + j = k\abs{F_{k + 1}} - 1 = \paren{\abs{F_{k + 1}} - 1}k + (k - 1)\]
                and hence
                $n = \abs{F_{k + 1}} - 1$ and $j = k - 1$. (These are the largest that $n$ and $j$ can be.) Then the inductive hypothesis ensures that
                \[1^{k - 1}0 \subseteq t^j(\rho) \subseteq \sigma.\]
            \end{itemize}

            \item $\sigma \prefix \l_k = 1^k\eta$ for some $\eta$ with length $\l_k - k$.

            We have two subcases:
            \begin{itemize}[listparindent=1.5em]
                \item $1^k0 \subseteq \sigma$.

                Take $n$ such that $\sigma$ is the $n\th$ string lexicographically of length $\l_{k + 1}$ extending $1^k0$, and let $\rho\tau$ to be the $n\th$ string lexicographically in $F_{k + 1}$. Since $t^{k - 1}(\rho)\tau$ falls into case \ref{II_top_continue} above, it follows by \cref{II_refined} that
                \[\sigma = t\paren{t^{k - 1}(\rho)\tau} = t\paren{t^{k - 1}(\rho\tau)} = t^k(\rho\tau)\]
                has the form \ref{II} for length $\l_{k + 1}$. Moreover, we have $1^k0 \subseteq \sigma$, as required.

                \item $1^{k + 1} \subseteq \sigma$.

                Then $\sigma$ has the form \ref{III} for length $\l_{k + 1}$.
            \end{itemize}
            
        \end{enumerate}
        
        \item We consider strings $\sigma$ of the following forms.
        \begin{enumerate}[listparindent=1.5em, label=\roman*.]
             \item $\sigma = t^{h_{k + 1} - 1}\paren{0^{\l_{k + 1}}}$, where $h_{k + 1} > 0$.

             By (\ref{hk_recurrence}),
             \[h_{k + 1} - 1 = d_kh_k + k\abs{F_{k + 1}} - 1 = d_kh_k + \paren{\abs{F_{k + 1}} - 1}k + (k - 1),\]
             so letting $n = \abs{F_{k + 1}} - 1$, \cref{I_to_II} tells us that
             \[\sigma = t^{d_kh_k + nk + (k - 1)}\paren{0^{\l_{k + 1}}} = t^{k - 1}\paren{\rho_n}\tau_n.\]
             This falls into case \ref{II_top_stop} above, so $t(\sigma) = \zero$.

             \item $\sigma = t^k(\rho\tau)$ for some $\rho\tau \in F_{k + 1}$, where $\rho \in F_k$.

            If $k > 0$, then by \cref{II_refined}, $t^{k - 1}(\rho\tau) = t^{k - 1}(\rho)\tau$ falls into case \ref{II_top_continue} above, so
            \[\sigma = t^k(\rho\tau) = t\paren{t^{k - 1}(\rho)\tau} \supseteq 1^k0 \supseteq 1^k.\]
            If $k = 0$, then trivially $\sigma \supseteq 1^k$ as well. Thus, $\sigma$ falls into case \ref{III_top} above, which means $t(\sigma) = \zero$.

             \item $\sigma = 1^{k + 1}\eta$ for some $\eta$ with length $\l_{k + 1} - (k + 1)$.

             Since $\sigma \supseteq 1^{k + 1}$ falls into case \ref{III_top} above, we have $t(\sigma) = \zero$.
        \end{enumerate}
    \end{enumerate}

    This at last concludes the recursive definition of $t: \bigcup_k 2^{\l_k} \to 2^{<\N}$, which is then computable since $\paren{F_k}_k$ is.
    
    \proofstep{Step 9: Checking properties of the limit}

    For $k \geq 2$, let $S_k = \set{\sigma \in 2^{\l_k}: |t(\sigma)| = \l_k}$. Then by conditions (\ref{partition}) and (\ref{top_unmapped}), we have
    \[S_k = \underbrace{\set{t^j(0^{\l_k}): j < h_k - 1}}_{\text{Group I (without the top)}} \cup \underbrace{\set{t^j(\rho): \rho \in F_k, j < k - 1}}_{\text{Group II (without the top)}}\]
    and
    \[2^{\l_k} \setminus S_k = \underbrace{\set{t^{h_k - 1}(0^{\l_k})}}_{\text{Top of Group I}} \cup \underbrace{\set{t^{k - 1}(\rho): \rho \in F_k}}_{\text{Top of Group II}} \cup \underbrace{\set{1^k\eta: \eta \in 2^{\l_k - k}}}_{\text{Group III}}.\]
    We now verify each of the requirements of \cref{limit_is_measure_preserving}:
    \begin{itemize}
        \item $t: \bigcup_k 2^{\l_k} \to 2^{<\N}$ is monotone by condition (\ref{monotone}).
        
        \item $\paren{\cyl{S_k}}_k$ is increasing. Consider any $x \in \cyl{S_k}$ and let $\sigma = x \prefix \l_{k + 1}$. We show that $\sigma \in S_{k + 1}$ and hence $x \in \cyl{S_{k + 1}}$ by cases:
        \begin{enumerate}[I.]
            \item $\sigma \prefix \l_k = t^j\paren{0^{\l_k}}$ for some $j < h_k - 1$.
            
            Then taking $n$ such that $\sigma = t^j\paren{0^{\l_k}}\theta_n$, \cref{I_refined} implies
            \[\sigma = t^j\paren{0^{\l_k}}\theta_n = t^{nh_k + j}\paren{0^{\l_{k + 1}}} \in S_{k + 1}.\]
            \item $\sigma \prefix \l_k = t^j(\rho)$ for some $\rho \in F_k$ and $j < k - 1$.
            
            Take $\tau$ with length $\l_{k + 1} - \l_k$ such that $\sigma = t^j(\rho)\tau$. We have two subcases:
            \begin{itemize}
                \item $\rho\tau \in F_{k + 1}$.
                
                Then \cref{II_refined} implies
                \[\sigma = t^j(\rho)\tau = t^j(\rho\tau) \in S_{k + 1}.\]
                \item $\rho\tau \in F_{k + 1}'$.
                
                Then taking $n$ such that $\rho\tau = \rho_n\tau_n$, \cref{I_to_II} implies
                \[\sigma = t^j(\rho_n)\tau_n = t^{d_kh_k + nk + j}\paren{0^{\l_{k + 1}}} \in S_{k + 1}.\]
            \end{itemize}
        \end{enumerate}

        \item $t$ is injective on $S_k$ because the strings listed in condition (\ref{partition}) are all distinct.
        \item $t$ maps every string in $2^{\l_k} \setminus S_k$ to $\zero$ by condition (\ref{top_unmapped}).
        \item $\lambda\cyl{S_k} \to 1$ as $k \to \infty$ because
        \[1 - \lambda\cyl{S_k} = \lambda\cyl{2^{\l_k} \setminus S_k} = 2^{-\l_k}\paren{1 + 2 \cdot 2^{\l_k - k}} = 2^{-\l_k} + 2 \cdot 2^{-k} \to 0 \quad\text{as}\quad k \to \infty.\]
    \end{itemize}
    Thus, \cref{limit_is_measure_preserving} tells us that the limit $T: \ \subseteq \! 2^\N \to 2^\N$ is injective and measure-preserving with $\Sigma^0_1$ domain $\bigcup_k \cyl{S_k} = \neg\set{1^\infty}$, where equality holds as follows:
    \begin{itemize}
        \item $\subseteq$. Since $1^{\l_k} \notin S_k$ for all $k$ by condition (\ref{partition}), we have $1^\infty \notin \bigcup_k \cyl{S_k}$.
        \item $\supseteq$. If $x \notin \bigcup_k \cyl{S_k}$, then we must have $1^{k - 2} \subseteq x \prefix \l_k$ for all $k$ by condition (\ref{partition}).
    \end{itemize}

    $T$ also satisfies the following properties:
    \begin{itemize}
        \item For all $y \in \dom T$, we have $T(y) \in \dom T$.
        
        This is because $1^k \not\subseteq t(\sigma)$ for all $\sigma \in S_k$.
        
        \item $T$ is computable.
        
        This holds by \cref{limit_of_computable_monotone_map} since $t$ is computable.
        
        \item $T^k(x) \notin \cyl 0$ for all $k > 0$.
        
        For any $k > 0$, by substituting $k + 1$ for $k$ in condition (\ref{partition}) with $\rho = x \prefix \l_{k + 1}$ and $j = k$, we have
        \[1 \subset 1^k0 \subseteq t^k\paren{x \prefix \l_{k + 1}} \subset T^k(x).\]
    \end{itemize}

    \proofstep{Step 10: Proving ergodicity of the limit}

    Before showing that $T$ is ergodic, we prove an elementary lemma.

    \begin{lem}\label{injective_invariance}
        Let $(X, \B, \mu)$ be a probability space, $T: \ \subseteq \! X \to X$ be an injective measure-preserving transformation, and $E \in \B$ satisfy $T^{-1}(E) = E$. Let $D = \dom T$. Then for all $j \in \N$, we have $T^j(E \cap D) \subseteq E \cap D$ and $\mu\paren{E \setminus T^j(E \cap D)} = 0$. Thus, $E$ differs from $T^j(E \cap D)$ by a measure-zero set.
    \end{lem}
    \begin{proof}
        Assume the claim holds for some $j \in \N$, which it trivially does for $j = 0$. Then using the inductive hypothesis and $E$'s invariance, we have
        \[T^{j + 1}(E \cap D) \subseteq T(E \cap D) = T\paren{T^{-1}(E) \cap D} \subseteq E \cap D.\]
        Then using $T$'s preservation of measure, $E$'s invariance and $T$'s injectivity, and the inductive hypothesis, we have
        \[\mu\paren{E \setminus T^{j + 1}(E \cap D)} = \mu\paren{T^{-1}(E) \setminus T^{-1}\paren{T^{j + 1}(E \cap D)}} = \mu\paren{E \setminus T^j(E \cap D)} = 0.\]
        Thus, the claim holds for all $j \in \N$.
    \end{proof}
    
    We now give an argument closely resembling that of Theorem 6.2 in \cite{Friedman}. Let $E$ be a $T$-invariant set with positive measure. That is, $T^{-1}(E) = E$ and $\lambda(E) > 0$. We will show that $\lambda(E) = 1$.

    Let $I_k = \bigcup_{j < h_k} \cyl{t^j(0^{\l_k})}$ be the Group I stack at stage $k$. Observe that $\paren{I_k}_k$ is increasing in $k$ by \cref{I_refined} and that
    \[\lambda(I_k) = 2^{-\l_k}h_k = 2^{-\l_k}\paren{2^{\l_k} - (k + 1)2^{\l_k - k}} = 1 - (k + 1)2^{-k} \to 1\quad\text{as}\quad k \to \infty.\]
    By the Lebesgue Density Theorem, almost every point of $E$ has Lebesgue density 1, so there exists such a point $y \in E \cap \bigcup_k I_k$. This means that
    \[\lim_{\l \to \infty} \frac{\lambda\paren{E \cap \cyl{y \prefix \l}}}{\lambda\cyl{y \prefix \l}} = 1.\]
    Fix $\eps > 0$ and take $L$ such that for all $\l \geq L$,
    \[\lambda\paren{E \cap \cyl{y \prefix \l}} \geq (1 - \eps)\lambda\cyl{y \prefix \l}.\]
    Using $E$'s invariance and \cref{injective_invariance}, this implies
    \[\lambda\paren{E \cap T^j\cyl{y \prefix \l}} \geq (1 - \eps)\lambda\cyl{y \prefix \l}\]
    for all integers $j$. Consider any $k$ such that $\l_k \geq L$ and $y \in I_k$, which holds for all sufficiently large $k$. Take $i < h_k$ such that $t^i\paren{0^{\l_k}} \subset y$. Then since $T$ is injective, for all $j < h_k$, we have
    \[T^{j - i}\cyl{y \prefix \l_k} = T^{j - i}\cyl{t^i\paren{0^{\l_k}}} = \cyl{t^j\paren{0^{\l_k}}},\]
    which are all disjoint by condition (\ref{partition}). Therefore,
    \begin{align*}
        \lambda(E) &\geq \lambda\paren{\bigcup_{j < h_k} \paren{E \cap \cyl{t^j\paren{0^{\l_k}}}}} = \sum_{j < h_k} \lambda\paren{E \cap \cyl{t^j\paren{0^{\l_k}}}} \\
        &= \sum_{j < h_k} \lambda\paren{E \cap T^{j - i}\cyl{y \prefix \l_k}} \geq (1 - \eps)\sum_{j < h_k} \lambda\cyl{y \prefix \l_k} = (1 - \eps)\lambda(I_k).
    \end{align*}
    Sending $k \to \infty$ gives $\lambda(E) \geq 1 - \eps$. As $\eps > 0$ was arbitrary, we conclude that $\lambda(E) = 1$ and consequently that $T$ is ergodic.
\end{proof}

\subsection{Discussion}

With the proof of \cref{Kurtz_Poincare_characterization} at last complete, a few questions arise.

First, note that the transformation $T$ we constructed was not total but instead had a $\Sigma^0_1$ domain. This limitation seems inherent when working with an arbitrary point which is not Kurtz random, but a total transformation can clearly suffice some of the time. For example, if $x = 01^\infty$, we could take $A = \cyl 0$ and $T$ to be the shift map. Thus, we have the following question.

\begin{qn}
    For what non-Kurtz-random points $x \in 2^\N$ do there exist a clopen set $A \ni x$ with $\lambda(A) > 0$ and a \emph{total} computable measure-preserving transformation $T: 2^\N \to 2^\N$ such that $T^k(x) \notin A$ for all $k > 0$? Does this change if we require $T$ to be ergodic or injective?
\end{qn}

Next, observe that while the transformation $T$ we constructed was ergodic, we did not show that it possesses any stronger mixing properties like the shift map has. Indeed, the simple cutting-and-stacking procedure we implemented likely will not guarantee anything stronger than ergodicity. A more complex procedure resembling the staircase construction of  \cite{Adams}, on the other hand, could very well achieve a mixing transformation.

Suitably implementing this would probably be rather messy given that the Group II stacks which contain the first several iterates of the clopen set $C_k$ must be kept intact when we eventually move them over to Group I. Nonetheless, the following question likely has an affirmative answer with the right strategy.

\begin{qn}
    Given a non-Kurtz-random point $x \in 2^\N$, do there exist a clopen set $A \ni x$ with $\lambda(A) > 0$ and a computable \emph{weakly/lightly/strongly mixing} measure-preserving transformation $T: \ \subseteq \! 2^\N \to 2^\N$ such that $T^k(x) \notin A$ for all $k > 0$?
\end{qn}

Finally, our construction was very much specific to Cantor space with the Lebesgue measure. One could consider other computable probability measures on Cantor space or even computable probability spaces other than Cantor space. Note that one cannot simply rely on the isomorphism between any computable probability space and Cantor space given by Theorem 5.1.1 of \cite{Hoyrup2009} because this is only up to a null $\Sigma^0_2$ set, which could contain the very non-Kurtz-random points we care about!

\begin{qn}
    For what computable probability spaces other than $\paren{2^\N, \lambda}$ is it true that given a non-Kurtz-random point $x \in X$, there exist a $\Sigma^0_1$ set $A \ni x$ with $\lambda(A) > 0$ and a computable (ergodic) measure-preserving transformation $T: \ \subseteq \! X \to X$ such that $T^k(x) \notin A$ for all $k > 0$?
\end{qn}

\section[Effective recurrence in Π0n sets with Ø(n-1) computable measure]{Effective recurrence in $\Pi^0_n$ sets with $\zero^{(n - 1)}$-computable measure}\label{section_Schnorr}

We now shift our focus from $\Sigma^0_n$ sets to $\Pi^0_n$ sets with $\zero^{(n - 1)}$-computable measure.

\subsection{Recurrence for Schnorr $n$-random points}

The following result for $\paren{2^\N, \lambda}$ and $n = 1$ was mentioned by \cite{Downey2016}, courtesy of Jason Rute. Below we extend the argument to any computable probability space.

\begin{thm}\label{CPS_Schnorr_n_Poincare}
    Let $(X, \mu)$ be a computable probability space and $n \geq 1$. If $x \in X$ is Schnorr $n$-random, then for any $\Pi^0_n$ set $A \ni x$ with $\zero^{(n - 1)}$-computable $\mu(A) > 0$ and any computable measure-preserving transformation $T: \ \subseteq \! X \to X$ with $x \in \dom T$, we have $T^k(x) \in A$ for some $k > 0$.
\end{thm}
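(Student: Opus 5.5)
We argue by contraposition, following the pattern of the proof of \cref{CPS_weak_n_Poincare}. Suppose $A \ni x$ is $\Pi^0_n$ with $\zero^{(n-1)}$-computable measure $\mu(A) > 0$, and $T$ is computable and measure-preserving with $x \in \dom T =: D$. Suppose further that $T^k(x) \notin A$ for all $k > 0$. We will build a Schnorr $n$-test, relative to $\zero^{(n-1)}$, that captures $x$.

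The natural null set containing $x$ is
\[P = A \cap \bigcap_{k>0} T^{-k}(\neg A).\]
It is null by the Poincar\'e Recurrence Theorem. Rather than using $P$ itself, we use its finite approximations
\[P_N = A \cap \bigcap_{0<k\le N} T^{-k}(\neg A),\]
the set of points of $A$ that do not return within $N$ steps. Three facts about $P_N$ drive the argument.

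\textbf{Measure.} Since $\lambda(P_N) \searrow 0$, and since $T^{-k}(\neg A)$ has measure $1-\mu(A)$, one expects $\mu(P_N)$ to be $\zero^{(n-1)}$-computable uniformly in $N$. To see this, write $B_k = T^{-k}(A)$. Then $\mu(P_N) = \mu(A \setminus \bigcup_{0<k\le N} B_k)$ is obtained by inclusion–exclusion from the measures of the finite intersections $\mu(A\cap B_{k_1}\cap\dots\cap B_{k_m})$. These are not a priori computable from $\mu(A)$ alone. A cleaner route is to compare with a $\Sigma^0_n$ set.

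\textbf{Complexity.} By \cref{iterated_preimage_of_sigma0n}, the set $T^{-k}(\neg A)$ equals $D\cap Q_k$ for uniformly $\Sigma^0_n$ sets $Q_k$, so each $P_N$ is, up to the full-measure set $D$, a $\Pi^0_n\cap\Sigma^0_n$ set.

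\textbf{Test.} Relative to $\zero^{(n-1)}$, the set $A$ is $\Pi^0_1$, and the sets $Q_k$ are $\Sigma^0_1$ up to the $D$-issue. So $P_N$ is an intersection of a $\zero^{(n-1)}$-$\Pi^0_1$ set with finitely many $\zero^{(n-1)}$-$\Sigma^0_1$ sets. The measure of such a set is only upper semicomputable in general. The key trick is to use the Kakutani skyscraper / first-return decomposition together with measure preservation:
\[\mu(A) = \sum_{j\ge 1}\mu(A\cap\{\text{first return at } j\}) + \mu(P).\]
More useful is the identity (Kac-type)
\[\mu(P_N) = \mu\Bigl(\bigcup_{0\le k\le N} T^{-k}A\Bigr) - \mu\Bigl(\bigcup_{1\le k\le N} T^{-k}A\Bigr).\]
This holds because the points of $\bigcup_{0\le k\le N}T^{-k}A$ not in $\bigcup_{1\le k\le N}T^{-k}A$ are exactly those in $A$ that miss $A$ on steps $1,\dots,N$. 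Measure preservation makes the second term equal to $\mu(\bigcup_{0\le k<N}T^{-k}A)$. Hence, with $U_N = \bigcup_{k\le N}T^{-k}A$,
\[\mu(P_N) = \mu(U_N)-\mu(U_{N-1}).\]
Thus $\sum_{N\le M}\mu(P_N)$ telescopes to $\mu(U_M)$ minus a constant. This shows the measures are linked, but it still requires the measures $\mu(U_N)$ themselves.

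This is the step I expect to be the main obstacle. The measures $\mu(U_N)$ of finite unions of $\Pi^0_n$ sets are upper semicomputable from $\zero^{(n-1)}$, while lower bounds come from $\mu(U_N)\ge\mu(A)$ and from the identity above. My plan is to exploit $\mu(P_N)\ge0$, $\mu(P_N)\searrow 0$, and the fact that $\mu(A)$ is exactly computable. We will avoid needing exact measures of the test components and instead use the standard fact that a Schnorr test may be replaced by a $\zero^{(n-1)}$-computable sequence of sets whose measures go to $0$ at a $\zero^{(n-1)}$-computable rate. To get such a rate, we bound $\mu(P_N)$ above by $\mu(A)-\mu(A\cap U'_N)$, where $U'_N=\bigcup_{1\le k\le N}T^{-k}A$ is approximated from inside by $\zero^{(n-1)}$-c.e.\ open-relative pieces. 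We then search, using $\zero^{(n-1)}$, for $N$ and a $\Pi^0_n$ inner approximation whose measure exceeds $\mu(A)-2^{-m}$. This search terminates by Poincar\'e recurrence and the computability of $\mu(A)$. Enlarging the resulting $\Pi^0_n$ sets to $\Sigma^0_n$ covers of controlled measure, via the $\zero^{(n-1)}$-relativized regularity used in \cref{almost_decidable_basis}, then yields a Schnorr $n$-test containing $x$. This contradicts the assumption that $x$ is Schnorr $n$-random.
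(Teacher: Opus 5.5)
There is a genuine gap, and it sits at the step you flag as the main obstacle. What distinguishes a Schnorr $n$-test is that the measures of its $\Sigma^0_n$ components are $\zero^{(n-1)}$-computable. You propose to ``avoid needing exact measures of the test components'' and rely on a ``standard fact'' that sets whose measures go to $0$ at a $\zero^{(n-1)}$-computable rate suffice. That is not a characterization of Schnorr tests: every $n$-test already satisfies $\mu(U_m)\le 2^{-m}$. So capturing $x$ by such a sequence only shows that $x$ is not $n$-random. Even if completed, your plan would prove the theorem only for $n$-random points, and Schnorr $n$-randomness is in general strictly weaker.

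The surrounding steps also fail as stated:
\begin{itemize}
\item For $n\ge 2$, a $\Pi^0_n$ set need not be $\Pi^0_1(\zero^{(n-1)})$, since it need not be closed. One only gets $\Pi^0_1(\zero^{(n-1)})$ subsets of almost the same measure.
\item A $\Pi^0_1$ set like $U'_N$ can have positive measure and empty interior, so inner approximation by effectively open pieces is unavailable in general.
\item Searching for a $\Pi^0_n$ set ``whose measure exceeds $\mu(A)-2^{-m}$'' is not $\zero^{(n-1)}$-effective, because measures of $\Pi^0_n$ sets are only $\zero^{(n-1)}$-upper-semicomputable.
\item The sets $P_N$ are not $\Sigma^0_n$. ``Enlarging to $\Sigma^0_n$ covers of controlled measure'' again controls only the size of the measures, not their computability.
\end{itemize}

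The missing idea is to work with $\neg A$ rather than $A$.
\begin{itemize}
\item Since $\neg A$ is $\Sigma^0_n$, \cref{iterated_preimage_of_sigma0n} gives uniformly $\Sigma^0_n$ sets $U_k$ with $T^{-k}(\neg A)=D\cap U_k$. Because $D$ is conull and $T$ is measure-preserving, $\mu(U_k)=1-\mu(A)$, which is $\zero^{(n-1)}$-computable for free.
\item Write $A=\bigcap_{k>0}A_k$ with nested $A_k$. For $n\ge 2$, take them uniformly $\Sigma^0_{n-1}$; their measures are $\zero^{(n-2)}$-lower-semicomputable, hence $\zero^{(n-1)}$-computable. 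For $n=1$, take them uniformly $\Sigma^0_1$ with computable measures via the almost decidable basis (\cref{approximate_sigma01_or_pi01}).
\item The identity $\mu(U\cap V)=\mu(U)+\mu(V)-\mu(U\cup V)$ (\cref{intersection_of_sigma0n_with_computable_measure}) makes each $A_k\cap U_k$ a $\Sigma^0_n$ set with $\zero^{(n-1)}$-computable measure.
\item Then $x\in\bigcap_{k>0}(A_k\cap U_k)$, and this set is null by Poincar\'e recurrence because $D$ is conull. \cref{loose_Schnorr_test} turns this into a Schnorr $n$-test capturing $x$.
\end{itemize}

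The same intersection lemma would have settled your ``Measure'' paragraph without inclusion--exclusion or the Kac identity. Up to a null set, $\mu(P_N)=\mu\bigl(\bigcap_{1\le k\le N}U_k\bigr)-\mu\bigl(\neg A\cap\bigcap_{1\le k\le N}U_k\bigr)$. Both terms are measures of finite intersections of $\Sigma^0_n$ sets with $\zero^{(n-1)}$-computable measure.
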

\begin{proof}
    Suppose there are a $\Pi^0_n$ set $A \ni x$ with $\zero^{(n - 1)}$-computable $\mu(A) > 0$ and a computable measure-preserving transformation $T: \ \subseteq\! X \to X$ with $x \in \dom T =: D$ such that $T^k(x) \notin A$ for all $k > 0$.

    Define $\paren{A_k}_{k > 0}$ according to the following two cases:
    \begin{itemize}
        \item $n = 1$. Then there exists by \cref{approximate_sigma01_or_pi01} a nested sequence $\paren{A_k}_{k > 0}$ of uniformly $\Sigma^0_1$ sets with uniformly computable measures such that $A = \bigcap_{k > 0} A_k$.
        \item $n \geq 2$. Then there exists a nested sequence $\paren{A_k}_{k > 0}$ of uniformly $\Sigma^0_{n - 1}$ sets such that $A = \bigcap_{k > 0} A_k$. By \cref{measure_of_sigma0n}, $\mu(A_k)$ is uniformly $\zero^{(n - 2)}$-lower-semicomputable in $k$ and hence uniformly $\zero^{(n - 1)}$-computable in $k$.
    \end{itemize}
    
    \cref{iterated_preimage_of_sigma0n} implies that there are uniformly $\Sigma^0_n$ sets $U_k$ such that $T^{-k}(\neg A) = D \cap U_k$. The hypotheses then imply
    \[x \in A \cap \bigcap_{k > 0} T^{-k}(\neg A) = D \cap \bigcap_{k > 0} \paren{A_k \cap U_k} \subseteq \bigcap_{k > 0} \paren{A_k \cap U_k}.\]
    Since $D$ has full measure, the inclusion above does not change the measure, which is null by the Poincar\'e Recurrence Theorem. The sets $A_k$ have uniformly $\zero^{(n - 1)}$-computable measures by construction, as do the sets $U_k$ since $\mu\paren{U_k} = \mu(\neg A)$ for all $k$ due to the fact that $D$ has full measure and $T$ is measure-preserving.
    
    Thus, \cref{intersection_of_sigma0n_with_computable_measure} implies that their intersections $A_k \cap U_k$ likewise have uniformly $\zero^{(n - 1)}$-computable measures. It follows by \cref{loose_Schnorr_test} that $x$ is not Schnorr $n$-random.
\end{proof}

By not intersecting with $A$ or involving the sets $A_k$, a similar proof to the one given above would show the following.

\begin{thm}\label{CPS_Schnorr_n_Poincare_ergodic}
    Let $(X, \mu)$ be a computable probability space and $n \geq 1$. If $x \in X$ is Schnorr $n$-random, then for any $\Pi^0_n$ set $A$ with $\zero^{(n - 1)}$-computable $\mu(A) > 0$ and any computable ergodic measure-preserving transformation $T: \ \subseteq \! X \to X$ with $x \in \dom T$, we have $T^k(x) \in A$ for some $k > 0$.
\end{thm}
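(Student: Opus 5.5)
We argue by contraposition, following the proof of \cref{CPS_Schnorr_n_Poincare} but dropping the sets $A_k$ and the intersection with $A$. So suppose we have a $\Pi^0_n$ set $A$ with $\zero^{(n-1)}$-computable $\mu(A) > 0$ and a computable ergodic measure-preserving transformation $T: \ \subseteq \! X \to X$ with $x \in \dom T =: D$ such that $T^k(x) \notin A$ for all $k > 0$. We aim to exhibit a Schnorr $n$-test, in the loose form allowed by \cref{loose_Schnorr_test}, that captures $x$.

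The first step is to compute preimages. Since $\neg A$ is $\Sigma^0_n$, \cref{iterated_preimage_of_sigma0n} gives uniformly $\Sigma^0_n$ sets $U_k$ with $T^{-k}(\neg A) = D \cap U_k$. The hypothesis then gives
\[x \in \bigcap_{k>0} T^{-k}(\neg A) \subseteq \bigcap_{k > 0} U_k.\]
For the measures, $D$ has full measure and $T$ is measure-preserving, so $\mu(U_k) = \mu(T^{-k}(\neg A)) = 1 - \mu(A)$ for every $k$. This is $\zero^{(n-1)}$-computable uniformly in $k$, since it is the same number each time.

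The second step replaces the single sets $U_k$ by the finite intersections $V_m = \bigcap_{0 < k \leq m} U_k$. These are uniformly $\Sigma^0_n$ and nested, and $x$ lies in every $V_m$. By \cref{intersection_of_sigma0n_with_computable_measure}, applied iteratively or in its uniform form, the measures $\mu(V_m)$ are uniformly $\zero^{(n-1)}$-computable. The intersection $\bigcap_m V_m = \bigcap_{k>0} U_k$ equals $\bigcap_{k>0} T^{-k}(\neg A)$ up to a null set, namely the set of points that never enter $A$. By ergodicity and Poincar\'e recurrence (in its ergodic form), almost every point enters $A$, since $\mu(A)>0$. Hence $\mu(V_m) \to 0$, and \cref{loose_Schnorr_test} shows that $x$ is not Schnorr $n$-random.

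The main point to check is exactly this last use of \cref{loose_Schnorr_test}. I expect it to accept a nested sequence of uniformly $\Sigma^0_n$ sets with uniformly $\zero^{(n-1)}$-computable measures tending to $0$, which is what it was used for in the proof of \cref{CPS_Schnorr_n_Poincare} with the sets $A_k \cap U_k$. If it instead requires an explicit rate such as $2^{-m}$, then $\zero^{(n-1)}$ can search for indices $m_j$ with $\mu(V_{m_j}) < 2^{-j}$, since those measures are computable relative to it. This yields a subsequence that is a genuine Schnorr $n$-test.

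Note that ergodicity is used only to ensure that almost every point of the whole space, and not merely of $A$, eventually enters $A$. This is what allows us to dispense with $A \ni x$.
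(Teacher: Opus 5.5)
Your proof is correct and is essentially the paper's argument. The paper obtains this theorem from the proof of \cref{CPS_Schnorr_n_Poincare} by dropping the intersection with $A$ and the sets $A_k$: it uses the uniformly $\Sigma^0_n$ sets $U_k$ with constant measure $1-\mu(A)$, and the ergodic form of Poincar\'e recurrence to make $\bigcap_{k>0} U_k$ null. Your passage to the nested sets $V_m$, and the fallback subsequence, are harmless but unnecessary, since \cref{loose_Schnorr_test} only asks for uniformly $\zero^{(n-1)}$-computable measures and a null intersection, and carries out that nesting itself.
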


By \cref{preservation_of_randomness}, for both of these theorems, we will have $T^k(x) \in A$ for infinitely many $k$ whenever $x$ is Schnorr $n$-random.

The converse to \cref{CPS_Schnorr_n_Poincare_ergodic} does hold in $\paren{2^\N, \lambda}$, which characterizes Schnorr $n$-randomness in terms of recurrence for ergodic transformations.

\begin{thm}\label{Schnorr_n_Poincare_ergodic_characterization}
    Consider Cantor space with the Lebesgue measure $\lambda$, and let $n \geq 1$. Then $x \in 2^\N$ is Schnorr $n$-random if and only if for any $\Pi^0_n$ set $A$ with $\zero^{(n - 1)}$-computable $\mu(A) > 0$ and any computable ergodic measure-preserving transformation $T: \ \subseteq \! 2^\N \to 2^\N$ with $x \in \dom T$, we have $T^k(x) \in A$ for some $k > 0$.
\end{thm}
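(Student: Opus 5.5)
The forward direction is \cref{CPS_Schnorr_n_Poincare_ergodic}, so only the converse needs work. Suppose $x$ is not Schnorr $n$-random. Then there is a $\zero^{(n-1)}$-Schnorr test $(V_k)_k$: uniformly $\Sigma^0_n$ sets with $\lambda(V_k) \leq 2^{-k}$, the measures uniformly $\zero^{(n-1)}$-computable, and $x \in \bigcap_k V_k$. I will find a $\Pi^0_n$ set $A$ with $\zero^{(n-1)}$-computable positive measure that the orbit of $x$ under a simple computable ergodic map never meets. The map will be the left shift $\sigma$, which is total, computable and ergodic (indeed mixing) on $(2^\N,\lambda)$. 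The reason for choosing $\sigma$: if $x$ is not Schnorr $n$-random, then no tail $\sigma^k(x)$ is Schnorr $n$-random either. This follows from \cref{preservation_of_randomness}, or directly, since a test covering $\sigma^k(x)$ pulls back to a test covering $x$ and conversely.

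First, form the $\Sigma^0_n$ set $U = \bigcup_{k>0} \sigma^{-k}(V_{m})$ for a suitable index $m$. Since $\sigma$ preserves measure, $\lambda(U) \leq \sum_{k>0}\lambda(\sigma^{-k}V_m)$, and this union bound is not small, so I refine the construction. Use the test $W_j = \bigcup_{k} \sigma^{-k}(V_{j+k+1})$ instead. It is uniformly $\Sigma^0_n$ with $\lambda(W_j) \leq \sum_k 2^{-(j+k+1)} = 2^{-j}$.

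Set $A = \neg W_1$, with $\lambda(A) \geq 1/2$. Then $A$ is $\Pi^0_n$. The key issue is that $\lambda(W_1)$ must be $\zero^{(n-1)}$-computable. This is the main obstacle: a union of $\Sigma^0_n$ sets with computable measures has only lower-semicomputable measure relative to $\zero^{(n-1)}$, because of overlaps. To handle it, I would use the Cantor-space structure. Relative to $\zero^{(n-1)}$, each $V_j$ is effectively open with computable measure, so I can replace it by a relativized Schnorr test consisting of disjoint cylinders with computable measure. The tail of the sum $\sum_k \lambda(\sigma^{-k}V_{j+k+1})$ is computably small, so finite unions can be used to approximate. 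Measures of finite unions of $\Sigma^0_n$ sets with $\zero^{(n-1)}$-computable measure are $\zero^{(n-1)}$-computable by \cref{intersection_of_sigma0n_with_computable_measure} combined with inclusion–exclusion, or by the earlier lemma on unions. This gives $\zero^{(n-1)}$-computability of $\lambda(W_1)$.

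Finally, check avoidance. We need $\sigma^k(x) \in W_1$ for every $k>0$, so that $\sigma^k(x) \notin A$. Here $x \in V_{j}$ for all $j$. I will define $W_1$ to catch shifts of $x$: $y \in W_1$ if $y = \sigma^k(z)$-preimage-type condition holds. Concretely, let $W_1 = \{y : \exists k\, \exists \tau \in 2^k\ (\tau y \in V_{k+2})\}$. Its measure is at most $\sum_k 2^k \cdot 2^{-k-2}\cdot$ (a per-string factor), which I make summable by using $V_{2k+2}$ instead. Then every tail $\sigma^k(x)$ lies in $W_1$, witnessed by $\tau = x\prefix k$. The measure-computability argument is the one sketched above, applied to the maps $y\mapsto \tau y$.
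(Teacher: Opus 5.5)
Your final construction (last paragraph) is correct and is essentially the paper's. With $\sigma$ the shift, you take $A=\neg\bigcup_{k}\sigma^k(V_{2k+2})$, so that $\sigma^k(x)\in\sigma^k(V_{2k+2})$ for every $k$; the index $2k+2$ absorbs the factor $2^k$ by which $\sigma^k$ can inflate measure, giving $\lambda(A)\geq 1/2$; and $\lambda(A)$ is $\zero^{(n-1)}$-computable from the finite unions together with the tail bound $\sum_{k>K}2^{-k-2}$.

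The only difference is bookkeeping. You keep $\Sigma^0_n$ sets and write $\sigma^k(V)=\bigcup_{\tau\in 2^k}\{y:\tau y\in V\}$, where $\lambda\{y:\tau y\in V\}=2^k\lambda(V\cap\cyl\tau)$ is $\zero^{(n-1)}$-computable by \cref{intersection_of_sigma0n_with_computable_measure}. The paper instead first passes via \cref{relativizing_randomness} to a Schnorr test relative to $\zero^{(n-1)}$ and then uses \cref{shift_of_sigma01}.

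In a write-up, drop the intermediate preimage-based $W_j=\bigcup_k\sigma^{-k}(V_{j+k+1})$: nothing forces $\sigma^k(x)$ into it. Also drop the claim that each $\Sigma^0_n$ set $V_j$ is effectively open relative to $\zero^{(n-1)}$. That claim is false for $n\geq 2$ (e.g.\ $\{0^\infty\}$ is $\Sigma^0_2$), and your $\tau$-preimage route does not need it.
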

\begin{proof}
    The $\RA$ direction is simply \cref{CPS_Schnorr_n_Poincare_ergodic}. For the $\LA$ direction, assume that $x \in 2^\N$ is not Schnorr $n$-random. By \cref{relativizing_randomness}, $x$ is not Schnorr random relative to $\zero^{(n - 1)}$, so there is a sequence of uniformly $\Sigma^0_1\paren{\zero^{(n - 1)}}$ sets $U_k$ such that $\lambda(U_k) \leq 2^{-k}$ for all $k$ and $\lambda(U_k)$ is uniformly $\zero^{(n - 1)}$-computable in $k$. Replacing $U_k$ with $U_{2k + 2}$, we may assume that $\lambda(U_k) \leq 2^{-2k - 2}$ for all $k$.

    Let $T: 2^\N \to 2^\N$ be the shift map. By \cref{shift_of_sigma01}, the sets $T^k\paren{U_k}$ are uniformly $\Sigma^0_1\paren{\zero^{(n - 1)}}$ with uniformly $\zero^{(n - 1)}$-computable measures. Define $V = \bigcup_k T^k\paren{U_k}$, which is then also $\Sigma^0_1\paren{\zero^{(n - 1)}}$. We will take $A = \neg V$, which is $\Pi^0_1\paren{\zero^{(n - 1)}}$ and hence $\Pi^0_n$ by \cref{sigma0n(A(m))_is_sigma0(m+n)(A)}. By definition of $V$, we have $T^k(x) \notin A$ for all $k$. It remains to be shown that $A$ has $\zero^{(n - 1)}$-computable measure.
    
    Since $T$ at most doubles the measure of each cylinder and hence of every open set, we have
    \[\lambda(V) \leq \sum_k \lambda\paren{T^k\paren{U_k}} \leq \sum_k 2^k\lambda\paren{U_k} \leq \sum_k 2^k \cdot 2^{-2k - 2} = \sum_k 2^{-k - 2} = \frac{1}{2} < 1.\]
    Thus, $\lambda(A) > 0$. Now let $V_k = \bigcup_{j \leq k} T^j\paren{U_j}$, which is uniformly $\Sigma^0_1\paren{\zero^{(n - 1)}}$ in $k$ and has uniformly $\zero^{(n - 1)}$-computable measure in $k$ by \cref{intersection_of_sigma0n_with_computable_measure}. Then as above, we have
    \[\lambda\paren{V \setminus V_k} \leq \lambda\paren{\bigcup_{j > k} T^j\paren{U_j}} \leq \sum_{j > k} 2^{-j - 2} = 2^{-k - 2} < 2^{-k}.\]
    That is, $\lambda(V_k)$ approximates $\lambda(V)$ within $2^{-k}$ and has uniformly $\zero^{(n - 1)}$-computable measure. This implies that $\lambda(V)$ is $\zero^{(n - 1)}$-computable, along with $\lambda(A) = 1 - \lambda(V)$.
\end{proof}

\subsection[Recurrence for Π0n-generic points]{Recurrence for $\Pi^0_n$-generic points}

Observe that the set $A$ we constructed in the proof of \cref{Schnorr_n_Poincare_ergodic_characterization} did \emph{not} contain the point $x$. One could try to fix this by defining $V = \bigcup_{k > 0} T^k\paren{U_k}$, but as we will soon see, there are some points for which such an attempt would necessary fail. Thus, a richer structure of recurrent points arises when we consider only sets $A \ni x$.

\subsubsection{Defining genericity in computable probability spaces}

Recall what it means for a point in Cantor space to be (weakly) $n$-generic.

\begin{defn}
    A point $x \in 2^\N$ is \textbf{$n$-generic} if $x$ does not lie on the boundary of any $\Sigma^0_1\paren{\zero^{(n - 1)}}$ set. Equivalently, for any $\Pi^0_1\paren{\zero^{(n - 1)}}$ set $P \ni x$, there is a clopen set $Q \subseteq P$ with $x \in Q$.
\end{defn}

\begin{defn}
    A point $x \in 2^\N$ is \textbf{weakly $n$-generic} if $x$ lies in every dense $\Sigma^0_1\paren{\zero^{(n - 1)}}$ set. Equivalently, for any $\Pi^0_1\paren{\zero^{(n - 1)}}$ set $P \ni x$, there is a nonempty clopen set $Q \subseteq P$.
\end{defn}

The following result is classical.

\begin{thm}\label{genericity_relationships}
    For all $n \geq 1$,
    \begin{center}
        weakly $(n + 1)$-generic $\myimplies{(a)}$ $n$-generic $\myimplies{(b)}$ weakly $n$-generic \\
        $\myimplies{(c)}$ (Kurtz random relative to $\zero^{(n - 1)}$ and not Schnorr random).
    \end{center}
\end{thm}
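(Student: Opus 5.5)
We prove the three implications separately, working throughout with the $\Pi^0_1\paren{\zero^{(n - 1)}}$ formulations of the definitions. Implication (b) is immediate: if $x$ is $n$-generic and $P \ni x$ is $\Pi^0_1\paren{\zero^{(n - 1)}}$, then there is a clopen $Q \subseteq P$ with $x \in Q$. This $Q$ is in particular nonempty, which is exactly weak $n$-genericity.

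For (a), let $x$ be weakly $(n + 1)$-generic and let $U$ be a $\Sigma^0_1\paren{\zero^{(n - 1)}}$ set. We must show $x \notin \partial U$. Consider
\[W = U \cup \neg\cl U = U \cup \bigcup\set{\cyl\sigma : \cyl\sigma \cap U = \zero}.\]
This set is dense. The condition $\cyl\sigma \cap U = \zero$ is $\Pi^0_1$ relative to $\zero^{(n - 1)}$, so it is decidable in $\zero^{(n)}$. Hence $W$ is $\Sigma^0_1\paren{\zero^{(n)}}$. Weak $(n + 1)$-genericity then puts $x \in W$, so $x$ lies either in $U$ or in the interior of its complement, and in neither case on $\partial U$.

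For (c), suppose $x$ is weakly $n$-generic. We prove the two halves separately.

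\emph{Kurtz random relative to $\zero^{(n - 1)}$.} Take a null $\Pi^0_1\paren{\zero^{(n - 1)}}$ set $P$. Being null, $P$ contains no nonempty clopen set, so $\neg P$ is a dense $\Sigma^0_1\paren{\zero^{(n - 1)}}$ set. Weak $n$-genericity gives $x \in \neg P$. Thus $x$ avoids every null $\Pi^0_1\paren{\zero^{(n - 1)}}$ set.

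\emph{Not Schnorr random.} It suffices to build an unrelativized Schnorr test capturing every weakly $1$-generic point, since weakly $n$-generic implies weakly $1$-generic. Fix a computable enumeration $(\sigma_i)_i$ of all strings and put
\[U_k = \bigcup_i \cyl{\sigma_i \tau_{i,k}},\]
where $\tau_{i,k}$ is a fixed string of length $i + k + 1$, chosen so that $\lambda\cyl{\sigma_i\tau_{i,k}} \leq 2^{-i-k-1}$. Then $\lambda(U_k) \leq 2^{-k}$.

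Each $U_k$ meets every cylinder $\cyl{\sigma_i}$, so it is a dense c.e.\ open set, and every weakly $1$-generic point lies in $\bigcap_k U_k$.

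The main obstacle is ensuring that $\lambda(U_k)$ is uniformly computable, since the union above can overlap. We handle this by computing the measure of the finite union $\bigcup_{i \leq m} \cyl{\sigma_i\tau_{i,k}}$ exactly; this is a clopen set, so its measure is a computable rational. The remaining tail has measure at most $\sum_{i > m} 2^{-i-k-1} \leq 2^{-m-k-1}$, which gives a computable approximation of $\lambda(U_k)$ to any desired precision.

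Hence $(U_k)_k$ is a Schnorr test containing $x$, and $x$ is not Schnorr random.
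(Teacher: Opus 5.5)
Your proof is correct, and it is more self-contained than the paper's, which works by citation. For the two genericity implications, the paper calls one trivial and cites Theorem 2.24.11 of Downey and Hirschfeldt for the other. For (c), it cites their Proposition 8.11.9 (weakly 1-generic points are not Schnorr random), together with exactly your observation that a null $\Pi^0_1\paren{\zero^{(n-1)}}$ set has dense complement.

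You prove the cited facts directly instead. For (a), you show that $U \cup \neg\cl U$ is dense and $\Sigma^0_1\paren{\zero^{(n)}}$, because $\cyl\sigma \cap U = \zero$ is $\zero^{(n)}$-decidable. For the failure of Schnorr randomness, you give an explicit test that meets every cylinder and whose measure is computable via the geometric tail bound.

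The paper's labelling is reversed relative to yours: it calls (a) trivial. In fact (b) is the immediate one, and (a) is the implication that genuinely needs the jump, so your assignment is the right one.

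Your test has a further useful feature: its $i$th cylinder has length at least $i+k+1$. So the same covers show directly that the weakly 1-generic points have Hausdorff dimension zero. This is the ``omitted argument'' the paper alludes to after its Hausdorff-dimension corollary.

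A cosmetic point: you announce that you will use the $\Pi^0_1\paren{\zero^{(n-1)}}$ formulations, but you actually use the boundary and dense-set ones. This is harmless, since the paper states them as equivalent.
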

\begin{proof}
    Implication (a) is trivial, implication (b) holds by Theorem 2.24.11 in \cite{DowneyHirschfeldt}, and implication (c) holds by Proposition 8.11.9 in \cite{DowneyHirschfeldt} and the fact that no weakly $n$-generic point can lie in a null $\Pi^0_1\paren{\zero^{(n - 1)}}$ set.
\end{proof}

Using the Baire Category Theorem with the countable collection of dense $\Sigma^0_1\paren{\zero^{(n - 1)}}$ sets, we obtain the following.

\begin{cor}
    For all $n \geq 1$, the set of (weakly) $n$-generic points is comeager in $2^\N$.
\end{cor}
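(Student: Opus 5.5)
The corollary should follow directly from the Baire Category Theorem applied to $2^\N$, a complete metric space. I will treat weak $n$-genericity first and then get $n$-genericity as a refinement.

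For weak $n$-genericity, the set of weakly $n$-generic points is by definition
\[G_w = \bigcap \set{U : U \text{ is a dense } \Sigma^0_1\paren{\zero^{(n - 1)}} \text{ set}}.\]
There are only countably many $\Sigma^0_1\paren{\zero^{(n - 1)}}$ sets, since each is given by a $\zero^{(n - 1)}$-c.e.\ set of strings $W$ via $\cyl W$. So $G_w$ is a countable intersection of dense open sets, and hence comeager. (By Baire it is also dense, though comeagerness is all the statement asks for.)

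For $n$-genericity, the points that are not $n$-generic are exactly those lying on the boundary $\partial U = \cl U \setminus U$ of some $\Sigma^0_1\paren{\zero^{(n - 1)}}$ set $U$. Each such boundary is closed. It is also nowhere dense: if a nonempty cylinder $\cyl\sigma$ were contained in $\cl U \setminus U$, then $\cyl\sigma \subseteq \cl U$ would force $\cyl\sigma$ to meet the open set $U$, contradicting $\cyl\sigma \cap U = \zero$. Hence the non-$n$-generic points form a countable union of nowhere dense sets, i.e.\ a meager set, and the $n$-generic points are comeager.

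As a cross-check, \cref{genericity_relationships}(b) gives $n$-generic $\RA$ weakly $n$-generic, so the weak case also follows from the strong one, since a superset of a comeager set is comeager.

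There is no real obstacle here. The only points needing care are the countability of the relativized $\Sigma^0_1$ sets and the fact that the boundary of an open set is nowhere dense, both of which are elementary.
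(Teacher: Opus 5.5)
Your proof is correct and is essentially the paper's argument: the paper just applies Baire category to the countably many dense $\Sigma^0_1\paren{\zero^{(n-1)}}$ sets. The one difference is the $n$-generic case, which the paper leaves implicit (it follows from the weak case at level $n+1$ via implication (a) of Theorem~\ref{genericity_relationships}); your direct observation that each boundary $\cl U \setminus U$ is closed and nowhere dense handles it equally well.
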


The reason we bring up generic points now is because they constitute a class of points separate from the Schnorr random points which recur in any $\Pi^0_1$ set containing them. Indeed, consider any computable measure-preserving transformation $T: \ \subseteq \! 2^\N \to 2^\N$ on Cantor space with the Lebesgue measure $\lambda$. Let $x \in 2^\N$ be 1-generic and $P \ni x$ be $\Pi^0_1$ with $\lambda(P) > 0$. Then by the definition of 1-genericity, there is a clopen set $Q \subseteq P$ with $x \in Q$. Since $x$ is a Kurtz random point inside the clopen set $Q$ with positive measure, \cref{CPS_weak_n_Poincare} guarantees the existence of some $k > 0$ such that $T^k(x) \in Q \subseteq P$, meaning that $x$ recurs in $P$.

While genericity has occasionally been studied in computable topological spaces other than Cantor space---see \cite{Hoyrup2017}, for example---we will need to develop some new notions of genericity in order to obtain similar recurrence results for any computable probability space and for sets more complex than $\Pi^0_1$.

\begin{defn}\label{pi0n_generic_definition}
    Let $(X, \mu)$ be a computable probability space and $A \subseteq \N$. (In each of the following definitions, we drop ``relative to $A$'' if $A = \zero$.)
    \begin{itemize}
        \item A point $x \in X$ is \textbf{$\Pi^0_1$-generic relative to $A$} if for any $\Pi^0_1(A)$ set $P \ni x$, there is a $\Sigma^0_1$ set $U \subseteq P$ with $\mu(U) > 0$ and $x \in U$.

        \item A point $x \in X$ is \textbf{weakly $\Pi^0_1$-generic relative to $A$} if for any $\Pi^0_1(A)$ set $P \ni x$, there is a $\Sigma^0_1$ set $U \subseteq P$ with $\mu(U) > 0$.

        \item For $n \geq 2$, a point $x \in X$ is \textbf{$\Pi^0_n$-generic relative to $A$} if for any $\Pi^0_n(A)$ set $P \ni x$, there is a $\Pi^0_1\paren{A^{(n - 2)}}$ set $Q \subseteq P$ with $\mu(Q) > 0$ and $x \in Q$.

        \item For $n \geq 2$, a point $x \in X$ is \textbf{weakly $\Pi^0_n$-generic relative to $A$} if for any $\Pi^0_n(A)$ set $P \ni x$, there is a $\Pi^0_1\paren{A^{(n - 2)}}$ set $Q \subseteq P$ with $\mu(Q) > 0$.
    \end{itemize}
\end{defn}

Some of the familiar relationships between the standard notions of genericity are likewise true for $\Pi^0_n$ genericity no matter what $n$ is.

\begin{lem}
    Let $(X, \mu)$ be a computable probability space. Then for all $n \geq 1$ and $A \subseteq \N$,
    \begin{center}
        $\Pi^0_n$-generic relative to $A$ $\myimplies{(a)}$ weakly $\Pi^0_n$-generic relative to $A$ \\
        $\myimplies{(b)}$ weakly $n$-random relative to $A$.
    \end{center}
\end{lem}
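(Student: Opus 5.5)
Implication (a) is immediate from the definitions: in every case, the witness set $U$ (for $n = 1$) or $Q$ (for $n \geq 2$) required by $\Pi^0_n$-genericity relative to $A$ is a subset of $P$ with positive measure that also contains $x$. Dropping the clause ``$x \in U$'' or ``$x \in Q$'' gives exactly the witness required by weak $\Pi^0_n$-genericity relative to $A$. So no work is needed beyond unwinding \cref{pi0n_generic_definition}.

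For implication (b), argue by contraposition. Suppose $x$ is not weakly $n$-random relative to $A$. Then there is a $\Pi^0_n(A)$ set $P \ni x$ with $\mu(P) = 0$. By weak $\Pi^0_n$-genericity relative to $A$, there is a set $W \subseteq P$ with $\mu(W) > 0$. Here $W$ is a $\Sigma^0_1$ set $U$ when $n = 1$, or a $\Pi^0_1\paren{A^{(n - 2)}}$ set $Q$ when $n \geq 2$. Monotonicity of measure then gives $0 < \mu(W) \leq \mu(P) = 0$, a contradiction. Hence $x$ must be weakly $n$-random relative to $A$.

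The one point to check is that I am using the right notion of weak $n$-randomness relative to $A$, namely avoiding every null $\Pi^0_n(A)$ set; for $n = 1$ this is Kurtz randomness relative to $A$. If the background section instead phrases it as membership in every $\Sigma^0_n(A)$ set of full measure, I would pass to complements, which is routine. Beyond that, there is no real obstacle: the witness sets are never required to have any particular complexity, only positive measure. In particular, nothing about relativizing the witness complexity (e.g.\ $\Sigma^0_1$ versus $\Sigma^0_1(A)$) affects the argument.
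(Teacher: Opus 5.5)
Your proposal is correct and follows the paper's proof: you obtain (a) by dropping the membership clause from the definition, and (b) from the fact that a weakly $\Pi^0_n$-generic point relative to $A$ cannot lie in a null $\Pi^0_n(A)$ set, since a positive-measure witness inside such a set would contradict monotonicity of $\mu$. Your argument for (b) is really by contradiction rather than contraposition, but that does not affect its validity.
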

\begin{proof}
    Implication (a) is obvious from the definition since we merely remove the condition ``$x \in U$'' or ``$x \in Q$'' when moving from $\Pi^0_n$-genericity to weak $\Pi^0_n$-genericity.

    Implication (b) holds because no weakly $\Pi^0_n$-generic point relative to $A$ can lie in a null $\Pi^0_n(A)$ set.
\end{proof}

The following result shows that in the definition of (weak) $\Pi^0_1$-genericity, we could equivalently replace ``$\Sigma^0_1$'' with ``open'' or ``almost decidable.''

\begin{lem}\label{pi01_genericity_with_open_sets}
    Let $(X, \mu)$ be a computable probability space and $A \subseteq \N$. Then the following are equivalent:
    \begin{enumerate}[(a)]
        \item For any $\Pi^0_1(A)$ set $P \ni x$, there is a $\Sigma^0_1$ set $U \subseteq P$ with $\mu(U) > 0$ (and $x \in U$).
        \item For any $\Pi^0_1(A)$ set $P \ni x$, there is an open set $U \subseteq P$ with $\mu(U) > 0$ (and $x \in U$).
        \item For any $\Pi^0_1(A)$ set $P \ni x$, there is an almost decidable set $B \subseteq P$ with $\mu(B) > 0$ (and $x \in B$).
    \end{enumerate}
\end{lem}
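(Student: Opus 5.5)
Both versions of the statement (with and without the parenthetical ``and $x \in \cdot$'') will be handled by the same cycle of implications (a) $\RA$ (b) $\RA$ (c) $\RA$ (a), tracking the membership of $x$ at each step.

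\textbf{(a) $\RA$ (b).} This is immediate, since every $\Sigma^0_1$ set is open.

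\textbf{(b) $\RA$ (c).} Given an open $U \subseteq P$ with $\mu(U) > 0$ (and $x \in U$), I would apply \cref{almost_decidable_basis} in the form used in the proof of \cref{CPS_weak_n_Poincare}. That result provides, inside any open set with positive measure and containing a given point, an almost decidable set of positive measure containing that point. Concretely, I would pick a basic open ball $B(c,r) \subseteq U$, with $x \in B(c,r)$ in the non-weak case. I would then shrink the radius to some $r'$ with $\mu(\partial B(c,r')) = 0$, so that the ball is almost decidable. This is possible because only countably many radii carry boundary mass. In the weak case I would instead use that the basic balls contained in $U$ cover $U$, so one of them has positive measure. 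Thus this step essentially reduces to the almost-decidable basis lemma and countable additivity.

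\textbf{(c) $\RA$ (a).} Let $B \subseteq P$ be almost decidable with $\mu(B) > 0$ (and $x \in B$). By the definition of almost decidability, there are $\Sigma^0_1$ sets $U \subseteq B$ and $V \subseteq \neg B$ with $\mu(U \cup V) = 1$, and hence $\mu(U) = \mu(B) > 0$ by \cref{almost_decidable_has_computable_measure}. So in the weak version, $U \subseteq P$ already witnesses (a).

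The main obstacle is the non-weak version, where $x \in B$ does not imply $x \in U$, because $x$ might lie on the boundary part $B \setminus U$. My plan is to replace $P$ by a smaller set that forces the witness to contain $x$. I would use $P' = P \cap \neg V$, which is still $\Pi^0_1(A)$ (intersected with a $\Pi^0_1$ set) and contains $x$, since $x \in B \subseteq \neg V$. However, applying (c) again to $P'$ just gives another almost decidable set with the same issue. So instead I would aim to show directly that (c) forces the point to lie in the interior of $B$. To do this, I would apply hypothesis (c) to $P'' = P \cap \neg V \cap \neg U$ whenever $x \notin U$. If $x \notin U$, then $x \in P''$, which is $\Pi^0_1(A)$, so (c) yields an almost decidable $B'' \subseteq P''$ with $\mu(B'') > 0$. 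But $P'' \subseteq \neg(U \cup V)$ is null, which is a contradiction. Hence $x \in U$, and $U$ is the desired $\Sigma^0_1$ witness. Since this trick only uses that $P''$ is a $\Pi^0_1(A)$ set containing $x$, it actually settles the non-weak case cleanly, and I expect the remaining care to lie only in checking that the complexities $\Pi^0_1$ versus $\Pi^0_1(A)$ combine correctly.
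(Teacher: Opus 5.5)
Your cycle (a) $\Rightarrow$ (b) $\Rightarrow$ (c) $\Rightarrow$ (a) is the same as the paper's, and your (a) $\Rightarrow$ (b) and (c) $\Rightarrow$ (a) are correct. Your (c) $\Rightarrow$ (a) is in fact slightly better than the paper's. The paper just asserts ``since $x$ is Kurtz random, $x \in U \cup V$,'' although Kurtz randomness is not a hypothesis of the lemma. You instead derive exactly this fact from (c), by applying (c) to the null $\Pi^0_1(A)$ set $P \cap \neg U \cap \neg V$.

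The step that needs repair is (b) $\Rightarrow$ (c) in the non-weak case. You produce a ball $B \ni x$ with $B \subseteq U$, but you never check that $\mu(B) > 0$. For an arbitrary point this can fail: $U$ may have positive measure while the part of $U$ near $x$ is null. (The paper's one-line appeal to the basis property passes over the same point.) Either of two fixes works.
\begin{enumerate}[(i)]
\item Take $B = B_1 \cup B_2$, where $B_1 \ni x$ is an almost decidable ball inside $U$, and $B_2 \subseteq U$ is an almost decidable ball of positive measure. Such a $B_2$ exists because the countably many balls $B(s_i, r_n) \subseteq U$ from \cref{almost_decidable_basis} cover $U$. Then $B$ is almost decidable by \cref{finite_union_of_almost_decidable_sets}.
\item Reuse your own trick from (c) $\Rightarrow$ (a). If some neighborhood of $x$ were null, it would contain a closed ball $\cl B(s_i, q) \ni x$ with $q$ rational. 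This is a null $\Pi^0_1$ set by \cref{closed_balls_closed}, and applying (b) to it gives a contradiction. Hence $x \in \supp \mu$, so every open neighborhood of $x$ has positive measure.
\end{enumerate}

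Separately, $\mu(\partial B(c, r')) = 0$ does not by itself make the ball almost decidable in the paper's sense. The witnesses must be $\Sigma^0_1$ sets whose union is dense. In general that requires a computable radius whose sphere has empty interior, not just a null one. The countability of radii whose spheres carry mass does not give you a computable radius; that is what the computable Baire category argument behind \cref{almost_decidable_basis} supplies. So choose $r' = r_n$ from that sequence with $d(c, x) < r_n < r$, which density permits. Likewise, run your weak-case covering argument with these balls directly, rather than shrinking a positive-measure ball afterward, since shrinking could destroy positive measure.
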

\begin{proof}
    Below we prove each implication including ``$x \in U$'' or ``$x \in B$.'' The proof excluding this condition is nearly identical.

    (a) $\RA$ (b). This is obvious since any $\Sigma^0_1$ set is open.

    (b) $\RA$ (c). Assume $P \ni x$ is $\Pi^0_1(A)$ and let $U \subseteq P$ be open with $\mu(U) > 0$ and $x \in U$. Since the almost decidable sets form a basis for the topology on $X$ by \cref{almost_decidable_basis}, there is an almost decidable set $B \subseteq U$ with $\mu(B) > 0$ and $x \in B$.
    
    (c) $\RA$ (a). Assume $P \ni x$ is $\Pi^0_1(A)$ and let $B \subseteq P$ be almost decidable with $\mu(B) > 0$ and $x \in B$. Take $\Sigma^0_1$ sets $U \subseteq B$ and $V \subseteq \neg B$ such that $\mu(U \cup V) = 1$. Since $x$ is Kurtz random, we must have $x \in U \cup V$ and hence $x \in U$ given that $x \in B \subseteq \neg V$. Then $U \subseteq P$ is $\Sigma^0_1$ with $\mu(U) = \mu(B) > 0$ and $x \in U$.
\end{proof}

\subsubsection[Π01-genericity]{$\Pi^0_1$-genericity}

First we note the elementary fact that $\Pi^0_1$-genericity becomes a stronger condition when relativized to a stronger oracle. (The same is not obviously true of $\Pi^0_n$-genericity for $n \geq 2$ since the required complexity of the set $Q$ depends on the oracle. Indeed, we will later prove this false in \cref{pi0n_genericity_does_not_get_stronger_with_relativization}.)

\begin{lem}\label{pi01_genericity_gets_stronger_with_relativization}
    Let $(X, \mu)$ be a computable probability space and $A, B \subseteq \N$. Then whenever $A \geq_T B$,
    \begin{center}
        (weakly) $\Pi^0_1$-generic relative to $A$ $\implies$ (weakly) $\Pi^0_1$-generic relative to $B$.
    \end{center}
\end{lem}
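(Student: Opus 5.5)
The plan is to observe that the class of tests only grows with the oracle, while the required witness set stays fixed. By definition, (weak) $\Pi^0_1$-genericity relative to $A$ says that for every $\Pi^0_1(A)$ set $P \ni x$ there is a $\Sigma^0_1$ set $U \subseteq P$ (unrelativized) with $\mu(U) > 0$, and $x \in U$ in the non-weak case. The complexity required of $U$ does not depend on the oracle. So it suffices to show that every $\Pi^0_1(B)$ set is also a $\Pi^0_1(A)$ set whenever $A \geq_T B$.

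Fix a $\Pi^0_1(B)$ set $P \ni x$. Then $\neg P$ is $\Sigma^0_1(B)$, so there is a $B$-computably enumerable collection of basic open sets (indices) whose union is $\neg P$. The set $B$ is computable from $A$ via some Turing functional. Composing this functional with the $B$-enumeration gives an $A$-computable enumeration of the same collection of basic open sets. Hence $\neg P$ is $\Sigma^0_1(A)$ and $P$ is $\Pi^0_1(A)$. This is the standard fact that relativized arithmetical classes are monotone under Turing reducibility. If the background section states it explicitly, I would cite it; otherwise the one-line argument above suffices.

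Now apply the hypothesis that $x$ is (weakly) $\Pi^0_1$-generic relative to $A$ to this $P$. It produces a $\Sigma^0_1$ set $U \subseteq P$ with $\mu(U) > 0$, and with $x \in U$ in the non-weak case. This is exactly the witness demanded by (weak) $\Pi^0_1$-genericity relative to $B$. Since $P$ was arbitrary, the implication follows, and the weak and non-weak versions are handled simultaneously.

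I expect no real obstacle. The only point requiring care is confirming that the definition of $\Pi^0_1(A)$ in a computable metric space is via $A$-c.e.\ unions of basic balls, so that the relativization monotonicity is immediate.
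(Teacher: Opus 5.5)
Your proposal is correct and follows the paper's argument: every $\Pi^0_1(B)$ set is $\Pi^0_1(A)$ when $B \leq_T A$, and the required witness is an unrelativized $\Sigma^0_1$ set, so it does not depend on the oracle. Your extra justification that a $B$-c.e. index set is $A$-c.e. is exactly the fact the paper's one-line proof leaves implicit.
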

\begin{proof}
    This is immediate since any $\Pi^0_1(B)$ set is also $\Pi^0_1(A)$ when $B \leq_T A$.
\end{proof}

Note that in Cantor space, $\Pi^0_1$-genericity relative to $\zero^{(n - 1)}$ is nearly identical to $n$-genericity, except that it refers to $\Sigma^0_1$ sets with positive measure rather than nonempty clopen sets. By the following result, this does not make a difference, as long as the measure $\mu$ is \emph{strictly positive}, i.e., $\mu(U) > 0$ for any nonempty open set $U$.

\begin{lem}\label{equivalence of_genericity_and_pi01_genericity}
    Consider Cantor space with a computable probability measure $\mu$. Then for all $n \geq 1$,
    \begin{enumerate}[(a)]
        \item Any (weakly) $\Pi^0_1$-generic point relative to $\zero^{(n - 1)}$ is also (weakly) $n$-generic.
        \item Assuming $\mu$ is strictly positive, any (weakly) $n$-generic point is also (weakly) $\Pi^0_1$-generic relative to $\zero^{(n - 1)}$.
        \item Assuming $\mu$ is not strictly positive, there is an $n$-generic point which is not weakly $\Pi^0_1$-generic.
    \end{enumerate}
\end{lem}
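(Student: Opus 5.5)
Parts (a) and (b) both come from unwinding the definitions, using that a $\Sigma^0_1$ subset of Cantor space is a union of cylinders. Part (c) needs a small construction built around a cylinder of measure zero.

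For (a), let $x$ be (weakly) $\Pi^0_1$-generic relative to $\zero^{(n-1)}$, and let $P \ni x$ be $\Pi^0_1\paren{\zero^{(n-1)}}$. The definition gives a $\Sigma^0_1$ set $U \subseteq P$ with $\mu(U) > 0$, and in the non-weak case also $x \in U$. Write $U = \cyl W$ for some c.e.\ set of strings $W$. Since $\mu(U) > 0$, $W$ is nonempty, so some cylinder $\cyl\sigma \subseteq U \subseteq P$ is a nonempty clopen subset of $P$. In the non-weak case we choose $\sigma \in W$ with $\sigma \subset x$, so that $x \in \cyl\sigma$. This is exactly (weak) $n$-genericity in its $\Pi^0_1$ formulation.

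For (b), let $x$ be (weakly) $n$-generic and $P \ni x$ be $\Pi^0_1\paren{\zero^{(n-1)}}$. The definition gives a nonempty clopen $Q \subseteq P$, with $x \in Q$ in the non-weak case. Every clopen subset of Cantor space is a finite union of cylinders, so $Q$ is $\Sigma^0_1$. Strict positivity of $\mu$ gives $\mu(Q) > 0$, and we take $U = Q$.

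For (c), fix a nonempty open set of measure zero and shrink it to a cylinder $\cyl\sigma$ with $\mu\cyl\sigma = 0$. The idea is to find an $n$-generic point $x$ with $\sigma \subset x$ and to test it against $P = \cyl\sigma$. This $P$ is clopen, hence $\Pi^0_1$, and every subset $U \subseteq P$ has $\mu(U) = 0$. So $x$ fails weak $\Pi^0_1$-genericity already unrelativized, and therefore also relative to $\zero^{(n-1)}$ by \cref{pi01_genericity_gets_stronger_with_relativization}. It remains to find such an $x$. The set of $n$-generic points is comeager by the corollary above, and a comeager set meets every nonempty open set by the Baire Category Theorem, so some $n$-generic point lies in $\cyl\sigma$. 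Alternatively, if $y$ is $n$-generic then $\sigma y$ is $n$-generic as well, because prepending a fixed string is a computable homeomorphism onto $\cyl\sigma$ that preserves the relevant arithmetical classes.

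The only step needing care is this last construction, namely producing an $n$-generic point inside the null cylinder. It is handled by the comeagerness corollary, and I expect no real obstacle.
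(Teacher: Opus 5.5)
Your proposal is correct and follows essentially the same route as the paper: (a) extracts a cylinder from the $\Sigma^0_1$ set $U$, (b) uses that a nonempty clopen set is $\Sigma^0_1$ and has positive measure when $\mu$ is strictly positive, and (c) tests an $n$-generic point in a null cylinder $\cyl\sigma$ against $P = \cyl\sigma$. You also justify that such an $n$-generic point exists, via comeagerness or by prepending $\sigma$, which the paper leaves implicit.
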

\begin{proof}
    \begin{enumerate}[(a)]
        \item Assume $x \in 2^\N$ is $\Pi^0_1$-generic relative to $\zero^{(n - 1)}$, and consider any $\Pi^0_1\paren{\zero^{(n - 1)}}$ set $P \ni x$. Then there is a $\Sigma^0_1$ set $U \subseteq P$ with $\mu(U) > 0$ and $x \in U$. This implies that there is a clopen set $Q \subseteq U \subseteq P$ with $x \in Q$. Since $P$ was an arbitrary $\Pi^0_1\paren{\zero^{(n - 1)}}$ set containing $x$, it follows that $x$ is $n$-generic.

        A nearly identical proof shows that any weakly $\Pi^0_1$-generic point relative to $\zero^{(n - 1)}$ is also weakly $n$-generic.

        \item Assume that $\mu$ is strictly positive. Then since any nonempty clopen set is $\Sigma^0_1$ with positive measure, $n$-genericity clearly implies $\Pi^0_1$-genericity relative to $\zero^{(n - 1)}$, and weak $n$-genericity clearly implies weak $\Pi^0_1$-genericity relative to $\zero^{(n - 1)}$.

        \item Assume that $\mu$ is not strictly positive, and let $U$ be a nonempty open set with $\mu(U) = 0$. Take a cylinder $\cyl\sigma \subseteq U$ and an $n$-generic point $x \supset \sigma$. Then $\cyl\sigma \ni x$ is $\Pi^0_1$ yet contains no open set with positive measure, so $x$ cannot be weakly $\Pi^0_1$-generic. \qedhere
    \end{enumerate}
\end{proof}

Now we turn to describing how $\Pi^0_1$-genericity relates to weak $\Pi^0_1$-genericity, for which we require an elementary lemma.

\begin{lem}\label{when_is_ball_in_pi01}
    Let $(X, d, S)$ be a computable metric space, $A \subseteq \N$, $\paren{r_j}_j$ be a dense uniformly computable sequence in $\R_{> 0}$, and $B_{\cs{i, j}} = \{x \in X: d(x, s_i) < r_j\}$. Then the condition $B_{\cs{i, j}} \subseteq P$ is $\Pi^0_1(A)$ in $\cs{i, j}$ and an index for a $\Pi^0_1(A)$ set $P$.
\end{lem}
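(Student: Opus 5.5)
The plan is to show that the complement condition $B_{\cs{i,j}} \not\subseteq P$ is $\Sigma^0_1(A)$, uniformly in $\cs{i,j}$ and the index of $P$. Write $\neg P = \bigcup_m W_m$ as an $A$-c.e.\ union of basic balls $W_m = B_{\cs{a_m, b_m}}$, using the index of $P$. The condition $B_{\cs{i,j}} \subseteq P$ says that no point of $B_{\cs{i,j}}$ lies in $\neg P$, i.e.\ $B_{\cs{i,j}} \cap \bigcup_m W_m = \zero$. Equivalently, for every $m$ the two open balls $B(s_i, r_j)$ and $B(s_{a_m}, r_{b_m})$ are disjoint. So it suffices to show that disjointness of two basic balls is a $\Pi^0_1$ property of their indices, with no oracle needed. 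The conjunction over the $A$-c.e.\ list of $m$ is then $\Pi^0_1(A)$.

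To analyze disjointness, I will use density of the ideal points $S$ in $X$. Two open balls intersect if and only if they share a point, and since their intersection is open, this happens if and only if some ideal point $s_k$ lies in both. Hence
\[B(s_i, r_j) \cap B(s_a, r_b) \neq \zero \iff \exists k\ \bigl(d(s_k, s_i) < r_j \text{ and } d(s_k, s_a) < r_b\bigr).\]
The distances $d(s_k, s_i)$ are uniformly computable reals and the $r_j$ are uniformly computable. A strict inequality between computable reals is $\Sigma^0_1$, so the right-hand side is $\Sigma^0_1$ uniformly in $i, j, a, b$. Disjointness is therefore $\Pi^0_1$.

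Combining the two steps, $B_{\cs{i,j}} \not\subseteq P$ holds if and only if there exist $m$ in the $A$-enumeration and $k$ with $s_k$ in both $B_{\cs{i,j}}$ and $W_m$. This is an existential over $A$-computably enumerable data of a $\Sigma^0_1$ matrix, hence $\Sigma^0_1(A)$ uniformly, and the lemma follows.

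The only delicate point is the intersection step. An intersection of open sets might be nonempty and yet seem to contain no ideal point, but openness of the intersection together with density of $S$ rules this out. The hypothesis that $(r_j)$ is dense is not needed for this direction; it presumably serves later uses, where the $B_{\cs{i,j}}$ must form a basis.
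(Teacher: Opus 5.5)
Your proposal is correct and follows essentially the same route as the paper: write $\neg P$ as an $A$-c.e.\ union of balls, reduce $B_{\cs{i,j}} \subseteq P$ to disjointness from each of those balls, and witness a nonempty intersection of two balls by a common ideal point, so that disjointness is co-c.e.

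One inconsistency: the paper uses the density of $(r_j)$ exactly to justify writing $\neg P$ as a union of balls $B(s_a, r_b)$, and that is the form your display uses. Your closing remark that density is unneeded therefore holds only if you keep the rational-radius ideal balls $B(s_a, q_b)$ from the definition of $\Sigma^0_1(A)$ instead. That version works just as well, since the intersection test is unchanged with $q_b$ in place of $r_b$.
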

\begin{proof}
    Because $\paren{r_j}_j$ is a dense uniformly computable sequence in $\R_{> 0}$, we can write the given $\Pi^0_1(A)$ set $P$ as $\bigcap_{\cs{k, \l} \in W} \neg B_{\cs{k, \l}}$ for some $A$-c.e.\ set $W \subseteq \N$. Then $B_{\cs{i, j}} \subseteq P$ if and only if $B_{\cs{i, j}}$ and $B_{\cs{k, \l}}$ are disjoint for all $\cs{k, \l} \in W$.

    Observe that $B_{\cs{i, j}} \cap B_{\cs{k, \l}} \neq \zero$ holds if and only if there exists $m$ such that $d(s_i, s_m) < r_j$ and $d(s_k, s_m) < r_\l$, which is c.e.\ in $i$, $j$, $k$, and $\l$. Therefore,
    \[B_{\cs{i, j}} \subseteq P \iff \forall\cs{k, \l} \ \Big(\cs{k, \l} \notin W \quad\text{or}\quad B_{\cs{i, j}} \cap B_{\cs{k, \l}} = \zero\Big)\]
    is $\Pi^0_1(A)$ in $\cs{i, j}$ and an index for $P$.
\end{proof}

The following result demonstrates that $\Pi^0_1$-genericity has precisely the relationship one would expect with weak $\Pi^0_1$-genericity.

\begin{thm}\label{weakly_pi01_generic_for_A'_implies_pi01_generic_for_A}
    Let $(X, \mu)$ be a computable probability space and $A \subseteq \N$. Then any weakly $\Pi^0_1$-generic point relative to $A'$ is also $\Pi^0_1$-generic relative to $A$.
\end{thm}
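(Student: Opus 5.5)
The plan is to take an arbitrary $\Pi^0_1(A)$ set $P \ni x$ and cover the ``interior of positive measure'' of $P$ by basic balls, using the oracle $A'$ to decide which balls qualify. Fix an enumeration $\paren{r_j}_j$ of the positive rationals, which is a dense uniformly computable sequence in $\R_{>0}$. As in \cref{when_is_ball_in_pi01}, write $B_{\cs{i,j}} = \{y \in X : d(y, s_i) < r_j\}$. Define
\[W = \set{\cs{i,j} : B_{\cs{i,j}} \subseteq P \text{ and } \mu\paren{B_{\cs{i,j}}} > 0}, \qquad V = \bigcup_{\cs{i,j} \in W} B_{\cs{i,j}}.\]

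The first step is a complexity estimate. By \cref{when_is_ball_in_pi01}, the condition $B_{\cs{i,j}} \subseteq P$ is $\Pi^0_1(A)$ in $\cs{i,j}$. The condition $\mu\paren{B_{\cs{i,j}}} > 0$ is $\Sigma^0_1$, since the measure of a uniformly $\Sigma^0_1$ open set is uniformly lower-semicomputable. Hence $W$ is a conjunction of a $\Pi^0_1(A)$ condition and a $\Sigma^0_1$ condition, so it is $A'$-computable, in particular $A'$-c.e. Therefore $V$ is $\Sigma^0_1(A')$, and $P \setminus V = P \cap \neg V$ is $\Pi^0_1(A')$. This uses that a $\Pi^0_1(A)$ set is also $\Pi^0_1(A')$.

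The second step is to show $x \in V$, arguing by contradiction. Suppose $x \notin V$. Then $x \in P \setminus V$, which is a $\Pi^0_1(A')$ set. Weak $\Pi^0_1$-genericity relative to $A'$ then gives a $\Sigma^0_1$ set $U \subseteq P \setminus V$ with $\mu(U) > 0$. Since $U$ is a countable union of basic balls $B_{\cs{i,j}}$, countable additivity yields some ball $B_{\cs{i,j}} \subseteq U$ with $\mu\paren{B_{\cs{i,j}}} > 0$. This ball lies inside $P$, so $\cs{i,j} \in W$ and hence $B_{\cs{i,j}} \subseteq V$. But the ball also lies in $U \subseteq \neg V$ and is nonempty, which is a contradiction. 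So $x \in V$.

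Finally, $x \in V$ means $x \in B_{\cs{i,j}}$ for some $\cs{i,j} \in W$. That ball is a $\Sigma^0_1$ set contained in $P$, has positive measure, and contains $x$, which is exactly what $\Pi^0_1$-genericity relative to $A$ requires.

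The only delicate point is the complexity count in the first step. Once one checks that $W$ is $A'$-decidable, so that $P \setminus V$ is genuinely $\Pi^0_1(A')$, the rest is immediate.
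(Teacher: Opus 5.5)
Your proof is correct and is essentially the paper's argument. The paper forms the $\Sigma^0_1(A')$ set $\neg P \cup \bigcup_{i \in S} B_i$, where $S$ is your $W$. It shows this set meets every $\Sigma^0_1$ set of positive measure and concludes that $x$ lies in it. You instead apply weak $\Pi^0_1$-genericity relative to $A'$ directly to its complement $P \setminus V$ and argue by contradiction, which is the same reasoning in contrapositive form.
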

\begin{proof}
    Let $x \in X$ be weakly $\Pi^0_1$-generic relative to $A'$, and consider any $\Pi^0_1(A)$ set $P \ni x$. Let $B_i$ be the $i\th$ ideal ball and define
    \[S = \set{i \in \N: \mu\paren{B_i} > 0 \quad\text{and}\quad B_i \subseteq P},\]
    which is the intersection of a $\Sigma^0_1$ set and a $\Pi^0_1(A)$ set by \cref{when_is_ball_in_pi01}, so it is $A'$-c.e. Let $V = \neg P \cup \bigcup_{i \in S} B_i$, which is then $\Sigma^0_1(A')$. We claim that $V$ intersects every $\Sigma^0_1$ set with positive measure.
    
    Indeed, suppose $U$ is $\Sigma^0_1$ with $\mu(U) > 0$. If $U$ intersects $\neg P$, we are done, so assume $U \subseteq P$. Then there is a ball $B_i \subseteq U$ with positive measure. This means that $i \in S$ and hence $B_i \subseteq V$.

    Now since $x$ is weakly $\Pi^0_1$-generic relative to $A'$ and $V$ is a $\Sigma^0_1\paren{A'}$ set which intersects every $\Sigma^0_1$ set with positive measure, we must have $x \in V$. Given that $x \in P$, it follows that $x \in B_i$ for some $i \in S$. That is, there is a $\Sigma^0_1$ set $U \subseteq P$ with $\mu(U) > 0$ and $x \in U$. As $P$ was an arbitrary $\Pi^0_1(A)$ set containing $x$, we conclude that $x$ is $\Pi^0_1$-generic relative to $A$.
\end{proof}

Now we complete the picture of $\Pi^0_1$-genericity as it relates to randomness, which exactly mirrors \cref{genericity_relationships}.

\begin{thm}\label{pi01_generic_not_Schnorr_random}
    Consider Cantor space with the Lebesgue measure $\lambda$. Then for all $A \subseteq \N$, no weakly $\Pi^0_1$-generic point relative to $A$ is Schnorr random.
\end{thm}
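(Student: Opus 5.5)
The plan is to reduce everything to the classical chain of implications in \cref{genericity_relationships}, so that no new construction is needed. Let $x \in 2^\N$ be weakly $\Pi^0_1$-generic relative to $A$.

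First, since $A \geq_T \zero$ for every $A \subseteq \N$, \cref{pi01_genericity_gets_stronger_with_relativization} shows that $x$ is weakly $\Pi^0_1$-generic (relative to $\zero$). Next, apply part (a) of \cref{equivalence of_genericity_and_pi01_genericity} with $n = 1$. Here $\zero^{(0)} = \zero$, and no positivity assumption on the measure is needed for that direction. This gives that $x$ is weakly $1$-generic. Finally, implication (c) of \cref{genericity_relationships} with $n = 1$ says that every weakly $1$-generic point is Kurtz random and \emph{not} Schnorr random, which is the claim.

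The only step with real content is the last one, which is quoted from \cite{DowneyHirschfeldt} (Proposition 8.11.9). If a self-contained argument were wanted, I would build a Schnorr test directly. The idea is to construct a computable sequence of dense $\Sigma^0_1$ sets $V$ whose complements $\neg V$ are $\Pi^0_1$ with computable measures going to $1$. Concretely, for each $k$, enumerate strings $\sigma_i$ and put into $V_k$ an extension $\sigma_i\tau_i$ with $|\tau_i|$ large enough that
\[\sum_i 2^{-|\sigma_i\tau_i|} \leq 2^{-k}.\]
Then $V_k$ is dense, and $\lambda(V_k)$ is computable because the tail of the series is computably bounded. A weakly $1$-generic point lies in every such dense $\Sigma^0_1$ set, so $x \in \bigcap_k V_k$. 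This is a Schnorr null set, hence $x$ is not Schnorr random.

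Since the paper permits citing earlier results, I expect the reduction above to suffice, and I do not anticipate any obstacle.
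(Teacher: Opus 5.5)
Your reduction is correct and is exactly the paper's proof. You drop the oracle using the fact that weak $\Pi^0_1$-genericity only gets stronger under relativization, pass to weak $1$-genericity via part (a) of the comparison with classical genericity (which needs no positivity hypothesis), and finish with implication (c) of the classical chain. Your optional self-contained Schnorr test is also sound once the lengths are fixed computably (e.g.\ $|\sigma_i\tau_i| \geq i + k + 1$), so that the tail bound makes $\lambda(V_k)$ computable.
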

\begin{proof}
    By \cref{pi01_genericity_gets_stronger_with_relativization}, any weakly $\Pi^0_1$-generic point relative to $A$ is weakly $\Pi^0_1$-generic. By \cref{equivalence of_genericity_and_pi01_genericity}, any weakly $\Pi^0_1$-generic point in $\paren{2^\N, \lambda}$ is weakly 1-generic. By \cref{genericity_relationships}, any weakly 1-generic point is not Schnorr random.
\end{proof}

Thus far, we have not made any claims about the existence of $\Pi^0_1$-generic points for an arbitrary computable probability space, and indeed, they do not necessarily exist. To see why, let $C$ be a fat Cantor set in $[0, 1]$ with Lebesgue measure $1/2$, and consider $[0, 1]$ with the measure $\mu$ defined by $\mu(A) = 2\lambda(A \cap C)$. Then for any point $x \in [0, 1]$, we have two cases:
\begin{itemize}
    \item $x \in C$. Then $C \ni x$ is $\Pi^0_1$ yet contains no open set with positive measure since it is nowhere dense (i.e., has empty interior), so $x$ cannot be weakly $\Pi^0_1$-generic.

    \item $x \notin C$. Then there exist rational numbers $q$ and $r$ such that $x \in [q, r] \subseteq \neg C$. Thus, $[q, r] \ni x$ is $\Pi^0_1$ yet contains no open set with positive measure, so $x$ cannot be weakly $\Pi^0_1$-generic.
\end{itemize}

The problem with this example was that the support of $\mu$ was nowhere dense. The following result shows that this is the only obstruction. In it we use the following notation:
\begin{itemize}
    \item $\Int(A)$ denotes the interior of a set $A$.
    \item $\cl A$ denotes the closure of a set $A$.
    \item $\supp \mu$ denotes the \emph{support} of a measure $\mu$, i.e., the set of all points every open neighborhood of which has positive measure. (Recall that the support is always closed, and it also has full measure whenever $X$ is Hausdorff and $\mu$ is a Radon measure, which is the case for any computable probability space.)
\end{itemize}

\begin{thm}\label{pi01_generic_comeager}
    Let $(X, \mu)$ be a computable probability space and $A \subseteq \N$. Then the set of (weakly) $\Pi^0_1$-generic points relative to $A$ is comeager in $\cl{\Int(\supp \mu)}$ and hence nonempty as long as the support of $\mu$ is somewhere dense.
\end{thm}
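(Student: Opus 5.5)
The plan is a Baire category argument carried out separately for each $\Pi^0_1(A)$ set. Since $\Pi^0_1$-genericity relative to $A$ implies weak $\Pi^0_1$-genericity relative to $A$, it suffices to treat the non-weak notion. Write $Y = \cl{\Int(\supp \mu)}$ and $I = \Int(\supp\mu)$. Let $(B_i)_i$ be the ideal balls.

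For each of the countably many $\Pi^0_1(A)$ sets $P$, define
\[W_P = \bigcup\set{B_i : B_i \subseteq P,\ \mu(B_i) > 0}, \qquad G_P = \neg P \cup W_P.\]
Each $B_i$ is $\Sigma^0_1$, so any $x \in P \cap W_P$ lies in a $\Sigma^0_1$ set $U \subseteq P$ with $\mu(U) > 0$ and $x \in U$. Hence every point of $\bigcap_P G_P$ is $\Pi^0_1$-generic relative to $A$. The set $G_P$ is open, being the union of the open set $\neg P$ and a union of balls. It therefore suffices to show that $G_P \cap Y$ is dense in $Y$. Then $Y \setminus G_P$ is closed and nowhere dense in $Y$, and the countable intersection $\bigcap_P G_P$ is comeager in $Y$.

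The density is the key step. Let $O$ be open with $O \cap Y \neq \zero$. Since $Y$ is the closure of $I$, the set $O' = O \cap I$ is a nonempty open set. There are two cases.
\begin{itemize}
\item If $O' \not\subseteq P$, then $O' \cap \neg P$ is a nonempty subset of $G_P$. It also lies in $Y$, since $O' \subseteq I \subseteq Y$.
\item If $O' \subseteq P$, choose a nonempty ideal ball $B_i \subseteq O'$; this is possible because the ideal balls form a basis. Then $B_i \subseteq \supp\mu$. Since $B_i$ is an open neighborhood of a point of the support, $\mu(B_i) > 0$. As $B_i \subseteq P$, we get $B_i \subseteq W_P \subseteq G_P$, and again $B_i \subseteq I \subseteq Y$.
\end{itemize}
In both cases $O \cap Y$ meets $G_P$, so $G_P \cap Y$ is dense in $Y$.

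For the nonemptiness claim, note that computable metric spaces are complete. Hence $Y$, a closed subspace, is a complete metric space and is Baire. A comeager subset of $Y$ is therefore nonempty whenever $Y \neq \zero$, that is, whenever $\supp\mu$ has nonempty interior, meaning it is somewhere dense.

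The only subtle point I anticipate is making sure the relevant balls really have positive measure. This is exactly why I work inside $I$ rather than all of $\supp\mu$: a ball inside $P$ that merely meets $\supp\mu$ could still be null on the part it lies in, whereas every nonempty open subset of $I$ has positive measure.
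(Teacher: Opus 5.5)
Your proof is correct, but it is organized differently from the paper's.

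The paper's route has two steps:
\begin{itemize}
\item It first treats the \emph{weak} notion. The weakly $\Pi^0_1$-generic points relative to $A$ are exactly the points lying in every $\Sigma^0_1(A)$ set that meets every open set of positive measure. Each such set is dense in $\cl{\Int(\supp \mu)}$, because any open set meeting $\cl{\Int(\supp\mu)}$ contains a nonempty open subset of $\Int(\supp\mu)$, and that subset has positive measure.
\item It then gets the non-weak notion by applying the weak result with oracle $A'$ and invoking \cref{weakly_pi01_generic_for_A'_implies_pi01_generic_for_A}.
\end{itemize}

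You go straight to the non-weak notion instead. Your sets $G_P = \neg P \cup W_P$ are exactly the sets $V$ from the proof of that theorem. Your two-case density argument merges that proof's ``either $U$ meets $\neg P$ or $U \subseteq P$'' split with the paper's density step. Baire category only needs countably many dense open sets, so you never have to check that $G_P$ is $\Sigma^0_1(A')$, which in the paper relies on \cref{when_is_ball_in_pi01}. Your route is therefore self-contained and avoids the jump. The paper's route is shorter because it reuses the earlier theorem and a clean characterization of weak genericity.

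One small correction to your closing remark. An open set that meets $\supp\mu$ automatically has positive measure, by the definition of the support, so that is not the danger. The real reason to pass to $\Int(\supp\mu)$ is your second case: there you need a nonempty \emph{open} set inside both $O$ and $P$ that meets the support. Knowing only $O \cap \supp\mu \subseteq P$ would not provide one; take $P = \supp\mu$ nowhere dense, as in the fat Cantor set example. By contrast, $O \cap \Int(\supp\mu)$ is itself open.
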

\begin{proof}
    Let $\U$ be the collection of all $\Sigma^0_1(A)$ sets which intersect every open set with positive measure. Consider any $U \in \U$ and any open set $V$ such that $V \cap \cl{\Int(\supp \mu)} \neq \zero$. Then $V \cap \Int(\supp \mu)$ is a nonempty open set inside the support of $\mu$, so it has positive measure and hence intersects $U$.

    Thus, every set from $\U$ is dense in $\cl{\Int(\supp \mu)}$. This implies that the countable intersection $\bigcap_{U \in \U} U$, which is precisely the set of weakly $\Pi^0_1$-generic points relative to $A$, is comeager in $\cl{\Int(\supp \mu)}$. By the Baire Category Theorem, as long as $\Int(\supp \mu) \neq \zero$, this is a nonempty set.

    Finally, since weakly $\Pi^0_1$-generic points relative to $A'$ are $\Pi^0_1$-generic relative to $A$ by \cref{weakly_pi01_generic_for_A'_implies_pi01_generic_for_A}, we can make the same claim for $\Pi^0_1$-generic points.
\end{proof}

\subsubsection[Π0n-genericity when n ≥ 2]{$\Pi^0_n$-genericity when $n \geq 2$}

$\Pi^0_n$-genericity becomes much more complicated a property when $n \geq 2$. Before proceeding, we recall what it means for a subset of a partial order to be dense.

\begin{defn}
    Let $(P, \leq)$ be a partial order. Then a subset $D \subseteq P$ is \textbf{dense} if for every $p \in P$, there exists $q \in D$ with $q \leq p$.
\end{defn}

The following lemma is instrumental in showing that $\Pi^0_n$-generic points exist. Through the proof, it becomes clear why we must consider $\Pi^0_1\paren{A^{(n - 2)}}$ subsets in the definition of $\Pi^0_n$-genericity relative to $A$.

\begin{lem}\label{pi0n_generic_dense}
    Let $(X, \mu)$ be a computable probability space, $n \geq 2$, and $A \subseteq \N$. Let $(\P, \subseteq)$ be the partial order consisting of $\Pi^0_1(A^{(n - 2)})$ sets with positive measure, ordered by the subset relation. For each $\Pi^0_n(A)$ set $P$, let
    \[D_P = \{Q \in \P: Q \subseteq P \text{ or } P \cap Q = \zero\}.\]
    Then $D_P$ is dense in $\P$ for each $\Pi^0_n(A)$ set $P$.
\end{lem}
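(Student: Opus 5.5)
The plan is a case split on whether $Q$ essentially lies inside $P$. Fix $Q \in \P$, a $\Pi^0_1\paren{A^{(n - 2)}}$ set with $\mu(Q) > 0$. We must find $Q' \in \P$ with $Q' \subseteq Q$ and either $Q' \subseteq P$ or $Q' \cap P = \zero$.

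Write $P = \bigcap_k S_k$ with $\paren{S_k}_k$ nested and uniformly $\Sigma^0_{n - 1}(A)$. Either $\mu(Q \setminus S_k) > 0$ for some $k$, or $\mu(Q \setminus P) = 0$. Both cases rest on one auxiliary inner-approximation fact. I would prove it first, by induction on $m \geq 1$:
\begin{quote}
every $\Pi^0_m(A)$ set $C$ contains, for each rational $\eps > 0$, a $\Pi^0_1\paren{A^{(m - 1)}}$ subset of measure at least $\mu(C) - \eps$.
\end{quote}
For $m = 1$ this is trivial. For the inductive step, write $C = \bigcap_j V_j$ with $V_j$ a $\Sigma^0_{m - 1}(A)$ set. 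Write $V_j$ as an increasing union of $\Pi^0_{m - 2}(A)$ sets, and replace each of these by a $\Pi^0_1\paren{A^{(m - 3)}}$ inner approximation using the inductive hypothesis. Their measures are $A^{(m - 2)}$-computable by \cref{measure_of_sigma0n}. Since $\mu(V_j)$ is $A^{(m - 2)}$-lower-semicomputable, hence $A^{(m - 1)}$-computable, the oracle $A^{(m - 1)}$ can choose one approximant $R_j \subseteq V_j$ with $\mu(V_j \setminus R_j) < \eps 2^{-j - 1}$. Then $\bigcap_j R_j$ is $\Pi^0_1\paren{A^{(m - 1)}}$, and the construction is uniform in $\eps$. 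This is the same kind of reasoning as in \cref{approximate_sigma01_or_pi01}.

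\emph{Case 1: $\mu(Q \setminus S_k) > 0$ for some $k$.} The set $\neg S_k$ is $\Pi^0_{n - 1}(A)$, so the fact above with $m = n - 1$ gives a $\Pi^0_1\paren{A^{(n - 2)}}$ set $R \subseteq \neg S_k$ with $\mu(R) > \mu(\neg S_k) - \mu(Q \setminus S_k)$. Take $Q' = Q \cap R$. It is $\Pi^0_1\paren{A^{(n - 2)}}$, has positive measure by the choice of $\eps$, and is disjoint from $P \subseteq S_k$. So $Q' \in D_P$. For $n = 2$ we can simply take $R = \neg S_k$ itself.

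\emph{Case 2: $\mu(Q \setminus P) = 0$.} Here we want $Q' \subseteq Q \cap P$ of positive measure and complexity only $\Pi^0_1\paren{A^{(n - 2)}}$. Since $\mu(Q \setminus S_k) = 0$ for every $k$, the plan is to pick, uniformly in $k$ and computably in $A^{(n - 2)}$, a $\Pi^0_1\paren{A^{(n - 2)}}$ set $R_k \subseteq S_k$ with $\mu(Q \setminus R_k) < \mu(Q)2^{-k - 2}$. Then $Q' = Q \cap \bigcap_k R_k \subseteq P$ has measure at least $\mu(Q)/2$.

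To build $R_k$, I would write $S_k$ as an increasing union of $\Pi^0_{n - 2}(A)$ sets and apply the inner-approximation fact with $m = n - 2$. This yields uniformly $\Pi^0_1\paren{A^{(n - 3)}}$ sets $R_{k, i} \subseteq S_k$ with $\mu\paren{S_k \setminus \bigcup_i R_{k, i}} = 0$ and $A^{(n - 2)}$-computable measures. For $n = 2$, $S_k$ is $\Sigma^0_1(A)$ and I would instead take finite unions of closed ideal balls inside it.

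\textbf{Main obstacle.} The difficulty is that the index $i_k$ must be found using only $A^{(n - 2)}$. Certifying $\mu(Q \setminus R_{k, i}) < \mu(Q)2^{-k - 2}$ needs a lower bound on $\mu(Q \cap R_{k, i})$, or an upper bound on $\mu(S_k)$, and neither is obviously available from $A^{(n - 2)}$. I would first try to exploit that the bad sets $Q \setminus R_{k, i}$ are relatively open in $Q$ and decrease to a null set. The hope is to enumerate, relative to $A^{(n - 2)}$, basic open sets to remove from $Q$ with a total measure budget below $\mu(Q)$, so that $Q'$ is $Q$ minus an $A^{(n - 2)}$-c.e.\ open set. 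If that fails, the fallback is a nonuniform choice: replace $P$ by a suitable $\Pi^0_n(A)$ representation, or shrink $Q$ first so that the relevant measures become computable in $A^{(n - 2)}$. This is the step where I expect the real work of the lemma to lie.
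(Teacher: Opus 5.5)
Your Case 1 is correct and is the paper's Case 1. Your inner-approximation fact is just the complemented form of \cref{approximate_sigma0n_with_sigma01(zero(n-1))} relativized to $A$, so it need not be reproved.

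The gap is Case 2. You correctly reduce it to choosing, computably in $A^{(n-2)}$, indices $i_k$ with $\mu(Q\setminus R_{k,i_k})$ small, but you give no way to certify this, and the proposed fallbacks are not arguments. As you note, $\mu(Q\setminus R_{k,i})=\mu(Q)-\mu(Q\cap R_{k,i})$ would need a lower bound on the measure of a $\Pi^0_1(A^{(n-2)})$ set, and $A^{(n-2)}$ does not supply one. Your threshold $\mu(Q)2^{-k-2}$ has the same defect, since $\mu(Q)$ is only $A^{(n-2)}$-upper-semicomputable. Instead, fix a rational $0<q<\mu(Q)$ and use the budgets $2^{-k-1}q$.

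The missing idea is to approximate $Q$ from the outside as well. By \cref{approximate_sigma01_or_pi01} relativized to $A^{(n-2)}$, write $Q=\bigcap_m U_m$, where $(U_m)_m$ is nested and uniformly $\Sigma^0_1(A^{(n-2)})$ with uniformly $A^{(n-2)}$-computable measures. Then $Q\setminus R_{k,i}\subseteq U_m\cap\neg R_{k,i}$.

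Both sets on the right are $\Sigma^0_1(A^{(n-2)})$ with $A^{(n-2)}$-computable measure. For $n\geq 3$, $\neg R_{k,i}$ is $\Sigma^0_1(A^{(n-3)})$, so its measure is $A^{(n-3)}$-lower-semicomputable. For $n=2$, take the $R_{k,i}$ from \cref{approximate_sigma01_or_pi01} so that they have $A$-computable measure; unions of closed ideal balls do not give this. Hence, by \cref{intersection_of_sigma0n_with_computable_measure}, $\mu(U_m\setminus R_{k,i})$ is uniformly $A^{(n-2)}$-computable.

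Take the $R_{k,i}$ increasing in $i$. Then the sets $U_m\setminus R_{k,m}$ decrease to $Q\setminus\bigcup_i R_{k,i}\subseteq (Q\setminus S_k)\cup(S_k\setminus\bigcup_i R_{k,i})$. This set is null precisely because $\mu(Q\setminus S_k)=0$. So $A^{(n-2)}$ can search for $i_k$ with $\mu(U_{i_k}\setminus R_{k,i_k})<2^{-k-1}q$.

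Now set $Q'=Q\cap\bigcap_k R_{k,i_k}$. It is $\Pi^0_1(A^{(n-2)})$, it lies in $\bigcap_k S_k=P$, and $\mu(Q')\geq\mu(Q)-q>0$.

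This is the paper's argument in complemented form. The paper writes $\neg P=\bigcup_j P_j$ with $P_j=\bigcap_k V_{j,k}$. It uses the $A^{(n-2)}$-computability of $\mu(U_k\cap V_{j,k})$ to choose $s(j)$, and then enlarges $V_{j,s(j)}$ to a $\Sigma^0_1(A^{(n-3)})$ set $U_{j,s(j)}$. Your sets $\neg R_{k,i}$ play exactly the role of those $U_{j,s(j)}$.
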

\begin{proof}
    Consider any $\Pi^0_n(A)$ set $P$ and $Q \in \P$. We seek to show that there is some $R \in D_P$ such that $R \subseteq Q$.
    
    Write $\neg P = \bigcup_j P_j$, where the sets $P_j$ are uniformly $\Pi^0_{n - 1}(A)$. There are two cases:
    \begin{itemize}
        \item $\mu(Q \setminus P) > 0$. Since
        \[\mu\paren{\bigcup_j \paren{Q \cap P_j}} = \mu(Q \cap \neg P) > 0,\]
        there exists some $j$ such that $\mu\paren{Q \cap P_j} > 0$. Then $Q \cap P_j$ is $\Pi^0_{n - 1}(A)$ with positive measure, so by \cref{approximate_sigma0n_with_sigma01(zero(n-1))} relativized to $A$, there exists a $\Pi^0_1\paren{A^{(n - 2)}}$ set $R \subseteq Q \cap P_j$ with positive measure. Then $R \in \P$ and $R \subseteq P_j \subseteq \neg P$, so $R \in D_P$. Also, $R \subseteq Q$.

        \item $\mu(Q \setminus P) = 0$. Then $\mu\paren{Q \cap P_j} = 0$ for all $j$. Fix $q \in \Q$ with $0 < q < \mu(Q)$. We now define $\Sigma^0_1\paren{A^{(n - 2)}}$ sets $U_k$ and $U_{j, k}$ and an $A^{(n - 2)}$-computable function $s$ such that 
        \[\mu\paren{U_{s(j)} \cap U_{j, s(j)}} \leq 2^{-(j + 1)}q\]
        according to the following subcases:
        \begin{itemize}
            \item $n = 2$. Since $Q$ and the sets $P_j$ are uniformly $\Pi^0_1(A)$, by \cref{approximate_sigma01_or_pi01}, we can write $Q = \bigcap_k U_k$ and $P_j = \bigcap_k U_{j, k}$, where the sets $U_k$ and $U_{j, k}$ are uniformly $\Sigma^0_1(A)$ with uniformly $A$-computable measures. Then for each $j$,
            \[\lim_{k \to \infty} \mu\paren{U_k \cap U_{j, k}} = \mu\paren{P \cap P_j} = 0,\]
            where $\mu\paren{U_k \cap U_{j, k}}$ is uniformly $A$-computable by \cref{intersection_of_sigma0n_with_computable_measure}. Thus, there is an $A$-computable function $s$ such that $\mu\paren{U_{s(j)} \cap U_{j, s(j)}} \leq 2^{-(j + 1)}q$ for all $j$.

            \item $n \geq 3$. Since $Q$ is $\Pi^0_1\paren{A^{(n - 2)}}$, by \cref{approximate_sigma01_or_pi01}, we can write $Q = \bigcap_k U_k$, where the sets $U_k$ are uniformly $\Sigma^0_1\paren{A^{(n - 2)}}$ with uniformly $A^{(n - 2)}$-computable measures.
            
            Likewise, since the sets $P_j$ are uniformly $\Pi^0_{n - 1}(A)$, we can write $P_j = \bigcap_k V_{j, k}$, where the sets $V_{j, k}$ are uniformly $\Sigma^0_{n - 2}(A)$ (with uniformly $A^{(n - 2)}$-computable measures by \cref{sigma0n(A(m))_is_sigma0(m+n)(A),measure_of_sigma0n}). Then for each $j$,
            \[\lim_{k \to \infty} \mu\paren{U_k \cap V_{j, k}} = \mu\paren{P \cap P_j} = 0,\]
            so there is an $A^{(n - 2)}$-computable function $s$ such that $\mu\paren{U_{s(j)} \cap V_{j, s(j)}} \leq 2^{-(j + 2)}q$ for all $j$.
            
            Since the sets $V_{j, k}$ are uniformly $\Sigma^0_{n - 2}(A)$, by \cref{approximate_sigma0n_with_sigma01(zero(n-1))} relativized to $A$, we can take uniformly $\Sigma^0_1\paren{A^{(n - 3)}}$ sets $U_{j, k} \supseteq V_{j, k}$ such that $\mu\paren{U_{j, k}} \leq \mu\paren{V_{j, k}} + 2^{-(j + 2)}q$ for all $j$ and $k$. Then since
            \[U_k \cap U_{j, k} \subseteq \paren{U_k \cap V_{j, k}} \cup \paren{U_{j, k} \setminus V_{j, k}},\]
            we have
            \begin{align*}
                \mu\paren{U_{s(j)} \cap U_{j, s(j)}} &\leq \mu\paren{U_{s(j)} \cap V_{j, s(j)}} + \mu\paren{U_{j, s(j)} \setminus V_{j, s(j)}} \\
                &\leq 2^{-(j + 2)}q + 2^{-(j + 2)}q = 2^{-(j + 1)}q.
            \end{align*}
        \end{itemize}
        Now regardless of the case by which we defined $U_k$, $U_{j, k}$, and $s$, observe that since $Q \subseteq U_k$ for all $k$,
        \[\mu\paren{Q \cap \bigcup_j U_{j, s(j)}} \leq \sum_j \mu\paren{Q \cap U_{j, s(j)}} \leq \sum_j \mu\paren{U_{s(j)} \cap U_{j, s(j)}} \leq \sum_j 2^{-(j + 1)}q = q.\]
        Then $R := Q \setminus \bigcup_j U_{j, s(j)}$ is $\Pi^0_1\paren{A^{(n - 2)}}$ and satisfies
        \[\mu(R) = \mu(Q) - \mu\paren{Q \cap \bigcup_j U_{j, s(j)}} \geq \mu(Q) - q > 0,\]
        so $R \in \P$. Moreover,
        \[\neg P = \bigcup_j P_j \subseteq \bigcup_j V_{j, s(j)} \subseteq \bigcup_j U_{j, s(j)},\]
        so
        \[R \subseteq \neg \bigcup_j U_{j, s(j)} \subseteq P.\]
        This implies that $R \in D_P$. Also, $R \subseteq Q$.
    \end{itemize}
    Thus, $D_P$ is dense in $\P$.
\end{proof}

Now it easily follows that $\Pi^0_n$-generic points always exist when $n \geq 2$.

\begin{cor}\label{pi0n_generic_exists}
    Let $(X, \mu)$ be a computable probability space, $n \geq 2$, and $A \subseteq \N$. Then there is a $\Pi^0_n$-generic point relative to $A$.
\end{cor}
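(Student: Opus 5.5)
We will build the point as the outcome of a forcing-style construction in the partial order $(\P, \subseteq)$ from \cref{pi0n_generic_dense}, meeting every dense set $D_P$ in turn. Enumerate the countably many $\Pi^0_n(A)$ sets as $P_0, P_1, \dots$. Start with $Q_{-1} = X$, which is $\Pi^0_1$ with measure $1$ and so lies in $\P$. Given $Q_{m-1} \in \P$, apply \cref{pi0n_generic_dense} to obtain $Q'_m \in D_{P_m}$ with $Q'_m \subseteq Q_{m-1}$.

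To make sure the nested sets single out a point, we also shrink diameters. Because $Q'_m$ has positive measure, some ideal ball $B$ of radius at most $2^{-m}$ satisfies $\mu(Q'_m \cap B) > 0$. The closed ball $\overline{B}(s_i, r)$ is $\Pi^0_1$, so $Q_m := Q'_m \cap \overline{B}(s_i, r)$ is still $\Pi^0_1(A^{(n-2)})$ with positive measure. Hence $Q_m \in \P$ and $Q_m \subseteq Q'_m$, and its diameter is at most $2^{1-m}$.

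Membership in $D_{P_m}$ passes to subsets in $\P$, since both $Q \subseteq P$ and $Q \cap P = \zero$ are preserved under shrinking. So $Q_m \in D_{P_m}$ as well.

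We now have a nested sequence of nonempty closed sets whose diameters tend to $0$. By completeness of $X$ (Cantor's intersection theorem), $\bigcap_m Q_m = \{x\}$ for a single point $x$.

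To verify genericity, take any $\Pi^0_n(A)$ set $P \ni x$, say $P = P_m$. We have $x \in Q_m$, and $Q_m$ either is contained in $P_m$ or is disjoint from it. Disjointness is impossible because $x$ lies in both. Therefore $Q_m \subseteq P$, and $Q_m$ is a $\Pi^0_1(A^{(n-2)})$ set of positive measure containing $x$. This is exactly the definition of $\Pi^0_n$-genericity relative to $A$.

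The main obstacle is ensuring the intersection is a single point inside every $Q_m$. The dense-set argument alone only produces a filter, which might have empty intersection in a non-compact space. The diameter-shrinking step handles this. It uses that closed ideal balls are $\Pi^0_1$ and that positive-measure sets meet some small ball in positive measure, because countably many balls of radius $2^{-m}$ cover $X$. Completeness of the computable metric space then supplies the limit point.
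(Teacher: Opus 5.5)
Your proof is correct and has the same skeleton as the paper's: a descending sequence through the dense sets $D_{P_e}$ of \cref{pi0n_generic_dense}, followed by the same verification that $x \in Q_e$ forces $Q_e \subseteq P_e$. The one real divergence is how you guarantee that $\bigcap_e Q_e \neq \zero$.

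The paper starts the recursion inside a compact $\Pi^0_1$ set $K$ of positive measure, obtained from \cref{approximate_sigma01_inside_with_compact}. Every later $Q_e$ is then a closed subset of $K$, and compactness gives a point in the intersection. You instead start from $X$ and, at each stage, intersect with a closed rational ball of radius at most $2^{-m}$ that meets $Q'_m$ in positive measure. Completeness then finishes the job via Cantor's intersection theorem.

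Your version avoids the auxiliary compactness lemma. That lemma's proof is itself a total-boundedness argument with finite unions of closed balls, so you are essentially building that idea into the recursion. You also get a unique limit point for free, which the paper only remarks on in the proof of \cref{pi0n_generic_uncountable}. The paper's version keeps each recursion step a bare application of density, and it reuses the fixed $K$ in later existence arguments.

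Your supporting observations are all correct: $X \in \P$; closed rational balls are $\Pi^0_1$ by \cref{closed_balls_closed}; countably many balls of radius $2^{-m}$ cover $X$; and membership in $D_P$ is inherited by subsets lying in $\P$.
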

\begin{proof}
    Define the partial order $(\P, \subseteq)$ and the sets $D_P$ as in \cref{pi0n_generic_dense}. Let $P_e$ denote the $e\th$ $\Pi^0_n(A)$ set. Also, by \cref{approximate_sigma01_inside_with_compact}, we may take a compact $\Pi^0_1$ set $K$ with positive measure.
    
    Since $K \in \P$ and each set $D_P$ is dense in $\P$, we can recursively define a nested sequence $\paren{Q_e}_e \in \prod_e D_{P_e}$ such that $Q_0 \subseteq K$. Since $K$ is compact and each set $Q_e \subseteq K$ is $\Pi^0_1\paren{A^{(n - 2)}}$ and hence closed, there exists a point $x \in \bigcap_e Q_e$.
    
    For each $\Pi^0_n(A)$ set $P_e \ni x$, we have $x \in Q_e$, which means that $P_e$ and $Q_e$ are not disjoint. Since $Q_e \in D_{P_e}$, it follows that $Q_e \subseteq P_e$. This set is $\Pi^0_1\paren{A^{(n - 2)}}$ with $\mu(Q_e) > 0$ and $x \in Q_e$, so $x$ is $\Pi^0_n$-generic point relative to $A$.
\end{proof}

We can also make a stronger claim about the cardinality of the set of $\Pi^0_n$-generic points, as long as the measure is \emph{non-atomic}, i.e., no singleton has positive measure.

\begin{thm}\label{pi0n_generic_uncountable}
    Let $(X, \mu)$ be a computable probability space, where $\mu$ is non-atomic. Let $n \geq 2$ and $A \subseteq \N$. Then the set of $\Pi^0_n$-generic points relative to $A$ has the cardinality of the continuum.
\end{thm}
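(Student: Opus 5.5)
We build a perfect binary tree of conditions in the partial order $(\P, \subseteq)$ of \cref{pi0n_generic_dense}, and show that distinct branches give distinct $\Pi^0_n$-generic points relative to $A$. As in \cref{pi0n_generic_exists}, enumerate the $\Pi^0_n(A)$ sets as $P_0, P_1, \dots$ and fix, by \cref{approximate_sigma01_inside_with_compact}, a compact $\Pi^0_1$ set $K$ with $\mu(K) > 0$. We will define, for each binary string $\sigma$, a set $Q_\sigma \in \P$ with the following properties:
\begin{itemize}
    \item $Q_\zero \subseteq K$;
    \item $Q_{\sigma i} \subseteq Q_\sigma$ for $i \in \{0, 1\}$;
    \item $Q_{\sigma} \in D_{P_{|\sigma| - 1}}$ whenever $|\sigma| \geq 1$;
    \item $Q_{\sigma 0} \cap Q_{\sigma 1} = \zero$.
\end{itemize}
For each $y \in 2^\N$, the sets $Q_{y \prefix k}$ are nested, nonempty and closed inside the compact set $K$, so their intersection contains some point $x_y$. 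The argument at the end of \cref{pi0n_generic_exists} then applies verbatim: if $x_y \in P_e$, then $P_e$ meets $Q_{y \prefix (e+1)} \in D_{P_e}$, hence $Q_{y \prefix (e+1)} \subseteq P_e$. So $x_y$ is $\Pi^0_n$-generic relative to $A$. If $y \neq y'$ first differ at position $k$, then $x_y$ and $x_{y'}$ lie in the disjoint sets $Q_{(y \prefix k)0}$ and $Q_{(y \prefix k)1}$, so they are distinct. This gives an injection $2^\N \to X$, and since $X$ is a separable metric space the cardinality is exactly that of the continuum.

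The only new ingredient is the splitting step: given $Q \in \P$, find disjoint $R_0, R_1 \in \P$ with $R_0, R_1 \subseteq Q$. After splitting we refine each half into $D_{P_{|\sigma|}}$ using \cref{pi0n_generic_dense}, which preserves disjointness.

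To split, we use non-atomicity. Since $\mu$ is non-atomic and $\mu(Q) > 0$, there are two points $a \neq b$ in $\supp(\mu|_Q)$; otherwise $\mu|_Q$ would be concentrated on a single point, which would then be an atom. Take ideal balls $B_a \ni a$ and $B_b \ni b$ whose closed versions $\cl{B}(s_i, r)$ are disjoint, with rational radii chosen so that these closed balls are $\Pi^0_1$. Then $R_0 = Q \cap \cl{B}(s_i, r)$ and $R_1 = Q \cap \cl{B}(s_j, r')$ are $\Pi^0_1(A^{(n-2)})$. They have positive measure because $a$ and $b$ are in the support of $\mu|_Q$, and they are disjoint.

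The main point to check is exactly this splitting. We must make sure that the balls are chosen with computable centers and rational radii so that the closed balls are effectively closed, and that the support argument gives positive measure. Both follow from standard facts about computable metric spaces and the definition of the support, so I expect no serious obstacle. Note that no effectivity in the choice of the $Q_\sigma$ is required, since the tree is built noneffectively.
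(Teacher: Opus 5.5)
Your proposal is correct and follows essentially the same route as the paper. Both build a binary tree of conditions in $\P$ non-effectively and split each node using two points in the support of the restricted measure (which exist by non-atomicity), via disjoint closed balls with rational radii. Both then refine each half into $D_{P_e}$ by the density lemma, use compactness of $K$ to get a point on each branch, and use disjointness to get injectivity. Your explicit remark that $|X| \le 2^{\aleph_0}$, because $X$ is a separable metric space, supplies the upper bound that the paper leaves implicit.
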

\begin{proof}
    Define the partial order $(\P, \subseteq)$ and the sets $D_P$ as in \cref{pi0n_generic_dense}. Let $P_e$ denote the $e\th$ $\Pi^0_n(A)$ set. We recursively define finite sequences $\paren{Q_e^\sigma}_{e < |\sigma|}$ for all strings $\sigma$ as follows.
    
    Assume for some $\l$ that for all strings $\sigma$ with $|\sigma| \leq \l$, we have defined $\paren{Q_e^\sigma}_{e < |\sigma|}$ such that the following hold:
    \begin{enumerate}[(a)]
        \item $Q_e^\sigma \in D_{P_e}$ for all $e < |\sigma|$.
        \item $\paren{Q_e^\sigma}_{e < |\sigma|}$ is nested.
        \item $Q_e^\sigma = Q_e^\tau$ whenever $\sigma \prefix (e + 1) = \tau \prefix (e + 1)$.
        \item Whenever $\tau \neq \sigma$ has the same length as $\sigma$, the sets $Q_{\l - 1}^\sigma$ and $Q_{\l - 1}^\tau$ are disjoint.
    \end{enumerate}
    (These conditions hold vacuously for $\l = 0$ since $\paren{Q_e^\zero}_{e < 0}$ is the empty sequence.)

    Now consider any string $\sigma$ with length $\l$. By \cref{approximate_sigma01_inside_with_compact}, there exists a compact $\Pi^0_1$ set $K$ with positive measure. Define
    \[R = \begin{cases}
         K & \text{if } \l = 0 \\
         Q_{\l - 1}^\sigma & \text{if } \l > 0
    \end{cases},\]
    which lies in $\P$. Define a finite Borel measure $\nu$ on $X$ by $\nu(C) = \mu(R \cap C)$. Since $\mu$ is non-atomic, so is $\nu$:
    \[\nu(\{x\}) = \mu(R \cap \{x\}) \leq \mu(\{x\}) = 0 \quad\text{for all}\quad x \in X.\]
    Hence, given that
    \[\nu(\supp \nu) = \nu(X) = \mu(R) > 0,\]
    there must be at least two points $x_0$ and $x_1$ in the support of $\nu$. Take ideal points $s_i$ and rationals $q_i > 0$ for $i \in \{0, 1\}$ such that, letting
    \[B_i = \set{x \in X: d\paren{x, s_i} < q_i}\quad\text{and}\quad \cl B_i = \set{x \in X: d\paren{x, s_i} \leq q_i},\]
    we have $B_i \ni x_i$ for each $i$ and $\cl B_0 \cap \cl B_1 = \zero$. Then for each $i$, we have
    \[\mu\paren{R \cap \cl B_i} = \nu\paren{\cl B_i} \geq \nu\paren{B_i} > 0\]
    and hence $R \cap \cl B_i \in \P$. Since $D_{P_\l}$ is dense in $\P$, there must exist $R_i \in D_{P_\l}$ such that $R_i \subseteq R \cap \cl B_i$. Thus, we may define $\paren{Q^{\sigma i}_e}_{e \leq \l}$ by $Q^{\sigma i}_e = Q^\sigma_e$ for $e < \l$ and $Q^{\sigma i}_\l = R_i$.
    
    This clearly satisfies all of the properties stated above for strings of length up to $\l + 1$, completing the recursive definition of the finite sequences $\paren{Q^\sigma_e}_{e < |\sigma|}$ for all strings $\sigma$.

    Now given $x \in 2^\N$, define a sequence $\paren{Q^x_e}_e$ by $Q^x_e = Q^\sigma_e$, where $\sigma = x \prefix (e + 1)$. Then this sequence satisfies $Q^x_e \in D_{P_e}$ by property (a) and is nested by properties (b) and (c).

    Because $K$ is compact and the sets $Q^x_e \subseteq K$ are all closed, every intersection $\bigcap_e Q^x_e$ is nonempty. (We could even guarantee that it is a singleton if we insisted that the balls $B_i$ above each have radius no more than $2^{-\l}$.) Hence we may define a function $f: 2^\N \to X$ such that $f(x) \in \bigcap_e Q^x_e$ for all $x \in 2^\N$. This function is injective by property (d), and just as in the proof of \cref{pi0n_generic_exists}, $f(x)$ is always $\Pi^0_n$-generic relative to $A$.
\end{proof}

By following the structure of this proof, forthcoming existence results for points which satisfy some genericity or randomness properties can likewise be strengthened to obtain continuum-many such points, as long as the measure is non-atomic.

In order to speak to the non-randomness of $\Pi^0_n$-generic points, we require a lemma.

\begin{lem}\label{when_is_cylinder_in_sigma01}
    Let $A \subseteq \N$. In Cantor space, the condition $\cyl\sigma \subseteq U$ is $A$-c.e.\ in a string $\sigma$ and an index for a $\Sigma^0_1(A)$ set $U$.
\end{lem}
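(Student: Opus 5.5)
The approach is to use compactness of cylinders in Cantor space to turn the inclusion $\cyl\sigma \subseteq U$ into a condition about finitely many strings. Fix an index for a $\Sigma^0_1(A)$ set $U$. From it we can uniformly obtain an $A$-c.e.\ set of strings $W \subseteq 2^{<\N}$ with $U = \cyl W$, and an $A$-computable enumeration $(W_s)_s$ of finite sets with $W = \bigcup_s W_s$. The claim is that
\[\cyl\sigma \subseteq U \iff \exists s\ \ \cyl\sigma \subseteq \cyl{W_s}.\]
Both directions are short. For $\Leftarrow$, note that $\cyl{W_s} \subseteq \cyl W = U$. For $\Rightarrow$, the cylinder $\cyl\sigma$ is compact and is covered by the open sets $\cyl\tau$ with $\tau \in W$. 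So finitely many of them suffice, and all of these appear in some single $W_s$.

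It remains to show that the matrix ``$\cyl\sigma \subseteq \cyl F$'' is decidable, uniformly in $\sigma$ and a finite set $F$. Let $L = \max\paren{\{|\sigma|\} \cup \{|\tau| : \tau \in F\}}$. Then $\cyl\sigma \subseteq \cyl F$ holds exactly when every extension $\sigma\eta$ of length $L$ has some initial segment in $F$. This is a finite check. Since the outer condition is an existential over $s$ of a relation that is $A$-computable in $(\sigma, \text{index}, s)$, the whole condition is $A$-c.e.\ in $\sigma$ and the index.

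There is no real obstacle here. The only points needing care are the uniformity of passing from an index for $U$ to an enumeration of $W$, which is standard, and the use of compactness.
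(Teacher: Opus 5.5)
Your proposal is correct and follows the paper's proof: both take an $A$-computable approximation $W_s$ to the $A$-c.e.\ set of strings generating $U$, and use compactness of $\cyl\sigma$ to get $\cyl\sigma \subseteq U \iff \exists s\ \cyl\sigma \subseteq \cyl{W_s}$. Your explicit argument that the finite inclusion $\cyl\sigma \subseteq \cyl F$ is decidable, by checking all length-$L$ extensions of $\sigma$, fills in a step the paper leaves implicit.
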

\begin{proof}
    Write the given $\Sigma^0_1(A)$ set $U$ as $\bigcup_{\tau \in W} \cyl\tau$ for some $A$-c.e.\ set $W \subseteq 2^{<\N}$. Let $W_s$ denote the $A$-computable approximation to $W$ by stage $s$. Then by compactness of cylinders, we have
    \[\cyl\sigma \subseteq U \iff (\exists s) \ \cyl\sigma \subseteq \bigcup_{\tau \in W_s} \cyl\tau,\]
    which is $A$-c.e.\ in $\sigma$ and an index for $U$.
\end{proof}

Note in the following result that unlike the analogous \cref{pi01_generic_not_Schnorr_random} for $\Pi^0_1$-genericity, the oracle contributes to the level of non-randomness that $\Pi^0_n$-generic points have when $n \geq 2$.

\begin{thm}\label{pi0n_generic_not_Schnorr_random}
    Consider Cantor space with the Lebesgue measure $\lambda$. Then for all $n \geq 2$ and $A \subseteq \N$, no weakly $\Pi^0_n$-generic point relative to $A$ is Schnorr $n$-random relative to $A$.
\end{thm}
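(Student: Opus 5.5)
The plan is a contrapositive via a Schnorr test. I will build, relative to the oracle $A^{(n-1)}$, a Schnorr test $\paren{U_k}_k$ such that for every $k$ the complement $\neg U_k$ contains no $\Pi^0_1\paren{A^{(n-2)}}$ subset of positive measure. Granting this, let $x$ be weakly $\Pi^0_n$-generic relative to $A$ and suppose $x \notin U_k$ for some $k$. Then $P = \neg U_k$ is $\Pi^0_1\paren{A^{(n-1)}}$, hence $\Pi^0_n(A)$ by \cref{sigma0n(A(m))_is_sigma0(m+n)(A)}, and $x \in P$. Weak $\Pi^0_n$-genericity then gives a $\Pi^0_1\paren{A^{(n-2)}}$ set $Q \subseteq P$ with $\lambda(Q) > 0$, which is impossible. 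So $x \in \bigcap_k U_k$. Hence $x$ is not Schnorr random relative to $A^{(n-1)}$, i.e.\ (by \cref{relativizing_randomness}, relativized to $A$) not Schnorr $n$-random relative to $A$.

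For the construction, let $Q_e$ be the $e\th$ $\Pi^0_1\paren{A^{(n-2)}}$ set. The key observation is that the predicate ``$Q_e \cap \cyl\sigma \neq \zero$'' is $\Pi^0_1\paren{A^{(n-2)}}$ uniformly in $e$ and $\sigma$. This holds by compactness of Cantor space, the same kind of argument as in \cref{when_is_cylinder_in_sigma01}. Consequently it is decidable from $A^{(n-1)}$, uniformly in $e$ and $\sigma$.

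Fix $k$. For each $e$, use $A^{(n-1)}$ to search the strings of length $m_{e,k} := e + k + 2$ for one, call it $\sigma_{e,k}$, with $Q_e \cap \cyl{\sigma_{e,k}} \neq \zero$. If $Q_e = \zero$, detected by asking the oracle whether $Q_e \cap \cyl\zero \neq \zero$, put nothing. Set $U_k = \bigcup_e \cyl{\sigma_{e,k}}$, which is uniformly $\Sigma^0_1\paren{A^{(n-1)}}$. Then every nonempty $Q_e$, and in particular every one of positive measure, meets $U_k$, so $Q_e \not\subseteq \neg U_k$, as required. Moreover $\lambda(U_k) \le \sum_e 2^{-(e+k+2)} \le 2^{-k-1}$.

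It remains to check that $\lambda(U_k)$ is uniformly $A^{(n-1)}$-computable. The measure of the finite union $\bigcup_{e < N} \cyl{\sigma_{e,k}}$ is computable from the finitely many strings involved. The remaining tail has measure at most $\sum_{e \ge N} 2^{-(e+k+2)} \le 2^{-N}$, so these finite unions approximate $\lambda(U_k)$ with an explicit modulus. Thus $\paren{U_k}_k$ is a Schnorr test relative to $A^{(n-1)}$.

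I expect the main obstacle to be the step that avoids reasoning about measures at all. Deciding ``$\lambda(Q_e \cap \cyl\sigma) > 0$'' would appear to need $A^{(n)}$ rather than $A^{(n-1)}$. Mere nonemptiness suffices, however, because the contradiction only requires $Q \not\subseteq P$. I will also double-check that the compactness argument makes the nonemptiness predicate genuinely $\Pi^0_1\paren{A^{(n-2)}}$, and not merely $\Pi^0_1$ relative to a higher jump.
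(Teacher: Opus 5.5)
Your proposal is correct and follows essentially the same route as the paper. Both arguments build a Schnorr test relative to $A^{(n-1)}$ out of one short cylinder meeting each $\Pi^0_1(A^{(n-2)})$ set, with the cylinder's length growing with the index and with $k$; both use compactness to see that ``$\cyl{\sigma}$ meets $P_e$'' is $A^{(n-1)}$-decidable; and both conclude via $\neg U_k$ being $\Pi^0_n(A)$. The only difference is cosmetic: the paper restricts to the $A^{(n-1)}$-c.e.\ set of indices with $\lambda(P_e)>0$ and enumerates it by an $A^{(n-1)}$-computable $f$, whereas you treat every nonempty $Q_e$ directly, which spares the measure computation.
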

\begin{proof}
    Let $P_e$ denote the $e\th$ $\Pi^0_1\paren{A^{(n - 2)}}$ set. Then by \cref{measure_of_sigma0n}, $\lambda\paren{P_e}$ is uniformly $A^{(n - 2)}$-upper-semicomputable and hence uniformly $A^{(n - 1)}$-computable. Then
    \[S := \set{e \in \N: \mu\paren{P_e} > 0}\]
    is $A^{(n - 1)}$-c.e. Let $f: \N \to \N$ be an $A^{(n - 1)}$-computable function with range $S$.
    
    Now given that $P_e \neq \zero$ for each $e \in S$, for each length we consider, there will always exist some $\sigma$ of that length satisfying $\cyl\sigma \cap P_e \neq \zero$. Hence we can define functions $\sigma_k: \N \to 2^{<\N}$ such that $\sigma_k(j)$ is the lexicographically least string $\sigma$ of length $j + k + 1$ such that $\cyl\sigma \cap P_{f(j)} \neq \zero$.

    By \cref{when_is_cylinder_in_sigma01}, the condition $\cyl\sigma \subseteq \neg P_e$ is $A^{(n - 2)}$-c.e.\ in $\sigma$ and $e$, which implies that its negation $\cyl\sigma \cap P_e \neq \zero$ is $A^{(n - 1)}$-computable in $\sigma$ and $e$. Therefore, since $f$ is also $A^{(n - 1)}$-computable, it follows that the functions $\sigma_k$ are uniformly $A^{(n - 1)}$-computable. This implies that the sets
    \[U_k := \bigcup_j \cyl{\sigma_k(j)}\]
    are uniformly $\Sigma^0_1\paren{A^{(n - 1)}}$. Observe that for each $J$ and $k$,
    \[\lambda\paren{\bigcup_{j \geq J} \cyl{\sigma_k(j)}} \leq \sum_{j \geq J} \lambda\cyl{\sigma_k(j)} = \sum_{j \geq J} 2^{-(j + k + 1)} = 2^{-(J + k)}.\]
    Thus, $\lambda\paren{U_k} \leq 2^{-k}$ is uniformly $A^{(n - 1)}$-computable, so $\paren{U_k}_k$ is a Schnorr test relative to $A^{(n - 1)}$ and hence a Schnorr $n$-test relative to $A$. This test captures every weakly $\Pi^0_n$-generic point relative to $A$ since each of the sets $U_k$ intersects every $\Pi^0_1\paren{A^{(n - 2)}}$ set with positive measure.
\end{proof}

Thus, the weakly $\Pi^0_n$-generic points form a measure-zero set when $n \geq 2$. In fact, we can say something stronger using the notion of Hausdorff dimension.

\begin{defn}
    Let $X$ be a metric space and $A \subseteq X$. For $s \geq 0$ and $\delta > 0$, we define
    \[\H^s_\delta(A) := \inf\set{\sum_j \diam\paren{U_j}^s: \paren{U_j}_j \text{ is a countable cover of $A$ with } \diam\paren{U_j} \leq \delta \text{ for all } j}.\]
    Then the \textbf{$s$-dimensional Hausdorff outer measure} of $A$ is $H^s(A) := \lim_{\delta \to 0} \H^s_\delta(A)$, and the \textbf{Hausdorff dimension} of $A$ is $\inf\set{s \geq 0: \H^s(A) = 0}$.
\end{defn}

Measure-zero subsets of Cantor space can in general have Hausdorff dimension anywhere in $[0, 1]$. The following result shows that the set of weakly $\Pi^0_n$-generic points when $n \geq 2$ remains as small as possible in this sense.

\begin{cor}
    Consider Cantor space with the Lebesgue measure $\lambda$. Then for all $n \geq 2$ and $A \subseteq \N$, the set of weakly $\Pi^0_n$-generic points relative to $A$ has Hausdorff dimension zero.
\end{cor}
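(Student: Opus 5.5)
The plan is to reuse the Schnorr test $\paren{U_k}_k$ built in the proof of \cref{pi0n_generic_not_Schnorr_random}, but with the string lengths tightened so that the test witnesses $\H^s = 0$ for every $s > 0$. Recall that there we enumerated the $\Pi^0_1\paren{A^{(n - 2)}}$ sets $P_{f(j)}$ with positive measure and chose one cylinder $\cyl{\sigma_k(j)}$ meeting each. The lengths $j + k + 1$ are irrelevant to the fact that every weakly $\Pi^0_n$-generic point relative to $A$ lies in $U_k$; only the property that each $U_k$ meets every positive-measure $\Pi^0_1\paren{A^{(n - 2)}}$ set mattered. So for each $k$ we are free to choose $\sigma_k(j)$ of any length we like, say $m_k(j)$, with $\cyl{\sigma_k(j)} \cap P_{f(j)} \neq \zero$. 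Effectivity is not needed for a Hausdorff dimension bound, which is a purely classical statement.

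Fix $s > 0$. I will show $\H^s(G) = 0$, where $G$ is the set of weakly $\Pi^0_n$-generic points relative to $A$. For each $k$, set $m_k(j) = \lceil (j + k + 1)/s \rceil$ and define
\[U_k = \bigcup_j \cyl{\sigma_k(j)}.\]
Then $G \subseteq U_k$. The argument is exactly as at the end of the proof of \cref{pi0n_generic_not_Schnorr_random}: if $x \in G$ but $x \notin U_k$, then $\neg U_k$ is a $\Pi^0_1\paren{A^{(n - 1)}}$ set. This is not of the right form for directly applying the definition, so I will phrase it as the paper does. Each $U_k$ intersects every $\Pi^0_1\paren{A^{(n - 2)}}$ set $Q$ of positive measure, because such a $Q$ is some $P_{f(j)}$ and $\cyl{\sigma_k(j)}$ meets it. Now suppose $x \notin U_k$. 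Then $P := \neg U_k$ is $\Pi^0_1\paren{A^{(n-1)}}$, hence $\Pi^0_n(A)$, and it contains $x$. Weak $\Pi^0_n$-genericity then gives a positive-measure $\Pi^0_1\paren{A^{(n - 2)}}$ set $Q \subseteq P$, which contradicts the fact that $U_k$ meets $Q$.

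This last step requires $U_k$ to be $\Sigma^0_1\paren{A^{(n-1)}}$. That holds because $\sigma_k$ is uniformly $A^{(n - 1)}$-computable for any computable length function $m_k$, by the same reasoning as before via \cref{when_is_cylinder_in_sigma01}.

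The cylinders $\cyl{\sigma_k(j)}$ then cover $G$, with diameters $2^{-m_k(j)} \leq 2^{-(k+1)/s} \to 0$, and
\[\sum_j \diam\cyl{\sigma_k(j)}^s \leq \sum_j 2^{-(j + k + 1)} = 2^{-k}.\]
Hence $\H^s_\delta(G) \leq 2^{-k}$ for every $k$ large enough that $2^{-(k+1)/s} \leq \delta$, so $\H^s(G) = 0$. Since $s > 0$ was arbitrary, $G$ has Hausdorff dimension zero.

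The only real obstacle is checking that membership of $G$ in the modified $U_k$ still goes through. That is, the complexity of $U_k$ must stay at $\Sigma^0_1\paren{A^{(n-1)}}$ so that its complement is $\Pi^0_n(A)$, and this is why I choose $m_k$ computable for each rational $s$ (taking $s$ rational suffices). Everything else is routine bookkeeping. Alternatively, one could avoid the complexity issue entirely by noting that the cover only needs to exist, but the weak genericity argument itself needs the definability.
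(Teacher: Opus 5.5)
Your proof is correct and is essentially the paper's: cover $G$ by the cylinders $\cyl{\sigma_k(j)}$ from the proof of \cref{pi0n_generic_not_Schnorr_random} and let $k \to \infty$. The rescaling of the lengths to $\lceil (j+k+1)/s \rceil$ is unnecessary, though. With the original lengths $j+k+1$ one already has $\sum_j 2^{-s(j+k+1)} = 2^{-s(k+1)}/(1 - 2^{-s}) \to 0$ for every real $s > 0$, which is what the paper uses, and this spares you both re-checking that the modified $U_k$ still captures $G$ and restricting to rational $s$.
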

\begin{proof}
    Define the functions $\sigma_k$ as in the proof of \cref{pi0n_generic_not_Schnorr_random} and fix $s > 0$ and $\delta > 0$. Recall the metric we use on $2^\N$, which is given by $d(x, y) = 2^{-\min\set{i: \ x_i \neq y_i}}$ whenever $x \neq y$. This implies that the diameter of any cylinder $\cyl\sigma$ is simply $2^{-|\sigma|}$.
    
    Take $k$ such that $2^{-(k + 1)} \leq \delta$, in which case the diameter of $\cyl{\sigma_k(j)}$ is $2^{-(j + k + 1)} \leq \delta$ for all $j$. Therefore, these cylinders form a $\delta$-cover of the set $G$ of weakly $\Pi^0_n$-generic points relative to $A$, so
    \[\H^s_\delta(G) \leq \sum_j \diam\cyl{\sigma_k(j)}^s = \sum_j 2^{-s(j + k + 1)} = \frac{2^{-s(k + 1)}}{1 - 2^{-s}}.\]
    Sending $k \to \infty$ gives us $\H^s_\delta(G) = 0$ for all $s > 0$ and $\delta > 0$. This in turn implies that $\H^s(G) = 0$ for all $s > 0$ and hence that $G$ has Hausdorff dimension zero.
\end{proof}

A similar proof exploiting the omitted argument for why weakly 1-generic points are not Schnorr random would show that they too have Hausdorff dimension zero.

There does not appear to be a simple relationship between (weakly) $\Pi^0_n$-generic points relative to different oracles when $n \geq 2$. To see why, we first make the following observations.

\begin{lem}\label{weakly_n+1_random_dense}
    Let $(X, \mu)$ be a computable probability space and $n \geq 2$. Take any $A, B \subseteq \N$ such that $A \geq_T B'$. Let $(\P, \subseteq)$ be the partial order consisting of $\Pi^0_1\paren{A^{(n - 2)}}$ sets with positive measure, ordered by the subset relation. For each null $\Pi^0_{n + 1}(B)$ set $P$, let
    \[E_P = \set{Q \in \P: Q \cap P = \zero}.\]
    Then $E_P$ is dense in $\P$ for each null $\Pi^0_{n + 1}(B)$ set $P$.
\end{lem}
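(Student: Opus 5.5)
The plan is to take any $Q \in \P$ and remove from it a single $\Sigma^0_n(B)$ superset of $P$ that covers only a small part of $Q$. The oracle assumption $A \geq_T B'$ is what makes the complement of that superset a $\Pi^0_1\paren{A^{(n - 2)}}$ set.

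Fix a null $\Pi^0_{n + 1}(B)$ set $P$ and some $Q \in \P$. We need $R \in E_P$ with $R \subseteq Q$.

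\emph{Step 1.} Write $P = \bigcap_j S_j$, where the sets $S_j$ are uniformly $\Sigma^0_n(B)$. Replacing $S_j$ by $\bigcap_{i \leq j} S_i$, which is still uniformly $\Sigma^0_n(B)$, we may assume $\paren{S_j}_j$ is nested.

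\emph{Step 2.} Continuity of measure along the decreasing sequence $Q \cap S_j$ gives
\[\mu\paren{Q \cap S_j} \to \mu(Q \cap P) \leq \mu(P) = 0.\]
So we can fix a single $j$ with $\mu\paren{Q \cap S_j} < \mu(Q)$. This step needs no effectivity: we only need $j$ to exist, not to be found computably.

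\emph{Step 3.} We compute the complexity of $S_j$. A $\Sigma^0_n(B)$ set is $\Sigma^0_1\paren{B^{(n - 1)}}$; this is the standard correspondence recorded in \cref{sigma0n(A(m))_is_sigma0(m+n)(A)}, read in the direction ``$\Sigma^0_n(B)$ is $\Sigma^0_1$ relative to $B^{(n - 1)}$.'' Since $A \geq_T B'$, monotonicity of the jump gives $A^{(n - 2)} \geq_T B^{(n - 1)}$. Hence $S_j$ is $\Sigma^0_1\paren{A^{(n - 2)}}$.

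\emph{Step 4.} Set $R = Q \setminus S_j = Q \cap \neg S_j$. This is an intersection of two $\Pi^0_1\paren{A^{(n - 2)}}$ sets, so it is again $\Pi^0_1\paren{A^{(n - 2)}}$. Its measure is
\[\mu(R) = \mu(Q) - \mu\paren{Q \cap S_j} > 0,\]
so $R \in \P$. Also $R \subseteq Q$, and $R \cap P \subseteq R \cap S_j = \zero$, so $R \in E_P$. Hence $E_P$ is dense in $\P$.

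The only step needing care is Step 3. Both relativization facts must be applied in the right direction: a $\Sigma^0_n(B)$ set is $\Sigma^0_1$ relative to $B^{(n - 1)}$, and $B' \leq_T A$ implies $B^{(n - 1)} \leq_T A^{(n - 2)}$. This is where the hypothesis $n \geq 2$ is used, so that $A^{(n - 2)}$ makes sense.
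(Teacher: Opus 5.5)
Your Step 3 is false, and it is the step the whole argument rests on. \cref{sigma0n(A(m))_is_sigma0(m+n)(A)} goes only one way: every $\Sigma^0_1\paren{B^{(n-1)}}$ set is $\Sigma^0_n(B)$, not conversely. Post's theorem identifies $\Sigma^0_n(B)$ subsets of $\N$ with $B^{(n-1)}$-c.e.\ sets, but this does not transfer to subsets of $X$. A $\Sigma^0_1(C)$ subset of $X$ is open for every oracle $C$, whereas a $\Sigma^0_n(B)$ set with $n \geq 2$ need not be open. For example, in Cantor space $\{0^\infty\}$ is $\Pi^0_1$, hence $\Sigma^0_2$, but it is not open. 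The paper states this explicitly just before \cref{approximate_sigma0n_with_sigma01(zero(n-1))}. So your $S_j$ need not be $\Sigma^0_1\paren{A^{(n-2)}}$. Then $R = Q \setminus S_j$ need not even be closed, and it need not lie in $\P$.

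The missing ingredient is outer approximation. Since $\mu(Q \setminus S_j) > 0$, fix a rational $q$ with $0 < q < \mu(Q \setminus S_j)$. Apply \cref{approximate_sigma0n_with_sigma01(zero(n-1))} relativized to $B$ to get a $\Sigma^0_1\paren{B^{(n-1)}}$ set $V \supseteq S_j$ with $\mu(V) < \mu(S_j) + q$. Then $\mu(Q \setminus V) \geq \mu(Q \setminus S_j) - \mu(V \setminus S_j) > \mu(Q \setminus S_j) - q > 0$. The set $R := Q \setminus V$ is $\Pi^0_1\paren{A^{(n-2)}}$ by your correct observation that $B^{(n-1)} \leq_T A^{(n-2)}$. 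Finally, $R \cap P \subseteq R \cap V = \zero$. With this change your argument becomes essentially the paper's proof. Steps 1 and 2 and your jump computation are fine as they stand.
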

\begin{proof}
    Consider any null $\Pi^0_{n + 1}(B)$ set $P$ and $Q \in \P$. We seek to show that there is some $R \in E_P$ such that $R \subseteq Q$.

    Consider any $Q \in \P$ and write $P = \bigcap_k U_k$ for some uniformly $\Sigma^0_n(B)$ sets $U_k$. Since
    \[\mu\paren{\bigcup_k \paren{Q \setminus U_k}} = \mu(Q \setminus P) = \mu(Q) > 0,\]
    there must be some $k$ such that $\mu\paren{Q \setminus U_k} > 0$. Fix $q \in \Q$ with $0 < q < \mu\paren{Q \setminus U_k}$. Since $U_k$ is $\Sigma^0_n(B)$, by \cref{approximate_sigma0n_with_sigma01(zero(n-1))} relativized to $B$, there exists a $\Sigma^0_1\paren{B^{(n - 1)}}$ set $V_k \supseteq U_k$ such that $\mu\paren{V_k} \leq \mu\paren{U_k} + q$. Then given that
    \[q < \mu\paren{Q \setminus U_k} \leq \mu\paren{Q \setminus V_k} + \mu\paren{V_k \setminus U_k} \leq \mu\paren{Q \setminus V_k} + q,\]
    we must have $\mu\paren{Q \setminus V_k} > 0$. Since $\neg V_k$ is $\Pi^0_1\paren{B^{(n - 1)}}$ and $A \geq_T B'$, this implies that $\neg V_k$ is $\Pi^0_1\paren{A^{(n - 2)}}$, just like $Q$. Thus, $R := Q \setminus V_k \in \P$. Clearly, $R$ is disjoint from $P$, so $R \in E_P$, and $R \subseteq Q$. Thus, $E_P$ is dense in $\P$.
\end{proof}

\begin{cor}\label{pi0n_generic_for_A'_and_weakly_n+1_random_for_A_exists}
    Let $(X, \mu)$ be a computable probability space, $n \geq 2$, and $A, B \subseteq \N$. Then whenever $A \geq_T B'$, there is a $\Pi^0_n$-generic point relative to $A$ which is also weakly $(n + 1)$-random relative to $B$.
\end{cor}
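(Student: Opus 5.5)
We mimic the proof of \cref{pi0n_generic_exists}, but interleave the dense sets from \cref{weakly_n+1_random_dense} with those from \cref{pi0n_generic_dense}. Define the partial order $(\P, \subseteq)$ of $\Pi^0_1\paren{A^{(n - 2)}}$ sets with positive measure. Both lemmas use this same poset. \cref{pi0n_generic_dense} gives that $D_P$ is dense in $\P$ for every $\Pi^0_n(A)$ set $P$. Since $A \geq_T B'$, \cref{weakly_n+1_random_dense} gives that $E_P$ is dense in $\P$ for every null $\Pi^0_{n + 1}(B)$ set $P$. Enumerate the $\Pi^0_n(A)$ sets as $\paren{P_e}_e$ and the null $\Pi^0_{n + 1}(B)$ sets as $\paren{N_e}_e$; there are only countably many, so no effectivity of this enumeration is needed.

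Using \cref{approximate_sigma01_inside_with_compact}, fix a compact $\Pi^0_1$ set $K$ with positive measure. Being $\Pi^0_1$, it is also $\Pi^0_1\paren{A^{(n - 2)}}$, so $K \in \P$. Recursively choose a nested sequence in $\P$ with $Q_0 \subseteq K$, alternating between the two families. At step $2e$, pick $Q_{2e} \in D_{P_e}$ below the previous set. At step $2e + 1$, pick $Q_{2e + 1} \in E_{N_e}$ below $Q_{2e}$. Density guarantees each choice exists. Each $Q_m$ is a closed subset of the compact set $K$, and the sequence is nested, so there is a point $x \in \bigcap_m Q_m$.

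Now verify the two properties. For genericity: if $x \in P_e$, then $Q_{2e}$ meets $P_e$, since it contains $x$. As $Q_{2e} \in D_{P_e}$, this forces $Q_{2e} \subseteq P_e$. So $Q_{2e}$ is a $\Pi^0_1\paren{A^{(n - 2)}}$ set of positive measure with $x \in Q_{2e} \subseteq P_e$, and $x$ is $\Pi^0_n$-generic relative to $A$. For randomness: for every null $\Pi^0_{n + 1}(B)$ set $N_e$, we have $x \in Q_{2e + 1}$ and $Q_{2e + 1} \cap N_e = \zero$, so $x \notin N_e$. Hence $x$ is weakly $(n + 1)$-random relative to $B$.

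Given the two density lemmas, no step is a real obstacle. The only point needing care is that both families of dense sets live in the \emph{same} poset $\P$. This is exactly why the hypothesis $A \geq_T B'$ is needed: the sets produced in \cref{weakly_n+1_random_dense}, namely complements of $\Sigma^0_1\paren{B^{(n - 1)}}$ sets, must be $\Pi^0_1\paren{A^{(n - 2)}}$. That lemma has already handled this.
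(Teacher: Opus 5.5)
Your proposal is correct and follows the paper's own proof essentially verbatim. You use the same poset $\P$, the same alternation between the dense families $D_{P_e}$ and $E_{R_e}$ below a compact $\Pi^0_1$ set $K$, and the same verification of genericity and weak $(n+1)$-randomness at a point of $\bigcap_j Q_j$.
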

\begin{proof}
    Define the partial order $(\P, \subseteq)$ and the sets $D_P$ and $E_R$ as in \cref{pi0n_generic_dense,weakly_n+1_random_dense}. Let $P_e$ denote the $e\th$ $\Pi^0_n(A)$ set and $R_e$ denote the $e\th$ null $\Pi^0_{n + 1}(B)$ set (according to some non-effective enumeration of them). Also, by \cref{approximate_sigma01_inside_with_compact}, we may take a compact $\Pi^0_1$ set $K$ with positive measure.
    
    Since $K \in \P$ and the sets $D_P$ and $E_R$ are dense in $\P$, we can recursively define a nested sequence $\paren{Q_j}_j$ such that $Q_0 \subseteq K$, $Q_{2e} \in D_{P_e}$ for all $e$, and $Q_{2e + 1} \in E_{R_e}$ for all $e$. Since $K$ is compact and each set $Q_j \subseteq K$ is $\Pi^0_1\paren{A^{(n - 2)}}$ and hence closed, there exists a point $x \in \bigcap_j Q_j$, which as in \cref{pi0n_generic_exists} is $\Pi^0_n$-generic point relative to $A$.

    Also, for each null $\Pi^0_{n + 1}(B)$ set $R_e$, we have $Q_{2e + 1} \in E_{R_e}$, which means that $Q_{2e + 1} \ni x$ is disjoint from $R_e$. That is, $x \notin R_e$, so $x$ is weakly $(n + 1)$-random relative to $B$.
\end{proof}

This immediately yields the following result, showing that neither \cref{pi01_genericity_gets_stronger_with_relativization} nor \cref{weakly_pi01_generic_for_A'_implies_pi01_generic_for_A} extends to $\Pi^0_n$-genericity when $n \geq 2$.

\begin{cor}\label{pi0n_genericity_does_not_get_stronger_with_relativization}
    Consider Cantor space with the Lebesgue measure and $n \geq 2$. Then whenever $A, B \subseteq \N$ satisfy $A \geq_T B'$, there is a $\Pi^0_n$-generic point relative to $A$ which is not weakly $\Pi^0_n$-generic relative to $B$.
\end{cor}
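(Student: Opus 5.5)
Given $A \geq_T B'$, we apply \cref{pi0n_generic_for_A'_and_weakly_n+1_random_for_A_exists} in $\paren{2^\N, \lambda}$. It yields a point $x$ that is $\Pi^0_n$-generic relative to $A$ and also weakly $(n + 1)$-random relative to $B$. It then suffices to show that a point which is weakly $(n+1)$-random relative to $B$ cannot be weakly $\Pi^0_n$-generic relative to $B$.

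For this we use \cref{pi0n_generic_not_Schnorr_random} with oracle $B$. It says that no weakly $\Pi^0_n$-generic point relative to $B$ is Schnorr $n$-random relative to $B$. So we only need the standard implication that weak $(n+1)$-randomness relative to $B$ implies Schnorr $n$-randomness relative to $B$. To check it, let $(U_k)_k$ be a Schnorr $n$-test relative to $B$, i.e.\ a Schnorr test relative to $B^{(n-1)}$ (cf.\ \cref{relativizing_randomness}). Each $U_k$ is $\Sigma^0_1\paren{B^{(n-1)}}$, hence uniformly $\Sigma^0_n(B)$ by \cref{sigma0n(A(m))_is_sigma0(m+n)(A)}. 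Then $\bigcap_k U_k$ is a $\Pi^0_{n+1}(B)$ set, and it is null because $\lambda(U_k) \leq 2^{-k}$. A weakly $(n+1)$-random point relative to $B$ avoids every null $\Pi^0_{n+1}(B)$ set, so it passes the test.

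Putting these together, $x$ is Schnorr $n$-random relative to $B$, so by \cref{pi0n_generic_not_Schnorr_random} it is not weakly $\Pi^0_n$-generic relative to $B$. The only point needing care is the bookkeeping in the implication above: matching the relativized definition of Schnorr $n$-tests to the arithmetical level $\Sigma^0_n(B)$. This is routine via the cited lemma.
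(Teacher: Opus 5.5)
Your proposal is correct and follows the paper's argument exactly. You take the point that the preceding corollary makes $\Pi^0_n$-generic relative to $A$ and weakly $(n+1)$-random relative to $B$, observe that it is therefore Schnorr $n$-random relative to $B$, and conclude with the theorem that no weakly $\Pi^0_n$-generic point relative to $B$ is Schnorr $n$-random relative to $B$. The only addition is that you verify the implication from weak $(n+1)$-randomness to Schnorr $n$-randomness, via the fact that a Schnorr $n$-test relative to $B$ has a null $\Pi^0_{n+1}(B)$ intersection; the paper takes this as standard.
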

\begin{proof}
    By \cref{pi0n_generic_for_A'_and_weakly_n+1_random_for_A_exists}, there exists a $\Pi^0_n$-generic point $x$ relative to $A$ which is also weakly $(n + 1)$-random relative to $B$ and hence Schnorr $n$-random relative to $B$. Then by \cref{pi0n_generic_not_Schnorr_random}, $x$ cannot be weakly $\Pi^0_n$-generic relative to $B$.
\end{proof}

Before returning to effective recurrence, we comment on the ``largeness'' of $\Pi^0_n$-generic points in Cantor space with the Lebesgue measure. Since the $\Pi^0_1$-generic points form a comeager set, they are large in the sense of Baire category. When $n \geq 2$, on the other hand, the collection of $\Pi^0_n$-generic points seems rather small in most ways.

Indeed, because they are all weakly 2-random and hence Schnorr random, they are disjoint from the comeager set of weakly 1-generic points, meaning that they form a meager set. Moreover, given that they are not Schnorr $n$-random, they also form a measure-zero set, which in fact has Hausdorff dimension zero. The only way in which the collection of $\Pi^0_n$-generic points seems to be large when $n \geq 2$ is by its cardinality, which is that of the continuum.

Yet the definition of weak $\Pi^0_n$-genericity could be phrased as follows: a point is weakly $\Pi^0_n$-generic for $n \geq 2$ if and only if it lies in every $\Sigma^0_n$ set that intersects every $\Pi^0_1\paren{\zero^{(n - 2)}}$ set with positive measure. This formulation is reminiscent of an alternative notion of density with respect to particular sets of positive measure. Thus, we have the following question.

\begin{qn}
    In Cantor space with the Lebesgue measure, are there any meaningful senses other than cardinality in which the collection of (weakly) $\Pi^0_n$-generic points for $n \geq 2$ is large?
\end{qn}

\subsubsection[Effective recurrence and preservation of Π0n-genericity]{Effective recurrence and preservation of $\Pi^0_n$-genericity}

We at last provide effective recurrence theorems for (weakly) $\Pi^0_n$-generic points. Since these points are not Schnorr $n$-random in $\paren{2^\N, \lambda}$ by \cref{pi01_generic_not_Schnorr_random,pi0n_generic_not_Schnorr_random}, the following results properly expand the class of recurrent points for $\Pi^0_n$ sets established so far.

\begin{thm}\label{CPS_pi0n_generic_Poincare}
    Let $(X, \mu)$ be a computable probability space, $n \geq 1$, and $A \subseteq \N$. If $x \in X$ is $\Pi^0_n$-generic relative to $A$, then for any $\Pi^0_n(A)$ set $P \ni x$ with $\mu(P) > 0$ and any computable measure-preserving transformation $T: \ \subseteq \! X \to X$ with $x \in \dom T$, we have $T^k(x) \in P$ for some $k > 0$.
\end{thm}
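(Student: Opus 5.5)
The plan is to reduce to recurrence in a simpler set of positive measure that contains $x$ and sits inside $P$, and then apply an earlier recurrence theorem to that set. The point $x$ has the right randomness for the smaller set because $\Pi^0_n$-genericity implies weak $n$-randomness, as shown in the lemma following \cref{pi0n_generic_definition}. The argument splits into the cases $n = 1$ and $n \geq 2$.

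For $n = 1$, I will use the $\Pi^0_1$-genericity of $x$ relative to $A$ directly. It provides a $\Sigma^0_1$ set $U \subseteq P$ with $\mu(U) > 0$ and $x \in U$. Since $x$ is weakly $1$-random, i.e.\ Kurtz random, \cref{CPS_weak_n_Poincare} with $n = 1$ applies to $U$ and $T$. This gives some $k > 0$ with $T^k(x) \in U \subseteq P$. The oracle $A$ causes no trouble here, because $U$ is an unrelativized $\Sigma^0_1$ set.

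For $n \geq 2$, genericity gives a $\Pi^0_1\paren{A^{(n-2)}}$ set $Q \subseteq P$ with $\mu(Q) > 0$ and $x \in Q$. I then need a recurrence theorem for $\Pi^0_1(A^{(n-2)})$ sets containing a sufficiently random point. My first choice is to rerun the proof of \cref{CPS_weak_n_Poincare} (case $n \geq 2$) with $Q$ as the set. The preimages $T^{-k}(\neg Q)$ have $\neg Q$ of class $\Sigma^0_1(A^{(n-2)})$, hence $\Sigma^0_{n-1}(A)$, hence $\Pi^0_n(A)$. By \cref{iterated_preimage_of_sigma0n}, relativized to $A$ (which is harmless since $T$ is computable), $T^{-k}(\neg Q) = D \cap P_k$ with $P_k$ uniformly $\Pi^0_n(A)$. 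If $x$ never returned to $Q$, it would lie in the set $Q \cap \bigcap_{k>0} P_k$. This set is $\Pi^0_n(A)$ and, by the Poincar\'e Recurrence Theorem, null, since $D$ has full measure.

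The remaining step is to rule out $x$ lying in a null $\Pi^0_n(A)$ set. Applying $\Pi^0_n$-genericity relative to $A$ to that null set $R \ni x$ would yield a $\Pi^0_1(A^{(n-2)})$ subset of $R$ with positive measure, which is impossible. The same argument handles $n = 1$ uniformly: a null $\Pi^0_1(A)$ set containing $x$ cannot contain a positive-measure $\Sigma^0_1$ set. So $x$ is weakly $n$-random relative to $A$, and that is all the above argument needs.

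I expect the main obstacle to be this relativization bookkeeping: checking that \cref{iterated_preimage_of_sigma0n} and the complexity collapse $\Sigma^0_1(A^{(n-2)}) \subseteq \Sigma^0_{n-1}(A)$ (\cref{sigma0n(A(m))_is_sigma0(m+n)(A)}) relativize cleanly. Given those, the theorem follows.
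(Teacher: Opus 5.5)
Your proposal is correct and follows the paper's proof. It has the same split into $n = 1$ and $n \geq 2$, uses genericity in the same way to produce $U \ni x$ or $Q \ni x$, and appeals in the same way to weak $n$-randomness relative to $A$. The only difference is that for $n \geq 2$ you inline the argument of \cref{CPS_weak_n_Poincare}, relativized to $A$ and with $Q$ serving as both the set and its $\Pi^0_{n-1}(A)$ subset, whereas the paper cites that theorem directly after noting via \cref{sigma0n(A(m))_is_sigma0(m+n)(A)} that $Q$ is $\Pi^0_{n-1}(A)$.
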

\begin{proof}
    Assume $x \in X$ is $\Pi^0_n$-generic relative to $A$, and let $P \ni x$ be $\Pi^0_n(A)$ with $\mu(P) > 0$. We have two cases:
    \begin{itemize}
        \item $n = 1$. Then there is a $\Sigma^0_1$ set $U \subseteq P$ with $\mu(U) > 0$ and $x \in U$. Since $x$ is Kurtz random and $U \ni x$ is $\Sigma^0_1$ with $\mu(U) > 0$, \cref{CPS_weak_n_Poincare} implies that $T^k(x) \in U \subseteq P$ for some $k > 0$.

        \item $n \geq 2$. Then there is a $\Pi^0_1\paren{A^{(n - 2)}}$ set $Q \subseteq P$ with $\mu(Q) > 0$ and $x \in Q$. By \cref{sigma0n(A(m))_is_sigma0(m+n)(A)}, $Q$ is $\Pi^0_{n - 1}(A)$. Since $x$ is weakly $n$-random relative to $A$ and $Q \ni x$ is $\Sigma^0_n(A)$ with $\mu(Q) > 0$, \cref{CPS_weak_n_Poincare} relativized to $A$ implies that $T^k(x) \in Q \subseteq P$ for some $k > 0$. \qedhere
    \end{itemize}
\end{proof}

Using \cref{CPS_weak_n_Poincare_ergodic}, a nearly identical proof gives us the same result for weakly $\Pi^0_n$-generic points and ergodic transformations.

\begin{thm}\label{CPS_weak_pi0n_generic_Poincare_ergodic}
    Let $(X, \mu)$ be a computable probability space, $n \geq 1$, and $A \subseteq \N$. If $x \in X$ is weakly $\Pi^0_n$-generic relative to $A$, then for any $\Pi^0_n(A)$ set $P \ni x$ with $\mu(P) > 0$ and any computable ergodic measure-preserving transformation $T: \ \subseteq \! X \to X$ with $x \in \dom T$, we have $T^k(x) \in P$ for some $k > 0$.
\end{thm}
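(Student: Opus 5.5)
We mirror the proof of \cref{CPS_pi0n_generic_Poincare}, with the only change being that weak $\Pi^0_n$-genericity gives a positive-measure subset $Q \subseteq P$ that need not contain $x$. That is exactly why we must use \cref{CPS_weak_n_Poincare_ergodic} (relativized to $A$) in place of \cref{CPS_weak_n_Poincare}: ergodicity lets the target set omit the point.

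Fix $x$ weakly $\Pi^0_n$-generic relative to $A$, a $\Pi^0_n(A)$ set $P \ni x$ with $\mu(P) > 0$, and a computable ergodic $T$ with $x \in \dom T$. First we record that $x$ is weakly $n$-random relative to $A$, by the lemma showing that weak $\Pi^0_n$-genericity relative to $A$ implies weak $n$-randomness relative to $A$. We then split into two cases.

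\emph{Case $n = 1$.} Weak $\Pi^0_1$-genericity relative to $A$ gives a $\Sigma^0_1$ set $U \subseteq P$ with $\mu(U) > 0$. Since $x$ is Kurtz random, \cref{CPS_weak_n_Poincare_ergodic} with $n = 1$ gives some $k > 0$ with $T^k(x) \in U \subseteq P$. This case needs no relativization, since $U$ is unrelativized $\Sigma^0_1$.

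\emph{Case $n \geq 2$.} We obtain a $\Pi^0_1(A^{(n-2)})$ set $Q \subseteq P$ with $\mu(Q) > 0$. By \cref{sigma0n(A(m))_is_sigma0(m+n)(A)}, $Q$ is $\Pi^0_{n-1}(A)$, hence $\Sigma^0_n(A)$. Applying \cref{CPS_weak_n_Poincare_ergodic} relativized to $A$ (with $T$ still computable, hence $A$-computable) then yields some $k > 0$ with $T^k(x) \in Q \subseteq P$.

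The only point needing care is this relativization. The proof of \cref{CPS_weak_n_Poincare_ergodic} relies only on \cref{iterated_preimage_of_sigma0n}, which states that preimages of $\Pi^0_n(A)$ sets under $T$ are uniformly $\Pi^0_n(A)$ within the domain, and on the Poincar\'e recurrence/ergodic theorem. Both relativize verbatim. So we expect no real obstacle beyond checking this.
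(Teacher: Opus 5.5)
Your proposal is correct and follows exactly the route the paper intends: you repeat the proof of \cref{CPS_pi0n_generic_Poincare} case by case ($n = 1$ via a $\Sigma^0_1$ subset, $n \geq 2$ via a $\Pi^0_1(A^{(n-2)})$ subset, which is $\Pi^0_{n-1}(A)$), drop the requirement that the subset contain $x$, and invoke \cref{CPS_weak_n_Poincare_ergodic} (relativized to $A$ when $n \geq 2$) in place of \cref{CPS_weak_n_Poincare}. Your justification of the relativization is also sound: $T$ is computable, so preimages of $\Sigma^0_n(A)$ sets remain uniformly $\Sigma^0_n(A)$ in the domain.
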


Note that neither of these theorems requires any kind of computability of the measures of the $\Pi^0_n(A)$ sets $P$, unlike \cref{CPS_Schnorr_n_Poincare,CPS_Schnorr_n_Poincare_ergodic}. In \cref[S]{section_other}, we will expand the hypotheses even further through what will be called quasi-$\Pi^0_n$-genericity.

One shortcoming of these two effective recurrence theorems is that we are only guaranteed that \emph{some} iterate of the point $x$ returns to the set $P$, not that infinitely many iterates return. In order to achieve infinitely many, we would need to know that (weak) $\Pi^0_n$-genericity is preserved by the transformation $T$. Any computable measure-preserving bijection with computable inverse would clearly suffice, but there is a weaker condition that also works.

\begin{lem}\label{preservation_of_pi0n_genericity}
    Let $(X, \mu, T)$ be a computable measure-preserving system and $A \subseteq \N$.
    \begin{enumerate}[(a)]
        \item If $T$ is an open map, then $T$ preserves (weak) $\Pi^0_1$-genericity relative to $A$.
        \item For each $n \geq 2$, if $T$ preserves $\Pi^0_1\paren{A^{(n - 2)}}$ sets, then $T$ preserves (weak) $\Pi^0_n$-genericity relative to $A$.
    \end{enumerate}
\end{lem}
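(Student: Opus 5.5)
The plan is to show that if $x \in \dom T$ is (weakly) $\Pi^0_n$-generic relative to $A$, then so is $T(x)$. We use two basic facts about a computable measure-preserving transformation $T$ with domain $D$. First, by \cref{iterated_preimage_of_sigma0n}, the preimage of a $\Pi^0_n(A)$ set $P$ has the form $T^{-1}(P) = D \cap P'$ for some $\Pi^0_n(A)$ set $P'$; this relativizes harmlessly, since $T$ is computable and the set complexity only involves $A$. Second, $\mu(T(E)) \geq \mu(T^{-1}(T(E))) \geq \mu(E \cap D)$ whenever $T(E)$ is measurable, because $T$ is measure-preserving and $E \cap D \subseteq T^{-1}(T(E))$. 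Since $D$ has full measure, this gives $\mu(T(E)) \geq \mu(E) > 0$ whenever $\mu(E) > 0$.

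For (a), suppose $x$ is $\Pi^0_1$-generic relative to $A$, and let $P \ni T(x)$ be $\Pi^0_1(A)$. Write $T^{-1}(P) = D \cap P'$ with $P'$ a $\Pi^0_1(A)$ set; then $x \in P'$. Genericity gives a $\Sigma^0_1$ set $U \subseteq P'$ with $\mu(U) > 0$ and $x \in U$. Replace $U$ by $U \cap D$, noting that $\dom T$ need not be open; the image $T(U \cap D)$ still lies inside $P$. Since $T$ is an open map, the natural candidate is $T(U \cap D)$, or better the image of the open set $U \cap D$ when $D$ is itself open. I expect the main obstacle to be exactly this domain issue: I would handle it either by reading ``open map'' as mapping relatively open subsets of $D$ to open sets, or by applying \cref{pi01_genericity_with_open_sets}, whose clause (b) allows an arbitrary open set $V$ rather than a $\Sigma^0_1$ one. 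With that lemma, taking $V = T(U \cap D)$ gives an open subset of $P$ containing $T(x)$, and $\mu(V) \geq \mu(U) > 0$ by the second fact. Hence $T(x)$ is $\Pi^0_1$-generic relative to $A$. For the weak version the same argument works after dropping ``$x \in U$''; here $P \ni T(x)$ still guarantees $x \in P'$, which is all the hypothesis needs.

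For (b), with $n \geq 2$, the argument has the same shape. Let $P \ni T(x)$ be $\Pi^0_n(A)$, and obtain the $\Pi^0_n(A)$ set $P' \ni x$ with $T^{-1}(P) = D \cap P'$. Genericity gives a $\Pi^0_1(A^{(n-2)})$ set $Q \subseteq P'$ with $\mu(Q) > 0$ and $x \in Q$. The hypothesis that $T$ preserves $\Pi^0_1(A^{(n-2)})$ sets, meaning images of such sets are again such sets, makes $T(Q)$ (interpreted as $T(Q \cap D)$) a $\Pi^0_1(A^{(n-2)})$ set. This set contains $T(x)$, lies inside $P$, and has measure at least $\mu(Q) > 0$. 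The weak version is identical without tracking membership of $x$.

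The only delicate points are two. One is the precise meaning of ``preserves'' and ``open'' for a partial map; I would fix it as the image of $E \cap \dom T$. The other is measurability of images: in (a) the image is open, and in (b) it is closed, so both are measurable.
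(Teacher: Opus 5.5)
Your proof is correct and follows the paper's argument. You pull $P$ back to a $\Pi^0_n(A)$ set $P' \ni x$ with $T^{-1}(P) = \dom T \cap P'$, take the genericity witness inside $P'$, and push it forward, using openness of $T$ plus \cref{pi01_genericity_with_open_sets} in (a) or the preservation hypothesis in (b), with positivity from $\mu(T(E)) = \mu(T^{-1}(T(E))) \geq \mu(E \cap \dom T) = \mu(E)$; your explicit treatment of $\dom T$ is more careful than the paper's write-up. One small correction to your wording in (a): the partial-map reading of \emph{open map} and the open-set form of $\Pi^0_1$-genericity are not alternatives but are both needed, the first to make $T(U \cap \dom T)$ open and the second to let an open (not necessarily $\Sigma^0_1$) witness suffice, and your final argument does in fact use both.
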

\begin{proof}
    Below we only prove each claim for $\Pi^0_1$-genericity because the argument for weak $\Pi^0_1$-genericity is nearly identical.
    \begin{enumerate}[(a)]
        \item Assume that $T$ is an open map. Let $x$ be $\Pi^0_1$-generic relative to $A$ and consider any $\Pi^0_1(A)$ set $P \ni T(x)$. Then $T^{-1}(P) \ni x$ is $\Pi^0_1(A)$ and $x$ is $\Pi^0_1$-generic relative to $A$, so there is an open set $U \subseteq P$ with $\mu(U) > 0$ and $x \in U$. Since $T$ is open, $T(U) \subseteq P$ is likewise open, and we also have
        \[\mu(T(U)) = \mu\paren{T^{-1}(T(U))} \geq \mu(U) > 0,\]
        along with $T(x) \in T(U)$. Therefore, $T(x)$ is $\Pi^0_1$-generic relative to $A$.

        \item Let $n \geq 2$ and assume that $T$ preserves $\Pi^0_1\paren{A^{(n - 2)}}$ sets. Let $x$ be $\Pi^0_n$-generic relative to $A$ and consider any $\Pi^0_n(A)$ set $P \ni T(x)$. Then $T^{-1}(P) \ni x$ is $\Pi^0_n(A)$ and $x$ is $\Pi^0_n$-generic relative to $A$, so there is a $\Pi^0_1\paren{A^{(n - 2)}}$ set $Q \subseteq P$ with $\mu(Q) > 0$ and $x \in Q$. By assumption, $T(Q) \subseteq P$ is likewise $\Pi^0_1\paren{A^{(n - 2)}}$, and we also have
        \[\mu(T(Q)) = \mu\paren{T^{-1}(T(Q))} \geq \mu(Q) > 0,\]
        along with $T(x) \in T(Q)$. Therefore, $T(x)$ is $\Pi^0_1$-generic relative to $A$. \qedhere
    \end{enumerate}
\end{proof}

We can use this result to show that the shift on $\paren{2^\N, \lambda}$, for example, preserves (weak) $\Pi^0_n$-genericity.

\begin{cor}
    Consider Cantor space with any computable probability measure $\mu$ preserved by the shift map $T: 2^\N \to 2^\N$. Let $n \geq 1$ and $A \subseteq \N$. Then $T$ preserves (weak) $\Pi^0_n$-genericity relative to $A$.
\end{cor}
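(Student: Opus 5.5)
The plan is to apply \cref{preservation_of_pi0n_genericity}, splitting into the cases $n = 1$ and $n \geq 2$. In each case we only need to check the relevant hypothesis on the shift map $T$: openness when $n = 1$, and preservation of $\Pi^0_1\paren{A^{(n - 2)}}$ sets when $n \geq 2$. We should also confirm that $(2^\N, \mu, T)$ is a computable measure-preserving system. This is immediate: $T$ is total and computable, since $T^{-1}\cyl\rho = \cyl{0\rho} \cup \cyl{1\rho}$ uniformly in $\rho$, and it preserves $\mu$ by assumption.

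\emph{Case $n = 1$.} The shift is an open map. Indeed, $T\cyl{\sigma} = \cyl{\sigma'}$ where $\sigma'$ is $\sigma$ with its first bit deleted (and $T\cyl\zero = 2^\N$). Since every open set is a union of cylinders and images commute with unions, $T$ carries open sets to open sets. Part (a) of \cref{preservation_of_pi0n_genericity} then gives the claim for (weak) $\Pi^0_1$-genericity relative to $A$.

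\emph{Case $n \geq 2$.} Write $B = A^{(n - 2)}$. We need that $T(Q)$ is $\Pi^0_1(B)$ whenever $Q$ is $\Pi^0_1(B)$. I would use compactness of Cantor space. Write $Q = \bigcap_s Q_s$, where $Q_s$ is the clopen set given by the $B$-computable stage-$s$ approximation to the complement (a nested sequence of uniformly $B$-clopen sets). The image of each clopen set is uniformly clopen by the cylinder computation above. Since $T$ is continuous and $2^\N$ is compact, for a decreasing sequence of closed (hence compact) sets we have $T\paren{\bigcap_s Q_s} = \bigcap_s T(Q_s)$. This is the key step. The inclusion $\subseteq$ is trivial. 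For $\supseteq$, suppose $y \in T(Q_s)$ for all $s$. Then the sets $T^{-1}(y) \cap Q_s$ are nonempty, compact and nested, so their intersection is nonempty. Thus $T(Q)$ is an intersection of uniformly $B$-computable clopen sets, hence $\Pi^0_1(B)$.

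An alternative formulation of the same step is $y \in T(Q) \iff 0y \in Q \text{ or } 1y \in Q$. This is a union of two $\Pi^0_1(B)$ conditions, obtained as preimages of $Q$ under the computable maps $y \mapsto iy$, and a finite union of $\Pi^0_1(B)$ sets is $\Pi^0_1(B)$. This argument is cleaner and I would probably present it instead. Part (b) of \cref{preservation_of_pi0n_genericity} then finishes the proof.

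The only point needing care is the relativized closure of $\Pi^0_1(B)$ sets under images via $T$. The two-preimage description avoids any compactness subtleties, so I expect no real obstacle.
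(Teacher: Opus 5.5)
Your proposal is correct and follows the paper's route: apply \cref{preservation_of_pi0n_genericity}, using openness of the shift for $n = 1$ and preservation of $\Pi^0_1\paren{A^{(n-2)}}$ sets for $n \geq 2$. The paper cites \cref{shift_of_sigma01,shift_of_pi01} for these two facts, whereas you prove them directly. Your description $T(Q) = \{y : 0y \in Q\} \cup \{y : 1y \in Q\}$ is exactly the decomposition $T(P_0) \cup T(P_1)$ used in the proof of \cref{shift_of_pi01}.
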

\begin{proof}
    By \cref{shift_of_sigma01}, $T$ preserves $\Sigma^0_1$ sets and hence is an open map. Also, by \cref{shift_of_pi01}, $T$ preserves $\Pi^0_1\paren{A^{(n - 2)}}$ sets whenever $n \geq 2$. Thus, \cref{preservation_of_pi0n_genericity} implies that $T$ preserves (weak) $\Pi^0_n$-genericity relative to $A$.
\end{proof}

\subsection{Failing recurrence}

This last subsection will be devoted to proving a partial converse to the effective recurrence theorems we have provided for Schnorr $n$-random points and (weakly) $\Pi^0_n$-generic points in the case $n = 1$.

Before stating the theorem, we first prove a lemma, where $\cl U$ denotes the closure of $U$.

\begin{lem}\label{subset_of_sigma01_with_rational_measure_and_same_closure}
    Consider Cantor space with the Lebesgue measure $\lambda$. Let $V$ be a $\Sigma^0_1$ set and $q$ be a rational number such that $0 < q < \lambda(V)$. Then there is a $\Sigma^0_1$ set $U \subseteq V$ with such that $\lambda(U) = q$ and $\cl U = \cl V$.
\end{lem}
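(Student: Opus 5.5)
The construction will build $U$ as a union of many small cylinders inside $V$: infinitely many tiny pieces keep the closure equal to $\cl V$, and a greedy rational budget keeps the measure exactly $q$. First I will fix a computable enumeration $\sigma_0, \sigma_1, \dots$ of strings such that $V = \bigcup_i \cyl{\sigma_i}$. Replacing each $\sigma_i$ by its extensions of a suitable length, I may assume the cylinders $\cyl{\sigma_i}$ are pairwise disjoint (the standard prefix-free refinement of a c.e.\ set of strings). Then $\lambda(V) = \sum_i 2^{-|\sigma_i|}$, and $\cl V$ is the closure of $\bigcup_i \cyl{\sigma_i}$. Since $\lambda(V) > q$ and the partial sums increase to $\lambda(V)$, some finite stage $N$ has $\sum_{i < N} 2^{-|\sigma_i|} > q$. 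However, I cannot decide $N$ in advance and wait for it, because we need the whole construction to respect the budget exactly.

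The plan is a computable greedy construction with a budget. Let $\delta = \lambda(V) - q > 0$; it is not computable, but I will only use rational bounds. At stage $i$, I put into $U$ a single subcylinder $\cyl{\sigma_i 0^{m_i}}$ of $\cyl{\sigma_i}$, where $m_i$ is chosen so that its measure is at most $2^{-i}\epsilon$ for some fixed rational $\epsilon < q$. For example, take $\epsilon = q/2$, so these pieces contribute at most $q/2 \cdot 2 = q$ in total; I will choose the constant so their total is strictly below $q$. Every nonempty open subset of $V$ contains some $\cyl{\sigma_i}$, hence meets $U$, so $\cl U \supseteq \cl V$, and $U \subseteq V$ gives equality. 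Let $r = \sum_i \lambda\cyl{\sigma_i 0^{m_i}}$; it is computable, and I arrange $r < q$ by using $\epsilon = q/4$.

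The remaining task is to add a further $\Sigma^0_1$ subset of $V$ disjoint from the pieces already chosen, with measure exactly $q - r$. The set $W = V \setminus \bigcup_i \cyl{\sigma_i 0^{m_i}}$ is $\Sigma^0_1$, since the removed set is a union of cylinders inside disjoint cylinders, so $W = \bigcup_i (\cyl{\sigma_i} \setminus \cyl{\sigma_i 0^{m_i}})$, a computable union of clopen sets. Its measure is $\lambda(V) - r > q - r$. I will then enumerate the cylinders of $W$ in disjoint form and add them greedily: keep a running rational total $s$, and when a cylinder $\cyl\tau$ appears, add the largest finite union of dyadic subcylinders of $\cyl\tau$ whose measure is at most $\min(2^{-|\tau|}, q - r - s)$. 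Because $q - r$ is a computable real rather than a rational, this step needs care; to avoid it, I will choose the $m_i$ so that $r$ is itself rational. For instance, make the pieces sum to exactly $q/2$ by scheduling the dyadic amounts, which is possible since $q/2$ has a computable binary expansion and I can allot its bits to successive $\sigma_i$ with sufficiently small pieces, only placing a bit once a long enough $\sigma_i$ is available. This turns $q - r$ into a rational target, and since dyadic approximations of a rational can be filled exactly in the limit, the greedy fill produces total measure exactly $q - r$ provided $\lambda(W) > q - r$. The greedy fill never stalls: whenever the deficit is positive, the unfilled part of $W$ has measure exceeding the deficit, so later cylinders keep being added and the deficit tends to $0$.

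The main obstacle is this exact-measure bookkeeping. The difficulty is showing that the greedy fill reaches exactly $q - r$ in the limit, using the slack $\lambda(W) - (q-r) > 0$ and an argument that the deficit cannot stay bounded away from $0$. A secondary difficulty is ensuring the closure pieces have rational total measure while still being placed inside every $\cyl{\sigma_i}$.
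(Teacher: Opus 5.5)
You have a genuine gap in each half of the argument. First, the closure claim fails once you make the cylinders $\cyl{\sigma_i}$ pairwise disjoint: it is then false that every nonempty open subset of $V$ contains some $\cyl{\sigma_i}$. Your density pieces total less than $q<\lambda(V)=\sum_i\lambda\cyl{\sigma_i}$, so some $m_i\ge 1$. Then $\cyl{\sigma_i 1}$ is a nonempty open subset of $V$ that contains no $\cyl{\sigma_j}$ and is disjoint from every piece $\cyl{\sigma_j 0^{m_j}}$. So one subcylinder per $\sigma_i$ does not make $U$ dense in $V$, and your measure-filling step does nothing to guarantee density either. What you need is that $U$ meets \emph{every} cylinder contained in $V$. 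The paper gets this by, at each stage $n+1$, giving each not-yet-met string $\sigma$ of length $n+1$ with $\cyl\sigma$ inside the part of $V$ enumerated so far a piece $\cyl{\sigma 0^k}$. Here $k$ is chosen so these pieces fit in the remaining slack below $q$.

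Second, the exact-measure step, which you flag as the main obstacle, is not just unfinished; the procedure you sketch can fail. You use each cylinder $\cyl\tau$ of $W$ only when it appears and never return to the part of it you did not use. So the claim that ``the unfilled part of $W$ has measure exceeding the deficit, so later cylinders keep being added'' does not follow, since the unfilled part may consist mostly of abandoned leftovers. For example, filling a single cylinder of measure $1/2$ to the target $1/3$ stalls immediately, because a finite union of subcylinders has dyadic measure. Relatedly, ``the largest finite union with measure at most $d$'' does not exist for non-dyadic $d$. Your scheme for making $r$ rational is also unsupported. With exactly one dyadic piece in each of infinitely many $\cyl{\sigma_i}$, each bounded by $2^{-i}\epsilon$, you cannot simply ``allot the bits'' of $q/2$, and you never show a suitable schedule exists.

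The missing idea is to fill from the \emph{whole} clopen set enumerated so far, not from the newest cylinder. Let $V_s$ be the clopen part of $V$ enumerated by stage $s$, with $V_0=\emptyset$, and fix $s_0$ with $\lambda(V_{s_0})<q\le\lambda(V_{s_0+1})$. Maintain finite sets $F_n$ with $\cyl{F_n}\subseteq V_{s_0+n}$ and $(1-2^{-n})q\le\lambda\cyl{F_n}<q$. Because $\lambda(V_{s_0+n+1}\setminus\cyl{F_n})\ge q-\lambda\cyl{F_n}$, you can always add finitely many cylinders to land in $[(1-2^{-(n+1)})q,q)$, and then add the density pieces inside the remaining slack. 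Both requirements are then charged against the single rational target $q$, so the detour through $r$ and $W$ is unnecessary, and $U=\bigcup_n\cyl{F_n}$ has measure exactly $q$.
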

\begin{proof}
    Write $V = \cyl W$ for some prefix-free c.e.\ set $W \subseteq 2^{<\N}$. Let $W_s$ be the computable approximation to $W$ by stage $s$, and assume without loss of generality that $W_0 = \zero$. Take $s_0$ largest such that $\lambda\cyl{W_{s_0}} < q$, which implies $\lambda\cyl{W_{s_0 + 1}} \geq q$.
    
    We define an increasing computable sequence $\paren{F_n}_n$ in $\PP_{<\N}\paren{2^{<\N}}$ as follows, starting with $F_0 = W_{s_0}$. Assume for some $n$ that $F_n$ has been defined such that the following conditions hold (which are all clearly true for $n = 0$):
    \begin{enumerate}[(a)]
        \item\label{induction_inside_V} $\cyl{F_n} \subseteq \cyl{W_{s_0 + n}}$.
        \item\label{induction_covers_most_of_V} For all strings $\sigma$ with length $n$ such that $\cyl\sigma \subseteq \cyl{W_{s_0 + n}}$, we have $\cyl{F_n} \cap \cyl\sigma \neq \zero$.
        \item\label{induction_measure_q} $\paren{1 - 2^{-n}}q \leq \lambda\cyl{F_n} < q$.
    \end{enumerate}
    We define a finite set $R \subseteq 2^{<\N}$ according to the following cases.
    \begin{itemize}
        \item $\lambda\cyl{F_n} \geq \paren{1 - 2^{-(n + 1)}}q$. Then take $R = \zero$.
        
        \item $\lambda\cyl{F_n} < \paren{1 - 2^{-(n + 1)}}q$. Then we have
        \[\lambda\paren{\cyl{W_{s_0 + n + 1}} \setminus \cyl{F_n}} \geq \lambda\cyl{W_{s_0 + n + 1}} - \lambda\cyl{F_n} > q - \paren{1 - 2^{-(n + 1)}}q = 2^{-(n + 1)}q.\]
        Thus, we may take a finite set $R \subseteq 2^{<\N}$ such that
        \[\cyl R \subseteq \cyl{W_{s_0 + n + 1}} \setminus \cyl{F_n}\quad\text{and}\quad 2^{-(n + 1)}q \leq \cyl R < q - \lambda\cyl{F_n}.\]
        Then
        \[\lambda\cyl{F_n \cup R} = \lambda\cyl{F_n} + \lambda\cyl R < q,\]
        and by condition \ref{induction_measure_q},
        \[\lambda\cyl{F_n \cup R} = \lambda\cyl{F_n} + \lambda\cyl R \geq \paren{1 - 2^{-n}}q + 2^{-(n + 1)}q = \paren{1 - 2^{-(n + 1)}}q.\]
    \end{itemize}
    Either way, we now have
    \[\cyl{F_n \cup R} \subseteq \cyl{W_{s_0 + n + 1}}\quad\text{and}\quad \paren{1 - 2^{-(n + 1)}}q \leq \lambda\cyl{F_n \cup R} < q.\]
    Let
    \[S = \{\sigma \in 2^{n + 1}: \cyl\sigma \subseteq \cyl{W_{s_0 + n + 1}} \text{ and } \cyl{F_n \cup R} \cap \cyl\sigma = \zero\}.\]
    Take $k$ least such that
    \[|S|2^{-(k + n + 1)} < q - \lambda\cyl{F_n \cup R}\]
    and define
    \[F_{n + 1} = F_n \cup R \cup \{\sigma 0^k: \sigma \in S\}.\]
    Then conditions \ref{induction_inside_V} and \ref{induction_covers_most_of_V} clearly hold for $n + 1$, along with condition \ref{induction_measure_q} since
    \[\lambda\cyl{F_{n + 1}} = \lambda\cyl{F_n \cup R} + |S|2^{-(k + n + 1)} < q\]
    and
    \[\lambda\cyl{F_{n + 1}} \geq \lambda\cyl{F_n \cup R} \geq \paren{1 - 2^{-(n + 1)}}q.\]
    This completes the definition of $\paren{F_n}_n$. Now we define
    \[U := \bigcup_n \cyl{F_n},\]
    which is $\Sigma^0_1$ and contained inside $V$ by condition \ref{induction_inside_V}. We also know by condition \ref{induction_measure_q} that
    \[\lambda(U) = \lim_{n \to \infty} \lambda\cyl{F_n} = q.\]
    Finally, consider any $x \in \cl V$ and any $\tau \subset x$. Then $V \cap \cyl\tau \neq \zero$, which implies that $\cyl{W_s} \cap \cyl\tau \neq \zero$ for some $s$. Take a string $\sigma$ with $n := |\sigma| \geq s - s_0$ such that $\cyl\sigma \subseteq \cyl{W_s} \cap \cyl\tau$.
    
    Then since $\sigma$ has length $n$ and satisfies $\cyl\sigma \subseteq \cyl{W_s} \subseteq \cyl{W_{s_0 + n}}$, condition \ref{induction_covers_most_of_V} implies that $\cyl{F_n} \cap \cyl\sigma \neq \zero$. Since $\cyl{F_n} \subseteq U$ and $\cyl\sigma \subseteq \cyl\tau$, we then also have $U \cap \cyl\tau \neq \zero$.

    Thus, $U$ intersects any neighborhood of $x$, which means that $x \in \cl U$ and hence $\cl V \subseteq \cl U$. The reverse conclusion trivially holds since $U \subseteq V$, so indeed $\cl U = \cl V$.
\end{proof}

Now we prove a sufficient condition for recurrence to fail with respect to $\Pi^0_1$ sets with computable measure. Its hypothesis (neither Schnorr random nor weakly 1-generic) properly expands that of \cref{Kurtz_Poincare_characterization} (not Kurtz random) because there is a Kurtz random point which is neither Schnorr random nor weakly 1-generic.

One way to see this is by noting that any Schnorr random point is also UD-random, a notion of randomness introduced by \cite{Avigad2013}. Then since there is a Kurtz random point which is neither UD-random nor weakly 1-generic by Theorem 2.1 of \cite{Calvert2015}, this point is also not Schnorr random.

\begin{thm}\label{Schnorr_converse}
    Consider Cantor space with the Lebesgue measure $\lambda$. Let $x \in 2^\N$ be neither Schnorr random nor weakly 1-generic. Then there are a $\Pi^0_1$ set $A \ni x$ with computable $\lambda(A) > 0$ and a total computable measure-preserving transformation $T: 2^\N \to 2^\N$ such that $T^k(x) \notin A$ for all $k > 0$.
\end{thm}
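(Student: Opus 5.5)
The approach is to combine the two weaknesses of $x$. Because $x$ is not weakly 1-generic, there is a dense $\Sigma^0_1$ set $V$ with $x \notin V$. Because $x$ is not Schnorr random, there is a Schnorr test $\paren{U_k}_k$ with $x \in \bigcap_k U_k$. The plan is to find a $\Sigma^0_1$ set $O \subseteq V$ with computable measure $\lambda(O) < 1$, and a total computable measure-preserving $T$ with $T^k(x) \in O$ for every $k > 0$. Then $A := \neg O$ is $\Pi^0_1$ with computable measure $1 - \lambda(O) > 0$. It contains $x$ because $O \subseteq V \not\ni x$, and no forward iterate of $x$ lies in $A$.

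The first step is preparing the test. As in \cref{Kurtz_Poincare_characterization}, I would first make the Schnorr test well-behaved:
\begin{itemize}
\item The sets $U_k$ are nested and clopen-approximable.
\item Each $U_k$ is written as $\cyl{W_k}$ for a computable prefix-free enumeration.
\item Each $\lambda(U_k)$ is a computable real that decreases quickly (pass to a subsequence so that $\sum_k k\,\lambda(U_k) < 1/4$).
\end{itemize}

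The second step supplies the room inside $V$. Since $V$ is dense, every cylinder meets it, and \cref{subset_of_sigma01_with_rational_measure_and_same_closure} gives $\Sigma^0_1$ subsets of $V$ with prescribed rational (hence computable) measure. I would choose pairwise disjoint $\Sigma^0_1$ sets $O_{k,j} \subseteq V$, for $1 \le j \le k$, with $\lambda(O_{k,j}) = \lambda(U_k \setminus U_{k+1})$. Exact equality needs these measures computable; since $\lambda(U_k)$ is only computable, I would either use rational approximations with a small correction set or rearrange the test so these masses are rational. I would take $O = \bigcup_{k,j} O_{k,j}$. Its measure is computable as a tail-controlled sum, by the same estimate as in \cref{Schnorr_n_Poincare_ergodic_characterization}, and it is at most $\sum_k k\lambda(U_k) < 1$.

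The third step builds $T$ in the cutting-and-stacking style of \cref{Kurtz_Poincare_characterization}, using monotone maps and \cref{limit_is_measure_preserving}. Over the layer $U_k \setminus U_{k+1}$, I would build a stack of height $k+1$:
\[U_k \setminus U_{k+1} \to O_{k,1} \to \cdots \to O_{k,k}.\]
A point of $U_k$ is thus carried into $O$ for its next $k$ iterates. Since $x \in U_k$ for every $k$, the plan is to arrange the stacks compatibly, with the stack over $U_{k+1}$ refining an initial segment of the stack over $U_k$, so that $T^j(x) \in O$ for all $j \le k$ for every $k$. The top of each stack and the remaining mass are sent back by a measure-preserving bijection. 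Because the sets are only $\Sigma^0_1$ with computable measure, I would enumerate $W_k$ and the pieces of the $O_{k,j}$ in stages, prefix-replacing equal-measure cylinders. Computability of all the measures guarantees that at every stage we can match leftover mass to leftover mass.

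The main obstacle is making $T$ \emph{total}. In \cref{Kurtz_Poincare_characterization} the domain was only $\Sigma^0_1$, and here the sets are open rather than clopen, so there are points (e.g.\ boundaries of $U_k$ or of $O_{k,j}$) where the prefix-replacement never commits. I would first try to exploit the computable measures. At each stage, the uncommitted region is a clopen set of computably small measure, and I would define $T$ on it provisionally by a fixed measure-preserving map, such as the identity or a shift of its own coordinates, refining only on ever-smaller pieces so that every point's image converges. If this fails, the fallback is to enlarge the final stage of each stack by an explicit total map (for instance a coordinatewise odometer on the complement) and absorb the error into the computable measure of $O$.
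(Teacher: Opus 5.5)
There is a genuine gap in your third step, at the claim that ``computability of all the measures guarantees that at every stage we can match leftover mass to leftover mass.'' That step is the whole difficulty, and as stated it is false. The set $\neg U_1$ has positive measure, and no point is ever confirmed to lie in it. So any measure-preserving $T$, even one with merely full-measure domain, must fix output bits at finite stages for points not yet enumerated into the $U_k$. Totality and boundary points are therefore not the real obstacle. In a monotone construction these commitments are permanent, so a ``provisional'' identity cannot be refined away later. When such a point $y$ later appears in $U_k$, the iterates $T(y),\dots,T^k(y)$ must land in $O$ inside cylinders $\cyl\rho$ that were already fixed, and you do not control which $\rho$. Knowing how much mass remains says nothing about where it will land. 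Targets $O_{k,j}$ allocated in advance with exact measures therefore give no guarantee of free room in $O\cap\cyl\rho$ for the relevant $\rho$.

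A symptom of the gap is that nothing in your plan uses density of $V$ beyond $\lambda(V)>0$. But a $1$-generic $x$ is not Schnorr random (\cref{genericity_relationships}) and avoids the cylinder $V=\cyl\sigma$ for any $\sigma\not\subset x$. Yet by \cref{CPS_pi0n_generic_Poincare}, such an $x$ returns to every $\Pi^0_1$ set of positive measure containing it, under every computable m.p.t. An argument insensitive to density would therefore prove a false statement.

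The missing idea is to keep unused room in $V$ inside every cylinder to which undetermined mass has been committed. The paper first shrinks $V$ via \cref{subset_of_sigma01_with_rational_measure_and_same_closure} to a dense $\Sigma^0_1$ set with rational measure $<1$, and takes $A=\neg V$ directly. It then builds $t$ so that at stage $k$ the first $k$ output bits are fixed for every input, while maintaining
\[\sum_{j=1}^{k}\lambda\paren{U_{n_j}\setminus U_{n_j,s_k}}<\min_{\rho\in S_k}\lambda\paren{V\cap\paren{\cyl\rho\setminus\cyl{R_k}}}.\]
Here $R_k$ is the range used so far and $S_k$ is the set of length-$k$ strings whose cylinders are not yet filled by it. The invariant is propagated as follows.
\begin{enumerate}[1.]
\item Density of $V$ gives, for each unfilled $\theta$ of length $k+1$, a small reserve cylinder $\cyl{\tau_\theta}\subseteq V\cap\cyl\theta$, which is left untouched.
\item Computability of $\lambda(U_n)$ is used to choose a new level $n_{k+1}$ and a stage $s_{k+1}$ so that all unseen mass of the active levels is below every $\lambda\cyl{\tau_\theta}$.
\item Newly enumerated pieces of $U_{n_j}$, and their next iterates, are placed in $V\cap\cyl\rho$ outside the reserves, for whichever committed $\rho$ they fell under.
\item Only afterwards is the rest of the space distributed over the unfilled $\theta$ in a measure-preserving way.
\end{enumerate}
Note also that the stacks must sit over the open sets $U_{n_j}$ themselves, giving $T^i\paren{U_{n_j}}\subseteq V$ for $1\le i\le j$ cylinder by cylinder. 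Stacks over the layers $U_k\setminus U_{k+1}$ do not work, because none of those layers contains $x$.
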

\begin{proof}
    The proof is organized in several steps.

    \proofstep{Step 1: Defining the Schnorr test $\paren{U_n}_n$ and the dense $\Sigma^0_1$ set $V$ such that $x \in \bigcap_n U_n \setminus V$}

    Since $x$ is not weakly 1-generic, there exists a dense $\Sigma^0_1$ set $V \not\ni x$, which necessarily has positive measure. By \cref{subset_of_sigma01_with_rational_measure_and_same_closure}, we may assume without loss of generality that $\lambda(V) < 1$ is computable. (Note that if the original set $V$ happened to have full measure, then $x$ would not be Kurtz random. This would allow us to apply \cref{Kurtz_Poincare_characterization}, which would give us everything we want here except for totality of the transformation $T$.)

    Since $x$ is not Schnorr random, there is a nested sequence of uniformly $\Sigma^0_1$ sets $U_n$ with uniformly computable measures $\lambda\paren{U_n} \leq 2^{-n}$ such that $x \in \bigcap_n U_n$. Take uniformly clopen sets $U_{n, s}$ such that $\paren{U_{n, s}}_s$ is increasing and satisfies $U_n = \bigcup_s U_{n, s}$ for each $n$. By replacing $U_{n, s}$ with $\bigcap_{k \leq n} U_{k, s}$, which keeps $\bigcup_s U_{n, s}$ the same since $\paren{U_n}_n$ is nested, we may assume without loss of generality that $U_{k, s} \supseteq U_{n, s}$ whenever $k \leq n$.

    Below we will repeatedly use the fact that the sets $U_n \setminus U_{n, s}$ have uniformly computable measures, which holds by \cref{intersection_of_sigma0n_with_computable_measure}.

    \proofstep{Step 2: Sketching the construction}

    Our goal is to construct a computable measure-preserving transformation $T: 2^\N \to 2^\N$ such that $T^k(x) \in V$ for all $k > 0$. A naive approach would be to define $T$ only by mapping cylinders from the different levels of the Schnorr test into $V$ as we come across them. Then the domain of $T$ (which must have full measure) would be contained in $T^{-1}(V)$. This implies that
    \[\lambda(V) < 1 = \lambda(\dom T) \leq \lambda\paren{T^{-1}(V)},\]
    which means that $T$ cannot be measure-preserving. Thus, our approach must be more sophisticated than this naive one.
    
    We proceed as follows. Starting with some level $n_1$ such that $\lambda\paren{U_{n_1}} < \lambda(V)$, we would like to map all of $U_{n_1}$ into $V$. However, there will be no point at which we are guaranteed to see the entirety of this level. Thus, we can only map a finite portion of it into $V$ before being forced to decide where the rest of the space will go to satisfy $\lambda\paren{T^{-1}\cyl0} = \lambda\paren{T^{-1}\cyl1} = 1/2$.

    Because $V$ is dense, there are sure to be some cylinders $\cyl{\tau_0} \subseteq V \cap \cyl0$ and $\cyl{\tau_1} \subseteq V \cap \cyl 1$. These will constitute a ``reserved space,'' i.e., a portion of $V$ we choose not to use right now so that we can rely on it later.
    
    Then if we enumerate enough of $U_{n_1}$ so that the remaining portion could fit into either of these cylinders and then map this portion into $V \setminus \paren{\cyl{\tau_0} \cup \cyl{\tau_1}}$, we can freely map the rest of the space in a measure-preserving way to $\cyl 0$ and $\cyl 1$, knowing that each of these cylinders has enough unused portions of $V$ to accommodate the rest of $U_{n_1}$ that we have not yet seen. This specifies the first bit of the output for all input sequences.

    Choosing a suitable stage $s_1$ for this can be done effectively because $\lambda\paren{U_{n_1}}$ is computable. We can also make sure that we do not need to use either $\cyl{\tau_0}$ or $\cyl{\tau_1}$ when mapping $U_{n_1, s_1}$ by taking these cylinders to be sufficiently small.

    For the next stage, we follow a similar procedure:
    \begin{enumerate}[1.]
        \item Take $n_2$ sufficiently large so that $U_{n_1} \setminus U_{n_1, s_1}$ and $U_{n_2}$ could be mapped into the unused portion of $V$ in either $\cyl0$ or $\cyl 1$.
        \item Find suitable reserves $\cyl{\tau_\theta} \subseteq \cyl\theta$ for each cylinder $\cyl\theta$ with $|\theta| = 2$ that we have not exhausted in mapping $U_{n_1, s_1}$.
        \item Choose $s_2$ large enough so that the remaining portions of both $U_{n_1}$ and $U_{n_2}$ beyond stage $s_2$ could fit into any one of these cylinders $\cyl{\tau_\theta}$.
        \item Define $T$ so that both $T\paren{U_{n_1, s_2}}$ and $T^2\paren{U_{n_2, s_2}}$ are inside $V$ and do not use up the reserve cylinders $\cyl{\tau_\theta}$.
        \item Divide up the rest of the space into clopen sets $\cyl{H_\theta}$ of suitable measure and map them to the cylinders $\cyl\theta$ with $|\theta| = 2$ so that the second bit of the output is now specified for all input sequences.
    \end{enumerate}

    \begin{figure}
        \centering
        \begin{tikzpicture}[line width=1.25pt, scale=1.25, transform shape]
        
            % U_{n_1, s_k}
        
            \node[orange, rotate=-90] at (0.5, 1) {$\subseteq U_{n_1, s_k}$};
        
            \draw[orange] (0,2) -- (1,2);
            \draw[blue] (0,3) -- (1,3);
            
            \draw[-stealth] (0.5, 2) -- (0.5, 3);
        
            % U_{n_2, s_k}
        
            \node[orange, rotate=-90] at (2.5, 1) {$\subseteq U_{n_2, s_k}$};
        
            \draw[orange] (2,2) -- (3,2);
            \draw[blue] (2,3) -- (3,3);
            \draw[blue] (2,4) -- (3,4);
            
            \draw[-stealth] (2.5, 2) -- (2.5, 3);
            \draw[-stealth] (2.5, 3) -- (2.5, 4);
        
            \draw [thick, blue, decorate, decoration={brace, amplitude=5pt}] (6.25,7) -- (6.25,3) node [midway, right=4pt] {$k$};
        
            \node[orange] at (4,1) {$\dots$};
        
            \node at (4,5.5) {\reflectbox{$\ddots$}};
        
            % U_{n_k, s_k}
        
            \node[orange, rotate=-90] at (5.5, 1) {$\subseteq U_{n_k, s_k}$};
        
            \draw[orange] (5,2) -- (6,2);
            \draw[blue] (5,3) -- (6,3);
            \draw[blue] (5,4) -- (6,4);
            \draw[blue] (5,7) -- (6,7);
            
            \draw[-stealth] (5.5, 2) -- (5.5, 3);
            \draw[-stealth] (5.5, 3) -- (5.5, 4);
            \draw[-stealth] (5.5, 4) -- (5.5, 5);
            \draw[-stealth] (5.5, 6) -- (5.5, 7);
        
            \node at (5.5,5.5) {$\vdots$};
        
            \draw [thick, blue, decorate, decoration={brace, amplitude=5pt}] (6.25,7) -- (6.25,3) node [midway, right=4pt] {$k$};
        
            % H_\theta
        
            \draw [thick, decorate, decoration={brace, amplitude=5pt}] (12,1.75) -- (8,1.75) node [midway, below=10pt] {$\cyl{H_\theta}$};
        
            \draw (8,2) -- (8.5,2);
            \draw (9,2) -- (9.5,2);
            \draw (10.5,2) -- (11,2);
            \draw (11.5,2) -- (12,2);
        
            \draw[-stealth] (8.25, 2) -- (8.25, 3);
            \draw[-stealth] (9.25, 2) -- (9.25, 3);
            \draw[-stealth] (10.75, 2) -- (10.75, 3);
            \draw[-stealth] (11.75, 2) -- (11.75, 3);
        
            % theta
        
            \node at (7.5,3) {$\cyl\theta$};
            \draw (8,3) -- (9.75,3);
            \draw[red] (9.75,3) -- (10.25,3);
            \draw (10.25,3) -- (12,3);
        
            \draw[red, thick, decorate, decoration={brace, amplitude=5pt}] (9.75,3.25) -- (10.25,3.25) node [midway, above=5pt] {$\cyl{\tau_\theta}$};
        
            \node at (10,5) {For each non-exhausted};
            \node at (10,4.5) {$\cyl\theta$ with $|\theta| = k$:};
        
            % Legend
        
            \draw[draw=black] (0,5) rectangle ++(3.5,2);
            \draw[blue] (0.25, 6.5) -- (0.75, 6.5) node [midway, right=20pt] {$V \setminus \bigcup_\theta \cyl{\tau_\theta}$};
        
            \draw[orange] (0.25, 5.5) -- (0.75, 5.5) node [midway, right=20pt] {$\bigcup_{j = 1}^k U_{n_j, s_k}$};
        \end{tikzpicture}
        \caption{The constructed transformation by stage $k$}
        \label{Schnorr_picture}
    \end{figure}

    The pattern continues with future stages, as illustrated in \cref[S]{Schnorr_picture}. Observe that by the end of stage $k$, we have guaranteed that $T^k\paren{U_{n_j, s_k}} \subseteq V$ for all $1 \leq j \leq k$ and have specified the first $k$ bits of the output for all input sequences. Thus, we will obtain a total computable measure-preserving transformation $T$ such that $T^k(x) \in V$ for all $k > 0$.

    With this picture in mind, we will now proceed to flesh out the details of the construction with full rigor.
    
    \proofstep{Step 3: Setting up the induction}

    Our transformation $T: 2^\N \to 2^\N$ will be constructed as the limit of a computable monotone function $t: \bigcup_k 2^{\l_k} \to 2^{<\N}$. The $k$-fold iterate of $t$ (wherever it is defined) is denoted by $t^k$, and $t^0$ is the identity.

    We will now recursively define sequences $\paren{n_k}_k$, $\paren{s_k}_k$, and $\paren{\l_k}_k$ in $\N$, sequences $\paren{D_k}_k$, $\paren{R_k}_k$, and $\paren{S_k}_k$ in $\PP_{<\N}\paren{2^{<\N}}$, and a map $t: \bigcup_k 2^{\l_k} \to 2^{<\N}$ as follows, starting with
    \[n_0 = s_0 = \l_0 = 0, \quad D_0 = R_0 = \zero,\quad S_0 = \{\zero\}, \quad\text{and}\quad t(\zero) = \zero.\]
    Assume for some $k$ that $n_j$, $s_j$, $\l_j$, $D_j$, $R_j$, $S_j$, and $t(\sigma)$ have been defined for all $j \leq k$ and $\sigma \in \bigcup_{j \leq k} 2^{\l_j}$ subject to the following conditions (which are all trivial when $k = 0$):
    \begin{enumerate}[(A)]
        \item\label{increasing_sequences} (Strictly increasing sequences) $n_j < n_k$, $s_j < s_k$, and $\l_j < \l_k$ whenever $j < k$.
        \item\label{refined} (Sufficiently refined) $U_{n_j, s_k}$ can be written as a union of cylinders $\cyl\sigma$ with $|\sigma| = \l_k$ whenever $1 \leq j \leq k$.
        \item\label{determined_domain} (Determined domain) $D_k = \set{t^i(\sigma): i < j \leq k, \quad |\sigma| = \l_k, \quad \text{and}\quad \cyl\sigma \subseteq U_{n_j, s_k}}$.
        \item\label{determined_range} (Determined range) $R_k \subseteq 2^{\l_k}$ and $\cyl{R_k} \subseteq V$
        \item\label{mapping_onto_Rk} (Mapping to the determined range) $t$ injectively maps $D_k$ onto $R_k$.
        \item\label{undetermined_range} (Undetermined range) $S_k = \set{\rho \in 2^k: \cyl\rho \not\subseteq \cyl{R_k}}$.
        \item\label{small_measure_remaining} (Small measure remaining) $\sum_{j = 1}^k \lambda\paren{U_{n_j} \setminus U_{n_j, s_k}} < \min_{\rho \in S_k} \lambda\paren{V \cap \paren{\cyl\rho \setminus \cyl{R_k}}}$.
        \item\label{mapping_onto_Sk} (Mapping to the undetermined range) $t$ maps $2^{\l_k} \setminus D_k$ onto $S_k$.
        \item\label{measure_preserving} (Measure preservation) $\lambda\cyl{\set{\sigma \in 2^{\l_k}: t(\sigma) \supseteq \rho}} = \lambda\cyl\rho$ whenever $|\rho| = k$.
        \item\label{monotonicity} (Monotonicity) $t\paren{\sigma \prefix \l_j} \subseteq t(\sigma)$ whenever $|\sigma| = \l_k$ and $j < k$.
    \end{enumerate}
    Note that $S_k$ is always nonempty since $\cyl{R_k} \subseteq V$, where $V \not\ni x$ is not all of $2^\N$.

    \proofstep{Step 4: Finding suitable $n_{k + 1}$, $s_{k + 1}$, and $\l_{k + 1}$ to get conditions \ref{increasing_sequences} and \ref{refined} for $k + 1$}
    
    By condition \ref{small_measure_remaining}, we can computably find $n_{k + 1} > n_k$ such that
    \[\lambda\paren{U_{n_{k + 1}}} < \min_{\rho \in S_k} \lambda\paren{V \cap \paren{\cyl\rho \setminus \cyl{R_k}}} - \sum_{j = 1}^k \lambda\paren{U_{n_j} \setminus U_{n_j, s_k}} =: \delta.\]
    Note that the definition of $\delta$ implies that for any $\rho \in S_k$,
    \begin{equation}\label{bound_sums_of_remaining_measures}
        \sum_{j = 1}^k \lambda\paren{U_{n_j} \setminus U_{n_j, s_k}} \leq \lambda\paren{V \cap \paren{\cyl\rho \setminus \cyl{R_k}}} - \delta.
    \end{equation}
    Now define
    \[S_{k + 1} = \set{\theta \in 2^{k + 1}: \cyl\theta \not\subseteq \cyl{R_k}}.\]
    Then for each string $\theta \in S_{k + 1}$, by density of $V$, we can computably find a string $\tau_\theta$ such that
    \[\cyl{\tau_\theta} \subseteq V \cap \paren{\cyl\theta \setminus \cyl{R_k}}\quad\text{and}\quad \lambda\cyl{\tau_\theta} < \frac{\delta - \lambda\paren{U_{n_{k + 1}}}}{2}.\]
    Observe that each string $\rho \in S_k$ has at most two extensions in $S_{k + 1}$, which implies
    \[\lambda\paren{\bigcup_{\substack{\theta \in S_{k + 1} \\ \rho \subset \theta}} \cyl{\tau_\theta}} < \delta - \lambda\paren{U_{n_{k + 1}}}\]
    and hence, combined with inequality (\ref{bound_sums_of_remaining_measures}),
    \begin{equation}\label{bound_sum_of_remaining_measures_and_k+1}
        \sum_{j = 1}^k \lambda\paren{U_{n_j} \setminus U_{n_j, s_k}} + \lambda\paren{U_{n_{k + 1}}} < \lambda\paren{V \cap \paren{\cyl\rho \setminus \cyl{R_k}}} - \lambda\paren{\bigcup_{\substack{\theta \in S_{k + 1} \\ \rho \subset \theta}} \cyl{\tau_\theta}}.
    \end{equation}
    Since $\lambda\paren{U_n \setminus U_{n, s}} \to 0$ as $s \to \infty$ for each $n$, we can computably find $s_{k + 1} > s_k$ such that
    \begin{equation}\label{bound_sums_of_remaining_measures_after_sk+1}
        \sum_{j = 1}^{k + 1} \lambda\paren{U_{n_j} \setminus U_{n_j, s_{k + 1}}} < \min_{\theta \in S_{k + 1}} \lambda\cyl{\tau_\theta}.
    \end{equation}
    Then for each string $\rho \in S_k$, by inequality (\ref{bound_sum_of_remaining_measures_and_k+1}), we have
    \begin{align*}
        \sum_{j = 1}^k \lambda\paren{U_{n_j, s_{k + 1}} \setminus U_{n_j, s_k}} + \lambda\paren{U_{n_{k + 1}, s_{k + 1}}} &\leq \sum_{j = 1}^k \lambda\paren{U_{n_j} \setminus U_{n_j, s_k}} + \lambda\paren{U_{n_{k + 1}}} \\
        &< \lambda\paren{\paren{V \cap \cyl\rho} \setminus \paren{\cyl{R_k} \cup \bigcup_{\substack{\theta \in S_{k + 1} \\ \rho \subset \theta}} \cyl{\tau_\theta}}}.
    \end{align*}
    Hence we can computably find a clopen set $C_\rho$ such that
    \[C_\rho \subseteq \paren{V \cap \cyl\rho} \setminus \paren{\cyl{R_k} \cup \bigcup_{\substack{\theta \in S_{k + 1} \\ \rho \subset \theta}} \cyl{\tau_\theta}}\quad\text{and}\quad\lambda\paren{C_\rho} = \sum_{j = 1}^k \lambda\paren{U_{n_j, s_{k + 1}} \setminus U_{n_j, s_k}} + \lambda\paren{U_{n_{k + 1}, s_{k + 1}}}.\]
    Take $\l_{k + 1} > \l_k$ such that for all $1 \leq j \leq k$ and all $\rho \in S_k$, there exist $F_j, F_{k + 1}, G_\rho \subseteq 2^{\l_{k + 1}}$ satisfying
    \[U_{n_j, s_{k + 1}} \setminus U_{n_j, s_k} = \cyl{F_j}, \quad U_{n_{k + 1}, s_{k + 1}} = \cyl{F_{k + 1}}, \quad\text{and}\quad C_\rho = \cyl{G_\rho}.\]
    Conditions \ref{increasing_sequences} and \ref{refined} and now satisfied for $k + 1$. Write
    \[G_\rho = \bigcup_{j = 1}^{k + 1} G_{\rho, j}\]
    for some disjoint sets $G_{\rho, j} \subseteq 2^{\l_{k + 1}}$ such that $\lambda\cyl{G_{\rho, j}} = \lambda\cyl{F_j}$.

    \proofstep{Step 5: Defining $t$ on $D_{k + 1}$ to get condition \ref{determined_domain} for $k + 1$}

    Assume for some $j \leq k + 1$ that for all $1 \leq i \leq j$ and strings $\sigma$ of length $\l_{k + 1}$ with $\cyl\sigma \subseteq U_{n_i, s_{k + 1}}$, we have defined $t^i(\sigma)$ such that the following properties hold:
    \begin{enumerate}[(a)]
        \item\label{t_length} $|t^i(\sigma)| = \l_{k + 1}$.
        
        \item\label{t_in_V} $\cyl{t^i(\sigma)} \subseteq V \setminus \bigcup_{\theta \in S_{k + 1}} \cyl{\tau_\theta}$.

        \item\label{t_mapping_prefix} If $\eta := t^{i - 1}(\sigma) \prefix \l_k \in D_k$, then writing $t^{i - 1}(\sigma) = \eta\tau$, we have $t^i(\sigma) = t(\eta)\tau$.

        \item\label{t_if_not_in_Dk_then_maps_out_of_Rk} If $\eta := t^{i - 1}(\sigma) \prefix \l_k \notin D_k$, then $\sigma \in F_i$ and $\rho := t(\eta) \in S_k$. Moreover, if $\sigma$ is the $m\th$ string lexicographically in $F_i$, then $t^i(\sigma)$ is the $m\th$ string lexicographically in $G_{\rho, i}$.

        \item\label{t_mapping_prefix_corollary} If $i \leq k$ and $\cyl{\sigma \prefix \l_k} \subseteq U_{n_i, s_k}$, then writing $\sigma = \paren{\sigma \prefix \l_k}\tau$, we have $t^i(\sigma) = t^i\paren{\sigma \prefix \l_k}\tau$.
        
        \item\label{t_induction_injective} If $t^i(\sigma) = t^{i'}(\sigma')$ for some $i' \leq i$ and some string $\sigma'$ of length $\l_{k + 1}$ with $\cyl{\sigma'} \subseteq U_{n_{i'}, s_{k + 1}}$, then $t^{i - i'}(\sigma) = \sigma'$.
    \end{enumerate}
    (This holds vacuously when $j = 0$.)

    Now assume $j \leq k$ and consider any string $\sigma$ of length $\l_{k + 1}$ with $\cyl\sigma \subseteq U_{n_{j + 1}, s_{k + 1}}$. Since $U_{n_{j + 1}, s_{k + 1}} \subseteq U_{n_j, s_{k + 1}}$, we have already defined $t^j(\sigma)$ to be a string of length $\l_{k + 1}$.

    Let $\eta = t^j(\sigma) \prefix \l_k$. We will use the following lemma.
    \begin{lem}\label{eta_in_Dk}
        Assume $j < k$ and $\cyl{\sigma \prefix \l_k} \subseteq U_{n_{j + 1}, s_k}$. Then $\eta = t^j\paren{\sigma \prefix \l_k} \in D_k$.
    \end{lem}
    \begin{proof}
        The hypothesis implies $\cyl{\sigma \prefix \l_k} \subseteq U_{n_j, s_k}$, so
        \[\eta = t^j(\sigma) \prefix \l_k = t^j\paren{\sigma \prefix \l_k},\]
        where the latter equality holds trivially if $j = 0$ and by property \ref{t_mapping_prefix_corollary} if $j > 0$. Because $\cyl{\sigma \prefix \l_k} \subseteq U_{n_{j + 1}, s_k}$, where $j + 1 \leq k$, it follows that $\eta \in D_k$.
    \end{proof}
    
    To define $t^{j + 1}(\sigma)$, there are two cases:
    \begin{enumerate}[1.]
        \item $\eta \in D_k$. Then writing $t^j(\sigma) = \eta\tau$, we define $t^{j + 1}(\sigma) = t(\eta)\tau$.

        \item $\eta \notin D_k$. Then by condition \ref{mapping_onto_Sk}, $\rho := t(\eta) \in S_k$. We also have $\sigma \in F_{j + 1}$ according to the following two subcases:
        \begin{enumerate}[label=\roman*., ref=\theenumi\roman*]            
            \item\label{not_in_domain_j<k} $j < k$. Then by \cref{eta_in_Dk}, we must have $\cyl\sigma \subseteq \cyl{\sigma \prefix \l_k} \subseteq U_{n_{j + 1}, s_{k + 1}} \setminus U_{n_{j + 1}, s_k}$, which implies that $\sigma \in F_{j + 1}$.
            \item $j = k$. Then since $\cyl\sigma \subseteq U_{n_{k + 1}, s_{k + 1}}$, we clearly have $\sigma \in F_{k + 1}$.
        \end{enumerate}
        Taking $m$ such that $\sigma$ is the $m\th$ string lexicographically in $F_{j + 1}$, we define $t^{j + 1}(\sigma)$ to be the $m\th$ string lexicographically in $G_{\rho, j + 1}$.
    \end{enumerate}
    We now verify the induction properties for $j + 1$.
    \begin{enumerate}[(a)]
        \item We have two cases:
        \begin{itemize}
            \item $\eta \in D_k$. By condition \ref{mapping_onto_Rk}, we know that $|t(\eta)| = \l_k$ and hence $\abs{t^{j + 1}(\sigma)} = \l_{k + 1}$.
            \item $\eta \notin D_k$. Since $G_{\rho, j + 1} \subseteq 2^{\l_{k + 1}}$, we know that $\abs{t^{j + 1}(\sigma)} = \l_{k + 1}$.
        \end{itemize}
        \item We have two cases:
        \begin{itemize}
            \item $\eta \in D_k$. By condition \ref{determined_range} and the definition of the strings $\tau_\theta$, we have
            \[\cyl{t^{j + 1}(\sigma)} \subseteq \cyl{t(\eta)} \subseteq \cyl{R_k} \subseteq V \setminus \bigcup_{\theta \in S_k} \cyl{\tau_\theta}.\]
            \item $\eta \notin D_k$. Then
            \[\cyl{t^{j + 1}(\sigma)} \subseteq \cyl{G_{\rho, j + 1}} \subseteq \cyl{G_\rho} = C_\rho \subseteq V \setminus \bigcup_{\theta \in S_k} \cyl{\tau_\theta}.\]
        \end{itemize}
        \item This property with $j + 1$ in place of $j$ is clear from the definition of $t^{j + 1}(\sigma)$.

        \item This property with $j + 1$ in place of $j$ is clear from the definition of $t^{j + 1}(\sigma)$.

        \item Assume $j + 1 \leq k$ and $\cyl{\sigma \prefix \l_k} \subseteq U_{n_{j + 1}, s_k}$. Then by \cref{eta_in_Dk}, $\eta = t^j\paren{\sigma \prefix \l_k} \in D_k$. Writing $t^j(\sigma) = \eta\tau$, property \ref{t_mapping_prefix} with $j + 1$ implies
        \[t^{j + 1}(\sigma) = t(\eta)\tau = t^{j + 1}\paren{\sigma \prefix \l_k}\tau.\]
        \item Suppose $t^{j + 1}(\sigma) = t^i\paren{\sigma'}$ for some $i \leq j + 1$ and some string $\sigma'$ of length $\l_{k + 1}$ with $\cyl{\sigma'} \subseteq U_{n_i, s_{k + 1}}$. If $i = 0$, then trivially $t^{j + 1 - i}(\sigma) = \sigma'$, so assume $i \geq 1$ and let $\eta' = t^{i - 1}\paren{\sigma'} \prefix \l_k$. We have two cases:
        \begin{itemize}
            \item $\eta \in D_k$. Write $t^j(\sigma) = \eta\tau$ so that $t^{j + 1}(\sigma) = t(\eta)\tau$. Then since
            \[\cyl{t^i\paren{\sigma'}} = \cyl{t^{j + 1}(\sigma)} \subseteq \cyl{t(\eta)} \subseteq \cyl{R_k}\]
            and $\cyl{R_k}$ is disjoint from $C_\rho = \cyl{G_\rho} \supseteq G_{\rho, i}$, it follows by property \ref{t_if_not_in_Dk_then_maps_out_of_Rk} with $j + 1$ that $\eta' \in D_k$. Then writing $t^{i - 1}\paren{\sigma'} = \eta'\tau'$, property \ref{t_mapping_prefix} with $j + 1$ implies
            \[t(\eta)\tau = t^{j + 1}(\sigma) = t^i\paren{\sigma'} = t\paren{\eta'}\tau'.\]
            Thus, $t(\eta) = t\paren{\eta'}$ and $\tau = \tau'$. We know by condition \ref{mapping_onto_Rk} that $t$ is injective on $D_k$, so we must have $\eta = \eta'$. Then
            \[t^j(\sigma) = \eta\tau = \eta'\tau' = t^{i - 1}\paren{\sigma'},\]
            so property \ref{t_induction_injective} with $j$ implies that
            \[t^{(j + 1) - i}(\sigma) = t^{j - (i - 1)}(\sigma) = \sigma'.\]
            \item $\eta \notin D_k$. Let $\rho = t(\eta) \in S_k$. Then taking $m$ such that $\sigma$ is the $m\th$ string lexicographically in $F_{j + 1}$, we have defined $t^{j + 1}(\sigma)$ to be the $m\th$ string lexicographically in $G_{\rho, j + 1}$. Then
            \[\cyl{t^i\paren{\sigma'}} = \cyl{t^{j + 1}(\sigma)} \subseteq \cyl{G_{\rho, j + 1}} \subseteq \cyl{G_\rho} = C_\rho.\]
            If we had $\eta' \in D_k$, then property \ref{t_mapping_prefix} for $j + 1$ would imply
            \[\cyl{t^i\paren{\sigma'}} \subseteq \cyl{t\paren{\eta'}} \subseteq \cyl{R_k},\]
            yet $C_\rho$ is disjoint from $\cyl{R_k}$. Hence $\eta' \notin D_k$. Let $\rho' = t(\eta') \in S_k$.
            
            Now by property \ref{t_if_not_in_Dk_then_maps_out_of_Rk} with $j + 1$, if we take $m'$ such that $\sigma'$ is the $\paren{m'}\th$ string lexicographically in $F_i$, then $t^i(\sigma')$ is the $(m')\th$ string lexicographically in $G_{\rho', i}$. Then $\cyl{t^{j + 1}(\sigma)} = \cyl{t^i\paren{\sigma'}}$ lies inside both
            \[\cyl{G_{\rho, j + 1}} \subseteq \cyl{G_\rho} = C_\rho \subseteq \cyl\rho\quad\text{and}\quad\cyl{G_{\rho', i}} \subseteq \cyl{G_{\rho'}} = C_{\rho'} \subseteq \cyl{\rho'},\]
            so we must have $\rho = \rho'$. This implies $j + 1 = i$ since otherwise, $G_{\rho, j + 1}$ and $G_{\rho', i}$ would be disjoint. It follows that $m = m'$ and hence $t^{(j + 1) - i}(\sigma) = \sigma = \sigma'$.
        \end{itemize}
    \end{enumerate}
    This completes the inductive definition of $t$ on
    \[D_{k + 1} := \set{t^i(\sigma): i < j \leq k + 1, \quad |\sigma| = \l_{k + 1}, \quad\text{and}\quad \cyl\sigma \subseteq U_{n_j, s_{k + 1}}},\]
    which gives condition \ref{determined_domain} for $k + 1$. Let $R_{k + 1} = t(D_{k + 1})$.

    Later on, it will be important that $t$ is monotone on $D_{k + 1}$.

    \begin{lem}\label{t_monotone}
        For all $\sigma \in D_{k + 1}$, we have $t\paren{\sigma \prefix \l_k} \subseteq t(\sigma)$.
    \end{lem}
    \begin{proof}
        Consider any $i < j \leq k$ and any string $\sigma$ of length $\l_{k + 1}$ such that $\cyl\sigma \subseteq U_{n_j, s_{k + 1}}$. We must show that
        \[t\paren{t^i(\sigma) \prefix \l_k} \subseteq t^{i + 1}(\sigma).\]
        Let $\eta = t^i(\sigma) \prefix \l_k$. We have two cases:
        \begin{itemize}
            \item $\eta \in D_k$. Then writing $t^i(\sigma) = \eta\tau$, property \ref{t_mapping_prefix} implies
            \[t(\eta) \subseteq t(\eta)\tau = t^{i + 1}(\sigma).\]
            \item $\eta \notin D_k$. By property \ref{t_if_not_in_Dk_then_maps_out_of_Rk}, we have $\rho := t(\eta) \in S_k$ and $t^{i + 1}(\sigma) \in G_{\rho, i + 1}$. Since
            \[\cyl{G_{\rho, i + 1}} \subseteq \cyl{G_\rho} = C_\rho \subseteq \cyl\rho,\]
            it follows that
            \begin{align*}
                t(\eta) &= \rho \subseteq t^{i + 1}(\sigma). \qedhere
            \end{align*}
        \end{itemize}
    \end{proof}

    \proofstep{Step 6: Proving conditions \ref{determined_range}, \ref{mapping_onto_Rk}, \ref{undetermined_range}, and \ref{small_measure_remaining} for $k + 1$}
    
    The following lemma gives us condition \ref{determined_range} for $k + 1$.

    \begin{lem}\label{Rk+1_in_V}
        $R_{k + 1} \subseteq 2^{\l_{k + 1}}$ and $\cyl{R_{k + 1}} \subseteq V \setminus \bigcup_{\theta \in S_{k + 1}} \cyl{\tau_\theta}$.
    \end{lem}
    \begin{proof}
        This holds by properties \ref{t_length} and \ref{t_in_V}.
    \end{proof}

    The following lemma gives us condition \ref{mapping_onto_Rk} for $k + 1$.

    \begin{lem}\label{t_injective}
        $t$ is injective on $D_{k + 1}$.
    \end{lem}
    \begin{proof}
        Suppose that $t(\tau) = t\paren{\tau'}$ for some strings $\tau, \tau' \in D_{k + 1}$. Write $\tau = t^i(\sigma)$ for some $i < j \leq k + 1$ and some string $\sigma$ with length $\l_{k + 1}$ such that $\cyl\sigma \subseteq U_{n_j, s_{k + 1}}$. Likewise, write $\tau' = t^{i'}\paren{\sigma'}$ for some $i' < j' \leq k + 1$ and some string $\sigma'$ with length $\l_{k + 1}$ such that $\cyl{\sigma'} \subseteq U_{n_{j'}, s_{k + 1}}$. Say $i' \leq i$. Then
        \[t^{i + 1}(\sigma) = t(\tau) = t\paren{\tau'} = t^{i' + 1}\paren{\sigma'},\]
        where $\cyl\sigma \subseteq U_{n_{i + 1}, s_{k + 1}}$ and $\cyl{\sigma'} \subseteq U_{n_{i' + 1}, s_{k + 1}}$. By property \ref{t_induction_injective}, it follows that
        \[t^{i - i'}(\sigma) = t^{(i + 1) - (i' + 1)}(\sigma) = \sigma'\]
        and hence
        \begin{align*}
            \tau &= t^i(\sigma) = t^{i'}\paren{\sigma'} = \tau'. \qedhere
        \end{align*}
    \end{proof}

    It will be useful to know that the clopen sets generated by the determined domain and range are increasing from $k$ to $k + 1$.

    \begin{lem}\label{Dk_in_Dk+1}
        $\cyl{D_k} \subseteq \cyl{D_{k + 1}}$ and  $\cyl{R_k} \subseteq \cyl{R_{k + 1}}$.
    \end{lem}
    \begin{proof}
        Consider any string $\eta \in D_k$ and any string $\tau$ of length $\l_{k + 1} - \l_k$. Write $\eta = t^i(\sigma)$ for some $i < j \leq k$ and some string $\sigma$ of length $\l_k$ such that $\cyl\sigma \subseteq U_{n_j, s_k}$. Then also $\cyl\sigma \subseteq U_{n_i, s_k}$, so property \ref{t_mapping_prefix_corollary} tells us that
        \[t^i(\sigma\tau) = t^i(\sigma)\tau = \eta\tau.\]
        Since 
        \[\cyl{\sigma\tau} \subseteq \cyl\sigma \subseteq U_{n_j, s_{k + 1}},\]
        we have $t^i(\sigma\tau) \in D_{k + 1}$. Thus,
        \[\cyl{\eta\tau} = \cyl{t^i(\sigma\tau)} \subseteq \cyl{D_{k + 1}}.\]
        As $\tau \in 2^{\l_{k + 1} - \l_k}$ was arbitrary, it follows that $\cyl\eta \subseteq \cyl{D_{k + 1}}$. Therefore, $\cyl{D_k} \subseteq \cyl{D_{k + 1}}$, which implies $\cyl{R_k} \subseteq \cyl{R_{k + 1}}$.
    \end{proof}

    The following lemma gives us condition \ref{undetermined_range} for $k + 1$.

    \begin{lem}
        $S_{k + 1} = \{\theta \in 2^{k + 1}: \cyl\theta \not\subseteq \cyl{R_{k + 1}}\}$.
    \end{lem}
    \begin{proof}
        Recall that our original definition of $S_{k + 1}$ had $R_k$ in place of $R_{k + 1}$. The desired equality holds as follows:
        \begin{itemize}
            \item $\subseteq$. Assume $\theta \in S_{k + 1}$. Then by \cref{Rk+1_in_V}, $\cyl{\tau_\theta} \subseteq \cyl\theta$ is disjoint from $\cyl{R_{k + 1}}$, so $\cyl\theta \not\subseteq \cyl{R_{k + 1}}$.

            \item $\supseteq$. Assume $\theta \in 2^{k + 1} \setminus S_{k + 1}$. Then by \cref{Dk_in_Dk+1}, $\cyl\theta \subseteq \cyl{R_k} \subseteq \cyl{R_{k + 1}}$. \qedhere
        \end{itemize}
    \end{proof}

    The following lemma gives us condition \ref{small_measure_remaining} for $k + 1$.

    \begin{lem}
        We have
        \[\sum_{j = 1}^{k + 1} \lambda\paren{U_{n_j} \setminus U_{n_j, s_{k + 1}}} < \min_{\theta \in S_{k + 1}} \lambda\paren{V \cap \paren{\cyl\theta \setminus \cyl{R_{k + 1}}}}.\]
    \end{lem}
    \begin{proof}
        For each $\theta \in S_{k + 1}$, the definition of $\tau_\theta$ implies that $\cyl{\tau_\theta} \subseteq V \cap \cyl\theta$. We also know by \cref{Rk+1_in_V} that $\cyl{\tau_\theta}$ and $\cyl{R_{k + 1}}$ are disjoint. Hence
        \[\cyl{\tau_\theta} \subseteq V \cap \paren{\cyl\theta \setminus \cyl{R_{k + 1}}}.\]
        Hence by inequality \ref{bound_sums_of_remaining_measures_after_sk+1}, we have
        \begin{align*}
            \sum_{j = 1}^{k + 1} \lambda\paren{U_{n_j} \setminus U_{n_j, s_{k + 1}}} &< \min_{\theta \in S_{k + 1}} \lambda\cyl{\tau_\theta} \leq \min_{\theta \in S_{k + 1}} \lambda\paren{V \cap \paren{\cyl\theta \setminus \cyl{R_{k + 1}}}}. \qedhere
        \end{align*}
    \end{proof}

    \proofstep{Step 7: Proving facts about initial segments of $t(\sigma)$}

    Given any string $\tau$ with $|\tau| \leq \l_{k + 1}$, let
    \[D_{k + 1, \tau} = \set{\sigma \in D_{k + 1}: t(\sigma) \supseteq \tau}.\]
    Also, given any string $\rho$ of length $k$, let
    \[E_\rho = \set{\sigma \in 2^{\l_k}: t(\sigma) \supseteq \rho}.\]
    We now prove a few lemmas about these sets.
    \begin{lem}\label{how_much_was_mapped_into_theta}
        Consider any string $\theta$ of length $k + 1$. Then $\lambda\cyl{D_{k + 1, \theta}} \leq \lambda\cyl\theta$, and equality holds if and only if $\theta \notin S_{k + 1}$.
    \end{lem}
    \begin{proof}
        Since $t$ is injective on $D_{k + 1}$ by \cref{t_injective} and preserves the lengths of strings by \cref{Rk+1_in_V}, we have
        \[\lambda\cyl{D_{k + 1, \theta}} = \cyl{t\paren{D_{k + 1, \theta}}}.\]
        The result then follows according to each case:
        \begin{itemize}
            \item $\theta \in S_{k + 1}$. Then by \cref{Rk+1_in_V}, $\cyl{R_{k + 1}}$ is disjoint from $\cyl{\tau_\theta}$, so we have
            \[\cyl{t\paren{D_{k + 1, \theta}}} \subseteq \cyl\theta \setminus \cyl{\tau_\theta}\quad\text{and hence}\quad \lambda\cyl{t\paren{D_{k + 1, \theta}}} < \lambda\cyl\theta.\]
            \item $\theta \notin S_{k + 1}$. Then $\cyl\theta \subseteq \cyl{R_{k + 1}}$, which means that every length-$\l_{k + 1}$ extension of $\theta$ has the form $t(\sigma)$ for some $\sigma \in D_{k + 1}$. This gives us
            \begin{align*}
                \cyl{t\paren{D_{k + 1, \theta}}} &= \cyl\theta\quad\text{and hence}\quad \lambda\cyl{t\paren{D_{k + 1, \theta}}} = \lambda\cyl\theta. \qedhere
            \end{align*}
        \end{itemize}
    \end{proof}

    \begin{lem}\label{Dk+1_rho_in_E_rho}
        Let $\rho$ be a string of length $k$. Then $\cyl{D_{k + 1, \rho}} \subseteq \cyl{E_\rho}$.
    \end{lem}
    \begin{proof}
        Any string in $D_{k + 1, \rho}$ has the form $t^i(\sigma)$ for some $i < j \leq k + 1$ and some string $\sigma$ of length $\l_{k + 1}$ such that $\cyl\sigma \subseteq U_{n_j, s_{k + 1}}$ and $t^{i + 1}(\sigma) \supseteq \rho$. Let $\eta = t^i(\sigma) \prefix \l_k$.
        
        By \cref{t_monotone}, we know that $t(\eta) \subseteq t^{i + 1}(\sigma)$. By conditions \ref{mapping_onto_Rk} and \ref{mapping_onto_Sk}, the string $t(\eta)$ has length either $\l_k$ (if $\eta \in D_k$) or $k$ (if $\eta \notin D_k$). Either way, it must extend the length-$k$ initial segment $\rho$ of $t^{i + 1}(\sigma)$. This implies that $\eta \in E_\rho$ and hence
        \[\cyl{t^i(\sigma)} \subseteq \cyl\eta \subseteq \cyl{E_\rho}.\]
        Thus, $\cyl{D_{k + 1, \rho}} \subseteq \cyl{E_\rho}$.
    \end{proof}

    \begin{lem}\label{Dk+1_and_E_rho_minus_Dk+1_rho_partition}
        We have
        \[\cyl{D_{k + 1}} \cup \bigcup_{\rho \in S_k} \paren{\cyl{E_\rho} \setminus \cyl{D_{k + 1, \rho}}} = 2^\N.\]
    \end{lem}
    \begin{proof}
        Consider any string $\sigma$ of length $\l_{k + 1}$ such that $\sigma \notin D_{k + 1}$. Since $\cyl{D_k} \subseteq \cyl{D_{k + 1}}$ by \cref{Dk_in_Dk+1}, it follows that $\sigma \prefix \l_k \notin D_k$. Then condition \ref{mapping_onto_Sk} implies that $\rho := t\paren{\sigma \prefix \l_k} \in S_k$.
        
        Therefore, we have $\sigma \prefix \l_k \in E_\rho$ and $\sigma \notin D_{k + 1, \rho}$, so
        \begin{align*}
            \cyl\sigma &\subseteq \cyl{E_\rho} \setminus \cyl{D_{k + 1, \rho}}. \qedhere
        \end{align*}
    \end{proof}

    \begin{lem}\label{partition_of_E_rho}
        Let $\rho$ be a string of length $k$. Then
        \[\lambda\cyl{D_{k + 1, \rho}} + \sum_{\substack{\theta \in S_{k + 1} \\ \rho \subset \theta}} \paren{\lambda\cyl\theta - \lambda\cyl{D_{k + 1, \theta}}} = \lambda\cyl{E_\rho}.\]
    \end{lem}
    \begin{proof}
        Clearly,
        \[D_{k + 1, \rho} = D_{k + 1, \rho 0} \cup D_{k + 1, \rho 1},\]
        so
        \[\lambda\cyl{D_{k + 1, \rho}} = \lambda\cyl{D_{k + 1, \rho 0}} + \lambda\cyl{D_{k + 1, \rho 1}}.\]
        This can be rearranged to obtain
        \[\lambda\cyl{D_{k + 1, \rho}} + \sum_{i = 0}^1 \paren{\lambda\cyl{\rho i} - \lambda\cyl{D_{k + 1, \rho i}}} = \lambda\cyl\rho.\]
        By \cref{how_much_was_mapped_into_theta}, $\lambda\cyl{D_{k + 1, \rho i}} = \lambda\cyl{\rho i}$ if and only if $\rho i \notin S_{k + 1}$. Thus, the corresponding summand above is only nonzero when $\rho i \in S_{k + 1}$. We also know by condition \ref{measure_preserving} that $\lambda\cyl\rho = \lambda\cyl{E_\rho}$. This gives us the desired equality.
    \end{proof}

    \proofstep{Step 8: Defining $t$ everywhere else}

    Consider any string $\rho \in S_k$. By \cref{Dk+1_rho_in_E_rho,partition_of_E_rho}, we have
    \[\lambda\paren{\cyl{E_\rho} \setminus \cyl{D_{k + 1, \rho}}} = \sum_{\substack{\theta \in S_{k + 1} \\ \rho \subset \theta}} \paren{\lambda\cyl\theta - \lambda\cyl{D_{k + 1, \theta}}}.\]
    Thus, we can find sets $H_\theta \subseteq 2^{\l_{k + 1}}$ for each $\theta \in S_{k + 1}$ with $\rho \subset \theta$ such that
    \[\lambda\cyl{H_\theta} = \lambda\cyl\theta - \lambda\cyl{D_{k + 1, \theta}}\quad\text{and}\quad\cyl{E_\rho} \setminus \cyl{D_{k + 1, \rho}} = \bigcup_{\substack{\theta \in S_{k + 1} \\ \rho \subset \theta}} \cyl{H_\theta}.\]
    \begin{lem}\label{H_theta_partition_not_Dk+1}
        The sets $H_\theta$ for different $\theta \in S_{k + 1}$ form a partition of $2^{\l_{k + 1}} \setminus D_{k + 1}$.
    \end{lem}
    \begin{proof}
        There are three things to prove:
        \begin{itemize}
            \item $H_\theta$ is disjoint from $D_{k + 1}$ for each $\theta \in S_{k + 1}$.

            This holds because any string $\sigma \in H_\theta$ satisfies $t(\sigma) \supseteq \rho$ yet $\sigma \notin D_{k + 1, \rho}$, where $\rho = \theta \prefix k$.

            \item Every string of length $\l_{k + 1}$ is either in $D_{k + 1}$ or $H_\theta$ for some $\theta \in S_{k + 1}$.

            This holds by \cref{Dk+1_and_E_rho_minus_Dk+1_rho_partition} since
            \[\cyl{D_{k + 1}} \cup \bigcup_{\rho \in S_k}\bigcup_{\substack{\theta \in S_{k + 1} \\ \rho \subset \theta}} \cyl{H_\theta} = \cyl{D_{k + 1}} \cup \bigcup_{\rho \in S_k} \paren{\cyl{E_\rho} \setminus \cyl{D_{k + 1, \rho}}} = 2^\N.\]
            \item The sets $H_\theta$ for different strings $\theta \in S_{k + 1}$ are disjoint.

            Consider any $\theta, \theta' \in S_{k + 1}$ for which there exists $\sigma \in H_\theta \cap H_{\theta'}$. Let $\rho = \theta \prefix k$ and $\rho' = \theta' \prefix k$. Then $\cyl\sigma \subseteq \cyl{E_\rho} \cap \cyl{E_{\rho'}}$, so we must have $\rho = \rho'$.

            Now observe that if $\theta$ and $\theta'$ were different, then the clopen sets $\cyl{H_\theta}$ and $\cyl{H_{\theta'}}$ would be disjoint as they form a partition of $\cyl{E_\rho} \setminus \cyl{D_{k + 1, \rho}}$ due to their measures. Hence we must have $\theta = \theta'$.
        \end{itemize}
    \end{proof}
    By \cref{H_theta_partition_not_Dk+1}, we are now able to define $t$ on $2^{\l_{k + 1}} \setminus D_{k + 1}$ by $t(\sigma) = \theta$ for $\sigma \in H_\theta$.

    \proofstep{Step 9: Proving conditions \ref{mapping_onto_Sk}, \ref{measure_preserving}, and \ref{monotonicity} for $k + 1$}

    The following lemma gives us condition \ref{mapping_onto_Sk} for $k + 1$.

    \begin{lem}
        $t$ maps $2^{\l_{n + 1}} \setminus D_{k + 1}$ onto $S_{k + 1}$.
    \end{lem}
    \begin{proof}
        This is direct from the definition of $t$ on $2^{\l_{n + 1}} \setminus D_{k + 1}$, along with the fact that each $H_\theta$ for $\theta \in S_{k + 1}$ is nonempty by \cref{how_much_was_mapped_into_theta}.
    \end{proof}

    The following lemma gives us condition \ref{measure_preserving} for $k + 1$.

    \begin{lem}
        $\lambda\cyl{\set{\sigma \in 2^{\l_{k + 1}}: t(\sigma) \supseteq \theta}} = \lambda\cyl\theta$ whenever $|\theta| = k + 1$.
    \end{lem}
    \begin{proof}
        Consider any string $\theta$ of length $k + 1$. Then
        \[\set{\sigma \in 2^{\l_{k + 1}}: t(\sigma) \supseteq \theta} = \begin{cases}
            D_{k + 1, \theta} \cup H_\theta & \text{if } \theta \in S_{k + 1} \\
            D_{k + 1, \theta} & \text{if } \theta \notin S_{k + 1}
        \end{cases}.\]
        Note that whenever $\theta \in S_{k + 1}$, the definition of $H_\theta$ ensures that
        \[\lambda\cyl{D_{k + 1, \theta}} + \lambda\cyl{H_\theta} = \lambda\cyl\theta.\]
        Also, whenever $\theta \notin S_{k + 1}$, \cref{how_much_was_mapped_into_theta} tells us that
        \[\lambda\cyl{D_{k + 1, \theta}} = \lambda\cyl\theta.\]
        This gives the result.
    \end{proof}

    The following lemma gives us condition \ref{monotonicity} for $k + 1$.

    \begin{lem}
        For all strings $\sigma$ of length $\l_{k + 1}$, we have $t\paren{\sigma \prefix \l_k} \subseteq t(\sigma)$.
    \end{lem}
    \begin{proof}
        \cref{t_monotone} already showed this when $\sigma \in D_{k + 1}$, so assume $\sigma \in 2^{\l_{k + 1}} \setminus D_{k + 1}$. Then taking $\theta$ such that $\sigma \in H_\theta$ and letting $\rho = \theta \prefix k$, we have
        \[\cyl\sigma \subseteq \cyl{H_\theta} \subseteq \cyl{E_\rho}.\]
        This implies that $\sigma \prefix \l_k \in E_\rho$ and hence $t\paren{\sigma \prefix \l_k} \supseteq \rho$. In fact, given that $\sigma \notin D_{k + 1}$, since $\cyl{D_k} \subseteq \cyl{D_{k + 1}}$ by \cref{Dk_in_Dk+1}, it follows that $\sigma \prefix \l_k \notin D_k$. Thus, $t\paren{\sigma \prefix \l_k}$ has length exactly $k$ by condition \ref{mapping_onto_Sk}, so we get
        \begin{align*}
            t\paren{\sigma \prefix \l_k} &= \rho \subset \theta = t(\sigma). \qedhere
        \end{align*}
    \end{proof}
    With all conditions of the induction now verified for $k + 1$, this at last completes the definition of the sequences $\paren{n_k}_k$, $\paren{s_k}_k$, and $\paren{\l_k}_k$ in $\N$, the sequences $\paren{D_k}_k$, $\paren{R_k}_k$, and $\paren{S_k}_k$ in $\PP_{<\N}\paren{2^{<\N}}$, and the map $t: \bigcup_k 2^{\l_k} \to 2^{<\N}$. Note that all of these are computable.

    \proofstep{Step 10: Showing that the limit $T$ satisfies all the desired properties}

    Since the map $t$ is monotone by condition \ref{monotonicity}, it has a well-defined limit $T: \ \subseteq \! 2^\N \to 2^\N$. Let $A = \neg V$. We now check each of the properties we claimed in the statement of the theorem:
    \begin{itemize}
        \item $T$ is total.

        This holds by conditions \ref{mapping_onto_Rk} and \ref{mapping_onto_Sk}, which imply that $|t(\sigma)| \geq k$ whenever $|\sigma| = \l_k$.
        
        \item $T$ is computable.

        This holds by \cref{limit_of_computable_monotone_map} since $t$ is computable.

        \item $T$ is measure-preserving.

        Observe that for any string $\rho$ of length $k$ and any $y \in 2^\N$, we have
        \[y \in T^{-1}\cyl\rho \iff \rho \subset T(y) \iff \rho \subset t\paren{y \prefix \l_k}.\]
        Hence condition \ref{measure_preserving} implies
        \[\lambda\paren{T^{-1}\cyl\rho} = \lambda\cyl{\set{\sigma \in 2^{\l_k}: t(\sigma) \supseteq \rho}} = \lambda\cyl\rho.\]
        As the cylinders $\cyl\rho$ generate the Borel $\sigma$-algebra on $2^\N$, it follows that $T$ is measure-preserving.

        \item $A \ni x$ is a $\Pi^0_1$ set with computable $\lambda(A) > 0$.

        Since $V \not\ni x$ is $\Sigma^0_1$, clearly $A \ni x$ is $\Pi^0_1$. Also, we assumed at the beginning of the proof that $\lambda(V) < 1$ was computable, which implies that $\lambda(A) > 0$ is likewise computable.
        
        \item $T^j(x) \notin A$ for all $j > 0$.

        Consider any $j > 0$ and take $k$ large enough such that $x \in U_{n_j, s_k}$. Let $\sigma = x \prefix \l_k$. Then $\cyl\sigma \subseteq U_{n_j, s_k}$, so $\tau := t^{j - 1}(\sigma) \in D_k$ by condition \ref{determined_domain}. Hence conditions \ref{determined_range} and \ref{mapping_onto_Rk} imply
        \[T^j(x) \in \cyl{t^j(\sigma)} = \cyl{t(\tau)} \subseteq \cyl{R_k} \subseteq V = \neg A.\]
    \end{itemize}
\end{proof}

\subsection{Discussion}

With the proof of \cref{Schnorr_converse} finally complete, a couple of questions arise.

First, recall from \cref{CPS_weak_pi0n_generic_Poincare_ergodic} that weakly 1-generic points recur in any $\Pi^0_1$ set with positive computable measure containing them under a computable \emph{ergodic} measure-preserving transformation. Thus, it is natural to ask whether we can modify the construction above to obtain an ergodic transformation under the same hypotheses. This would characterize when a point is either Schnorr random or weakly 1-generic. 

\begin{qn}
    Given a point $x \in 2^\N$ which is neither Schnorr random nor weakly 1-generic, do there exist a $\Pi^0_1$ set $A \ni x$ with computable $\lambda(A) > 0$ and a computable \emph{ergodic} measure-preserving transformation $T: 2^\N \to 2^\N$ such that $T^k(x) \notin A$ for all $k > 0$?
\end{qn}

Similarly, \cref{CPS_pi0n_generic_Poincare} said that 1-generic points recur in any $\Pi^0_1$ set with positive computable measure containing them under a computable measure-preserving transformation. Thus, we can likewise ask whether a suitable transformation $T$ could still be constructed if we replaced ``weakly 1-generic'' with ``1-generic.'' This would mean assuming that $x$ lies on the boundary of some (not necessarily dense) $\Sigma^0_1$ set $V$. 

Without density of $V$, the strategy used in the proof above breaks down completely, and there is little hope of replacing it with anything else. Indeed, if we had a computable transformation $T$ that worked, then there would exist an algorithm that can list all the cylinders mapping into any given cylinder $\cyl\theta$ under $T$. However, if $x$ is weakly 1-generic but not 1-generic, then there is no algorithmic way to determine whether a given cylinder $\cyl\theta$ will intersect $V$ until we see something from $V$ enumerated in it.

The only safe way to handle this seems to be choosing portions of $\cyl\theta$ itself to map into $\cyl\theta$, for if $x$ happens to be in $\cyl\theta$, then because $x$ lies on the boundary of $V$, we will eventually see some of $V$ enumerated in $\cyl\theta$. Depending on the timing of when we find cylinders of the Schnorr test, though, we may not be able to map the entirety of them into $V$ due to the commitments we have to make eventually about where everything is mapped.

Thus, an affirmative answer to the following question is sure to be vastly more difficult than the proof above, assuming there even exists one.

\begin{qn}
    Given a point $x \in 2^\N$ which is neither Schnorr random nor \emph{1-generic}, do there exist a $\Pi^0_1$ set $A \ni x$ with computable $\lambda(A) > 0$ and a computable measure-preserving transformation $T: 2^\N \to 2^\N$ such that $T^k(x) \notin A$ for all $k > 0$?
\end{qn}

\section[Effective recurrence in Π0n sets]{Effective recurrence in $\Pi^0_n$ sets}\label{section_other}

\subsection{Recurrence for $n$-random points}

Recall from \cref{CPS_weak_n_Poincare_ergodic} that weakly $(n + 1)$-random points recur in any $\Sigma^0_{n + 1}$ set (or equivalently, any $\Pi^0_n$ set) with positive measure under a computable ergodic measure-preserving transformation. The following result due to \cite{Bienvenu2012}, as well as \cite{Franklin2012} independently, shows that the same is in fact true of $n$-random points.

\begin{thm}\label{CPS_n_Poincare_ergodic}
    Let $(X, \mu)$ be a computable probability space and $n \geq 1$. If $x \in X$ is $n$-random, then for any $\Pi^0_n$ set $A$ with $\mu(A) > 0$ and any computable ergodic measure-preserving transformation $T: \ \subseteq \! X \to X$ with $x \in \dom T$, we have $T^k(x) \in A$ for some $k > 0$.
\end{thm}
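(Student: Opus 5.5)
Since $x$ is $n$-random, it is Martin-L\"of random relative to $\zero^{(n-1)}$; this is the relativized form of the standard equivalence (\cref{relativizing_randomness}). The plan is to relativize the Martin-L\"of-random ergodic theorem of \cite{Bienvenu2012} and \cite{Franklin2012} (item (ii) of the introduction) to the oracle $\zero^{(n-1)}$, applied to the indicator function of $\neg A$. We then argue exactly as in the introduction's derivation of recurrence for $\Pi^0_1$ sets.

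\textbf{Step 1: make the test function lower-semicomputable relative to the oracle.} For $n=1$, $\neg A$ is $\Sigma^0_1$, so $f=\one_{\neg A}$ is lower-semicomputable. For $n\geq 2$, $A$ is not in general $\Pi^0_1(\zero^{(n-1)})$, so we approximate it. Since $\mu(A)>0$, we use \cref{approximate_sigma0n_with_sigma01(zero(n-1))} on the $\Sigma^0_n$ set $\neg A$. This yields a $\Sigma^0_1(\zero^{(n-1)})$ set $V\supseteq \neg A$ with $\mu(V)<1$. We then set $A'=\neg V\subseteq A$, which is $\Pi^0_1(\zero^{(n-1)})$ with $\mu(A')>0$. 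It suffices to show that some $T^k(x)$ lies in $A'$. Now $f=\one_V$ is nonnegative and lower-semicomputable relative to $\zero^{(n-1)}$, and $\int f\dmu = \mu(V)<1$.

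\textbf{Step 2: apply the relativized ergodic theorem.} $T$ is computable, hence $\zero^{(n-1)}$-computable, and it is ergodic and measure-preserving. The computable probability space is also computable relative to $\zero^{(n-1)}$. The Bienvenu--Day--Hoyrup--Mezhirov--Shen / Franklin--Greenberg--Miller--Ng theorem relativizes, since its proof goes through effective tests and the effective maximal ergodic inequality. It therefore gives
\[\frac1N\sum_{k<N} f(T^k x)\to \mu(V)<1.\]
Hence $f(T^k x)=0$ for infinitely many $k$, in particular for some $k>0$. For such $k$ we have $T^k(x)\notin V$, that is, $T^k(x)\in A'\subseteq A$.

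\textbf{Main obstacle.} The step needing most care is justifying that the cited theorem relativizes uniformly in the oracle, in the generality of a computable probability space and a partial $T$ with $x\in\dom T$. In particular, the orbit of $x$ must remain in $\dom T$. This should follow from \cref{preservation_of_randomness} relativized, together with the fact that $\dom T$ has full measure and is $\Pi^0_2$; that gives $T^k(x)\in\dom T$ for all $k$. Since the theorem is attributed to those authors, we expect to cite their result in relativized form rather than reprove it, checking only that the oracle enters through the complexity of $f$.
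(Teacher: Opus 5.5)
Your proposal is correct and follows essentially the paper's route. You shrink $A$ to a $\Pi^0_1\paren{\zero^{(n-1)}}$ subset of positive measure via \cref{approximate_sigma0n_with_sigma01(zero(n-1))} and relativize the $n = 1$ result of \cite{Bienvenu2012}; the only cosmetic difference is that the paper cites their recurrence theorem (Theorem 12) directly, whereas you pass through the convergence of the ergodic averages of $\one_V$ as in the introduction.

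Your ``main obstacle'' about the orbit staying in $\dom T$ is automatic, because the paper's definition of a measure-preserving transformation requires $\dom T$ to be $T$-invariant. You should not appeal to $\dom T$ being $\Pi^0_2$, since that is not guaranteed for a general computable $T$ here.
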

\begin{proof}
    Theorem 12 in \cite{Bienvenu2012} gives a slightly more general version of this for the case $n = 1$. For $n > 1$, note that if $A$ is $\Pi^0_n$ with $\mu(A) > 0$, then by \cref{approximate_sigma0n_with_sigma01(zero(n-1))}, there is a $\Pi^0_1\paren{\zero^{(n - 1)}}$ set $B \subseteq A$ with $\mu(B) > 0$. Relativizing the result for $n = 1$ then shows that for any $n$-random point $x \in X$ and any computable ergodic measure-preserving transformation $T: \ \subseteq \! X \to X$ with $x \in \dom T$, we will have $T^k(x) \in B \subseteq A$ for some $k > 0$.
\end{proof}

A similar result is expected to be true for non-ergodic transformations, but this remains open.

\begin{conj}
    Let $(X, \mu)$ be a computable probability space and $n \geq 1$. If $x \in X$ is $n$-random, then for any $\Pi^0_n$ set $A \ni x$ with $\mu(A) > 0$ and any computable measure-preserving transformation $T: \ \subseteq \! X \to X$ with $x \in \dom T$, we have $T^k(x) \in A$ for some $k > 0$.
\end{conj}

Note that if $n$-randomness is replaced with weak $(n + 1)$-randomness here, then the result is true by \cref{CPS_weak_n_Poincare}. (In fact, weakly $(n + 1)$-random points possess a stronger recurrence property that will be the content of \cref{CPS_weak_recurrence_frequency}.) In the non-ergodic setting, there are no weaker notions of randomness known to guarantee recurrence in $\Pi^0_n$ sets.

By the following result, $n$-randomness is indeed optimal in \cref{CPS_n_Poincare_ergodic}.

\begin{thm}\label{n_Poincare_ergodic_characterization}
    Consider Cantor space with the Lebesgue measure $\lambda$, and let $n \geq 1$. Then $x \in 2^\N$ is $n$-random if and only if for any $\Pi^0_n$ set $A$ with $\mu(A) > 0$ and any computable ergodic measure-preserving transformation $T: \ \subseteq \! 2^\N \to 2^\N$ with $x \in \dom T$, we have $T^k(x) \in A$ for some $k > 0$.
\end{thm}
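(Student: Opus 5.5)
The forward direction is exactly \cref{CPS_n_Poincare_ergodic}, so only the converse needs work. I would model it on the proof of \cref{Schnorr_n_Poincare_ergodic_characterization}, replacing Schnorr tests by Martin-L\"of tests and dropping the measure-computability bookkeeping. Suppose $x$ is not $n$-random. By \cref{relativizing_randomness}, $x$ is not Martin-L\"of random relative to $\zero^{(n - 1)}$. So there is a $\zero^{(n - 1)}$-Martin-L\"of test $(U_k)_k$, consisting of uniformly $\Sigma^0_1\paren{\zero^{(n - 1)}}$ sets with $\lambda(U_k) \leq 2^{-k}$, such that $x \in \bigcap_k U_k$. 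Passing to the subsequence $U_{2k + 2}$, I may assume $\lambda(U_k) \leq 2^{-2k - 2}$.

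I take $T$ to be the shift map on $2^\N$. It is a total computable ergodic measure-preserving transformation, so $x \in \dom T$ automatically.

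Next I define $V = \bigcup_k T^k(U_k)$. By \cref{shift_of_sigma01}, relativized to $\zero^{(n - 1)}$, the sets $T^k(U_k)$ are uniformly $\Sigma^0_1\paren{\zero^{(n - 1)}}$, so $V$ is $\Sigma^0_1\paren{\zero^{(n - 1)}}$. I then put $A = \neg V$. This is $\Pi^0_1\paren{\zero^{(n - 1)}}$, and hence $\Pi^0_n$ by \cref{sigma0n(A(m))_is_sigma0(m+n)(A)}. Since $x \in U_k$, we get $T^k(x) \in T^k(U_k) \subseteq V$ for every $k$, so $T^k(x) \notin A$ for all $k > 0$.

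It remains to check that $\lambda(A) > 0$. The shift at most doubles the measure of any cylinder, and therefore of any open set. So
\[\lambda(V) \leq \sum_k 2^k\lambda(U_k) \leq \sum_k 2^{-k - 2} = \frac12,\]
which gives $\lambda(A) \geq 1/2 > 0$. This completes the converse.

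I expect no real obstacle here. The only points needing care are the relativization of \cref{shift_of_sigma01} and the bound on how the shift distorts measure, and both are already used in the proof of \cref{Schnorr_n_Poincare_ergodic_characterization}. This is why the characterization is described as already known. Unlike the Schnorr case, nothing about the computability of $\lambda(A)$ has to be verified.
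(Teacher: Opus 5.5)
Your argument is correct, but it is not the route the paper takes.

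The paper fixes a universal $n$-test $\paren{U_j}_j$ (\cref{universal_n_test}). By \cref{shift_preserves_non_randomness}, every iterate $T^k(x)$ of a non-$n$-random $x$ under the shift is again non-$n$-random, so it lies in $\bigcap_j U_j \subseteq U_1$. The paper then simply takes $A = \neg U_1$, a $\Pi^0_n$ set of measure at least $1/2$.

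You instead transplant the construction from \cref{Schnorr_n_Poincare_ergodic_characterization}. You push each level $U_k$ of a test capturing $x$ forward by $T^k$, take the complement of the union, and control the measure with the doubling bound.

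The underlying ingredient is the same in both. Images of $\Sigma^0_1\paren{\zero^{(n - 1)}}$ sets under the shift stay $\Sigma^0_1\paren{\zero^{(n - 1)}}$ with at most doubled measure, and this is also what drives \cref{shift_preserves_non_randomness}. So the difference is mainly in packaging:
\begin{itemize}
\item The paper's version is shorter. It also yields a single pair $(A, T)$ that works for every non-$n$-random point at once.
\item Your version would give the same uniformity if applied to a universal test instead of a test chosen for $x$.
\item Your version avoids universal tests entirely, and your $A$ comes out directly as a $\Pi^0_1\paren{\zero^{(n - 1)}}$ set.
\item Yours is the argument that survives in the Schnorr setting, where no universal test exists. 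That is exactly why the paper uses it there.
\end{itemize}
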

\begin{proof}
    The $\RA$ direction is simply \cref{CPS_n_Poincare_ergodic}. For the $\LA$ direction, assume that $x \in 2^\N$ is not $n$-random. Let $\paren{U_j}_j$ be a universal $n$-test and $T: 2^\N \to 2^\N$ be the shift map.

    By \cref{shift_preserves_non_randomness}, $T^k(x)$ is not $n$-random for any $k$, which implies that $T^k(x) \in \bigcap_j U_k$ for all $k$. If we take $A = \neg U_1$, which has measure $\lambda(A) = 1 - \lambda(U_1) \geq 1/2$, then we get $T^k(x) \notin A$ for all $k$.
\end{proof}

Similar to when we characterized Schnorr $n$-randomness in terms of recurrence in $\Pi^0_n$ sets with $\zero^{(n - 1)}$-computable measure, the set $A$ constructed in the proof above did \emph{not} contain the point $x$. We have already seen in \cref{CPS_pi0n_generic_Poincare} that there exist points that recur in any $\Pi^0_n$ set containing them and are not Schnorr $n$-random, namely the $\Pi^0_n$-generic points. In the following subsection, we will further expand this to a broader class of points, which we will call quasi-$\Pi^0_n$-generic.

\subsection[Recurrence for quasi-Π0n-generic points]{Recurrence for quasi-$\Pi^0_n$-generic points}

In \cref[S]{section_Schnorr}, we saw that no matter the computable probability space $(X, \mu)$, for each $n \geq 2$, there always exist points $x \in X$ such that for every $\Pi^0_n$ set $P \ni x$, there is a $\Pi^0_1\paren{\zero^{(n - 2)}}$ set $Q \subseteq P$ with $\mu(Q) > 0$ and $x \in Q$. We called these points $\Pi^0_n$-generic, and because they are always weakly $n$-random, their defining property guarantees that they recur in any $\Pi^0_n$ set containing them with positive measure under a computable measure-preserving transformation. This expanded the class of points which recur in any $\Pi^0_n$ set containing them with positive $\zero^{(n - 1)}$-computable measure beyond the Schnorr $n$-random points.

We now take this a step further by introducing a broader notion of genericity that will allow us to obtain a similar recurrence result in greater generality.

\subsubsection[Defining quasi-Π0n-genericity]{Defining (weak) quasi-$\Pi^0_n$-genericity}

We define this broader notion of genericity as follows.

\begin{defn}\label{quasi_pi0n_generic_definition}
    Let $(X, \mu)$ be a computable probability space, $n \geq 1$, and $A \subseteq \N$. (In each of the following definitions, we drop ``relative to $A$'' if $A = \zero$.)
    \begin{itemize}
        \item A point $x \in X$ is \textbf{quasi-$\Pi^0_n$-generic relative to $A$} if for any $\Pi^0_n(A)$ set $P \ni x$, there is a $\Pi^0_1\paren{A^{(n - 1)}}$ set $Q \subseteq P$ with $A^{(n - 1)}$-computable $\mu(Q) > 0$ and $x \in Q$.

        \item A point $x \in X$ is \textbf{weakly quasi-$\Pi^0_n$-generic relative to $A$} if for any $\Pi^0_n(A)$ set $P \ni x$, there is a $\Pi^0_1\paren{A^{(n - 1)}}$ set $Q \subseteq P$ with $A^{(n - 1)}$-computable $\mu(Q) > 0$.
    \end{itemize}
\end{defn}

When considering Cantor space with the Lebesgue measure, we could also say ``(weakly) quasi-1-generic'' in place of ``(weakly) quasi-$\Pi^0_1$-generic,'' which is more in line with the standard terminology and is slightly more concise.

\begin{lem}
    Let $(X, \mu)$ be a computable probability space, $n \geq 1$, and $A \subseteq \N$. Then relativizing all terms to the oracle $A$,
    \begin{center}
		$\Pi^0_n$-generic $\myimplies{(a)}$ Quasi-$\Pi^0_n$-generic \qquad\qquad\qquad\qquad\qquad\qquad \ \\
		\qquad $\downimplies$ \qquad\qquad\qquad\qquad $\downimplies$ \qquad\qquad\qquad\qquad\qquad\qquad\qquad\qquad \ \\
		Weakly $\Pi^0_n$-generic $\myimplies{(b)}$ Weakly quasi-$\Pi^0_n$-generic $\myimplies{(c)}$ Weakly $n$-random.
	\end{center}
\end{lem}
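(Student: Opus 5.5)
The two vertical implications are immediate. Each notion's weak version is obtained by deleting the clause ``$x \in Q$'' (or ``$x \in U$'') from the definition, so the non-weak version implies the weak one. Implication (c) holds because a weakly quasi-$\Pi^0_n$-generic point relative to $A$ cannot lie in a null $\Pi^0_n(A)$ set $P$. Such a $P$ contains no $Q$ with $\mu(Q) > 0$. Since weak $n$-randomness relative to $A$ means avoiding null $\Pi^0_n(A)$ sets, (c) follows.

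The real content is in (a) and (b). Both require turning a witness from \cref{pi0n_generic_definition} into one meeting \cref{quasi_pi0n_generic_definition}: a $\Pi^0_1\paren{A^{(n - 1)}}$ subset with $A^{(n-1)}$-computable positive measure. The membership of $x$ must also be kept in case (a). I split into $n = 1$ and $n \geq 2$.

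For $n \geq 2$, the witness is a $\Pi^0_1\paren{A^{(n - 2)}}$ set $Q \subseteq P$ with $\mu(Q) > 0$. It is also $\Pi^0_1\paren{A^{(n - 1)}}$. Its measure is $A^{(n-2)}$-upper-semicomputable by \cref{measure_of_sigma0n} relativized, hence $A^{(n-1)}$-computable. So $Q$ itself is a valid witness, and $x \in Q$ is kept in case (a).

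For $n = 1$, the witness is a $\Sigma^0_1$ set $U \subseteq P$ with $\mu(U) > 0$, and $x \in U$ in case (a). This $U$ is not closed, so I must shrink it. Here $A^{(0)} = A$, so I need a $\Pi^0_1(A)$ set of $A$-computable positive measure inside $U$.
\begin{itemize}
\item For (b), it suffices to take a compact $\Pi^0_1$ set of positive measure inside an ideal ball contained in $U$, as in \cref{approximate_sigma01_inside_with_compact}. Then I adjust its measure to be computable. One way is to choose a ball whose boundary is null, i.e.\ an almost decidable ball (\cref{almost_decidable_basis}). Its closure is then $\Pi^0_1$ with computable measure, by \cref{almost_decidable_has_computable_measure}. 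Concretely, I take an almost decidable ball $B$ with $\cl B \subseteq U$, shrinking the radius to a suitable value where needed.
\item For (a), the same idea must produce a ball $B \ni x$ with $\cl B \subseteq U$ and $\mu(\partial B) = 0$. Almost decidable balls around ideal points with generic radii form a basis, so I can pick $B$ containing $x$ with a slightly larger almost decidable ball $B'$ still inside $U$. I then take $Q$ to be the closed ball $\{y : d(y, s) \leq r\}$ with $r$ chosen so that both this closed ball and the open ball of radius $r$ are almost decidable-type sets with null boundary. This $Q$ is $\Pi^0_1$ with computable measure equal to $\mu(B) > 0$, and it contains $x$.
\end{itemize}

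The main obstacle I expect is this $n = 1$ step. The paper's lemma on almost decidable sets may only supply an almost decidable $B$ rather than a closed-ball description. If so, I would use the complement of the $\Sigma^0_1$ set $V \subseteq \neg B$ from the almost decidable pair $(U', V)$. It satisfies $x \in B \subseteq \neg V$, $\neg V$ is $\Pi^0_1$, and $\mu(\neg V) = \mu(B)$ is computable. Intersecting with the closure of a slightly smaller ball keeps everything inside $P$.
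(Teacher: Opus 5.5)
Your treatment of the vertical implications, of (c), and of the case $n \geq 2$ of (a) and (b) matches the paper's proof. For $n = 1$ you take a different route. The paper does not build a ball. It uses \cref{approximate_sigma01_or_pi01} to write the witness $U$ as an increasing union $U = \bigcup_s P_s$ of $\Pi^0_1$ sets with uniformly computable measures. Since $\mu(P_s) \upto \mu(U) > 0$ and $x \in P_s$ for all large $s$, a single $P_s$ is the required witness, and positivity of its measure comes for free from monotone convergence. You instead pick one almost decidable closed ball $\cl B(s, r_m)$ from \cref{almost_decidable_basis}. It is $\Pi^0_1$ by \cref{closed_balls_closed} and has computable measure by \cref{almost_decidable_has_computable_measure}. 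This is the same underlying tool, since the paper derives \cref{approximate_sigma01_or_pi01} from that basis. For (b) your route works: $U$ is the union of the countably many such closed balls it contains, so one of them has positive measure.

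The gap is in (a) for $n = 1$. You assert $\mu(Q) = \mu(B) > 0$ for the ball chosen around $x$, but this does not follow from $x \in Q \subseteq U$ and $\mu(U) > 0$. If $x \notin \supp \mu$, every small ball around $x$ is null, while the positive measure of $U$ may sit elsewhere. You need one more observation, and either of these works:
\begin{itemize}
\item $x$ is $\Pi^0_1$-generic relative to $A$, hence weakly $1$-random relative to $A$ by the paper's earlier lemma on $\Pi^0_n$-genericity. So $x$ cannot lie in the null $\Pi^0_1$ set $Q$, and therefore $\mu(Q) > 0$.
\item Replace $Q$ by $Q \cup Q'$, where $Q'$ is a positive-measure almost decidable closed ball inside $U$, as in your (b) argument. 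This union is $\Pi^0_1$ and almost decidable by \cref{finite_union_of_almost_decidable_sets}, so it has computable positive measure and contains $x$.
\end{itemize}
The paper's exhaustion argument avoids this issue entirely, because the set $P_s$ may combine a null piece containing $x$ with a positive-measure piece elsewhere in $U$.

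Your fallback paragraph is unnecessary, since \cref{almost_decidable_basis} explicitly supplies almost decidable closed balls.
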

\begin{proof}
    The vertical implications are obvious from the definition since we merely remove the condition ``$x \in Q$'' when moving from (quasi-)$\Pi^0_n$-genericity to weak (quasi-)$\Pi^0_n$-genericity.
    
    For implications (a) and (b), there are two cases:
    \begin{itemize}
        \item $n = 1$. Assume that $x$ is $\Pi^0_1$-generic relative to $A$ and consider any $\Pi^0_1(A)$ set $P \ni x$. Then there is a $\Sigma^0_1$ set $U \subseteq P$ with $\mu(U) > 0$ and $x \in U$. By \cref{approximate_sigma01_or_pi01}, we can write $U = \bigcup_s P_s$ for some increasing sequence $\paren{P_s}_s$ of uniformly $\Pi^0_1$ sets with uniformly computable measures. Then there must be some $P_s$ (which is inside $P$) such that $\mu\paren{P_s} > 0$ and $x \in P_s$. Thus, $x$ is quasi-$\Pi^0_n$-generic relative to $A$, giving implication (a).

        A nearly identical proof shows that any weakly $\Pi^0_1$-generic point relative to $A$ is also weakly quasi-$\Pi^0_1$-generic relative to $A$, giving implication (b).

        \item $n \geq 2$. Then implications (a) and (b) hold because any $\Pi^0_1\paren{A^{(n - 2)}}$ set is also $\Pi^0_1\paren{A^{(n - 1)}}$ with $A^{(n - 2)}$-upper-semicomputable measure and hence $A^{(n - 1)}$-computable measure.
    \end{itemize}

    Implication (c) holds because no weakly quasi-$\Pi^0_n$ generic point relative to $A$ can lie in a null $\Pi^0_n(A)$ set.
\end{proof}

\subsubsection[Existence and randomness of quasi-Π0n-genericity]{Existence and randomness of quasi-$\Pi^0_n$-genericity}

Since any (weakly) $\Pi^0_n$-generic point is (weakly) quasi-$\Pi^0_n$-generic, we have already established conditions under which these points exist in \cref{pi01_generic_comeager,pi0n_generic_exists}. In fact, we will be able to say something even stronger through the following two lemmas.

\begin{lem}\label{quasi_pi0n_generic_dense}
    Let $(X, \mu)$ be a computable probability space, $n \geq 1$, and $A \subseteq \N$. Let $(\P, \subseteq)$ be the partial order consisting of $\Pi^0_1\paren{A^{(n - 1)}}$ sets with positive $A^{(n - 1)}$-computable measure, ordered by the subset relation. For each $\Pi^0_n(A)$ set $P$, let
    \[D_P = \{Q \in \P: Q \subseteq P \text{ or } P \cap Q = \zero\}.\]
    Then $D_P$ is dense in $\P$ for each $\Pi^0_n(A)$ set $P$.
\end{lem}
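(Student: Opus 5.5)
Fix a $\Pi^0_n(A)$ set $P$ and $Q \in \P$; we need $R \in D_P$ with $R \subseteq Q$. I will mirror the proof of \cref{pi0n_generic_dense}, but now work entirely with $A^{(n-1)}$ as the oracle and keep track of computability of measures. Write $\neg P = \bigcup_j P_j$ with $P_j$ uniformly $\Pi^0_{n-1}(A)$ (for $n = 1$, take $P_j$ uniformly clopen, or equivalently write $\neg P$ as a $\Sigma^0_1(A)$ set). Split into the cases $\mu(Q \setminus P) > 0$ and $\mu(Q \setminus P) = 0$.

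In the first case, some $j$ has $\mu(Q \cap P_j) > 0$. Then $Q \cap P_j$ is $\Pi^0_1(A^{(n-1)})$, since $P_j$ is $\Sigma^0_{n}(A)$-and-$\Pi^0_{n-1}(A)$ and hence $\Pi^0_1(A^{(n-1)})$ by \cref{sigma0n(A(m))_is_sigma0(m+n)(A)} (for $n=1$, it is $\Pi^0_1(A)$ or clopen). Its measure need not be $A^{(n-1)}$-computable, so I shrink it. Apply \cref{approximate_sigma01_or_pi01} relativized to $A^{(n-1)}$ to the $\Sigma^0_1(A^{(n-1)})$ complement: write $Q \cap P_j = \bigcap_k W_k$ with $W_k$ uniformly $\Sigma^0_1(A^{(n-1)})$ with uniformly $A^{(n-1)}$-computable measures. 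Alternatively, and more directly, I would use a relativized version of the standard fact that every $\Pi^0_1$ set of positive measure contains a $\Pi^0_1$ subset of positive computable measure. Concretely, take $B$ to be an almost decidable ball (relative to $A^{(n-1)}$) or intersect with the complement of a $\Sigma^0_1(A^{(n-1)})$ set of computable measure. The trick I would try first is the following. Let $V = \neg(Q \cap P_j)$, which is $\Sigma^0_1(A^{(n-1)})$ with $\mu(V) < 1$. By \cref{subset_of_sigma01_with_rational_measure_and_same_closure}-style reasoning (a relativized version valid in any computable probability space, obtained by enumerating $V$ and adding enumerated pieces while the measure stays below a rational $q$ with $\mu(V) < q < 1$, then padding with an almost decidable set), obtain a $\Sigma^0_1(A^{(n-1)})$ set $V' \supseteq V$ with $A^{(n-1)}$-computable measure $q < 1$. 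Then $R = \neg V' \subseteq Q \cap P_j \subseteq \neg P$ has measure $1 - q > 0$, which is computable, so $R \in D_P$.

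In the second case, $\mu(Q \cap P_j) = 0$ for all $j$. Following the earlier proof, cover each $P_j$ by $\Sigma^0_1(A^{(n-1)})$ sets $U_j \supseteq P_j$ (via \cref{approximate_sigma0n_with_sigma01(zero(n-1))} relativized, since $P_j$ is $\Sigma^0_n(A)$) with $\mu(Q \cap U_j)$ small. Here the oracle $A^{(n-1)}$ is strong enough that we can effectively choose these approximations with $\mu(Q \cap U_j) \leq 2^{-(j+1)}q$. Set $R_0 = Q \setminus \bigcup_j U_j \subseteq P$, which is $\Pi^0_1(A^{(n-1)})$ with measure at least $\mu(Q) - q > 0$. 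Then shrink $R_0$ as in the first case to a subset $R$ with positive $A^{(n-1)}$-computable measure. Subsets of $P$ remain in $D_P$.

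The main obstacle is the shrinking step: producing, inside an arbitrary $\Pi^0_1(A^{(n-1)})$ set of positive measure, a $\Pi^0_1(A^{(n-1)})$ subset with positive $A^{(n-1)}$-computable measure, in a general computable probability space. I would handle this by enlarging the complement $V$ through an enumeration that adds pieces of almost decidable sets, whose measures are computable by \cref{almost_decidable_has_computable_measure}, until a target rational measure is reached exactly in the limit. This mirrors the measure-filling construction in \cref{subset_of_sigma01_with_rational_measure_and_same_closure} while avoiding its Cantor-specific density clause.
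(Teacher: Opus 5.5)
\textbf{There is a genuine gap: the shrinking step is false.}

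Your whole argument relies on the claim that every $\Pi^0_1\paren{A^{(n-1)}}$ set of positive measure contains a $\Pi^0_1\paren{A^{(n-1)}}$ subset of positive $A^{(n-1)}$-computable measure. Equivalently, every $\Sigma^0_1\paren{A^{(n-1)}}$ set $V$ with $\mu(V)<1$ would lie inside a $\Sigma^0_1\paren{A^{(n-1)}}$ set of $A^{(n-1)}$-computable measure $<1$. This is false, and the paper itself refutes it:
\begin{itemize}
\item In $(2^\N,\lambda)$, let $P$ be the complement of the first level of a universal Martin-L\"of test relative to $A^{(n-1)}$.
\item Every nonempty $\Pi^0_1\paren{A^{(n-1)}}$ subset of $P$ consists of points random relative to $A^{(n-1)}$.
\item By \cref{nonempty_pi01(zero(n-1))_subset_of_nR_has_nR_measure}, such a subset has random, hence non-$A^{(n-1)}$-computable, measure. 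This is exactly the argument in the proof of \cref{quasi_pi0n_generic_not_n_random}.
\end{itemize}
Your padding scheme fails for this reason. To keep $V'\supseteq V$ you must absorb every piece of $V$ ever enumerated. Since $\mu(V)$ is only left-c.e.\ relative to the oracle, you never know how much is still to come, so you cannot steer $\mu(V')$ to a prescribed value. \cref{subset_of_sigma01_with_rational_measure_and_same_closure} works only because it builds a \emph{subset}, where you are free to stop adding.

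Separately, in your first case you claim that $P_j$, being $\Pi^0_{n-1}(A)$, is $\Pi^0_1\paren{A^{(n-1)}}$. \cref{sigma0n(A(m))_is_sigma0(m+n)(A)} gives only the reverse inclusion. For $n\geq 3$ the claim fails outright: $\Pi^0_1$ sets relative to any oracle are closed, while $\Pi^0_2$ sets need not be.

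\textbf{The missing idea: no shrinking is needed.} Only ever intersect $Q$ with sets whose measures are automatically $A^{(n-1)}$-computable. Two $\Pi^0_1\paren{A^{(n-1)}}$ sets with $A^{(n-1)}$-computable measures have an intersection with $A^{(n-1)}$-computable measure, by \cref{intersection_of_sigma0n_with_computable_measure} applied to complements.
\begin{itemize}
\item \emph{First case, $n\geq 2$.} Apply \cref{approximate_sigma0n_with_sigma01(zero(n-1))} (relativized to $A$) to $\neg P_j$. This gives a $\Pi^0_1\paren{A^{(n-2)}}$ set $Q_j\subseteq P_j$ with $\mu(P_j\setminus Q_j)<\mu(Q\cap P_j)$. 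Its measure is $A^{(n-2)}$-upper-semicomputable, hence $A^{(n-1)}$-computable, so $R=Q\cap Q_j$ works. This is what the paper does.
\item \emph{First case, $n=1$.} Take the $P_j$ from \cref{approximate_sigma01_or_pi01}, which have $A$-computable measures.
\item \emph{Second case.} Your covering step is correct, and more direct than the paper's detour through $U_k\cap V_{j,k}$. But instead of shrinking $R_0=Q\setminus\bigcup_j U_j$, note that $\mu(R_0)$ is already $A^{(n-1)}$-computable. Since $\mu(P_j)$ is $A^{(n-2)}$-upper-semicomputable, the last clause of \cref{approximate_sigma0n_with_sigma01(zero(n-1))} makes $\mu(U_j)$ uniformly $A^{(n-1)}$-computable. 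Hence $\mu\paren{Q\cap\bigcup_{j<J}U_j}$ is $A^{(n-1)}$-computable uniformly in $J$. It differs from $\mu\paren{Q\cap\bigcup_j U_j}$ by at most $\sum_{j\geq J}2^{-(j+1)}q=2^{-J}q$.
\end{itemize}
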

\begin{proof}
    Consider any $\Pi^0_n(A)$ set $P$ and any $Q \in \P$. We seek to show that there is some $R \in D_P$ such that $R \subseteq Q$. There are two cases:
    \begin{itemize}
        \item $\mu(Q \setminus P) > 0$. We have two subcases:
        \begin{itemize}
            \item $n = 1$. Since $\neg P$ is $\Sigma^0_1(A)$, by \cref{approximate_sigma01_or_pi01}, we can write $\neg P = \bigcup_j P_j$, where the sets $P_j$ are uniformly $\Pi^0_1(A)$ with uniformly $A$-computable measures. Then
            \[\mu\paren{\bigcup_j \paren{Q \cap P_j}} = \mu(Q \cap \neg P) > 0,\]
            so there exists some $j$ such that $R := Q \cap P_j$ has positive measure. Since $Q$ and $P_j$ are both $\Pi^0_1(A)$ with $A$-computable measure, so is $R$. Then $R \in \P$ and $R \subseteq P_j \subseteq \neg P$, so $R \in D_P$. Also, $R \subseteq Q$.

            \item $n \geq 2$. Since $\neg P$ is $\Sigma^0_n(A)$, we can write $\neg P = \bigcup_j P_j$, where the sets $P_j$ are uniformly $\Pi^0_{n - 1}(A)$. Then
            \[\mu\paren{\bigcup_j \paren{Q \cap P_j}} = \mu(Q \cap \neg P) > 0,\]
            so there exists some $j$ such that $\mu\paren{Q \cap P_j} > 0$. Fix $q \in \Q$ such that $0 < q < \mu\paren{Q \cap P_j}$. Since $P_j$ is $\Pi^0_{n - 1}(A)$, by \cref{approximate_sigma0n_with_sigma01(zero(n-1))} relativized to $A$, there exists a $\Pi^0_1\paren{A^{(n - 2)}}$ set $Q_j \subseteq P_j$ such that $\mu\paren{Q_j} \geq \mu\paren{P_j} - q$. Then given that
            \[q < \mu\paren{Q \cap P_j} \leq \mu\paren{Q \cap Q_j} + \mu\paren{P_j \setminus Q_j} \leq \mu\paren{Q \cap Q_j} + q,\]
            we must have $\mu\paren{Q \cap Q_j} > 0$. Since $Q$ and $Q_j$ are both $\Pi^0_1\paren{A^{(n - 1)}}$ with $A^{(n - 1)}$-computable measures, so is $R := Q \cap Q_j$. Then $R \in \P$ and $R \subseteq Q_j \subseteq P_j \subseteq \neg P$, so $R \in D_P$. Also, $R \subseteq Q$.
        \end{itemize}

        \item $\mu(Q \setminus P) = 0$. We have two subcases:
        \begin{itemize}
            \item $n = 1$. Then $R := Q \cap P$ is $\Pi^0_1(A)$, and $\mu(R) = \mu(Q)$ is positive and computable. Thus, $R \in \P$ and $R \subseteq P$, so $R \in D_P$. Also, $R \subseteq Q$.

            \item $n \geq 2$. We have $\mu\paren{Q \cap P_j} = 0$ for all $j$. Fix $q \in \Q$ with $0 < q < \mu(Q)$.
            
            Since $Q$ is $\Pi^0_1\paren{A^{(n - 1)}}$, by \cref{approximate_sigma01_or_pi01}, we can write $Q = \bigcap_k U_k$, where the sets $U_k$ are uniformly $\Sigma^0_1\paren{A^{(n - 1)}}$ with uniformly $A^{(n - 1)}$-computable measures. Likewise, since the sets $P_j$ are uniformly $\Pi^0_{n - 1}(A)$, we can write $P_j = \bigcap_k V_{j, k}$, where the sets $V_{j, k}$ are uniformly $\Sigma^0_{n - 2}(A)$ (with uniformly $A^{(n - 2)}$-computable measures). Then for each $j$,
            \[\lim_{k \to \infty} \mu\paren{U_k \cap V_{j, k}} = \mu\paren{Q \cap P_j} = 0,\]
            so there is an $A^{(n - 2)}$-computable function $s$ such that $\mu\paren{U_{s(j)} \cap V_{j, s(j)}} \leq 2^{-(j + 2)}q$ for all $j$.
            
            By \cref{approximate_sigma0n_with_sigma01(zero(n-1))}, take uniformly $\Sigma^0_1\paren{A^{(n - 3)}}$ sets $U_{j, k} \supseteq V_{j, k}$ such that $\mu\paren{U_{j, k}} \leq \mu\paren{V_{j, k}} + 2^{-(j + 2)}q$ for all $j$ and $k$. Then since
            \[U_k \cap U_{j, k} \subseteq \paren{U_k \cap V_{j, k}} \cup \paren{U_{j, k} \setminus V_{j, k}},\]
            we have
            \begin{align*}
                \mu\paren{U_{s(j)} \cap U_{j, s(j)}} &\leq \mu\paren{U_{s(j)} \cap V_{j, s(j)}} + \mu\paren{U_{j, s(j)} \setminus V_{j, s(j)}} \\
                &\leq 2^{-(j + 2)}q + 2^{-(j + 2)}q = 2^{-(j + 1)}q.
            \end{align*}
            Now observe that, since $Q \subseteq U_k$ for all $k$,
            \[\mu\paren{Q \cap \bigcup_j U_{j, s(j)}} \leq \sum_j \mu\paren{Q \cap U_{j, s(j)}} \leq \sum_j \mu\paren{U_{s(j)} \cap U_{j, s(j)}} \leq \sum_j 2^{-(j + 1)}q = q.\]
            Then $R := Q \setminus \bigcup_j U_{j, s(j)}$ is $\Pi^0_1\paren{A^{(n - 1)}}$ and satisfies
            \[\mu(R) = \mu(Q) - \mu\paren{Q \cap \bigcup_j U_{j, s(j)}} \geq \mu(Q) - q > 0.\]
            Thus, $R \in \P$. Moreover,
            \[\neg P = \bigcup_j P_j \subseteq \bigcup_j V_{j, s(j)} \subseteq \bigcup_j U_{j, s(j)},\]
            so
            \[R \subseteq \neg \bigcup_j U_{j, s(j)} \subseteq P.\]
            This implies that $R \in D_P$. Also, $R \subseteq Q$.
        \end{itemize}
    \end{itemize}
    Thus, $D_P$ is dense in $\P$.
\end{proof}

\begin{lem}\label{Schnorr_dense}
    Let $(X, \mu)$ be a computable probability space, $n \geq 1$, and $A \subseteq \N$. Let $(\P, \subseteq)$ be the partial order consisting of $\Pi^0_1\paren{A^{(n - 1)}}$ sets with positive $A^{(n - 1)}$-computable measure, ordered by the subset relation. For each Schnorr test $\U = \paren{U_k}_k$ relative to $A^{(n - 1)}$, let
    \[E_\U = \set{Q \in \P: Q \cap \bigcap_k U_k = \zero}.\]
    Then $E_\U$ is dense in $\P$ for each Schnorr test $\U$ relative to $A^{(n - 1)}$.
\end{lem}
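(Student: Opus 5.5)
Given $Q \in \P$ and a Schnorr test $\U = \paren{U_k}_k$ relative to $A^{(n - 1)}$, I would find a single level $k$ with $\mu(U_k) < \mu(Q)$ and take $R := Q \setminus U_k = Q \cap \neg U_k$. Then $R \subseteq Q$, and $R \cap \bigcap_j U_j \subseteq R \cap U_k = \zero$. So the only thing to check is that $R \in \P$: it should be $\Pi^0_1\paren{A^{(n - 1)}}$ with positive $A^{(n - 1)}$-computable measure.

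The steps are as follows.

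\begin{enumerate}
    \item \textbf{Choosing $k$.} Since $\mu(Q) > 0$ and $\mu(U_k) \leq 2^{-k} \to 0$, there is some $k$ with $2^{-k} < \mu(Q)$. Then $\mu(Q \cap U_k) \leq \mu(U_k) < \mu(Q)$, so $\mu(R) = \mu(Q) - \mu(Q \cap U_k) > 0$. Note that $k$ need not be found effectively; it only needs to exist.

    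\item \textbf{Complexity.} $U_k$ is $\Sigma^0_1\paren{A^{(n - 1)}}$, so $\neg U_k$ is $\Pi^0_1\paren{A^{(n - 1)}}$. Hence $R$, as the intersection of two $\Pi^0_1\paren{A^{(n - 1)}}$ sets, is $\Pi^0_1\paren{A^{(n - 1)}}$.

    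\item \textbf{Computability of $\mu(R)$.} The set $\neg U_k$ has $A^{(n - 1)}$-computable measure $1 - \mu(U_k)$, because $\U$ is a Schnorr test relative to $A^{(n - 1)}$. The set $Q$ has $A^{(n - 1)}$-computable measure because $Q \in \P$. I would then apply \cref{intersection_of_sigma0n_with_computable_measure}, relativized to $A^{(n - 1)}$, to conclude that the intersection $R$ has $A^{(n - 1)}$-computable measure.
\end{enumerate}

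The computability of $\mu(R)$ is the step I expect to need the most care. The cited lemma is stated for $\Sigma^0_n$ sets with computable measures, so I would work with complements. By complementation, $\mu(Q \cup U_k) = 1 - \mu(\neg Q \cap \neg U_k)$. Then $\mu(R) = \mu(Q) + \mu(\neg U_k) - \mu(Q \cup \neg U_k)$, which reduces the problem to the complement $\neg Q \cup U_k$, a union of $\Sigma^0_1\paren{A^{(n - 1)}}$ sets.

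If the lemma only covers intersections, I would argue directly instead. For two $\Pi^0_1$ sets with computable measures, the measure of the intersection is upper-semicomputable, since it is the intersection of two effectively closed sets. Their union is $\Pi^0_1$, and its measure is also upper-semicomputable. By inclusion–exclusion, $\mu(Q \cap \neg U_k) = \mu(Q) + \mu(\neg U_k) - \mu(Q \cup \neg U_k)$, and the right-hand side is lower-semicomputable. Having both bounds makes $\mu(R)$ computable, all relative to $A^{(n - 1)}$.

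With these three points, $R \in E_\U$ and $R \subseteq Q$, so $E_\U$ is dense in $\P$.
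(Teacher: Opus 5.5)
Your proposal is correct and matches the paper's proof: both remove a single level $U_k$ from $Q$ and apply the lemma on measures of intersections and unions, relativized to $A^{(n-1)}$. The only difference in choosing $k$ is that the paper uses $\mu\paren{Q\setminus\bigcap_k U_k}=\mu(Q)$ where you use $2^{-k}<\mu(Q)$, which is inessential.

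One small slip in your complement detour: the complement of $Q\cup\neg U_k$ is $\neg Q\cap U_k$, not $\neg Q\cup U_k$. The clean route is to note that $\neg R=\neg Q\cup U_k$ is a union of two $\Sigma^0_1\paren{A^{(n-1)}}$ sets with $A^{(n-1)}$-computable measures. Your direct semicomputability fallback is also fine as written.
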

\begin{proof}
    Consider any Schnorr test $\U = \paren{U_k}_k$ relative to $A^{(n - 1)}$ and any $Q \in \P$. That is, the sets $U_k$ are uniformly $\Sigma^0_1\paren{A^{(n - 1)}}$ with uniformly $A^{(n - 1)}$-computable measures $\mu(U_k) \leq 2^{-k}$. We seek to show that there is some $R \in E_\U$ such that $R \subseteq Q$. Since
    \[\mu\paren{\bigcup_k \paren{Q \setminus U_k}} = \mu\paren{Q \setminus \bigcap_k U_k} = \mu(Q) > 0,\]
    there must be some $j$ such that $\mu\paren{Q \setminus U_j} > 0$. This measure is also $A^{(n - 1)}$-computable by \cref{intersection_of_sigma0n_with_computable_measure} since $Q$ and $\neg U_j$ are $\Pi^0_1\paren{A^{(n - 1)}}$ sets with $A^{(n - 1)}$-computable measures, so $R := Q \setminus U_j \in \P$. Clearly, $R$ is disjoint from $\bigcap_k U_k$, so $R \in E_\U$, and $R \subseteq Q$.
\end{proof}

Now it easily follows that, not only do quasi-$\Pi^0_n$-generic points exist, but they can also be Schnorr $n$-random.

\begin{cor}\label{quasi_pi0n_generic_and_Schnorr_n_random_exists}
    Let $(X, \mu)$ be a computable probability space, $n \geq 1$, and $A \subseteq \N$. Then there is a quasi-$\Pi^0_n$-generic point relative to $A$ which is also Schnorr $n$-random relative to $A$.
\end{cor}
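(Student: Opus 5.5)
We build the point by a forcing argument over the partial order $(\P, \subseteq)$ of \cref{quasi_pi0n_generic_dense,Schnorr_dense}, exactly as in the proofs of \cref{pi0n_generic_exists,pi0n_generic_for_A'_and_weakly_n+1_random_for_A_exists}. Let $P_e$ denote the $e\th$ $\Pi^0_n(A)$ set, and let $\U_e$ denote the $e\th$ Schnorr test relative to $A^{(n-1)}$ (there are countably many, so a non-effective enumeration suffices). By \cref{relativizing_randomness}, Schnorr $n$-randomness relative to $A$ coincides with Schnorr randomness relative to $A^{(n-1)}$, so it is enough to avoid $\bigcap_k U_k$ for each such test.

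First we need a starting condition that is compact. By \cref{approximate_sigma01_inside_with_compact}, there is a compact $\Pi^0_1$ set $K$ with $\mu(K) > 0$. However, we must make sure that $K$ lies in $\P$, i.e., that its measure is $A^{(n-1)}$-computable. I would handle this by shrinking $K$. \cref{approximate_sigma01_or_pi01} (with $n=1$, unrelativized) lets us write the $\Sigma^0_1$ set $\neg K$ as an increasing union of $\Pi^0_1$ sets with computable measure. Equivalently, $K$ can be approximated from outside by $\Sigma^0_1$ sets with computable measures. Intersecting $K$ with a suitable $\Pi^0_1$ set of computable measure, as in the $n=1$ case of \cref{quasi_pi0n_generic_dense}, we aim for a compact $\Pi^0_1$ set $K' \subseteq K$ whose measure is positive and computable. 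Alternatively, and more simply, apply the density lemma \cref{quasi_pi0n_generic_dense} to the trivial $\Pi^0_n$ set $X$, starting from $Q = X \in \P$ (which has measure $1$), and then intersect with $K$. I expect this bookkeeping, namely getting a compact member of $\P$, to be the only genuinely delicate step. The fallback is to use \cref{intersection_of_sigma0n_with_computable_measure} to see that intersecting $K$ with a set of computable measure whose complement has computable measure keeps the measure computable.

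Given $Q_0 \in \P$ with $Q_0 \subseteq K$, we recursively choose a nested sequence $(Q_j)_j$ in $\P$ with $Q_{2e+1} \in D_{P_e}$ and $Q_{2e+2} \in E_{\U_e}$. This is possible by density (\cref{quasi_pi0n_generic_dense,Schnorr_dense}). Each $Q_j$ is closed and contained in the compact set $K$, so there is a point $x \in \bigcap_j Q_j$.

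Finally we verify the two properties of $x$.
\begin{itemize}
\item Quasi-genericity. If $P_e \ni x$, then $Q_{2e+1} \ni x$ meets $P_e$, so $Q_{2e+1} \subseteq P_e$ because $Q_{2e+1} \in D_{P_e}$. This $Q_{2e+1}$ is $\Pi^0_1(A^{(n-1)})$ with positive $A^{(n-1)}$-computable measure and contains $x$, so $x$ is quasi-$\Pi^0_n$-generic relative to $A$.
\item Randomness. For each $e$, $x \in Q_{2e+2}$ and $Q_{2e+2}$ is disjoint from $\bigcap_k U_k$, so $x$ passes every Schnorr test relative to $A^{(n-1)}$, i.e., $x$ is Schnorr $n$-random relative to $A$.
\end{itemize}
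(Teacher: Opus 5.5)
Your construction is the paper's: force over $\P$ with the dense sets $D_{P_e}$ and $E_{\U_e}$ interleaved below a compact starting set, take a point of $\bigcap_j Q_j$, and verify the two properties exactly as you do. The gap is the step you flag yourself: producing a member of $\P$ inside a compact set. None of your three remedies closes it.

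For $n \geq 2$ nothing is needed. A $\Pi^0_1$ set has upper-semicomputable measure, hence $\zero'$-computable, hence $A^{(n-1)}$-computable. So the $K$ of \cref{approximate_sigma01_inside_with_compact} is already in $\P$.

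For $n = 1$ the remedies fail. Intersecting $K$ with a $\Pi^0_1$ set $R$ of computable measure only makes $\mu(K \cap R)$ upper-semicomputable, because \cref{intersection_of_sigma0n_with_computable_measure} requires \emph{both} measures to be computable, and $\mu(K)$ need not be. Applying \cref{quasi_pi0n_generic_dense} to $P = X$ from $Q = X$ just returns $X$, and intersecting with $K$ puts you back at the start.

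In fact no strategy of the form ``shrink the given $K$'' can work in general. In $\paren{2^\N, \lambda}$ with $A = \zero$, a compact $\Pi^0_1$ set of positive measure may be $\neg U_1$ for a universal Martin-L\"of test $\paren{U_k}_k$. Then by \cref{nonempty_pi01(zero(n-1))_subset_of_nR_has_nR_measure}, every nonempty $\Pi^0_1$ subset of it has Martin-L\"of random, hence noncomputable, measure. So $\P$ contains no subset of it at all; this is the argument of \cref{quasi_pi0n_generic_not_n_random}.

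The repair is to build the compact set with computable measure from scratch. Use the almost decidable closed balls of \cref{almost_decidable_basis}. For each $k$, computably choose a radius $r_{m_k} < 2^{-k}$ and a number $j_k$ such that $C_k := \bigcup_{i < j_k} \cl B\paren{s_i, r_{m_k}}$ has $\mu(C_k) > 1 - 2^{-(k + 2)}$. The search succeeds because such finite unions are almost decidable with uniformly computable measures, and $\bigcup_i \cl B(s_i, r) = X$. Now set $K := \bigcap_k C_k$. Then:
\begin{itemize}
\item $K$ is $\Pi^0_1$, and it is compact because it is closed and totally bounded in the complete space $X$.
\item $\mu(K) > 1/2$.
\item $\mu(K)$ is computable, since the almost decidable sets $\bigcap_{k \leq N} C_k$ have uniformly computable measures exceeding $\mu(K)$ by less than $2^{-(N + 2)}$.
\end{itemize}
This $K$ lies in $\P$ for every $n$ and $A$, and with $Q_0 = K$ the rest of your argument goes through unchanged. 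The paper's proof simply asserts $K \in \P$. That is justified as above for $n \geq 2$, but for $n = 1$ it tacitly needs this same repair.
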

\begin{proof}
    Define the partial order $(\P, \subseteq)$ and the sets $D_P$ and $E_\U$ as in \cref{quasi_pi0n_generic_dense,Schnorr_dense}. Let $P_e$ denote the $e\th$ $\Pi^0_n(A)$ set and $\U_e$ denote the $e\th$ Schnorr test relative to $A^{(n - 1)}$ (according to some non-effective enumeration of them). Also, by \cref{approximate_sigma01_inside_with_compact}, we may take a compact $\Pi^0_1$ set $K$ with positive measure.
    
    Since $K \in \P$ and the sets $D_P$ and $E_\U$ are dense in $\P$, we can recursively define a nested sequence $\paren{Q_j}_j$ such that $Q_0 \subseteq K$, $Q_{2e} \in D_{P_e}$ for all $e$, and $Q_{2e + 1} \in E_{\U_e}$ for all $e$. Since $K$ is compact and each set $Q_j \subseteq K$ is $\Pi^0_1\paren{A^{(n - 1)}}$ and hence closed, there exists a point $x \in \bigcap_j Q_j$.
    
    For each $\Pi^0_n(A)$ set $P_e \ni x$, we have $x \in Q_{2e}$, which means that $P_e$ and $Q_{2e}$ are not disjoint. Since $Q_{2e} \in D_{P_e}$, it follows that $Q_{2e} \subseteq P_e$. This set is $\Pi^0_1\paren{A^{(n - 1)}}$ with $A^{(n - 1)}$-computable $\mu(Q_e) > 0$ and $x \in Q_{2e}$, so $x$ is quasi-$\Pi^0_n$-generic point relative to $A$.

    Also, for each Schnorr test $\U_e = \paren{U_{e, k}}_k$ relative to $A^{(n - 1)}$, we have $Q_{2e + 1} \in E_{\U_e}$, which means that $Q_{2e + 1} \ni x$ is disjoint from $\bigcap_k U_{e, k}$. That is, $x \notin \bigcap_k U_{e, k}$, so $x$ is Schnorr $n$-random relative to $A$.
\end{proof}

When considering Cantor space with the Lebesgue measure, recall by \cref{pi01_generic_not_Schnorr_random,pi0n_generic_not_Schnorr_random} that no Schnorr $n$-random point relative to $A$ is ever weakly $\Pi^0_n$-generic relative to $A$. Thus, we can conclude from \cref{quasi_pi0n_generic_and_Schnorr_n_random_exists} that quasi-$\Pi^0_n$-genericity is strictly more general a notion than $\Pi^0_n$-genericity.

\begin{cor}
    Consider Cantor space with the Lebesgue measure. Then for all $n \geq 1$ and $A \subseteq \N$, there is a point which is quasi-$\Pi^0_n$-generic relative to $A$ but not weakly $\Pi^0_n$-generic relative to $A$.
\end{cor}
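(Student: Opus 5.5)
The plan is to combine \cref{quasi_pi0n_generic_and_Schnorr_n_random_exists} with the non-randomness results for weak $\Pi^0_n$-genericity. Fix $n \geq 1$ and $A \subseteq \N$, and work in $\paren{2^\N, \lambda}$.

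First, \cref{quasi_pi0n_generic_and_Schnorr_n_random_exists} supplies a point $x$ that is quasi-$\Pi^0_n$-generic relative to $A$ and also Schnorr $n$-random relative to $A$. It then suffices to show that no point that is Schnorr $n$-random relative to $A$ can be weakly $\Pi^0_n$-generic relative to $A$. I would split this into two cases according to $n$.

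\emph{Case $n \geq 2$.} This is exactly \cref{pi0n_generic_not_Schnorr_random}.

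\emph{Case $n = 1$.} Here Schnorr $1$-randomness relative to $A$ is Schnorr randomness relative to $A^{(0)} = A$, which implies plain Schnorr randomness. By \cref{pi01_generic_not_Schnorr_random}, no weakly $\Pi^0_1$-generic point relative to $A$ is Schnorr random. Hence $x$ cannot be weakly $\Pi^0_1$-generic relative to $A$.

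The only point needing care is this $n = 1$ step, specifically the monotonicity of Schnorr randomness under weakening the oracle. The inclusion is immediate: any unrelativized Schnorr test is also a Schnorr test relative to $A$, so a point passing all $A$-relative tests passes all unrelativized ones. So I expect no real obstacle. The corollary is just the conjunction of the existence result with these two non-randomness facts.
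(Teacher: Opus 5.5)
Your proposal is correct and follows the paper's own argument exactly. The paper combines \cref{quasi_pi0n_generic_and_Schnorr_n_random_exists} with \cref{pi01_generic_not_Schnorr_random,pi0n_generic_not_Schnorr_random}, and your remark that Schnorr randomness relative to $A$ implies plain Schnorr randomness makes explicit the step the paper leaves implicit in the case $n = 1$.
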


The level of non-randomness that can be guaranteed of quasi-$\Pi^0_n$-genericity is likewise slightly different from that of $\Pi^0_n$-genericity.

\begin{thm}\label{quasi_pi0n_generic_not_n_random}
    Consider Cantor space with the Lebesgue measure. Then for all $n \geq 1$ and $A \subseteq \N$, no weakly quasi-$\Pi^0_n$-generic point relative to $A$ is $n$-random relative to $A$.
\end{thm}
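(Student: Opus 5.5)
We will mimic the proof of \cref{pi0n_generic_not_Schnorr_random}, but now build a Martin-L\"of test relative to $A^{(n - 1)}$ (that is, an $n$-test relative to $A$) rather than a Schnorr test. The idea is that the sets $Q$ witnessing weak quasi-$\Pi^0_n$-genericity range over $\Pi^0_1\paren{A^{(n - 1)}}$ sets with positive $A^{(n - 1)}$-computable measure. We will construct uniformly $\Sigma^0_1\paren{A^{(n - 1)}}$ sets $U_k$ with $\lambda(U_k) \leq 2^{-k}$ such that each $U_k$ intersects every such $Q$. Then $\neg U_k$ is a $\Pi^0_1\paren{A^{(n - 1)}}$ set, hence $\Pi^0_n(A)$ by \cref{sigma0n(A(m))_is_sigma0(m+n)(A)}, and it contains no such $Q$. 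So a weakly quasi-$\Pi^0_n$-generic point relative to $A$ cannot lie in $\neg U_k$; it lies in every $U_k$ and is therefore not $n$-random relative to $A$ (using \cref{relativizing_randomness} to pass between $n$-randomness relative to $A$ and Martin-L\"of randomness relative to $A^{(n - 1)}$).

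To build $U_k$, let $B = A^{(n - 1)}$. Enumerate pairs $(e, i)$, where $e$ is an index for a $\Pi^0_1(B)$ set $P_e$ and $i$ is an index for a $B$-partial-computable real. A pair is \emph{admissible} if $\Phi^B_i$ is total and names $\lambda(P_e)$ with positive value. We cannot decide admissibility, but we do not need to: the test only has to handle admissible pairs correctly and have small measure on all pairs.

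For the $j$th pair and level $k$, wait with oracle $B$ until $\Phi^B_i$ produces a rational approximation certifying $\lambda(P_e) > 2^{-(j + k + 1)}$ with some margin. Then search for a string $\sigma$ of length $j + k + 1$ such that $\cyl\sigma \cap P_e \neq \zero$. The relation $\cyl\sigma \subseteq \neg P_e$ is only $B$-c.e.\ (\cref{when_is_cylinder_in_sigma01}), so we cannot decide nonemptiness of $\cyl\sigma \cap P_e$ directly; this is exactly why we need the computable measure.

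Using the $B$-computable value of $\lambda(P_e)$, we instead enumerate $\neg P_e$ until the measure of the enumerated part exceeds $1 - \lambda(P_e) - 2^{-(j + k + 1)}$. Some string $\sigma$ of length $j + k + 1$ not yet covered by the enumeration must then actually meet $P_e$. This is the step I expect to be the main obstacle: making the measure bookkeeping choose a cylinder genuinely intersecting $P_e$. The fix is to pick $\sigma$ with $\lambda(\cyl\sigma \setminus \text{enumerated}) > 2^{-(j + k + 1)} \cdot$(small fraction), or simply to use finer strings whose unenumerated portion exceeds the remaining uncertainty. Enumerate this $\cyl\sigma$ into $U_k$. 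If the pair is not admissible, the search may never terminate, which only decreases $U_k$.

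Each pair contributes at most one cylinder of measure at most $2^{-(j + k + 1)}$, so $\lambda(U_k) \leq \sum_j 2^{-(j + k + 1)} = 2^{-k}$, and the $U_k$ are uniformly $\Sigma^0_1(B)$. Every $Q$ with positive $B$-computable measure arises from some admissible pair, so it meets every $U_k$, completing the argument as outlined in the first paragraph.
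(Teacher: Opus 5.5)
Your argument is correct, but it is not the paper's route. The paper argues from the random side. If $x$ is $n$-random relative to $A$, then $x$ lies in $P=\neg U_k$ for some level $U_k$ of a universal Martin-L\"of test relative to $B=A^{(n-1)}$. Every nonempty $\Pi^0_1(B)$ subset of $P$ consists of $B$-randoms, so by Nies's theorem (relativized) its measure is $B$-random and hence not $B$-computable. Thus $P$ itself witnesses the failure of weak quasi-genericity. You argue from the generic side instead: you build by hand a Martin-L\"of test relative to $B$ each of whose levels meets every $\Pi^0_1(B)$ set of positive $B$-computable measure. This parallels the proof that weakly $\Pi^0_n$-generic points are not Schnorr $n$-random, with the $B$-computable measure taking over the role that decidability of ``$\cyl{\sigma}\cap P_e\neq\zero$'' played there. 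The paper's proof is three lines but rests on a nontrivial external theorem, and it yields more (such subsets have \emph{random}, not merely noncomputable, measure). Yours is self-contained and elementary, and it exhibits an explicit single test capturing all weakly quasi-$\Pi^0_n$-generic points.

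You must make precise the step you flagged. With the threshold you wrote, the unresolved mass $\lambda\paren{\neg P_e\setminus E_s}$ can be as large as $2^{-(j+k+1)}$, where $E_s$ is the enumerated part of $\neg P_e$. That is the full measure of a length-$(j+k+1)$ cylinder, so no criterion on such cylinders can certify $\cyl{\sigma}\cap P_e\neq\zero$. Passing to finer strings makes this worse, not better, since it shrinks each cylinder while leaving the uncertainty unchanged. The working version goes as follows.
\begin{itemize}
\item Set $m=j+k+1$ and use $\Phi^B_i$ to obtain a rational $0<q<\lambda(P_e)$.
\item Enumerate $\neg P_e$ until $\lambda(E_s)>1-\lambda(P_e)-q2^{-m}/2$. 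This is detectable from a $q2^{-m}/4$-approximation to $\lambda(P_e)$, and it eventually happens for admissible pairs.
\item Since $1-\lambda(E_s)\geq\lambda(P_e)>q$, pigeonhole gives a string $\sigma$ of length $m$ with $\lambda\paren{\cyl{\sigma}\setminus E_s}>q2^{-m}$. This $\sigma$ can be found effectively, because $E_s$ is a finite union of cylinders.
\item The unresolved mass is below $q2^{-m}/2$, so $\lambda\paren{\cyl{\sigma}\cap P_e}>0$.
\end{itemize}

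Your extra requirement that $\Phi^B_i$ certify $\lambda(P_e)>2^{-(j+k+1)}$ is unnecessary. If you keep it, a fixed admissible pair only serves sufficiently large $k$. Your final sentence then needs the padding observation that each $Q$ has admissible pairs with arbitrarily large $j$.
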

\begin{proof}
    Let $x \in 2^\N$ be $n$-random relative to $A$, i.e., Martin-L\"of random relative to $A^{(n - 1)}$. Taking $\paren{U_k}_k$ to be a universal Martin-L\"of test relative to $A^{(n - 1)}$, we must have $x \in \neg U_k =: P$ for some $k$, which is $\Pi^0_1\paren{A^{(n - 1)}}$. Any nonempty $\Pi^0_1(A^{(n - 1)})$ subset of $P$ contains only Martin-L\"of random points relative to $A^{(n - 1)}$ and consequently must have Martin-L\"of random measure relative to $A^{(n - 1)}$ by \cref{nonempty_pi01(zero(n-1))_subset_of_nR_has_nR_measure}. In particular, $P$ has no $\Pi^0_1\paren{A^{(n - 1)}}$ subset with positive $A^{(n - 1)}$-computable measure, so $x$ cannot be weakly quasi-$\Pi^0_n$-generic relative to $A$.
\end{proof}

As with $\Pi^0_n$-genericity when $n \geq 2$, there does not appear to be a simple relationship between (weakly) quasi-$\Pi^0_n$-generic points relative to different oracles. This is demonstrated by the following result, which mirrors \cref{pi0n_genericity_does_not_get_stronger_with_relativization}.

\begin{lem}\label{quasi-pi0n_genericity_does_not_get_stronger_with_relativization}
    Consider Cantor space with the Lebesgue measure and $n \geq 1$. Then whenever $A, B \subseteq \N$ satisfy $A \geq_T B'$, there is a quasi-$\Pi^0_n$-generic point relative to $A$ which is not weakly quasi-$\Pi^0_n$-generic relative to $B$.
\end{lem}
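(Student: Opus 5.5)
The plan mirrors the proof of \cref{pi0n_genericity_does_not_get_stronger_with_relativization}. We produce a point that is quasi-$\Pi^0_n$-generic relative to $A$ and also $n$-random relative to $B$. Then \cref{quasi_pi0n_generic_not_n_random}, applied with oracle $B$, shows that such a point cannot be weakly quasi-$\Pi^0_n$-generic relative to $B$.

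To build the point, we work in the partial order $(\P, \subseteq)$ of \cref{quasi_pi0n_generic_dense} for the oracle $A$: the $\Pi^0_1\paren{A^{(n - 1)}}$ sets with positive $A^{(n - 1)}$-computable measure. Let $\paren{U_k}_k$ be a universal Martin-L\"of test relative to $B^{(n - 1)}$. We need one extra dense-ish step beyond the sets $D_{P_e}$: a condition $Q \in \P$ lying inside some $\neg U_k$.

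Since $A \geq_T B'$, we get $A^{(n - 1)} \geq_T B^{(n)}$. Each $\lambda(U_k)$ is $B^{(n - 1)}$-left-c.e., hence $B^{(n)}$-computable, so $\lambda(U_k)$ is $A^{(n - 1)}$-computable. Also $\neg U_k$ is $\Pi^0_1\paren{B^{(n - 1)}}$, hence $\Pi^0_1\paren{A^{(n - 1)}}$. Therefore $\neg U_1 \in \P$, since its measure is at least $1/2$ and computable from $A^{(n - 1)}$.

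The construction starts with $Q_0 \subseteq \neg U_1$. Concretely, we intersect with a compact $\Pi^0_1$ set $K$ of measure greater than $1/2$, as in \cref{approximate_sigma01_inside_with_compact}; $K$ can be taken with computable measure in Cantor space, for instance a clopen set, so $K \cap \neg U_1$ has $A^{(n - 1)}$-computable positive measure by \cref{intersection_of_sigma0n_with_computable_measure} relativized. In Cantor space compactness is automatic anyway. We then extend through the dense sets $D_{P_e}$ for the $e\th$ $\Pi^0_n(A)$ set $P_e$, exactly as in \cref{quasi_pi0n_generic_and_Schnorr_n_random_exists}, to get a nested sequence $\paren{Q_e}_e$.

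Any point $x \in \bigcap_e Q_e$, which is nonempty by compactness, is quasi-$\Pi^0_n$-generic relative to $A$ by the argument of \cref{quasi_pi0n_generic_and_Schnorr_n_random_exists}. It also lies in $\neg U_1$, so it is $n$-random relative to $B$.

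The main point to check is that $\neg U_1$ really lies in $\P$. This is the only place the hypothesis $A \geq_T B'$ is used: it lifts both the complexity of the set and the computability of its measure up to $A^{(n - 1)}$. A secondary check is that \cref{quasi_pi0n_generic_not_n_random}, relativized to $B$, applies verbatim. It does, since it only uses that $\Pi^0_1\paren{B^{(n - 1)}}$ subsets of $\neg U_1$ have $B^{(n - 1)}$-random measure (\cref{nonempty_pi01(zero(n-1))_subset_of_nR_has_nR_measure}), and a positive such measure can never be $B^{(n - 1)}$-computable.
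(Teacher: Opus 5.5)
Your proposal is correct and follows the same strategy as the paper: produce a point that is quasi-$\Pi^0_n$-generic relative to $A$ and $n$-random relative to $B$, then apply \cref{quasi_pi0n_generic_not_n_random} with oracle $B$. The only difference is how the point is obtained. The paper cites \cref{quasi_pi0n_generic_and_Schnorr_n_random_exists} and notes that Schnorr randomness relative to $A^{(n-1)} \geq_T B^{(n)}$ implies Martin-L\"of randomness relative to $B^{(n-1)}$; you instead rerun the forcing construction and, by universality, need only the single extra condition $\neg U_1 \in \P$, which rests on the same observation that the components of a $B^{(n-1)}$-Martin-L\"of test have $A^{(n-1)}$-computable measure.
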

\begin{proof}
    By \cref{quasi_pi0n_generic_and_Schnorr_n_random_exists}, there exists a $\Pi^0_n$-generic point $x$ relative to $A$ which is also Schnorr $n$-random relative to $A$. Since $A \geq_T B'$, $x$ is Schnorr $(n + 1)$-random relative to $B$ and hence $n$-random relative to $B$. Then by \cref{quasi_pi0n_generic_not_n_random}, $x$ cannot be weakly quasi-$\Pi^0_n$-generic relative to $B$.
\end{proof}

\begin{figure}
    \centering
    \begin{tikzpicture}[line width=1.25pt, transform shape]
        % Weakly $n$-random
        \draw[black, fill=olive, fill opacity=0.1] (0,0) rectangle ++(16,11);
    
        % Schnorr $n$-random
        \draw[black, fill=cyan, fill opacity=0.2] (1,1) rectangle ++(7,5);
    
        % $n$-random
        \draw[black, fill=blue, fill opacity=0.2] (2,2) rectangle ++(3,2);
    
        % Weakly quasi-$\Pi^0_n$-generic
        \draw[black, fill=pink, fill opacity=0.4] (6,2) rectangle ++(9,8);
    
        % Quasi-$\Pi^0_n$-generic
        \draw[black, fill=violet, fill opacity=0.2] (7,3) rectangle ++(6,5);
    
        % Weakly $\Pi^0_n$-generic
        \draw[black, fill=yellow, fill opacity=0.3] (9,4) rectangle ++(5,5);
    
        % $\Pi^0_n$-generic
        \draw[black, fill=orange, fill opacity=0.4] (10,5) rectangle ++(2,2);
    
        \node at (3,8.5) {Weakly $n$-random};
    
        \node at (3.5,5) {Schnorr $n$-random};
    
        \node at (3.5,3) {$n$-random};
    
        \node at (11,6) {$\Pi^0_n$-generic};
    
        \node at (11.5,8.5) {Weakly $\Pi^0_n$-generic};
    
        \node at (10,3.5) {Quasi-$\Pi^0_n$-generic};
    
        \node at (10.5,2.5) {Weakly quasi-$\Pi^0_n$-generic};
    \end{tikzpicture}
    \caption{Relationships among genericity and randomness in $\paren{2^\N, \lambda}$}
    \label{genericity_picture}
\end{figure}

With all these new notions of genericity, we have only discussed a small portion of the possible relationships among them. Altogether, the picture will look something like \cref[S]{genericity_picture} when considering Cantor space with the Lebesgue measure, but we have not shown that every intersection it depicts is nonempty. Thus, we have the following question, which could of course be expanded in scope even further by relativizing to various oracles.

\begin{qn}
    Considering Cantor space with the Lebesgue measure, which intersections of (weak) $\Pi^0_n$-genericity, (weak) quasi-$\Pi^0_n$-genericity, weak $n$-randomness, Schnorr $n$-randomness, $n$-randomness, and their complements are nonempty? Here are some of the more interesting intersections to consider:
    \begin{itemize}
        \item Is there a point which is weakly $\Pi^0_n$-generic but not $\Pi^0_n$-generic when $n \geq 2$?
        \item Is there a point which is weakly quasi-$\Pi^0_n$-generic but not quasi-$\Pi^0_n$-generic?
        \item Is there a point which is weakly $\Pi^0_n$-generic but not quasi-$\Pi^0_n$-generic?
        \item Is there a point which is quasi-$\Pi^0_n$-generic but neither Schnorr $n$-random nor weakly $\Pi^0_n$-generic?
        \item Is there a point which is Schnorr $n$-random but neither $n$-random nor weakly quasi-$\Pi^0_n$-generic?
    \end{itemize}
\end{qn}

\subsubsection[Effective recurrence and preservation of quasi-Π0n-genericity]{Effective recurrence and preservation of quasi-$\Pi^0_n$-genericity}

We at last provide effective recurrence theorems for (weakly) quasi-$\Pi^0_n$-generic points which are also Schnorr $n$-random. Since these points always exist in $\paren{2^\N, \lambda}$ by \cref{quasi_pi0n_generic_and_Schnorr_n_random_exists} and are not $n$-random by \cref{quasi_pi0n_generic_not_n_random}, the following results properly expand the class of recurrent points for $\Pi^0_n$ sets established so far.

\begin{thm}\label{CPS_Schnorr_n_quasi_pi0n_generic_Poincare}
    Let $(X, \mu)$ be a computable probability space, $n \geq 1$, and $A \subseteq \N$. If $x \in X$ is both Schnorr $n$-random relative to $A$ and quasi-$\Pi^0_n$-generic relative to $A$, then for any $\Pi^0_n(A)$ set $P \ni x$ with $\mu(P) > 0$ and any computable measure-preserving transformation $T: \ \subseteq \! X \to X$ with $x \in \dom T$, we have $T^k(x) \in P$ for some $k > 0$.
\end{thm}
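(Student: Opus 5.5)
The plan is to combine the two ingredients in the hypothesis exactly as in the proof of \cref{CPS_pi0n_generic_Poincare}, with \cref{CPS_Schnorr_n_Poincare} (relativized to $A$) playing the role that \cref{CPS_weak_n_Poincare} played there. Let $P \ni x$ be $\Pi^0_n(A)$ with $\mu(P) > 0$, and let $T$ be a computable measure-preserving transformation with $x \in \dom T$. Since $x$ is quasi-$\Pi^0_n$-generic relative to $A$, there is a $\Pi^0_1\paren{A^{(n - 1)}}$ set $Q \subseteq P$ with $A^{(n - 1)}$-computable $\mu(Q) > 0$ and $x \in Q$. It then suffices to show that $T^k(x) \in Q$ for some $k > 0$, because $Q \subseteq P$.

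First, I will check that $Q$ has the right form for \cref{CPS_Schnorr_n_Poincare} relativized to $A$. That theorem, relativized, asks for a $\Pi^0_n(A)$ set containing $x$ whose measure is positive and $A^{(n - 1)}$-computable. By \cref{sigma0n(A(m))_is_sigma0(m+n)(A)}, the $\Pi^0_1\paren{A^{(n - 1)}}$ set $Q$ is $\Pi^0_n(A)$. Its measure is $A^{(n - 1)}$-computable and positive by choice. Moreover, $x$ is Schnorr $n$-random relative to $A$ and $x \in \dom T$. The relativized theorem therefore yields $k > 0$ with $T^k(x) \in Q \subseteq P$. When $n = 1$, no case split is needed: $Q$ is then $\Pi^0_1(A)$ with $A$-computable measure, which is directly the $n = 1$ form of the relativized theorem.

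The only step needing care is whether \cref{CPS_Schnorr_n_Poincare} genuinely relativizes to an oracle $A$ while $T$ stays merely computable. I will re-run its proof with $A$ added throughout. For $n = 1$, write $Q$ via \cref{approximate_sigma01_or_pi01} relative to $A$ as a nested intersection of $\Sigma^0_1(A)$ sets with uniformly $A$-computable measures. For $n \geq 2$, use the $\Sigma^0_{n-1}(A)$ approximation together with \cref{measure_of_sigma0n} relativized. Next, apply \cref{iterated_preimage_of_sigma0n} relativized to get uniformly $\Sigma^0_n(A)$ sets $U_k$ with $T^{-k}(\neg Q) = D \cap U_k$; since $T$ is computable it is also $A$-computable, so these remain uniformly $\Sigma^0_n(A)$. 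Their measures equal $\mu(\neg Q)$, which is $A^{(n-1)}$-computable. Then \cref{intersection_of_sigma0n_with_computable_measure} and \cref{loose_Schnorr_test}, both relativized, finish the argument. Each of these lemmas concerns effective pointclasses and measures only, so I expect relativization to be routine. The main thing to watch is that the Schnorr $n$-test obtained is relative to $A^{(n-1)}$, matching Schnorr $n$-randomness relative to $A$, and that is what I would verify most carefully.
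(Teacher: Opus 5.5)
Your proposal is correct and follows essentially the paper's proof: take $Q \subseteq P$ from quasi-$\Pi^0_n$-genericity, then apply the Schnorr recurrence theorem \cref{CPS_Schnorr_n_Poincare} to $Q$. The only difference is which relativization you invoke. The paper uses the $n = 1$ case relativized to $A^{(n-1)}$, treating $x$ as Schnorr random relative to $A^{(n-1)}$ and $Q$ directly as $\Pi^0_1\paren{A^{(n-1)}}$; you lift $Q$ to $\Pi^0_n(A)$ via \cref{sigma0n(A(m))_is_sigma0(m+n)(A)} and use the level-$n$ case relativized to $A$, which works equally well.
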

\begin{proof}
    Assume $x \in X$ is both Schnorr $n$-random relative to $A$ and $\Pi^0_n$-generic relative to $A$, and let $P \ni x$ be $\Pi^0_n(A)$ with $\mu(P) > 0$. Then there is a $\Pi^0_1(A^{(n - 1)})$ set $Q \subseteq P$ with $A^{(n - 1)}$-computable $\mu(Q) > 0$ and $x \in Q$. Since $x$ is Schnorr random relative to $A^{(n - 1)}$ and $Q \ni x$ is $\Pi^0_1\paren{A^{(n - 1)}}$ with $A^{(n - 1)}$-computable $\mu(Q) > 0$, \cref{CPS_Schnorr_n_Poincare} relativized to $A^{(n - 1)}$ implies that $T^k(x) \in Q \subseteq P$ for some $k > 0$.
\end{proof}

Using \cref{CPS_Schnorr_n_Poincare_ergodic}, a nearly identical proof gives us the same result for weakly quasi-$\Pi^0_n$-generic and Schnorr $n$-random points and ergodic transformations.

\begin{thm}\label{CPS_Schnorr_n_weak_quasi_pi0n_generic_Poincare_ergodic}
    Let $(X, \mu)$ be a computable probability space, $n \geq 1$, and $A \subseteq \N$. If $x \in X$ is both Schnorr $n$-random relative to $A$ and weakly quasi-$\Pi^0_n$-generic relative to $A$, then for any $\Pi^0_n(A)$ set $P \ni x$ with $\mu(P) > 0$ and any computable ergodic measure-preserving transformation $T: \ \subseteq \! X \to X$ with $x \in \dom T$, we have $T^k(x) \in P$ for some $k > 0$.
\end{thm}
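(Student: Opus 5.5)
The plan mirrors the proof of \cref{CPS_Schnorr_n_quasi_pi0n_generic_Poincare}, replacing the non-ergodic recurrence theorem with its ergodic counterpart \cref{CPS_Schnorr_n_Poincare_ergodic}. Assume $x$ is Schnorr $n$-random relative to $A$ and weakly quasi-$\Pi^0_n$-generic relative to $A$. Fix a $\Pi^0_n(A)$ set $P \ni x$ with $\mu(P) > 0$, and a computable ergodic measure-preserving $T$ with $x \in \dom T$.

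First, apply the definition of weak quasi-$\Pi^0_n$-genericity relative to $A$ to $P$. This gives a $\Pi^0_1\paren{A^{(n - 1)}}$ set $Q \subseteq P$ whose measure $\mu(Q) > 0$ is $A^{(n - 1)}$-computable. Unlike the non-weak case, we do not know that $x \in Q$. This is harmless, because in the ergodic setting the target set need not contain the point.

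Second, recall that Schnorr $n$-randomness relative to $A$ means Schnorr randomness relative to $A^{(n - 1)}$ (see \cref{relativizing_randomness}). We then apply \cref{CPS_Schnorr_n_Poincare_ergodic} with $n = 1$, relativized to the oracle $A^{(n - 1)}$. Its hypotheses are a $\Pi^0_1\paren{A^{(n-1)}}$ set with positive $A^{(n-1)}$-computable measure, and a transformation that is computable (hence $A^{(n-1)}$-computable) and ergodic with $x \in \dom T$; the set $Q$ and the map $T$ satisfy these. The theorem then gives $k > 0$ with $T^k(x) \in Q \subseteq P$, which is what we want.

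The only point needing care is that the relativization is legitimate. The proof of \cref{CPS_Schnorr_n_Poincare_ergodic} for $n = 1$ uses \cref{approximate_sigma01_or_pi01}, \cref{iterated_preimage_of_sigma0n}, \cref{intersection_of_sigma0n_with_computable_measure} and \cref{loose_Schnorr_test}. All of these relativize uniformly to an oracle, and a computable $T$ is in particular computable relative to $A^{(n-1)}$. So I expect no real obstacle: the argument is a direct combination of the two facts above.
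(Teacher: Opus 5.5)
Your proposal is correct and follows the paper's argument. The paper proves \cref{CPS_Schnorr_n_quasi_pi0n_generic_Poincare} by extracting a $\Pi^0_1\paren{A^{(n-1)}}$ set $Q \subseteq P$ with positive $A^{(n-1)}$-computable measure and applying the $n = 1$ recurrence theorem relativized to $A^{(n-1)}$; it obtains this ergodic statement by the same argument with \cref{CPS_Schnorr_n_Poincare_ergodic} in place of \cref{CPS_Schnorr_n_Poincare}. You also correctly note that the ergodic version is exactly what makes $x \in Q$ unnecessary.
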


As with $\Pi^0_n$-genericity, these two effective recurrence theorems only guarantee that \emph{some} iterate of the point $x$ returns to the set $P$, not that infinitely many iterates return. In order to achieve infinitely many, we would need to know that (weak) quasi-$\Pi^0_n$-genericity is preserved by the transformation $T$. The following lemma provides a sufficient condition for this, which is proven the same as \cref{preservation_of_pi0n_genericity}.

\begin{lem}\label{preservation_of_quasi-pi0n_genericity}
    Let $(X, \mu, T)$ be a computable measure-preserving system, $n \geq 1$, and $A \subseteq \N$. If $T$ preserves $\Pi^0_1\paren{A^{(n - 1)}}$ sets with $A^{(n - 1)}$-computable measure, then $T$ preserves (weak) quasi-$\Pi^0_n$-genericity relative to $A$.
\end{lem}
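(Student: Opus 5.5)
We follow the proof of \cref{preservation_of_pi0n_genericity}(b) essentially verbatim, with ``$\Pi^0_1\paren{A^{(n - 1)}}$ set with $A^{(n - 1)}$-computable measure'' in place of ``$\Pi^0_1\paren{A^{(n - 2)}}$ set.'' We treat the non-weak version in detail; the weak version is identical with every ``$x \in Q$'' and ``$T(x) \in T(Q)$'' deleted.

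Let $x$ be quasi-$\Pi^0_n$-generic relative to $A$ with $x \in \dom T$, and let $P \ni T(x)$ be a $\Pi^0_n(A)$ set. Since $T$ is computable, \cref{iterated_preimage_of_sigma0n} (relativized, with $k = 1$) gives a $\Pi^0_n(A)$ set $P'$ with $T^{-1}(P) = \dom T \cap P'$. Then $x \in P'$, so quasi-genericity yields a $\Pi^0_1\paren{A^{(n - 1)}}$ set $Q' \subseteq P'$ with $A^{(n - 1)}$-computable $\mu(Q') > 0$ and $x \in Q'$.

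We would like $Q' \subseteq T^{-1}(P)$, but in general $Q' \not\subseteq \dom T$. So we set $Q = T(Q' \cap \dom T)$, which satisfies $Q \subseteq P$ and $T(x) \in Q$. This is where we need the hypothesis. Interpreting ``$T$ preserves such sets'' as saying that the forward image of the part lying in the domain is again such a set, $Q$ is $\Pi^0_1\paren{A^{(n - 1)}}$ with $A^{(n - 1)}$-computable measure. The measure is positive because $\dom T$ has full measure, which gives
\[\mu(Q) = \mu\paren{T^{-1}(Q)} \geq \mu(Q' \cap \dom T) = \mu(Q') > 0.\]
Hence $T(x)$ is quasi-$\Pi^0_n$-generic relative to $A$.

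The only delicate step is the domain issue just described. If the intended reading of the hypothesis is instead that $T(Q')$ is such a set for every such $Q'$, then we take $Q = T(Q')$ directly. Again $Q \subseteq P$, because only points of $Q' \cap \dom T \subseteq T^{-1}(P)$ have images. The same measure inequality applies, and the argument is unchanged.
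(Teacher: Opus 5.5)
Your proof is correct and follows the paper's argument, which just says the lemma is proven like the analogous preservation lemma for $\Pi^0_n$-genericity: pull $P$ back along $T$, apply (weak) quasi-genericity at $x$, and push the resulting set forward using the hypothesis, with $\mu(T(Q)) \geq \mu(Q) > 0$. Your use of a $\Pi^0_n(A)$ set $P'$ with $T^{-1}(P) = \dom T \cap P'$ fills in a step the paper glosses over, and your two readings of the hypothesis are actually the same, since for a partial map $T(Q')$ is by definition $T(Q' \cap \dom T)$.
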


We can use this result to show that the shift on $\paren{2^\N, \lambda}$, for example, preserves (weak) quasi-$\Pi^0_n$-genericity.

\begin{cor}
    Consider Cantor space with the Lebesgue measure the shift map $T: 2^\N \to 2^\N$. Let $n \geq 1$ and $A \subseteq \N$. Then $T$ preserves (weak) $\Pi^0_n$-genericity relative to $A$.
\end{cor}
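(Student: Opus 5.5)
The plan is to derive the claim from \cref{preservation_of_pi0n_genericity}, exactly as in the earlier corollary for shift-invariant computable measures. Lebesgue measure $\lambda$ is a computable probability measure preserved by the shift $T$, so that corollary already covers this statement. For self-containment, I would still spell out the two hypotheses of \cref{preservation_of_pi0n_genericity} in $\paren{2^\N, \lambda}$, splitting into the cases $n = 1$ and $n \geq 2$.

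First I would record that $(2^\N, \lambda, T)$ is a computable measure-preserving system. $T$ is total and computable, since the first $\l$ output bits depend on the first $\l + 1$ input bits. Moreover $T^{-1}\cyl\rho = \cyl{0\rho} \cup \cyl{1\rho}$ has measure $2 \cdot 2^{-|\rho|-1} = \lambda\cyl\rho$, and cylinders generate the Borel $\sigma$-algebra.

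For $n = 1$, I would invoke \cref{shift_of_sigma01}: the image of a $\Sigma^0_1$ set under $T$ is $\Sigma^0_1$. Concretely, $T\cyl{\sigma} = \cyl{\sigma'}$, where $\sigma'$ is $\sigma$ with its first bit deleted, and images commute with unions. In particular $T$ maps every open set to an open set, so $T$ is an open map, and part (a) of \cref{preservation_of_pi0n_genericity} shows that $T$ preserves (weak) $\Pi^0_1$-genericity relative to $A$.

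For $n \geq 2$, I would invoke \cref{shift_of_pi01}, relativized to the oracle $A^{(n - 2)}$: the image of a $\Pi^0_1\paren{A^{(n - 2)}}$ set under $T$ is again $\Pi^0_1\paren{A^{(n - 2)}}$. The underlying reason is compactness. For a $\Pi^0_1\paren{A^{(n-2)}}$ set $Q$, a cylinder $\cyl\rho$ misses $T(Q)$ exactly when both $\cyl{0\rho}$ and $\cyl{1\rho}$ are contained in $\neg Q$. By \cref{when_is_cylinder_in_sigma01} this condition is $A^{(n-2)}$-c.e. in $\rho$, so $\neg T(Q)$ is $\Sigma^0_1\paren{A^{(n-2)}}$. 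Part (b) of \cref{preservation_of_pi0n_genericity} then yields preservation of (weak) $\Pi^0_n$-genericity relative to $A$.

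Inside that lemma, the only further point is $\mu(T(Q)) \geq \mu(Q) > 0$. This follows from $Q \subseteq T^{-1}(T(Q))$ together with measure preservation. In the weak case we also do not need $x \in Q$, so the same argument goes through verbatim. The only step needing any care is the relativized image computation for $\Pi^0_1$ sets, and I expect no real obstacle since it is already supplied by \cref{shift_of_pi01}.
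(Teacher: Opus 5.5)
As a proof of the statement exactly as printed, your argument is correct. It is essentially the paper's proof of the earlier corollary for arbitrary shift-invariant computable measures: openness of $T$ via \cref{shift_of_sigma01} for $n=1$, preservation of $\Pi^0_1\paren{A^{(n-2)}}$ sets via \cref{shift_of_pi01} for $n\ge 2$, then \cref{preservation_of_pi0n_genericity}. Your direct compactness argument that $\neg T(Q)$ is $\Sigma^0_1\paren{A^{(n-2)}}$ is also fine.

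However, the redundancy you noticed points to a typo in the statement. This corollary sits in the subsection on quasi-$\Pi^0_n$-genericity, and the sentence introducing it says the shift preserves (weak) \emph{quasi}-$\Pi^0_n$-genericity. The paper's proof concludes exactly that, by applying \cref{shift_of_pi01} relativized to $A^{(n-1)}$ and then \cref{preservation_of_quasi-pi0n_genericity}.

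For that intended claim your proposal does not suffice. The two notions are genuinely different classes of points: in $\paren{2^\N,\lambda}$ there are quasi-$\Pi^0_n$-generic points that are Schnorr $n$-random, hence not weakly $\Pi^0_n$-generic. So preserving one does not give the other, and the hypotheses you verify are not the relevant ones. What is needed is that $T$ maps $\Pi^0_1\paren{A^{(n-1)}}$ sets with $A^{(n-1)}$-computable measure to sets of the same kind. This holds uniformly for all $n\ge 1$, with no case split and no use of openness.

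Your compactness argument, relativized to $A^{(n-1)}$, handles the $\Pi^0_1\paren{A^{(n-1)}}$ part. The missing ingredient is computability of the measure of the image, which is the second half of \cref{shift_of_pi01}:
\begin{itemize}
\item $T$ maps $\cyl{i}$ bijectively onto $2^\N$ and doubles measure there, so $\lambda\paren{T(Q\cap\cyl{i})} = 2\lambda\paren{Q\cap\cyl{i}}$ is $A^{(n-1)}$-computable.
\item Writing $T(Q) = T(Q\cap\cyl{0})\cup T(Q\cap\cyl{1})$, the measure $\lambda(T(Q))$ is then $A^{(n-1)}$-computable by \cref{intersection_of_sigma0n_with_computable_measure} (relativized) applied to the complements.
\end{itemize}
With this, the argument of \cref{preservation_of_quasi-pi0n_genericity} goes through. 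Positivity comes from your observation that $\lambda(T(Q)) \ge \lambda(Q) > 0$.
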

\begin{proof}
    By \cref{shift_of_pi01}, $T$ preserves $\Pi^0_1(A^{(n - 1)})$ sets with $A^{(n - 1)}$-computable Lebesgue measure. Then by \cref{preservation_of_quasi-pi0n_genericity}, $T$ preserves (weak) quasi-$\Pi^0_n$-genericity relative to $A$.
\end{proof}

In considering a potential converse to these theorems involving recurrence in $\Pi^0_n$ sets, a natural approach mirroring \cref{Schnorr_converse} would be the following.

\begin{qn}
    Considering Cantor space with the Lebesgue measure, given a point $x \in 2^\N$ which is neither Martin-L\"of random nor weakly quasi-1-generic, do there exist a $\Pi^0_1$ set $A \ni x$ with $\lambda(A) > 0$ and a computable measure-preserving transformation $T: \ \subseteq \! 2^\N \to 2^\N$ such that $T^k(x) \notin A$ for all $k > 0$?
\end{qn}

One could also try to strengthen this by making the transformation ergodic or by replacing ``weakly quasi-1-generic'' with ``quasi-1-generic.''

Of course, before going to all this trouble, one should first prove that there exists a Schnorr random point which is neither Martin-L\"of random nor weakly quasi-1-generic. (Otherwise, \cref{Schnorr_converse} would suffice in resolving this.)

An affirmative answer to the question above would likely provide some clarity regarding the relationship between $n$-randomness and recurrence in $\Pi^0_n$ sets for non-ergodic transformations, potentially leading to a way of improving \cref{CPS_weak_n_Poincare} towards $n$-randomness.

\subsection{Effective ergodic theorems for higher randomness}

Our last effective recurrence theorems will involve the frequency of recurrence, i.e., the long-term behavior of the \emph{ergodic averages}
\[A_Nf(x) := \frac{1}{N} \sum_{k < N} \paren{f \circ T^k}(x)\]
when $f$ is the characteristic function of a set $P$ and $x \in P$. The Birkhoff Ergodic Theorem (\cref{Birkhoff_ergodic_theorem}) tells us that these converge almost everywhere, a property which has been studied in the effective setting for various kinds of functions $f$ and transformations $T$, as discussed in the introduction. However, because we want to consider sets $A$ more complex than $\Pi^0_1$, we will first generalize some of these other results to more complex functions in any computable probability space.

The first of these requires the following upcrossing inequality due to \cite{Bishop}.

\begin{thm}\label{upcrossing_inequality}
    Let $(X, \mu, T)$ be a measure-preserving system and $f: X \to \cl\R$ be integrable. For $\alpha < \beta$, let $\tau_{\alpha, \beta}f(x)$ be the number of times the ergodic averages of $f$ at $x$ go from below $\alpha$ to above $\beta$. Then we have
    \[\int \tau_{\alpha, \beta}f \dmu \leq \frac{1}{\beta - \alpha}\int(f - \alpha)^+ \dmu,\]
    where $g^+ := \max\{g, 0\}$ denotes the positive part of $g$. In particular, $\tau_{\alpha, \beta}f$ is integrable.
\end{thm}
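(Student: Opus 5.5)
Our plan is to follow the classical proof of the maximal/upcrossing inequality via a Hopf-type maximal ergodic lemma applied to a stopping construction. We may assume $\int (f-\alpha)^+\dmu < \infty$ (automatic since $f$ is integrable). Replacing $f$ by $f - \alpha$ and $\beta$ by $\beta - \alpha$, we reduce to $\alpha = 0 < \beta =: c$. Throughout, the ergodic averages are $A_Nf = \frac1N\sum_{k<N} f\circ T^k$.

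\textbf{Finite-horizon inequality.} For each $M$, let $\tau^M(x)$ be the number of completed upcrossings of $[0,c]$ by $A_1f(x),\dots,A_Mf(x)$. By monotone convergence $\tau^M \upto \tau_{0,c}f$, so it suffices to prove
\[
\int \tau^M \dmu \leq \frac1c \int f^+\dmu
\]
uniformly in $M$. We split each completed upcrossing into a segment where the average starts below $0$ and ends above $c$. The idea, following Bishop, is to run a \emph{stopping-time} argument along the orbit: starting from time $k$, look for the first $N$ with $A_N f(T^k x) > c$ after having been below $0$. This decomposes the orbit $x, Tx, \dots$ into disjoint blocks. On each upcrossing block $[a,b)$ the partial sum $\sum_{a\le k<b} f(T^kx)$ exceeds $c$ times the appropriate length, while the preceding stretches where averages dip below $0$ contribute nonpositively after dropping the negative part. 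Summing over the orbit up to a large time $L \gg M$ gives the pointwise combinatorial inequality
\[
c\sum_{k<L} \tau^M(T^kx) \le \sum_{k<L+M} f^+(T^kx) \cdot C_M,
\]
with a bookkeeping constant that in fact can be taken to be $1$ after a careful choice of blocks. Integrating, using that $T$ preserves $\mu$, dividing by $L$ and letting $L\to\infty$ yields the finite-horizon bound.

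\textbf{Main obstacle.} The delicate step is the combinatorial lemma on finite sequences: a sequence whose Cesàro averages upcross $[0,c]$ $m$ times must have $\sum f^+$ at least $c\,m$ times a suitable normalization, and this must be transferred to an inequality whose integrated form has constant exactly $\frac1c$. We would first attempt the standard Hopf maximal lemma route: define $E=\{\sup_N A_N g>0\}$ for $g=f^+-c\one_{\text{start of upcrossing}}$-type functions and use $\int_E g\dmu\ge0$. If this fails to give the sharp constant, we would instead cite Bishop's original argument directly, since the statement is attributed to \cite{Bishop}.

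\textbf{Integrability.} Finally, integrability of $\tau_{\alpha,\beta}f$ follows at once, since the right-hand side is finite because $f$ is integrable.
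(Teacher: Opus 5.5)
Your outer reductions are sound. Shifting to $\alpha=0$, $c=\beta-\alpha$ is fine, as is truncating to $\tau^M\uparrow\tau_{0,c}f$. The transference step also works: integrate an orbitwise inequality, use invariance of $\mu$, divide by $L$, let $L\to\infty$. The gap is the orbitwise inequality $c\sum_{k<L}\tau^M(T^kx)\le\sum_{k<L+M}f^+(T^kx)$ itself. You assert it holds ``after a careful choice of blocks'' but never prove it. This is not bookkeeping: it is equivalent to the theorem. Take any reals $a_0,\dots,a_{L+M-1}$ and apply the theorem to the rotation of $\mathbb{Z}/(L+M)\mathbb{Z}$ with uniform measure and $f(j)=a_j$. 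For $k<L$ the first $M$ averages at $k$ never wrap around, so the theorem gives exactly your inequality with constant $1$. Your outline therefore reduces the theorem to a restatement of itself, and everything hinges on the constant. A bounded $C_M\neq 1$ loses the stated constant $1/(\beta-\alpha)$, and a $C_M$ growing with $M$ gives nothing.

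The sketch also does not indicate how to obtain this inequality. A stopping-time decomposition of one orbit into disjoint blocks handles the upcrossings of a single sequence of averages. Your left side, however, counts upcrossings of the averages anchored at every $k<L$, and these windows are nested and reuse the same positive values. For instance, let $f(T^mx)=B$ and $f(T^jx)=-\epsilon$ for $j\neq m$, with $\epsilon$ small. Then every $k<m$ with $m-k+1<B/c$ records an upcrossing completed at time $m$. That gives roughly $B/c$ upcrossings, all financed by the single value $B$, so the inequality is essentially sharp and any allocation must be essentially lossless. The obvious allocation uses the fact that the $i$th upcrossing from $k$, completed at length $n$, comes with a block of sum $>cn$, and spreads the charge $c$ over that block in proportion to $f^+$. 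This only shows that each value $f^+(T^jx)$ is charged at most $\sum_{d\le M}1/d\approx\log M$ times its size. Your Hopf fallback has the same hole. For the natural choice $g=f^+-c\tau^M$, the maximal lemma gives $\int_E g\,d\mu\ge 0$ only on $E=\{\sup_N\sum_{j<N}g\circ T^j>0\}$. Since $g\le 0$ off $E$, you would still need $\{g<0\}\subseteq E$ a.e., which is another combinatorial statement of the same kind. The paper gives no proof of this statement; it simply quotes the inequality from Bishop, so citing Bishop is the appropriate resolution. In that case your outline contributes only the routine reductions, not a proof.
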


The following effective version of the Birkhoff Ergodic Theorem for $n = 1$ was proven by \cite{Vyugin1998}. In generalizing it to arbitrary $n$, we follow the streamlined argument given by Theorem 6.2 of \cite{FranklinTowsner2012}. (See \cref{n_computability_definition} for the definition of $n$-computability and $n$-lower/upper-computability.)

\begin{thm}\label{nR_Birkhoff}
    Let $(X, \mu, T)$ be a computable measure-preserving system. Let $f: \ \subseteq \! X \to \cl\R$ be $n$-computable, defined almost everywhere, and integrable. Then for all $n$-random $x \in \bigcap_k T^{-k}(\dom f)$, $\lim_{N \to \infty} A_Nf(x)$ exists.
\end{thm}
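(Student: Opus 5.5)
The argument will follow the Franklin--Towsner proof of Theorem 6.2, relativized so that the tests live at level $\zero^{(n-1)}$.

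\emph{Reduction.} Suppose $x$ is $n$-random, $x\in\bigcap_k T^{-k}(\dom f)$, and the averages $A_Nf(x)$ fail to converge. Then there are rationals $\alpha<\beta$ with
\[\liminf_N A_Nf(x)<\alpha<\beta<\limsup_N A_Nf(x),\]
so $\tau_{\alpha,\beta}f(x)=\infty$. It therefore suffices to show that for each fixed rational pair $\alpha<\beta$, the set $\{\tau_{\alpha,\beta}f=\infty\}$ is covered by a Martin-L\"of test relative to $\zero^{(n-1)}$. By the relativized characterization of $n$-randomness, such a test cannot capture $x$.

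\emph{Building the test.} For each $m$, set
\[U_m=\{y\in\bigcap_k T^{-k}(\dom f):\ \tau_{\alpha,\beta}f(y)>m\},\]
where the condition means that some finite initial segment of the sequence of averages already shows more than $m$ upcrossings. This is a finite existential condition on finitely many values $f(T^ky)$, $k<N$, each of the form $A_Nf(y)<\alpha$ or $A_Nf(y)>\beta$ (strict inequalities). Since $f$ is $n$-computable and $T$ is computable, the maps $y\mapsto f(T^ky)$ are uniformly $n$-computable on the relevant domain. Hence strict inequalities of $A_Nf(y)$ against rationals define sets that are uniformly $\Sigma^0_1(\zero^{(n-1)})$ relative to the domain. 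Taking a union over witnesses, $U_m$ equals $D\cap V_m$ for uniformly $\Sigma^0_1(\zero^{(n-1)})$ sets $V_m$, where $D$ has full measure. By Markov's inequality and Theorem~\ref{upcrossing_inequality},
\[\mu(V_m)=\mu(U_m)\le \frac{C}{m},\qquad C=\frac{1}{\beta-\alpha}\int (f-\alpha)^+\,d\mu<\infty.\]
Passing to the subsequence $V_{\lceil C\rceil 2^{j}}$, chosen noncomputably only through the fixed constant, gives $\mu\le 2^{-j}$. This is a $\zero^{(n-1)}$-Martin-L\"of test containing every point of $D$ with infinitely many upcrossings, in particular $x$, which is the desired contradiction.

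\emph{Main obstacle.} The delicate step is making the definability claim rigorous given what $n$-computability means here (the definition is in the appendix). We need, uniformly in $k$ and rational $q$, that $\{y: f(T^ky)>q\}$ and $\{y: f(T^ky)<q\}$ are $\Sigma^0_1(\zero^{(n-1)})$ within the domain. If $f$ may take the values $\pm\infty$, the averages must also be handled in $\cl\R$. I would first try the representation of an $n$-computable $f$ as one that is $\zero^{(n-1)}$-computable on its domain, composed with the computable $T$ (relativization preserves computability of $T$). Finite sums and strict comparisons then stay $\Sigma^0_1(\zero^{(n-1)})$.

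If the appendix definition only provides $n$-lower- and $n$-upper-semicomputability separately, the fallback is to use each half for the corresponding strict inequality. This works because lower semicomputability gives $\{f>q\}$ as $\Sigma^0_1(\zero^{(n-1)})$, and upper semicomputability gives $\{f<q\}$ likewise.
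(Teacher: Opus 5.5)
Your overall architecture is the paper's: upcrossing sets built from finitely many comparisons of ergodic averages, Bishop's inequality to make $\tau_{\alpha,\beta}f$ integrable, Markov/Chebyshev, and rescaling by a fixed constant to get a test.

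The gap is the definability step, which puts the sets at the wrong level of the hierarchy. In this paper, $f$ being $n$-computable only means that $\{f>q\}$ and $\{f<q\}$ are $\Sigma^0_n$ in $\dom f$. It does \emph{not} make them $\Sigma^0_1(\zero^{(n-1)})$ in the domain. A $\Sigma^0_1(\zero^{(n-1)})$ set is a union of ideal balls, hence open. So both of your routes fail for $n\ge 2$:
\begin{itemize}
\item your first idea, that $f$ is $\zero^{(n-1)}$-computable and hence continuous on its domain;
\item your fallback, that $n$-lower-semicomputability gives $\{f>q\}\in\Sigma^0_1(\zero^{(n-1)})$.
\end{itemize}
Concretely, take $f=\one_P$ on $2^\N$ with $P=\{0^\infty\}$, or any non-open $\Pi^0_1$ set. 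This $f$ is upper-semicomputable, hence $2$-computable, total and integrable. Yet $\{f>1/2\}=P$ is not open, so it is not $\Sigma^0_1(\zero')$ in $X=\dom f$. Consequently your family $(V_m)$ is not a Martin-L\"of test relative to $\zero^{(n-1)}$, as claimed.

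The repair is easy and turns your argument into the paper's: do not relativize at all. By \cref{ergodic_averages_computable}, the averages $A_Nf$ are uniformly $n$-lower- and $n$-upper-semicomputable on $D=\bigcap_k T^{-k}(\dom f)$. Hence $\{A_Mf<\alpha\}$ and $\{A_Nf>\beta\}$ are uniformly $\Sigma^0_n$ in $D$. Finite intersections and effective unions stay $\Sigma^0_n$, so $\{\tau_{\alpha,\beta}f>m\}=V_m\cap D$ with $V_m$ uniformly $\Sigma^0_n$. Then $(V_{\lceil C\rceil 2^j})_j$ is an $n$-test in the paper's sense, which is defined with $\Sigma^0_n$ sets, and this contradicts $n$-randomness of $x$ directly.

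If you prefer to end with a Martin-L\"of test relative to $\zero^{(n-1)}$, you need one more step. Replace each $V_m$ by a $\Sigma^0_1(\zero^{(n-1)})$ superset of only slightly larger measure, using \cref{approximate_sigma0n_with_sigma01(zero(n-1))}, which does this uniformly.
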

\begin{proof}
    Fix rational numbers $\alpha < \beta$. For any positive integers $M_1 < N_1 < \cdots < M_t < N_t$, define sets
    \[U_{M_1, \dots, M_t}^{N_1, \dots, N_t} = \bigcap_{j = 1}^t \set{x \in X: A_{M_j}f(x) < \alpha < \beta < A_{N_j}f(x)},\]
    which are uniformly $\Sigma^0_n$ in $D := \bigcap_k T^{-k}(\dom f)$ since the ergodic averages are uniformly $n$-computable by \cref{ergodic_averages_computable}. Then we see that the sets
    \[\set{x \in X: \tau_{\alpha, \beta}f(x) > q} = \begin{dcases}
        X & \text{if } q < 0 \\
        \bigcup_{M_1 < N_1 < \cdots < M_t < N_t} U_{M_1, \dots, M_t}^{N_1, \dots, N_t} \quad \text{(where } t = \floor q + 1) & \text{if } q \geq 0
    \end{dcases}\]
    are also uniformly $\Sigma^0_n$ in $D$, so $\tau_{\alpha, \beta}f$ is $n$-lower-semicomputable.
    
    Since $f$ is integrable, $\tau_{\alpha, \beta}f$ is also integrable by \cref{upcrossing_inequality}, so we may take $q \in \Q$ such that $\int \tau_{\alpha, \beta}f \dmu \leq q$. Then we can write
    \[\set{x \in X: \tau_{\alpha, \beta}f(x) > 2^jq} = U_j \cap D\]
    for some sets $U_j$ which are uniformly $\Sigma^0_n$ in $D$, which has full measure since $T$ is measure-preserving and $f$ is defined almost everywhere. Then by Chebyshev's Inequality and the definition of $q$, these have measure
    \[\mu\paren{U_j} = \mu\paren{U_j \cap D} \leq \frac{1}{2^jq}\int \tau_{\alpha, \beta}f \dmu \leq 2^{-j}.\]
    Hence $\paren{U_j}_j$ is an $n$-test which captures every point $x \in D$ with
    \[\liminf_{N \to \infty} A_Nf(x) < \alpha < \beta < \limsup_{N \to \infty} A_Nf(x)\]
    since then $\tau_{\alpha, \beta}f(x) = \infty$. Such rational numbers $\alpha < \beta$ exist whenever the ergodic averages do not converge, which implies that the ergodic averages do converge at all $n$-random points in $D$.
\end{proof}

The previous result helps us prove another effective version of the Birkhoff Ergodic Theorem for $n$-lower-semicomputable functions, which was shown in the case $n = 1$ for Cantor space by \cite{Miyabe2016}. We follow the same argument, applying the machinery of computability probability spaces developed in \cref[S]{section_background}.

\begin{thm}\label{OWnR_Birkhoff}
    Let $(X, \mu, T)$ be a computable measure-preserving system and $n \geq 1$. Let $f: \ \subseteq \! X \to [0, \infty]$ be $n$-lower-semicomputable, defined almost everywhere, and integrable. Then for all Oberwolfach $n$-random $x \in \bigcap_k T^{-k}(\dom f)$, $\lim_{N \to \infty} A_Nf(x)$ exists.
\end{thm}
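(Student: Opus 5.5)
Following Miyabe's argument for $n = 1$, I would approximate $f$ from below by $n$-computable functions, apply \cref{nR_Birkhoff} to each approximation, and use Oberwolfach $n$-randomness to control the error along the whole orbit. Since $f$ is $n$-lower-semicomputable, nonnegative and integrable, I first write $f = \sup_m f_m$ with $f_m$ an increasing sequence of uniformly $n$-computable, bounded, nonnegative functions. The natural choice is finite sums of indicator-type functions of the uniformly $\Sigma^0_n$ sets $\{f > q\}$, suitably smoothed; in the relativized setting I would use $\zero^{(n-1)}$-computable step functions on $\Sigma^0_n$ sets, as in the machinery of \cref{section_background}. Then $\int f_m \dmu \upto \int f \dmu$. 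Each $f_m$ is $n$-computable, so \cref{nR_Birkhoff} gives convergence of $A_N f_m(x)$ for every $n$-random $x$ in the relevant full-measure domain. Oberwolfach $n$-randomness implies $n$-randomness, so the limit $\lim_N A_N f_m(x)$ exists for all $m$.

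Next I would handle the tail $g_m := f - f_m \geq 0$, which is $n$-lower-semicomputable uniformly in $m$ (as $f$ minus an $n$-computable function) with $\int g_m \dmu \to 0$. The goal is
\[\limsup_{N\to\infty} A_N g_m(x) \to 0 \quad\text{as } m\to\infty.\]
Given this, $\liminf A_N f(x) \ge \lim A_N f_m(x)$ and $\limsup A_N f(x) \le \lim A_N f_m(x) + \limsup A_N g_m(x)$, so the oscillation of $A_N f(x)$ vanishes and the limit exists.

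To control the tail I would use the maximal ergodic inequality
\[\mu\set{\sup_N A_N g \ge \eps} \le \frac{1}{\eps}\int g \dmu .\]
The set $\{\sup_N A_N g_m > 2^{-j}\}$ is uniformly $\Sigma^0_n$, since $A_N g_m$ is $n$-lower-semicomputable by \cref{ergodic_averages_computable}. Its measure is bounded by $2^j\int g_m\dmu$. However, $\int g_m \dmu = \int f\dmu - \int f_m\dmu$ converges to $0$ without a computable rate, because $\int f\dmu$ is only $\zero^{(n-1)}$-left-c.e. This is exactly where Oberwolfach randomness replaces Martin-Löf randomness. I would define, for each $j$, the sets
\[V_{j,m} = \set{\sup_N A_N g_m > 2^{-j}},\]
which are decreasing in $m$ with measure bounded by $2^{j}(\int f - \int f_m)$. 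I would then assemble them into an Oberwolfach $n$-test relative to the left-c.e. real $\int f\dmu$, so that a point lying in $V_{j,m}$ for all $m$, for some fixed $j$, fails the test. An Oberwolfach $n$-random $x$ therefore escapes some $V_{j,m}$ for each $j$, which gives $\limsup A_N g_m(x) \le 2^{-j}$ for large $m$.

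The main obstacle is matching the definition of an Oberwolfach $n$-test exactly: a uniformly $\Sigma^0_n$ family $(U_{j,q})$ indexed by rationals $q$ below a left-c.e. real $\alpha$, with $\mu(U_{j,q}) \le 2^{-j}$ once $q$ is close enough to $\alpha$. I would take $\alpha = \int f\dmu$ (relativized), and index by $q$ the set $V_{j,m(q)}$, where $m(q)$ is the first stage at which the approximation to $\int f_m$ exceeds $q$. This requires $\int f_m\dmu$ to be uniformly $\zero^{(n-1)}$-computable, which I would arrange when choosing the $f_m$. Some care is also needed to restrict to $x \in \bigcap_k T^{-k}(\dom f)$, using the full-measure domain handling as in \cref{nR_Birkhoff}.
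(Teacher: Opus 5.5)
Your architecture matches the paper's. You approximate $f$ from below by uniformly $n$-computable $f_m$ with uniformly $\zero^{(n-1)}$-computable integrals, get convergence of $A_Nf_m(x)$ from \cref{nR_Birkhoff}, and use the Maximal Ergodic Theorem to turn $\int f\dmu-\int f_m\dmu$ into an Oberwolfach $n$-test. Your threshold-$2^{-j}$ bookkeeping with a sandwich bound on the oscillation is equivalent to the paper's use of rationals $\alpha<\beta$ and the sets $\bigcup_N\set{A_N(f-f_k)>\beta-\alpha}$. The rational-indexed reformulation in your last paragraph is unnecessary: under the definition used here, $(V_{j,m})_m$ is already an Oberwolfach $n$-test with $\beta_m=2^j\int f_m\dmu$ and $\beta=2^j\int f\dmu$.

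The step that fails as you describe it is the construction of the $f_m$ for $n\ge2$.
\begin{itemize}
\item A step function on $\Sigma^0_n$ sets, such as $c\,\one_U$ with $U$ a level set of $f$, is only $n$-lower-semicomputable. For $0<r\le c$ the set $\set{c\,\one_U<r}=\neg U$ is $\Pi^0_n$, and in general not $\Sigma^0_n$.
\item Consequently \cref{nR_Birkhoff} does not apply to such $f_m$. Also $f-f_m$ need not be $n$-lower-semicomputable, so $V_{j,m}$ need not be $\Sigma^0_n$. Finally $\int f_m\dmu$ is in general only $\zero^{(n-1)}$-lower-semicomputable.
\item Relativizing the smoothed $n=1$ construction does not rescue this. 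The level sets of $f$ are $\Sigma^0_n$ but generally not $\Sigma^0_1\paren{\zero^{(n-1)}}$, so $f$ is not lower-semicomputable relative to $\zero^{(n-1)}$.
\end{itemize}
What works is what the paper does via \cref{approximate_(n+1)_lower_semicomputable_from_below}. Write each $\Sigma^0_n$ set $U_q$ with $U_q\cap\dom f=\set{f>q}$ as a union of uniformly $\Pi^0_{n-1}$ sets $P_{q,i}$, and take $f_m=\max_{s\le m}q_s\one_{P_{q_s,i_s}}$. These functions are $(n-1)$-upper-semicomputable, hence $n$-computable by \cref{n_lower_semicomputable_implies_(n+1)_computable}. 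Their integrals $\int f_m\dmu=M_m-\int(M_m-f_m)\dmu$ are $\zero^{(n-2)}$-upper-semicomputable by \cref{integral_of_n_lower_semicomputable}, hence $\zero^{(n-1)}$-computable. With that substitution, and with \cref{approximate_lower_semicomputable_from_below} plus \cref{integral_of_computable} for $n=1$, the rest of your argument goes through.
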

\begin{proof}
    According to the following cases, there exists an increasing sequence of uniformly $n$-computable functions $\paren{f_k}_k$ which converges to $f$ on $\dom f$ and has uniformly $\zero^{(n - 1)}$-computable integrals:
    \begin{itemize}
        \item $n = 1$. By \cref{approximate_lower_semicomputable_from_below}, there are an increasing computable sequence $\paren{M_k}_k$ in $\Q_{\geq 0}$ and a sequence of uniformly computable functions $f_k: X \to [0, M_k]$ such that $f_k \upto f$ on $\dom f$ as $k \to \infty$. By \cref{integral_of_computable}, their integrals $\int f_k \dmu$ are uniformly computable.

        \item $n \geq 2$. By \cref{approximate_(n+1)_lower_semicomputable_from_below}, there is a sequence of uniformly $(n - 1)$-upper-semicomputable functions $f_k: X \to [0, \infty)$ such that $f_k \upto f$ on $\dom f$ as $k \to \infty$. By \cref{integral_of_n_lower_semicomputable}, their integrals $\int f_k \dmu$ are uniformly $\zero^{(n - 2)}$-upper-semicomputable and hence uniformly $\zero^{(n - 1)}$-computable. The functions $f_k$ themselves are also uniformly $n$-computable by \cref{n_lower_semicomputable_implies_(n+1)_computable}.
    \end{itemize}

    Fix rational numbers $\alpha < \beta$ and define sets
    \[U_k = \bigcup_{N > 0}\set{x \in X: A_N\paren{f - f_k}(x) > \beta - \alpha}\]
    which are uniformly $\Sigma^0_n$ in $D := \bigcap_k T^{-k}(\dom f)$ by \cref{ergodic_averages_computable} because the functions $f - f_k$ are uniformly $n$-lower-semicomputable. Let
    \[\beta_k = \frac{1}{b - a}\int f_k \dmu\quad\text{and}\quad\beta = \frac{1}{b - a}\int f \dmu.\]
    Then the numbers $\beta_k$ are uniformly $\zero^{(n - 1)}$-computable, and $\beta_k \upto \beta$ as $k \to \infty$ by the Monotone Convergence Theorem. Using the Maximal Ergodic Theorem (\cref{maximal_ergodic_theorem}), we obtain
    \[\mu\paren{U_k} \leq \frac{1}{b - a}\int \paren{f - f_k} \dmu = \beta - \beta_k,\]
    which makes $\paren{U_k}_k$ an Oberwolfach $n$-test.

    Now consider any $x \in X$ and assume that $\lim_{N \to \infty} A_Nf(x)$ does not exist, in which case there exist rational numbers $\alpha < \beta$ such that
    \[\liminf_{N \to \infty} A_Nf(x) < \alpha < \beta < \limsup_{N \to \infty} A_Nf(x).\]
    We seek to show that $x$ is not Oberwolfach $n$-random. This is trivially true if $x$ is not $n$-random, so assume $x$ is $n$-random and consider any $k$. Since $f_k$ is $n$-computable, we know by \cref{nR_Birkhoff} that $\lim_{N \to \infty} A_Nf_k(x)$ exists. As $f_k \leq f$, it follows that
    \[\lim_{N \to \infty} A_Nf_k(x) \leq \liminf_{N \to \infty} A_Nf(x) < \alpha,\]
    so $A_Nf_k(x) < \alpha$ for all sufficiently large $N$, say all $N \geq N_0$. Also, as $\limsup_{N \to \infty} A_Nf(x) > \beta$, we must have $A_Nf(x) > \beta$ for infinitely many $N$. Taking such an $N \geq N_0$, we see that $A_N\paren{f - f_k}(x) > \beta - \alpha$, i.e., $x$ lies in the set $U_k$ defined above for this choice of $\alpha < \beta$.
    
    Thus, $x \in \bigcap_k U_k$, which means that $x$ is not Oberwolfach $n$-random.
\end{proof}

\subsection{Frequency of recurrence}

In order to describe which points recur at a positive frequency, we will make use of the Multiple Recurrence Theorem due to \cite{Furstenberg1978}.

\begin{thm}\label{multiple_recurrence}
    Let $(X, \B, \mu)$ be a probability space and $T_1, \dots, T_j: \ \subseteq \! X \to X$ be commuting measure-preserving transformations. Take $A \in \B$ with $\mu(A) > 0$. Then
    \[\liminf_{N \to \infty}\frac{1}{N}\sum_{k < N}\mu\paren{\bigcap_{i = 1}^j T_i^{-k}A} > 0.\]
\end{thm}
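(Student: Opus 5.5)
This is the multiple recurrence theorem of Furstenberg and Katznelson (the commuting case of Furstenberg's theorem). The plan is not to reprove it from scratch. Instead, I will reduce the setting of the statement, with partially defined and possibly non-invertible maps, to the classical setting of commuting invertible measure-preserving transformations, and then invoke the Furstenberg--Katznelson structure theory.

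\emph{First step: full domain.} Let $D = \bigcap_{i}\bigcap_{k} T_i^{-k}(\dom T_i)$, taken over all finite compositions of the $T_i$. Each $\dom T_i$ has full measure and each $T_i$ is measure-preserving. Hence every preimage has full measure, and the countable intersection $D$ has full measure. On $D$ all compositions are defined and $D$ is forward invariant. Restricting to $D$ changes none of the measures $\mu\paren{\bigcap_i T_i^{-k}A}$. So we may assume each $T_i$ is a total map $D \to D$ preserving $\mu$ and commuting with the others.

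\emph{Second step: invertibility.} Pass to the natural extension of the $\mathbb{Z}_{\geq 0}^j$-action generated by $T_1, \dots, T_j$. Take the inverse-limit space $\tilde X$ of sequences indexed by $\mathbb{Z}_{\geq 0}^j$ that are compatible with the action, with the measure determined by Kolmogorov consistency. On $\tilde X$ the lifted maps $\tilde T_i$ are commuting invertible measure-preserving transformations. The projection $\pi: \tilde X \to X$ satisfies $\pi\circ\tilde T_i = T_i\circ\pi$ and pushes $\tilde\mu$ to $\mu$. For $\tilde A = \pi^{-1}(A)$ we get
\[\tilde\mu\paren{\bigcap_i \tilde T_i^{-k}\tilde A} = \mu\paren{\bigcap_i T_i^{-k}A},\]
so it suffices to prove the claim in the invertible case. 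Standard separability arguments also let us assume that $(X,\B,\mu)$ is a standard Borel probability space, so that disintegration is available below.

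\emph{Third step: the invertible case.} Call a $\mathbb{Z}^j$-system \emph{SZ} if the $\liminf$ in the statement is positive for every set of positive measure. The proof follows Furstenberg--Katznelson in three parts.
\begin{enumerate}[(i)]
    \item The trivial system is SZ.
    \item Relative weak mixing: if $Y$ is a factor of $X$ such that $X$ is relatively weakly mixing over $Y$ (for the relevant differences $T_iT_1^{-1}$), and $Y$ is SZ, then $X$ is SZ. This is a van der Corput argument showing that the multiple averages for $X$ agree in $L^2$ with those of the conditional expectations onto $Y$.
    \item Compact extensions: if $X$ is a compact extension of an SZ factor $Y$, then $X$ is SZ. This uses finite-rank approximation of fibers together with the multidimensional van der Waerden (Gallai) theorem to find many $k$ simultaneously.
\end{enumerate}
A Zorn's lemma argument gives a maximal SZ factor. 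If it were not all of $X$, the dichotomy theorem would say that $X$ is either relatively weakly mixing over it or contains an intermediate proper compact extension. Both cases contradict maximality by (ii) and (iii). Because the SZ property is stated with a $\liminf$ of Ces\`aro averages, the conclusion is exactly the displayed inequality.

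The main obstacle is step (iii), together with the closure of the SZ property under inverse limits of chains that Zorn's lemma requires. Gaining uniformity over the fibers in the compact-extension step is the delicate part. I would follow Furstenberg's original treatment there, or the later Furstenberg--Katznelson--Ornstein exposition, rather than attempt a new argument. In the context of this paper it is therefore reasonable to treat the statement as the cited classical result once the reductions in the first two steps have been made.
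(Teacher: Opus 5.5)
The paper does not prove this statement; it quotes it as the Furstenberg--Katznelson multiple recurrence theorem for commuting transformations and cites it. Your bottom line (reduce to total invertible maps on a standard space, then invoke that theorem) is consistent with this. Your first two steps are a legitimate supplement the paper skips, since its statement allows partial, non-invertible $T_i$.

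One ordering fix is needed. Kolmogorov consistency needs the standard Borel structure \emph{before} you build the natural extension. The cleanest way is to code $\Phi(x) = \paren{\one_A\paren{T_1^{n_1}\cdots T_j^{n_j}x}}_{\mathbf n \in \N^j} \in 2^{\N^j}$. This intertwines the $T_i$ with the commuting coordinate shifts $\sigma_i$, so $\mu\paren{\bigcap_i T_i^{-k}A} = \nu\paren{\bigcap_i \sigma_i^{-k}A'}$ for $\nu = \Phi_*\mu$ and $A' = \set{\omega : \omega_{\mathbf 0} = 1}$. Then extend $\nu$ to $2^{\mathbb{Z}^j}$.

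Your Step 3, however, does not work as a proof for $j \geq 2$. What you describe is the architecture of the single-transformation argument (powers $T, T^2, \dots$), which is also all that the Furstenberg--Katznelson--Ornstein exposition covers. Two things fail.

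\emph{The dichotomy.} The Furstenberg--Zimmer dichotomy concerns the whole $\mathbb{Z}^j$-action: it gives either relative weak mixing of the action or a nontrivial intermediate extension compact for the action. Relative weak mixing of the action does not give relative weak mixing of the individual maps $T_i$, $T_iT_l^{-1}$ that your van der Corput step (ii) needs. Take $Y$ trivial and $T_1 = T_2 = T$ weakly mixing. Then $T_2T_1^{-1} = \mathrm{id}$, and indeed $\frac1N\sum_{k<N}\int f_0\cdot (f_1\circ T_1^k)\cdot(f_2\circ T_2^k)\,d\mu \to \int f_0\,d\mu\int f_1f_2\,d\mu$, so (ii) fails. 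Yet no nonconstant function has precompact $\mathbb{Z}^2$-orbit, so (iii) does not apply either. The dichotomy as you state it is therefore false. Furstenberg and Katznelson handle this with \emph{primitive extensions}: a splitting $\mathbb{Z}^j = \Gamma_c \oplus \Gamma_w$ with the extension compact for $\Gamma_c$ and relatively weakly mixing for $\Gamma_w$, together with a structure theorem producing a nontrivial intermediate primitive extension.

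\emph{The SZ property.} SZ must be required for every finite configuration $\set{\mathbf v_1, \dots, \mathbf v_m} \subseteq \mathbb{Z}^j$, not just for $T_1, \dots, T_j$. The Gallai step in (iii) and the change of coordinates along $\Gamma_c \oplus \Gamma_w$ both need the factor to be SZ for larger configurations than the one being proved.

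So either cite Furstenberg--Katznelson (1978) or Furstenberg's 1981 monograph directly after your reductions, as the paper in effect does, or rebuild Step 3 around primitive extensions.
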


From this we can derive the following pointwise result which tells us that almost every point in $A$ returns to $A$ simultaneously under all transformations $T_i$ at a positive upper frequency. Having been unable to find this fact anywhere in the literature, we provide a proof, which will also be useful in effectivizing it.

\begin{thm}\label{multiple_recurrence_frequency}
    Let $(X, \B, \mu)$ be a probability space and $T_1, \dots, T_j: \ \subseteq \! X \to X$ be commuting measure-preserving transformations. Take $A \in \B$ with $\mu(A) > 0$. Then for $\mu$-a.e.\ $x \in A$, we have
    \[\limsup_{N \to \infty} \frac{1}{N}\sum_{k < N} \prod_{i = 1}^j \paren{\one_A \circ T_i^k}(x) > 0.\]
\end{thm}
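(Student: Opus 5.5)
Set $g_k = \prod_{i=1}^j \one_A \circ T_i^k$, which equals $\one_{\bigcap_i T_i^{-k}A}$ up to the domains of the $T_i$ (a null complement). I will argue by contradiction via Fatou's lemma applied to the complement of the claimed set. Let
\[B = \set{x \in A: \limsup_{N \to \infty} \frac{1}{N}\sum_{k<N} g_k(x) = 0},\]
which is measurable, since each $g_k$ is measurable and $\limsup$ of measurable functions is measurable. Suppose for contradiction that $\mu(B) > 0$. The plan is to apply \cref{multiple_recurrence} not to $A$ but to $B$ itself, which is also a set of positive measure.

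By \cref{multiple_recurrence} applied to $B$, there is $c > 0$ such that
\[\frac{1}{N}\sum_{k<N}\mu\paren{\bigcap_{i=1}^j T_i^{-k}B} \geq c\]
for all sufficiently large $N$. Writing the left side as $\int \frac{1}{N}\sum_{k<N} \prod_i \one_B\circ T_i^k \dmu$, I next observe that the integrand vanishes off $B$ at $k=0$ only, which does not help directly. So instead I restrict the integral to $B$: since $\prod_i \one_B \circ T_i^k(x) \leq \one_B(T_1^k x)$, the integrand at $x$ is bounded by the frequency of visits to $B$, but I want the integral over points of $B$. Use measure preservation: $\mu\paren{\bigcap_i T_i^{-k}B} \leq \mu(T_1^{-k}B)$ is useless; rather I will use $\mu(B \cap \bigcap_i T_i^{-k}B)$. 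To get this, I apply \cref{multiple_recurrence} to the $j+1$ commuting transformations $\mathrm{id}, T_1, \dots, T_j$ (the identity commutes with everything and preserves measure). This gives $c>0$ with
\[\frac{1}{N}\sum_{k<N}\mu\paren{B \cap \bigcap_{i=1}^j T_i^{-k}B} \geq c\]
for large $N$, i.e. $\int_B \frac{1}{N}\sum_{k<N}\prod_i \one_B\circ T_i^k \dmu \geq c$.

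Now for $x \in B$ the integrand is at most $\frac{1}{N}\sum_{k<N} g_k(x)$ since $B \subseteq A$, and these averages are bounded by $1$ and tend to $0$ pointwise on $B$ by definition of $B$. By dominated convergence, $\int_B \frac1N\sum_{k<N} g_k \dmu \to 0$, contradicting the lower bound $c$. Hence $\mu(B)=0$, which is the claim. The only subtle step is the trick of including the identity among the transformations so that the return is measured from within $B$; the remaining care is bookkeeping about partial domains, handled by noting $\dom T_i$ and their iterated preimages have full measure.
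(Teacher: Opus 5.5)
Your argument is correct and is essentially the paper's proof. Both define $B$ as the set of points of $A$ where the $\limsup$ vanishes, adjoin the identity to the commuting family so that the Multiple Recurrence Theorem bounds the averages of $\mu\bigl(B \cap \bigcap_{i} T_i^{-k}B\bigr)$ below, and then show those averages tend to $0$. The paper does this via Fatou's lemma after observing $f_B \equiv 0$; you use dominated convergence on $B$, which is equivalent here because the nonnegative averages converge to $0$ on $B$.
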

\begin{proof}
    Let $T_0$ be the identity map, which is measure-preserving and computes with every other transformation $T_i$. For any set $C \subseteq X$, define
    \[f_C = \limsup_{N \to \infty} \frac{1}{N}\sum_{k < N} \prod_{i = 0}^j \paren{\one_C \circ T_i^k}.\]
    Let
    \[B = \set{x \in A: f_A(x) = 0}.\]
    Consider any $x \in X$. If $x \notin \bigcap_{i = 0}^j T_i^{-k}B$ for all $n$, then clearly $f_B(x) = 0$. Otherwise, if $x \in \bigcap_{i = 0}^j T_i^{-k}B$ for some $n$, then in particular $x \in B$, so $f_B(x) \leq f_A(x) = 0$. Thus, $f_B = 0$ everywhere. Now observe by Fatou's Lemma that
    \begin{align*}
        \liminf_{N \to \infty} \frac{1}{N}\sum_{k < N} \mu\paren{\bigcap_{i = 0}^j T_i^{-k}B} &\leq \limsup_{N \to \infty} \frac{1}{N}\sum_{k < N} \int \prod_{i = 0}^j \paren{\one_B \circ T_i^k} \dmu \\
        &\leq \int \limsup_{N \to \infty} \frac{1}{N}\sum_{k < N} \prod_{i = 0}^j \paren{\one_B \circ T_i^k} \dmu = \int f_B \dmu = 0.
    \end{align*}
    By \cref{multiple_recurrence}, we then must have $\mu(B) = 0$. Since $\bigcap_{i = 1}^j T_i^{-k}A \supseteq \bigcap_{i = 0}^j T_i^{-k}A$, this implies that for $\mu$-a.e.\ $x \in A$,
    \begin{align*}
        \limsup_{N \to \infty} \frac{1}{N}\sum_{k < N} \prod_{i = 1}^j \paren{\one_A \circ T_i^k} &\geq f_A(x) > 0. \qedhere
    \end{align*}
\end{proof}

Note that it was the use of Fatou's Lemma in this proof that forced us to consider the limit superior rather than the limit inferior. This problem disappears when there is just a single transformation since the limit exists almost everywhere by the Birkhoff Ergodic Theorem.

\begin{cor}\label{recurrence_frequency}
    Let $(X, \B, \mu, T)$ be a measure-preserving system. Take $A \in \B$ with $\mu(A) > 0$. Then for $\mu$-a.e.\ $x \in A$, we have
    \[\lim_{N \to \infty} \frac{1}{N}\sum_{k < N} \one_A \circ T^k(x) > 0.\]
\end{cor}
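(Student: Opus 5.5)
The plan is to derive this from \cref{multiple_recurrence_frequency} with $j = 1$ and $T_1 = T$, together with the Birkhoff Ergodic Theorem (\cref{Birkhoff_ergodic_theorem}). The only gap between the two statements is the difference between a limit superior and a limit, and for a single transformation that difference disappears.

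First, apply \cref{Birkhoff_ergodic_theorem} to the integrable function $f = \one_A$. This gives a full-measure set $E_1$ on which the ergodic averages
\[A_N\one_A(x) = \frac{1}{N}\sum_{k < N} \one_A \circ T^k(x)\]
converge as $N \to \infty$. If $T$ is only partially defined, we first intersect with $\bigcap_k T^{-k}(\dom T)$. This set still has full measure, because $\dom T$ has full measure and $T$ is measure-preserving, so each preimage $T^{-k}(\dom T)$ is conull.

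Next, apply \cref{multiple_recurrence_frequency} with $j = 1$ and $T_1 = T$; a single map trivially commutes with itself. This gives a set $E_2 \subseteq A$ with $\mu(A \setminus E_2) = 0$ such that
\[\limsup_{N \to \infty} A_N\one_A(x) > 0 \quad\text{for all } x \in E_2.\]

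Finally, for $x \in E_1 \cap E_2$ the limit exists, so it equals the limit superior and is therefore positive. The set $E_1 \cap E_2$ contains $\mu$-almost every point of $A$, which is exactly the claim.

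No step here is a real obstacle. The only care needed is with the partial domain of $T$, that is, checking that the iterates are defined almost everywhere so that both theorems apply to the same conull set. That check is the routine preimage argument above.
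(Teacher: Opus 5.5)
Your proposal is correct and follows the paper's own argument: specialize the multiple-recurrence frequency theorem to $j = 1$, then use the Birkhoff Ergodic Theorem to see that the limit exists almost everywhere, so the positive $\limsup$ is a positive limit. Your domain check is harmless but redundant, because the paper's definition of a measure-preserving system already requires $\dom T$ to be invariant, and $\dom T = T^{-1}(X)$ is automatically conull.
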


We now state effective versions of these results. The first, dealing with multiple recurrence, is very weak due to the fact that multiple ergodic averages are not known to converge pointwise almost everywhere without more restrictive conditions on the transformations.
\begin{thm}\label{CPS_weak_multiple_recurrence_frequency}
    Let $(X, \mu)$ be a computable probability space, $n \geq 1$, and $x \in X$ be weakly $(n + 2)$-random. Then for any $\Sigma^0_n$ set $A \ni x$ with $\mu(A) > 0$ and any commuting computable measure-preserving transformations $T_1, \dots, T_j: \ \subseteq \! X \to X$ with $x \in \bigcap_{i = 1}^j \dom T_i$, we have
    \[\limsup_{N \to \infty} \frac{1}{N}\sum_{k < N} \prod_{i = 1}^j \paren{\one_A \circ T_i^k}(x) > 0.\]
\end{thm}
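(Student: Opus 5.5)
We effectivize the proof of \cref{multiple_recurrence_frequency}: show that the exceptional set of points in $A$ with zero upper multiple-recurrence frequency is contained in a null $\Pi^0_{n+2}$ set.

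First reduce to a set of lower complexity. Since $A$ is $\Sigma^0_n$ and $x \in A$, pick a $\Pi^0_{n-1}$ set $B \subseteq A$ containing $x$ with $\mu(B) > 0$; for $n = 1$, use an almost decidable $B \subseteq A$ containing $x$ via \cref{almost_decidable_basis}, together with its $\Sigma^0_1$ interior approximation $U \subseteq B$ with $\mu(U) = \mu(B)$, and note that $x \in U$ by Kurtz randomness. It suffices to prove positivity of the upper frequency for $B$, because $\one_B \le \one_A$. Set $T_0 = \mathrm{id}$ and
\[g_N = \frac{1}{N}\sum_{k<N}\prod_{i=0}^j \paren{\one_B \circ T_i^k}.\]
The exceptional set is
\[E = \set{y \in B : \limsup_N g_N(y) = 0} = B \cap \bigcap_{m}\bigcup_{M}\bigcap_{N \ge M}\set{y: g_N(y) < 2^{-m}}.\]

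Next compute the complexity of $E$, relative to the common domain $D = \bigcap_{i,k} T_i^{-k}(\dom T_i)$, which has full measure. By \cref{iterated_preimage_of_sigma0n}, the preimages $T_i^{-k}(B)$ are uniformly $\Pi^0_{n-1}$ in $D$ when $n \ge 2$, and uniformly $\Sigma^0_1$ in $D$ for the $U$ version when $n = 1$. In either case they are uniformly $\Delta^0_n$-ish in $D$. The event $g_N < 2^{-m}$ is a finite Boolean combination of such preimages, hence uniformly $\Delta^0_{n+1}$ in $D$; for $n = 1$ we also replace $B$ by its $\Pi^0_1$ closure approximation $\neg V$, and these differ only on a null set. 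The set $\bigcap_{N \ge M}$ is then $\Pi^0_{n+1}$, the union over $M$ is $\Sigma^0_{n+2}$, and the intersection over $m$ is $\Pi^0_{n+3}$. That is one quantifier too many, so the plan needs refinement here.

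The fix is to use the limsup characterization "$\forall m\, \exists M\, \forall N \ge M$", which is the source of the extra quantifier. Instead, note that $\limsup g_N = 0$ implies the weaker condition that for every fixed $m$, $y$ lies in the $\Sigma^0_{n+2}$ set $E_m = B \cap \bigcup_M \bigcap_{N \ge M}\set{g_N < 2^{-m}}$. It then suffices to show that \emph{some single} $E_m$ is null, and this does hold for $m$ large. Apply the Fatou argument of \cref{multiple_recurrence_frequency} to the set $B_m = E_m$, or more precisely to $C_M = B \cap \bigcap_{N \ge M}\set{g_N < 2^{-m}}$. Each $C_M$ has
\[\limsup_N \frac{1}{N}\sum_{k<N} \prod_{i}\one_{C_M}\circ T_i^k \le 2^{-m}\]
everywhere. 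By Fatou's lemma,
\[\liminf_N \frac{1}{N}\sum_{k<N}\mu\paren{\bigcap_i T_i^{-k}C_M} \le 2^{-m}.\]
\cref{multiple_recurrence} alone gives only positivity, not a quantitative bound, so choose $m$ depending on $B$: the function $c(C) = \liminf \frac{1}{N}\sum_k \mu\paren{\bigcap_i T_i^{-k} C}$ is positive for every $C$ of positive measure and is monotone in $C$. If $\mu(E_m) > 0$ for every $m$, then some $C_M \subseteq E_m$ has measure at least $\mu(E_m)/2$, and the sets $E_m$ are decreasing with intersection $E$. By the non-effective \cref{multiple_recurrence_frequency} we have $\mu(E) = 0$, so $\mu(E_m) \to 0$ and this route gives nothing directly.

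A cleaner alternative, which I would pursue instead, is to cover the exceptional set by the $\Pi^0_{n+2}$ sets $F_m = B \cap \bigcap_{M}\bigcup_{N \ge M}\set{g_N < 2^{-m}}$ ("infinitely often below $2^{-m}$") together with a countable union argument. Weak $(n+2)$-randomness avoids each null $\Pi^0_{n+2}$ set individually, so it suffices to show that $y \in E$ implies $y \in F_m$ for all $m$, and that $\mu\paren{\bigcap_m F_m} = 0$. The set $\bigcap_m F_m$ is again $\Pi^0_{n+2}$, since an intersection of uniformly $\Pi^0_{n+2}$ sets stays $\Pi^0_{n+2}$, and it contains $E$. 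Its measure is the main obstacle: it is exactly the set where the \emph{liminf} is $0$, and Fatou's lemma only controls the limsup, as the paper itself remarks. I would try to handle this by showing that $\limsup$-zero points determine a $\Pi^0_{n+2}$ condition after all, by exploiting that $g_{N+1}$ and $g_N$ differ by at most $1/N$. Under this control, $\limsup g_N < 2^{-m}$ would follow from "$g_N < 2^{-m}$ for all $N$ in a sparse computable set beyond $M$ after finitely many exceptions", but this still needs $\forall\exists\forall$. So the honest fallback is the cruder bound above: $E \subseteq \bigcap_m E_m$, where each $E_m$ is $\Sigma^0_{n+2}$, and $E$ itself is null, with $x \in E$ making $x$ non-weakly-$(n+3)$-random. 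Squeezing this down to weak $(n+2)$-randomness is where I expect the real difficulty to lie.
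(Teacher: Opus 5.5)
Your proposal does not prove the statement. It ends with a bound that needs weak $(n+3)$-randomness, and the obstacle you run into comes from an overcount of the complexity of the innermost matrix.

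The failing step is your claim that $\set{g_N < 2^{-m}}$ is only uniformly $\Delta^0_{n+1}$ in $D$. That claim is what pushes $\forall m\,\exists M\,\forall N \geq M$ up to $\Pi^0_{n+3}$. Work with $A$ directly instead. The product $\prod_{i} \one_A \circ T_i^k$ is the indicator of $\bigcap_i T_i^{-k}A$, which is uniformly $\Sigma^0_n$ in the domain by \cref{iterated_preimage_of_sigma0n}. Hence the average $f_N$ is $n$-lower-semicomputable: $\set{f_N > q}$ is the finite union, over $K \subseteq \{0,\dots,N-1\}$ with $|K| > qN$, of the $\Sigma^0_n$ sets $\bigcap_{k \in K}\bigcap_i T_i^{-k}A$. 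So $P_{m,N} = \set{f_N \leq 2^{-m}}$ is uniformly $\Pi^0_n$, and the quantifier $\forall N \geq M$ is absorbed into it. Therefore
\[\set{y \in A : \limsup_{N \to \infty} f_N(y) = 0} = A \cap \bigcap_m \bigcup_{M} \bigcap_{N \geq M} P_{m,N}\]
is $\Pi^0_{n+2}$, up to intersecting with the full-measure common domain, because $A$ is $\Sigma^0_n$. This set is null by \cref{multiple_recurrence_frequency}, so a weakly $(n+2)$-random $x \in A$ is not in it. That is exactly the paper's proof.

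Concretely, your count slipped in two ways.
\begin{itemize}
\item For $n \geq 2$, finite Boolean combinations of the $\Pi^0_{n-1}$ sets $T_i^{-k}(B)$ are $\Delta^0_n$, not $\Delta^0_{n+1}$. So even your own reduction would have given $\Pi^0_{n+2}$.
\item For $n = 1$, an arbitrary Boolean combination of the $\Sigma^0_1$ sets $T_i^{-k}(U)$ is indeed only $\Delta^0_2$. The missing observation is that only positive combinations occur, so $\set{g_N \leq q}$ is the complement of a $\Sigma^0_1$ set. Switching to the $\Pi^0_1$ set $\neg V$ discards exactly this monotonicity.
\end{itemize}

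The reduction from $A$ to a smaller $B$, the identity factor $T_0$, and the detours through $E_m$, $C_M$ and the liminf sets $F_m$ are all unnecessary. As you yourself note, the last two cannot work: Furstenberg's theorem gives no quantitative lower bound, and Fatou's lemma only controls the limsup.
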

\begin{proof}
    Since $A$ is $\Sigma^0_n$ and each transformation $T_i$ is computable, by \cref{iterated_preimage_of_sigma0n}, the sets $\bigcap_{i = 1}^j T_i^{-k}A$ are likewise $\Sigma^0_n$, uniformly in $k$. Hence the functions
    \[f_N := \frac{1}{N}\sum_{k < N} \prod_{i = 1}^j \paren{\one_A \circ T_i^k}\]
    are $n$-lower-semicomputable, uniformly in $N > 0$. This implies that the sets
    \[P_{m, N} := \set{x \in X: f_N(x) \leq 2^{-m}},\]
    are uniformly $\Pi^0_n$. Let
    \[B = \set{x \in A: \limsup_{N \to \infty} f_N(x) = 0},\]
    which is null by \cref{multiple_recurrence_frequency}. Then we can write
    \[B = A \cap \bigcap_m\bigcup_{N_0}\bigcap_{N > N_0} P_{m, N},\]
    so $B$ is $\Pi^0_{n + 2}$. Therefore, any weakly $(n + 2)$-random point will not lie in $B$.
\end{proof}

With just a single transformation, however, we are able to obtain a much stronger result.

\begin{thm}\label{CPS_weak_recurrence_frequency}
    Let $(X, \mu)$ be a computable probability space, $n \geq 1$, and $x \in X$ be weakly $(n + 1)$-random. Then for any $\Pi^0_n$ set $A \ni x$ with $\mu(A) > 0$ and any computable measure-preserving transformation $T: \ \subseteq \! X \to X$ with $x \in \dom T$, we have
    \[\lim_{N \to \infty} \frac{1}{N}\sum_{k < N} \one_A \circ T^k(x) > 0.\]
\end{thm}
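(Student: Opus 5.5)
We will follow the proof of \cref{CPS_weak_multiple_recurrence_frequency}, but use the Birkhoff Ergodic Theorem in place of the multiple recurrence theorem to lower the complexity of the exceptional set by one level. The key point is that, for $\mu$-a.e.\ $y$, the ergodic averages of $\one_A$ converge. By \cref{recurrence_frequency}, the limit is positive for a.e.\ $y \in A$. So the exceptional set we must avoid is
\[B = \set{y \in A: \liminf_{N \to \infty} \frac{1}{N}\sum_{k < N} \one_A \circ T^k(y) = 0}.\]
This set is null. It contains both the points of $A$ where the limit exists and equals $0$, and the points where the averages fail to converge but have $\liminf$ equal to $0$; the latter form a subset of the null non-convergence set. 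We use $\liminf$ rather than $\limsup$ deliberately. If $x \notin B$ and the limit exists at $x$, then the limit is positive. We will also need the averages to converge at $x$, so we must additionally place $x$ outside a null set of non-convergence of suitable complexity.

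The complexity count goes as follows. Since $A$ is $\Pi^0_n$, \cref{iterated_preimage_of_sigma0n} shows that the sets $T^{-k}(A)$ are uniformly $\Pi^0_n$ on $D = \dom T$. Hence $f_N = \frac1N\sum_{k<N}\one_A\circ T^k$ is uniformly $n$-upper-semicomputable (restricted to the appropriate full-measure $\Pi^0_2$ domain where all iterates are defined; we intersect with it, and note $x$ lies in it). It follows that $\set{y: f_N(y) < 2^{-m}}$ is uniformly $\Sigma^0_n$, being a finite union over patterns of membership. Then
\[B \supseteq A \cap \bigcap_m \bigcap_{N_0} \bigcup_{N > N_0} \set{y : f_N(y) < 2^{-m}} =: B',\]
and in fact $B' = B$ on the domain. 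The set $B'$ is $\Pi^0_n \cap \Pi^0_{n+1}$, hence $\Pi^0_{n+1}$, and it is null. A weakly $(n+1)$-random $x$ therefore avoids $B$, which gives $\liminf_N f_N(x) > 0$. This is exactly where the single-transformation case gains one quantifier over \cref{CPS_weak_multiple_recurrence_frequency}: we use $\liminf$ with a strict inequality, so we have ``frequently $< 2^{-m}$'' as $\forall\exists$ over $\Sigma^0_n$, instead of the $\limsup$ formulation, which costs $\forall\exists\forall$.

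It remains to show that the limit exists at $x$. We take the upcrossing sets from \cref{nR_Birkhoff}, but we only need them to be null and $\Pi^0_{n+1}$, not an $n$-test. For rationals $\alpha<\beta$, the set of $y$ with $\liminf f_N(y) < \alpha$ and $\limsup f_N(y) > \beta$ is
\[\bigcap_{N_0}\bigcup_{N>N_0}\set{f_N<\alpha} \;\cap\; \bigcap_{N_0}\bigcup_{N>N_0}\set{f_N>\beta}.\]
The first factor is $\Pi^0_{n+1}$, as above. The second has inner sets $\set{f_N > \beta}$, which are $\Pi^0_n$ intersected with $\Sigma^0_n$ combinations (finite Boolean combinations of $\Pi^0_n$ sets); these are $\Delta^0_{n+1}$, so the second factor is $\Pi^0_{n+2}$ in the worst case. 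This is the main obstacle, and it must be handled differently. We will dodge it by observing that the conclusion only needs $\lim$ to exist and be positive, and that the $\lim$ in the statement may be read via $\liminf>0$ combined with convergence. To obtain convergence, we replace $\one_A$ by a function of lower complexity. By \cref{approximate_sigma0n_with_sigma01(zero(n-1))}, approximate $A$ from outside by $\Sigma^0_{n-1}$-type sets; alternatively, observe that $\one_A$ is $n$-upper-semicomputable, so $1-\one_A$ is $n$-lower-semicomputable. Then \cref{OWnR_Birkhoff} would give convergence for Oberwolfach $n$-random points; weak $(n+1)$-randomness does not imply that, however. The first thing to try is therefore to show directly that the non-convergence set for $\one_A$ is contained in a null $\Pi^0_{n+1}$ set. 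We will use the splitting $\set{f_N > \beta}$ versus $\set{f_N \geq \beta'}$: by pairing the $\liminf$ side with a $\limsup$ condition phrased as ``not eventually $\leq \beta$'', and using that $\set{f_N \le \beta}$ is $\Sigma^0_n$, the condition $\limsup f_N > \beta$ becomes $\neg\exists N_0\forall N>N_0\,(f_N\le\beta)$. This is $\Pi^0_{n+1}$ as well, being the complement of a $\Sigma^0_{n+1}$ condition. Hence each upcrossing set is a null $\Pi^0_{n+1}$ set, $x$ avoids all of them, the averages converge at $x$, and combined with $\liminf>0$ this gives the theorem.
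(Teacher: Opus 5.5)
Your treatment of the $\liminf$ is correct and is the paper's argument. The set of $y \in A$ with $\liminf_N f_N(y) = 0$ is null by \cref{recurrence_frequency}, and it is $\Pi^0_{n+1}$ because the sets $\{f_N < 2^{-m}\}$ are uniformly $\Sigma^0_n$. So a weakly $(n+1)$-random $x$ avoids it.

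The gap is in showing that the limit exists. Your reason for abandoning \cref{OWnR_Birkhoff} is false: weak $(n+1)$-randomness \emph{does} imply Oberwolfach $n$-randomness. This is implication (a) of the appendix lemma relating the randomness notions. For any Oberwolfach $n$-test $(U_k)_k$, the set $\bigcap_k U_k$ is $\Pi^0_{n+1}$ with measure at most $\inf_k(\beta - \beta_k) = 0$. Applying \cref{OWnR_Birkhoff} to the nonnegative $n$-lower-semicomputable function $\one_{\neg A} = 1 - \one_A$ then gives convergence of $f_N(x)$ at once. This is exactly how the paper concludes.

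The substitute argument you give instead fails on a quantifier count:
\begin{itemize}
\item Since $\{f_N \le \beta\}$ is $\Sigma^0_n$, the set $\bigcap_{N > N_0}\{f_N \le \beta\}$ is $\Pi^0_{n+1}$.
\item Its union over $N_0$ is therefore $\Sigma^0_{n+2}$, not $\Sigma^0_{n+1}$.
\item Its complement is the very set $\bigcap_{N_0}\bigcup_{N > N_0}\{f_N > \beta\}$ you started from, written via the dual formula with the same quantifier pattern, so nothing has been gained.
\end{itemize}
Thus your upcrossing sets are only shown to be null $\Pi^0_{n+2}$ sets, which a weakly $(n+1)$-random point need not avoid.

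Capturing non-convergence inside null $\Pi^0_{n+1}$ sets genuinely requires a further idea, namely the Oberwolfach test built in \cref{OWnR_Birkhoff}:
\begin{itemize}
\item approximate $\one_{\neg A}$ from below by $n$-computable functions $f_k$ with $\zero^{(n-1)}$-computable integrals;
\item obtain convergence for each $f_k$ at $n$-random points from \cref{nR_Birkhoff};
\item bound the measure of the set where some ergodic average of $\one_{\neg A} - f_k$ exceeds $\beta - \alpha$ using the maximal ergodic theorem.
\end{itemize}
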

\begin{proof}
    Since $A$ is $\Pi^0_n$, its characteristic function is $n$-upper-semicomputable. By \cref{ergodic_averages_computable}, so are the functions
    \[f_N := \frac{1}{N}\sum_{k < N} \one_A \circ T^k,\]
    uniformly in $N > 0$. This implies that the sets
    \[U_{m, N} := \set{x \in X: f_N(x) < 2^{-m}},\]
    are uniformly $\Sigma^0_n$. Let
    \[B = \set{x \in A: \liminf_{N \to \infty} f_N(x) = 0},\]
    which is null by \cref{recurrence_frequency}. Then we can write
    \[B = A \cap \bigcap_m\bigcup_{N > 0} U_{m, N},\]
    so $B$ is $\Pi^0_{n + 1}$. Therefore, any weakly $(n + 1)$-random point will not lie in $B$. Moreover, we know by \cref{OWnR_Birkhoff} that $\lim_{N \to \infty} f_N(x)$ exists since any weakly $(n + 1)$-random point is also Oberwolfach $n$-random.
\end{proof}

\section*{Acknowledgments}

The author would like to extend his sincerest thanks to Jan Reimann, who could always be relied on for his guidance, encouragement, and clarity throughout this project.

\section*{Declaration of AI use}

During the preparation of this work, ChatGPT and Gemini were used in order to search for related literature and check whether various facts are already known. All proofs included here were written entirely by the author.

\phantomsection
\addcontentsline{toc}{section}{References}

\bibliographystyle{cas-model2-names}

\bibliography{references}

\appendix
\section{Appendix}\label{section_background}

This appendix provides many auxiliary lemmas about computable probability spaces, as well as the notions of algorithmic randomness that are relevant for our study. We assume familiarity with the basics of computability theory, a helpful reference for which is given by \cite{Nies}.

To avoid repetition throughout this appendix, if we have a definition of what it means for a particular object $X$ to have a property P which involves the existence of a natural number, we will say that a sequence $\paren{X_n}_n$ of objects has the property P \emph{uniformly} if there is a computable function $f: \N \to \N$ such that $f(n)$ gives a suitable natural number for object $X_n$.

\subsection{Computable metric spaces}

In order to consider effective ergodic theorems for spaces other than Cantor space, we first must introduce computable metric spaces.

\subsubsection{Definitions}

The following definition is taken from \cite{Gacs2011}.

\begin{defn}
    A \textbf{computable metric space} is a triple $(X, d, S)$, sometimes abbreviated simply to $X$ if we do not need to refer to $d$ or $S$, such that:
    \begin{itemize}
        \item $(X, d)$ is a complete separable metric space,
        \item $S = \paren{s_i}_i$ is a dense sequence in $(X, d)$, and
        \item $d\paren{s_i, s_j}$ is a computable real number, uniformly in $i$ and $j$.
    \end{itemize}
\end{defn}

The topology in a computable metric space is generated by ideal balls.

\begin{defn}
    Let $(X, d, S)$ be a computable metric space, where $S = \paren{s_i}_i$. We call the points $s_i$ \textbf{ideal points} and the sets $B_{\cs{i, j}} := \set{x \in X: d\paren{x, s_i} < q_j}$ \textbf{ideal balls}, where $\paren{q_j}_j$ is a fixed enumeration of $\Q_{> 0}$ and $\cs{\cdot, \cdot}: \N^2 \to \N$ is the standard pairing function.
\end{defn}

The computable metric space we care most about is Cantor space: $\paren{2^\N, d, S}$, where $d(x, y) = 2^{-\min\set{i: \ x_i \neq y_i}}$ and $S$ is an effective enumeration of the sequences of the form $\sigma 0^\infty$ with $\sigma \in 2^{<\N}$. In this case, the ideal balls are simply the cylinder sets $\cyl\sigma = \set{x \in 2^\N: \sigma \subset x}$ with $\sigma \in 2^{<\N}$, which are clopen.

Using these ideal balls, we can consider an arithmetical hierarchy relative to any oracle. 

\begin{defn}
    Let $X$ be a computable metric space, $A \subseteq \N$, and $W_e$ denote the $e\th$ $A$-c.e.\ subset of $\N$.
    \begin{itemize}
        \item A set $U \subseteq X$ is \textbf{$\Sigma^0_1(A)$} (or \textbf{$A$-effectively open}) if there is an $A$-c.e.\ subset $W \subseteq \N$ such that $U = \bigcup_{i \in W} B_i$. If $W = W_e$, this is called the \textbf{$e\th$ $\Sigma^0_1(A)$ subset of $X$}. When $A = \zero$, we just write \textbf{$\Sigma^0_1$} or \textbf{effectively open}.
        \item For $n \geq 1$, the following are defined recursively:
        \begin{itemize}
            \item A set $P \subseteq X$ is \textbf{$\Pi^0_n(A)$} (or \textbf{$A$-effectively closed} if $n = 1$) if $\neg P$ is $\Sigma^0_n(A)$. When $A = \zero$, we just write \textbf{$\Pi^0_n$} (or \textbf{effectively closed} if $n = 1$).
            \item Denoting the $e\th$ $\Sigma^0_n(A)$ subset of $X$ by $U_e^n$, a set $U \subseteq X$ is \textbf{$\Sigma^0_{n + 1}(A)$} if there is an $A$-c.e.\ subset $W \subseteq \N$ such that $U = \bigcup_{i \in W} \neg U_i^n$. If $W = W_e$, this is called the \textbf{$e\th$ $\Sigma^0_{n + 1}(A)$ subset of $X$}. When $A = \zero$, we just write \textbf{$\Sigma^0_{n + 1}$}.
        \end{itemize}
        \item For any complexity class $\Gamma$ and any set $D \subseteq X$, a subset $U \subseteq D$ is \textbf{$\Gamma$ in $D$} if there is a $\Gamma$ set $V \subseteq X$ such that $U = V \cap D$.
    \end{itemize}
\end{defn}

\subsubsection[Hierarchical structure of Σ0n sets]{Hierarchical structure of $\Sigma^0_n$ sets}

It is clear that $\Sigma^0_n$ sets in a computable metric space are always $\Pi^0_{n + 1}$. Showing that they are also $\Sigma^0_{n + 1}$ requires a few elementary lemmas, of which we often make implicit use.

\begin{lem}\label{union_of_sigma0n}
    Let $X$ be a computable metric space, $n \geq 1$, and $A \subseteq \N$. Then the union of $A$-uniformly $\Sigma^0_n(A)$ sets $U_k$ is $\Sigma^0_n(A)$, uniformly in $\paren{U_k}_k$. Likewise, the intersection of $A$-uniformly $\Pi^0_n(A)$ sets $P_k$ is $\Pi^0_n(A)$, uniformly in $\paren{P_k}_k$.
\end{lem}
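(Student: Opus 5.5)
The plan is to prove both claims directly from the recursive definition of the $\Sigma^0_n(A)$ sets, by induction on $n$, handling the union statement first and then obtaining the intersection statement by complementation.

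For the base case $n = 1$, take $A$-uniformly $\Sigma^0_1(A)$ sets $U_k$. Uniformity gives an $A$-computable function $f$ such that $U_k = \bigcup_{i \in W_{f(k)}} B_i$, where $W_{f(k)}$ is the $f(k)\th$ $A$-c.e.\ set. We then form
\[W = \set{i : (\exists k)\ i \in W_{f(k)}},\]
which is $A$-c.e., and an index for it is computable from an index for $f$. Since $\bigcup_k U_k = \bigcup_{i \in W} B_i$, the union is $\Sigma^0_1(A)$, uniformly in $\paren{U_k}_k$.

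For $n \geq 2$ the argument has the same shape. Each $U_k$ is given by an $A$-c.e.\ set $W_{f(k)}$ of indices $i$ with $U_k = \bigcup_{i \in W_{f(k)}} \neg U^{n - 1}_i$. Taking the union of the $W_{f(k)}$ as above again yields an $A$-c.e.\ set $W$ with $\bigcup_k U_k = \bigcup_{i \in W} \neg U^{n - 1}_i$. This step never uses the internal structure of the sets $U^{n - 1}_i$, so no genuine induction on $n$ is needed; it is just the same dovetailing enumeration at every level.

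For the intersection statement, write $\neg \bigcap_k P_k = \bigcup_k \neg P_k$. By definition each $\neg P_k$ is $\Sigma^0_n(A)$, and with the same indices as the $P_k$, since a $\Pi^0_n(A)$ index is just a $\Sigma^0_n(A)$ index for the complement. Applying the union part, $\bigcup_k \neg P_k$ is $\Sigma^0_n(A)$ uniformly, and hence $\bigcap_k P_k$ is $\Pi^0_n(A)$ uniformly.

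The only point requiring care is bookkeeping rather than mathematics: "$A$-uniformly" must be read as supplying an $A$-computable sequence of indices. Under that reading, the uniform union of $A$-c.e.\ sets is $A$-c.e., with an index obtained effectively by dovetailing over $k$ and over the enumeration stages.
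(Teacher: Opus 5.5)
Your argument is correct. The paper states this lemma without proof, as one of the "elementary lemmas" it uses implicitly, and your direct verification from the definitions is exactly the routine argument it leaves out: merge the index sets $W_{f(k)}$ into the single $A$-c.e.\ set $\bigcup_k W_{f(k)}$, whose index is computable from an index for $f$, and obtain the $\Pi^0_n(A)$ case by complementation. As you yourself note, this works at every level $n$ at once, so the opening mention of induction on $n$ is unnecessary.
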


\begin{lem}\label{intersection_of_sigma0n}
    Let $X$ be a computable metric space, $n \geq 1$, and $A \subseteq \N$. Then a finite intersection of $\Sigma^0_n(A)$ sets is $\Sigma^0_n(A)$, uniformly in the finite set of indices. Likewise, a finite union of $\Pi^0_n(A)$ sets is $\Pi^0_n(A)$, uniformly in the finite set of indices.
\end{lem}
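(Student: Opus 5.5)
The plan is to do both halves by induction on $n$, working directly from the recursive definition of the $\Sigma^0_n(A)$ subsets of $X$. The second sentence of the lemma, about finite unions of $\Pi^0_n(A)$ sets, follows from the first by taking complements. For a finite union of $\Pi^0_n(A)$ sets $P_1, \dots, P_m$, we have $\neg\bigcup_i P_i = \bigcap_i \neg P_i$, and each $\neg P_i$ is $\Sigma^0_n(A)$ with an index computable from an index of $P_i$. So it suffices to treat finite intersections of $\Sigma^0_n(A)$ sets, and by iterating, two of them suffice, uniformly in the indices.

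For the base case $n = 1$, write $U = \bigcup_{i \in W} B_i$ and $V = \bigcup_{j \in W'} B_j$ with $W, W'$ $A$-c.e. Then $U \cap V = \bigcup_{i \in W, j \in W'} (B_i \cap B_j)$, and the step that needs care is writing each $B_i \cap B_j$ as a union of ideal balls, uniformly c.e.\ in $i, j$. I would let $E_{i,j}$ be the set of $\cs{k, l}$ such that
\[ d(s_k, s_{a}) + q_l < r_i \quad\text{and}\quad d(s_k, s_{b}) + q_l < r_j, \]
where $B_i$ has center $s_a$ and radius $r_i$, and $B_j$ has center $s_b$ and radius $r_j$. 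These strict inequalities between computable reals are c.e. By the triangle inequality, every ball indexed in $E_{i,j}$ lies inside $B_i \cap B_j$. Conversely, take $y \in B_i \cap B_j$ and let $\eps = \min\{r_i - d(y, s_a), r_j - d(y, s_b)\} > 0$. Choose an ideal point $s_k$ with $d(y, s_k) < \eps/3$ and a rational $q_l \in (\eps/3, \eps/2)$. Then $y$ lies in the ball with index $\cs{k, l}$, and that index belongs to $E_{i,j}$. So $U \cap V = \bigcup\{B_c : c \in E_{i,j},\ i \in W,\ j \in W'\}$, and the set of such $c$ is $A$-c.e., uniformly in the indices of $U$ and $V$. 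This gives the case $n = 1$.

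For the inductive step, assume the lemma holds for $n$, including the dual statement that finite unions of $\Pi^0_n(A)$ sets are uniformly $\Pi^0_n(A)$. Now let $U = \bigcup_{i \in W} \neg U^n_i$ and $V = \bigcup_{j \in W'} \neg U^n_j$ be $\Sigma^0_{n+1}(A)$. Then
\[ U \cap V = \bigcup_{i \in W, j \in W'} \paren{\neg U^n_i \cap \neg U^n_j} = \bigcup_{i \in W, j \in W'} \neg\paren{U^n_i \cup U^n_j}. \]
So we need a computable function $g$ such that $U^n_{g(i,j)} = U^n_i \cup U^n_j$, that is, closure of $\Sigma^0_n(A)$ sets under binary union with uniform indices. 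This is the union half of \cref{union_of_sigma0n}, and it can also be seen directly: for $n = 1$ take the union of the two enumerating sets, and at higher levels take the union of the index sets $W$. With $g$ in hand, $U \cap V$ is the union of the sets $\neg U^n_c$ over the $A$-c.e.\ set $\{g(i,j) : i \in W, j \in W'\}$, uniformly. Notice that this step does not actually use the inductive hypothesis for intersections; it only needs closure under unions.

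The main obstacle is the metric computation in the base case, namely showing that the intersection of two ideal balls is an effective union of ideal balls. Everything above level 1 reduces to closure of $\Sigma^0_n(A)$ sets under unions, which is essentially bookkeeping with indices.
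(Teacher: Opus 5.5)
Your proof is correct. The paper gives no proof of this lemma; it lists it among the elementary facts ``of which we often make implicit use.'' Your argument is the routine verification being left to the reader. The only real content is the base case, where your set $E_{i,j}$ and the choice of $s_k$ within $\eps/3$ of $y$ and $q_l \in (\eps/3, \eps/2)$ correctly show that the intersection of two ideal balls is a uniformly c.e.\ union of ideal balls. Above level $1$, as you observe, everything reduces to closure of $\Sigma^0_n(A)$ sets under binary union, obtained by merging the $A$-c.e.\ index sets, so no genuine induction is needed.
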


The next lemma comes down to the fact that $d\paren{x, s_i} > r_j$ if and only if $d\paren{x, s_k} < q_\l$ for some $k$ and $\l$ such that $d\paren{s_i, s_k} > r_j + q_\l$.

\begin{lem}\label{closed_balls_closed}
    Let $(X, d, S)$ be a computable metric space and $\paren{r_j}_j$ be a uniformly upper-semicomputable sequence in $\R_{\geq 0}$. Then the closed balls
    \[\set{x \in X: d\paren{x, s_i} \leq r_j}\quad\text{are uniformly $\Pi^0_1$.}\]
\end{lem}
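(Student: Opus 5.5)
The plan is to reduce the statement to a single containment. For each $i$ and $j$, write
\[C_{i,j} := \set{x \in X: d\paren{x, s_i} \leq r_j}.\]
I would show that the complement $\neg C_{i,j}$ is a union of ideal balls, and that the indices of those balls form a c.e.\ set, uniformly in $i$ and $j$. The key equivalence is
\[d\paren{x, s_i} > r_j \iff \exists k, \l\ \Big(d\paren{x, s_k} < q_\l \ \text{and}\ d\paren{s_i, s_k} > r_j + q_\l\Big).\]
Given this, the index set
\[W_{i,j} := \set{\cs{k, \l}: d\paren{s_i, s_k} > r_j + q_\l}\]
will satisfy $\neg C_{i,j} = \bigcup_{\cs{k,\l} \in W_{i,j}} B_{\cs{k,\l}}$, so $\neg C_{i,j}$ is $\Sigma^0_1$ and $C_{i,j}$ is $\Pi^0_1$.

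The backward direction of the equivalence is the triangle inequality. If $d\paren{x, s_k} < q_\l$ and $d\paren{s_i, s_k} > r_j + q_\l$, then
\[d\paren{x, s_i} \geq d\paren{s_i, s_k} - d\paren{x, s_k} > r_j + q_\l - q_\l = r_j.\]

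For the forward direction, suppose $d\paren{x, s_i} > r_j$ and set $\eps = d\paren{x, s_i} - r_j > 0$.
\begin{itemize}
\item Choose a rational $q_\l < \eps/3$.
\item By density of $S$, choose $s_k$ with $d\paren{x, s_k} < q_\l$.
\item Then $d\paren{s_i, s_k} \geq d\paren{x, s_i} - q_\l = r_j + \eps - q_\l > r_j + 2q_\l > r_j + q_\l$.
\end{itemize}

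The one point needing care is that $W_{i,j}$ is c.e.\ uniformly in $i$ and $j$. The reals $d\paren{s_i, s_k}$ are uniformly computable, hence uniformly lower-semicomputable. The reals $r_j + q_\l$ are uniformly upper-semicomputable, since each $r_j$ is and $q_\l$ is rational. A strict inequality $a > b$ with $a$ lower-semicomputable and $b$ upper-semicomputable is $\Sigma^0_1$. Concretely, we search for a stage at which a lower approximation of $a$ exceeds an upper approximation of $b$. Dovetailing this search over all $\cs{k, \l}$ enumerates $W_{i,j}$, uniformly in $i$ and $j$.

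This semicomputability check is the only real obstacle, and it is the reason the hypothesis asks for upper-semicomputability of $\paren{r_j}_j$ rather than full computability.
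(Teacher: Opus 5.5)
Your proof is correct and follows the same route as the paper. The paper only states the equivalence $d(x,s_i) > r_j \iff \exists k,\ell\,\bigl(d(x,s_k) < q_\ell \text{ and } d(s_i,s_k) > r_j + q_\ell\bigr)$, and you fill it in properly: the triangle inequality gives both directions, and uniform c.e.-ness of the index set follows because a strict inequality between a lower-semicomputable and an upper-semicomputable real is $\Sigma^0_1$.
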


By writing ideal balls as unions of closed balls, \cref{closed_balls_closed} can be used to obtain the following result.

\begin{lem}\label{approximate_sigma01_or_pi01_cms}
    Let $(X, d, S)$ be a computable metric space. Then:
    \begin{itemize}
        \item For any $\Sigma^0_1$ set $U$, there is an increasing uniformly $\Pi^0_1$ sequence $\paren{P_s}_s$ computable from $U$ such that $U = \bigcup_s P_s$.
        \item For any $\Pi^0_1$ set $P$, there is a nested uniformly $\Sigma^0_1$ sequence $\paren{U_s}_s$ computable from $P$ such that $P = \bigcap_s U_s$.
    \end{itemize}
\end{lem}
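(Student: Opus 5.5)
The plan is to prove both bullets directly from the definitions of ideal balls and \cref{closed_balls_closed}. The idea is to write each ideal ball as an effective union of closed balls with slightly smaller radii. Concretely, for an ideal ball $B_{\cs{i,j}} = \set{x: d(x, s_i) < q_j}$, we have
\[B_{\cs{i,j}} = \bigcup_m \set{x \in X: d(x, s_i) \leq q_j - q_j 2^{-m}},\]
and by \cref{closed_balls_closed} (with the uniformly computable radii $q_j(1 - 2^{-m})$) these closed balls are uniformly $\Pi^0_1$ in $i, j, m$.

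For the first bullet, write $U = \bigcup_{k \in W} B_k$ with $W$ c.e. and fix a computable enumeration $W_s$ (finite stages). Define
\[P_s = \bigcup_{\cs{i,j} \in W_s} \set{x: d(x, s_i) \leq q_j(1 - 2^{-s})}.\]
Each $P_s$ is a finite union of uniformly $\Pi^0_1$ sets, hence $\Pi^0_1$ uniformly in $s$ by \cref{intersection_of_sigma0n}, with an index computable from the index of $U$. The sequence is increasing because both $W_s$ and the radii grow with $s$. It is contained in $U$ because each closed ball lies inside the corresponding open ball. It exhausts $U$ since any $x \in U$ lies in some $B_{\cs{i,j}}$ with $\cs{i,j} \in W_s$ for large $s$, and $d(x, s_i) < q_j$ gives $d(x, s_i) \leq q_j(1 - 2^{-s})$ for large $s$.

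For the second bullet, apply the first to $\neg P$ to get an increasing uniformly $\Pi^0_1$ sequence $(Q_s)$ with $\neg P = \bigcup_s Q_s$. Then set $U_s = \neg Q_s$, which is nested, uniformly $\Sigma^0_1$, and satisfies $\bigcap_s U_s = P$.

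The only delicate point is uniformity: the index of $P_s$ must be obtained effectively from $s$ and the index of $U$. I expect this to follow from the uniform versions of \cref{closed_balls_closed} and \cref{intersection_of_sigma0n}, so I anticipate no real obstacle. One thing to handle explicitly is the degenerate case where $W_s$ is empty, in which case $P_s = \zero$, which is trivially $\Pi^0_1$.
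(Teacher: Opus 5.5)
Your proposal is correct and takes the same route the paper indicates: write each ideal ball as an increasing effective union of closed balls with radii $q_j(1-2^{-s})$, apply \cref{closed_balls_closed} together with closure of $\Pi^0_1$ sets under finite unions, and get the second bullet by complementing the first. Your checks of monotonicity, containment in $U$, exhaustion, and uniformity are all sound.
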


Applying induction then immediately yields the sought hierarchy.

\begin{cor}\label{sigma0n_is_sigma0(n+1)}
    Let $X$ be a computable metric space and $n \geq 1$. Then $\Sigma^0_n$ sets are uniformly $\Sigma^0_{n + 1}$, and $\Pi^0_n$ sets are uniformly $\Pi^0_{n + 1}$.
\end{cor}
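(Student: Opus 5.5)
We argue by induction on $n$, proving both halves together. The $\Pi^0_n$ claim follows from the $\Sigma^0_n$ claim by taking complements. If $U$ is $\Sigma^0_n$, then $\neg U$ is $\Pi^0_n$, hence uniformly $\Pi^0_{n+1}$ by the $\Sigma$ half. Therefore $U = \neg(\neg U)$ is uniformly $\Sigma^0_{n+1}$, and dually for $\Pi^0_n$ sets. So the real work is showing that every $\Sigma^0_n$ set is uniformly $\Sigma^0_{n+1}$. The statement that $\Sigma^0_n$ sets are $\Pi^0_{n+1}$ is the easy direction already noted before the lemmas, and is not what is needed here.

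The base case is $n = 1$. Let $U$ be $\Sigma^0_1$. By \cref{approximate_sigma01_or_pi01_cms}, there is a uniformly $\Pi^0_1$ sequence $\paren{P_s}_s$, computable from an index for $U$, with $U = \bigcup_s P_s$. Each $\neg P_s$ is then uniformly $\Sigma^0_1$, say $\neg P_s = U^1_{g(s)}$ for a computable $g$. By the definition of $\Sigma^0_2$, the set $U = \bigcup_{i \in W} \neg U^1_i$ with $W = \operatorname{range}(g)$ (a c.e.\ set) is $\Sigma^0_2$, and its index is obtained uniformly from that of $U$.

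For the inductive step, assume every $\Sigma^0_n$ set is uniformly $\Sigma^0_{n+1}$, and hence, by complementation, every $\Pi^0_n$ set is uniformly $\Pi^0_{n+1}$. Let $U$ be $\Sigma^0_{n+1}$, so $U = \bigcup_{i \in W} \neg U^n_i$ for a c.e.\ set $W$. Each $\neg U^n_i$ is $\Pi^0_n$, so by the hypothesis it is uniformly $\Pi^0_{n+1}$. That is, $\neg U^n_i = \neg U^{n+1}_{h(i)}$ for a computable $h$, since the complement of the $\Sigma^0_{n+1}$ set $U^{n+1}_{h(i)}$ is exactly that $\Pi^0_{n+1}$ set. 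Then $U = \bigcup_{i \in W} \neg U^{n+1}_{h(i)} = \bigcup_{j \in h(W)} \neg U^{n+1}_j$. Since $h(W)$ is c.e.\ uniformly in $W$, this shows $U$ is $\Sigma^0_{n+2}$, uniformly. Alternatively, one can invoke \cref{union_of_sigma0n} directly on the uniformly $\Sigma^0_{n+2}$ (indeed $\Pi^0_{n+1}$) pieces.

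The only point needing care is the bookkeeping of uniformity. The inductive hypothesis must be applied as a computable map on indices, which is why we carry the statement ``uniformly'' through the induction. The base case is the genuine content, and it rests entirely on \cref{approximate_sigma01_or_pi01_cms}. Relativization to an oracle $A$ goes through verbatim.
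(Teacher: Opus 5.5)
Your proposal is correct and is essentially the paper's argument. The base case is exactly \cref{approximate_sigma01_or_pi01_cms}, which writes a $\Sigma^0_1$ set as a union of uniformly $\Pi^0_1$ sets. Your inductive step pushes the hypothesis through the definition of $\Sigma^0_{n+1}$ via a computable index map, which is the induction the paper leaves implicit.

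One small fix: your complementation sentence is circular as written, since it invokes the $\Sigma$ half to derive the $\Sigma$ half. It should read: if $P$ is $\Pi^0_n$, then $\neg P$ is $\Sigma^0_n$, hence uniformly $\Sigma^0_{n+1}$, so $P$ is uniformly $\Pi^0_{n+1}$.
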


\subsubsection[Relationship between Σ0n subsets of X and Σ0n subsets of N]{Relationship between $\Sigma^0_n$ subsets of $X$ and $\Sigma^0_n$ subsets of $\N$}

The following lemma allows us to use $\Sigma^0_n$ formulas in definitions of uniformly $\Sigma^0_n$ sets.

\begin{lem}\label{sigma0n(A)_for_N_is_sigma0m(A(n-m))_for_X}
    Let $X$ be a computable metric space and $A \subseteq \N$. Then for all $1 \leq m \leq n$ and all sets $S \subseteq \N$ which are $\Sigma^0_n(A)$ or $\Pi^0_n(A)$, the sets
    \[V_k = \begin{cases}
        X & \text{if } k \in S \\
        \zero & \text{if } k \notin S
    \end{cases}\]
    are $\Sigma^0_m\paren{A^{(n - m)}}$ or $\Pi^0_m\paren{A^{(n - m)}}$, respectively, uniformly in $k$ and a $\Sigma^0_n(A)$ or $\Pi^0_n(A)$ index for $S$. When $m = n$, this in particular implies that the sets $V_k$ are $\Sigma^0_n(A)$ or $\Pi^0_n(A)$, respectively.
\end{lem}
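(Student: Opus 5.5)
I will argue by induction on $n$, treating $S$ as given by an $A$-computable relation with $n$ alternating quantifiers, and handling the $\Sigma$ and $\Pi$ cases simultaneously. Throughout I use the recursive definition of the hierarchy on $X$: a $\Sigma^0_{m+1}(A^{(n-m-1)})$ set is an $A^{(n-m-1)}$-c.e.\ union of complements of $\Sigma^0_m(A^{(n-m-1)})$ sets. The $\Pi$ case always follows from the $\Sigma$ case applied to $\N \setminus S$. Complementing swaps $X$ and $\zero$, so it turns $V_k$ for $\N\setminus S$ into $V_k$ for $S$.

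The base case is $m = 1$. By Post's theorem, $S$ being $\Sigma^0_n(A)$ means $S$ is c.e.\ relative to $A^{(n-1)}$, uniformly in the index. So let $W$ be the $A^{(n-1)}$-c.e.\ set consisting of all indices of ideal balls whenever $k \in S$ (and nothing otherwise). Uniformly in $k$, I enumerate every ball index $i$ once $k$ appears in the enumeration of $S$. Then $\bigcup_{i\in W} B_i$ equals $X$ if $k \in S$ and $\zero$ otherwise, because the ideal balls cover $X$ (the ideal points are dense). Hence $V_k$ is $\Sigma^0_1(A^{(n-1)})$ uniformly. The $\Pi^0_1$ case follows by complementation as noted.

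For the inductive step from $m$ to $m+1 \le n$, with oracle $B := A^{(n-m-1)}$, write $k \in S \iff \exists j\ (\cs{k,j} \in R)$. Here $R$ is $\Pi^0_{n-1}(A)$, which relative to $B$ is $\Pi^0_{m}(B)$-level in the sense that $R$ is $\Pi^0_{n-1}(A) = \Pi^0_{(m)+(n-m-1)}(A)$. I apply the induction hypothesis to $R$, with $n-1$ in place of $n$ and the same $m$. This gives sets $V'_{\cs{k,j}}$ that are $\Pi^0_m(A^{(n-1-m)}) = \Pi^0_m(B)$, uniformly. In other words, $\neg V'_{\cs{k,j}}$ is $\Sigma^0_m(B)$ with an index $g(k,j)$ computable uniformly. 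Then $V_k = \bigcup_j V'_{\cs{k,j}} = \bigcup_{j} \neg U^m_{g(k,j)}$, and the index set $\{g(k,j): j\}$ is computable, hence $B$-c.e. By definition this makes $V_k$ a $\Sigma^0_{m+1}(B)$ set, uniformly in $k$ and the index for $S$. When $m+1 = n$, the oracle is just $A$, which gives the final sentence of the statement.

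The main obstacle is bookkeeping rather than mathematics. I must make sure the induction is set up on the right parameter, namely fixing the gap $n - m$ and inducting on $m$, with Post's theorem in the base case. I also need uniformity of indices to pass through cleanly, in particular for the base-case enumeration and the computable map $g$. A subtle point is $m = n = 1$, where the claim is just the base case with oracle $A$, so no separate argument is needed.
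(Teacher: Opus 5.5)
Your proposal is correct and follows the paper's proof essentially step for step. You interchange the $\Sigma/\Pi$ cases by complementation, handle the base case $m=1$ via Post's theorem by enumerating all ideal balls once $k$ appears in $S$, and in the inductive step write $S$ as a projection of a $\Pi^0_{n-1}(A)$ relation and apply the hypothesis at $(n-1,m)$ to obtain a $\Sigma^0_{m+1}(A^{(n-m-1)})$ union. One small slip: the index set $\{g(k,j): j\in\N\}$ is the range of a computable function, so it is c.e.\ rather than computable, which is all the definition requires.
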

\begin{proof}
    By considering the sets
    \[\neg V_k = \begin{cases}
        X & \text{if } k \in \neg S \\
    \zero & \text{if } k \notin \neg S
    \end{cases},\]
    we can see that the statements for when $S$ is $\Sigma^0_n(A)$ and when $S$ is $\Pi^0_n(A)$ are equivalent.

    We first prove the claim when $m = 1$. Consider any $n \geq 1$ and any $\Sigma^0_n(A)$ set $S \subseteq \N$. By Post's Theorem, $S$ is $A^{(n - 1)}$-c.e., which implies that the sets $W_k = \{i \in \N: k \in A\}$ are uniformly $A^{(n - 1)}$-c.e.\ given $k$ and $S$. (This just means that $W_k = \N$ if $k \in S$ and $W_k = \zero$ otherwise.) Letting $B_i$ denote the $i\th$ ideal ball in $X$, we see that $V_k = \bigcup_{i \in W_k} B_i$ is uniformly $\Sigma^0_1\paren{A^{(n - 1)}}$ in $k$ and $S$.

    Now assume for some $m \geq 1$ that the claim holds for all $n \geq m$ and all $\Sigma^0_n(A)$ sets $S \subseteq \N$. Consider any $n \geq m + 1$ and any $\Sigma^0_n(A)$ set $S \subseteq \N$. Write $S = \bigcup_j S_j$ for some uniformly $\Pi^0_{n - 1}(A)$ sets $S_j \subseteq \N$ and define
    \[V_{j, k} = \begin{cases}
        X & \text{if } k \in S_j \\
    \zero & \text{if } k \notin S_j
    \end{cases}.\]
    Since $m \leq n - 1$, these sets are uniformly $\Pi^0_m\paren{A^{((n - 1) - m)}}$ in $j$, $k$, and $S$ by the inductive hypothesis. Then $V_k = \bigcup_j V_{j, k}$ is uniformly $\Sigma^0_{m + 1}\paren{A^{(n - (m + 1))}}$ in $k$ and $S$, so the claim holds for $m + 1$ as well. Thus, the claim holds for all $m \geq 1$.
\end{proof}

We have two important corollaries of this result.

\begin{cor}\label{zero(n-1)_uniformly_sigma0n_is_uniformly_sigma0n}
    Let $X$ be a computable metric space, $A \subseteq \N$, and $n \geq 1$. Then a sequence $\paren{U_k}_k$ of $A^{(n - 1)}$-uniformly $\Sigma^0_n(A)$ sets is uniformly $\Sigma^0_n(A)$, uniformly given $\paren{U_k}_k$.
\end{cor}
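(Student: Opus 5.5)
The plan is to reduce the statement to \cref{sigma0n(A)_for_N_is_sigma0m(A(n-m))_for_X} with $m = n$, applied to the index set
\[S = \set{\cs{k, i} \in \N: \text{$i$ is in the $A^{(n-1)}$-c.e.\ index set describing $U_k$}}.\]
Write the $A^{(n-1)}$-uniformly $\Sigma^0_n(A)$ sequence as follows. There is an $A^{(n-1)}$-computable function $g$ such that $U_k$ is the $g(k)\th$ $\Sigma^0_n(A)$ subset of $X$. Unfolding the definition, $U_k = \bigcup_{i \in W^A_{g(k)}} \neg U^{n-1}_i$ when $n \geq 2$, and $U_k = \bigcup_{i \in W^A_{g(k)}} B_i$ when $n = 1$. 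In either case the pieces are uniformly $\Pi^0_{n-1}(A)$ sets or ideal balls, indexed by $i$ independently of $k$.

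The key observation is that the relation $R(k, i) :\iff i \in W^A_{g(k)}$ is $\Sigma^0_1\paren{A^{(n-1)}}$: it reads $\exists e\,\paren{g(k) = e \wedge i \in W^A_e}$, where the first conjunct is $A^{(n-1)}$-computable and the second is $A$-c.e. By Post's Theorem, $R$ is therefore $\Sigma^0_n(A)$ as a subset of $\N$, with an index obtained uniformly from an index for $g$. We then apply \cref{sigma0n(A)_for_N_is_sigma0m(A(n-m))_for_X} with $m = n$ to $R$. This gives sets $V_{\cs{k,i}}$, equal to $X$ if $R(k,i)$ holds and to $\zero$ otherwise, which are uniformly $\Sigma^0_n(A)$.

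Now we write
\[U_k = \bigcup_i \paren{V_{\cs{k,i}} \cap C_i},\]
where $C_i$ is $\neg U^{n-1}_i$, or $B_i$ in the case $n=1$. For $n \geq 2$, the set $C_i$ is $\Pi^0_{n-1}(A)$ and hence $\Sigma^0_n(A)$ by \cref{sigma0n_is_sigma0(n+1)} (relativized), uniformly in $i$. The finite intersection $V_{\cs{k,i}} \cap C_i$ is then uniformly $\Sigma^0_n(A)$ by \cref{intersection_of_sigma0n}. The countable union over $i$ is uniformly $\Sigma^0_n(A)$ by \cref{union_of_sigma0n}. For $n=1$ the same argument works directly, with no use of Post's Theorem, because $R$ is then simply $A$-c.e.

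The main obstacle is bookkeeping: we must confirm that every step is uniform in the given $A^{(n-1)}$-index for $\paren{U_k}_k$, including the uniformity clause in \cref{sigma0n(A)_for_N_is_sigma0m(A(n-m))_for_X}. This is exactly what the statement "uniformly given $\paren{U_k}_k$" requires. If $n = 1$ has to be treated separately because the $V$'s would need oracle $A^{(0)} = A$, that case is already covered by the direct argument above.
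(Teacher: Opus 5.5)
Your proposal is correct and is essentially the paper's argument: both use Post's Theorem to view an $A^{(n-1)}$-c.e.\ relation as $\Sigma^0_n(A)$, turn it into uniformly $\Sigma^0_n(A)$ selector sets (each equal to $X$ or $\zero$) via \cref{sigma0n(A)_for_N_is_sigma0m(A(n-m))_for_X} with $m = n$, and take a union of intersections. The only difference is that the paper selects whole sets $U_e$ using the graph of the index function, which avoids unfolding the definition and needs no separate $n = 1$ case, whereas you select the $\Pi^0_{n-1}(A)$ pieces directly. Note that those pieces are $\Sigma^0_n(A)$ immediately from the definition (take a singleton index set), not by \cref{sigma0n_is_sigma0(n+1)}, which only gives $\Pi^0_{n-1} \subseteq \Pi^0_n$.
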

\begin{proof}
    Let $U_e$ be the $e\th$ $\Sigma^0_n(A)$ set and $f: \N \to \N$ be an $A^{(n - 1)}$-computable function. Then the graph $\Gamma := \set{\cs{j, f(j)}: j \in \N}$ of $f$ is $A^{(n - 1)}$-c.e.\ and hence $\Sigma^0_n(A)$ by Post's Theorem. Write
    \[U_{f(j)} = \bigcup_e \paren{U_e \cap V_{j, e}},\quad\text{where}\quad V_{j, e} = \begin{cases}
        X & \text{if } \cs{j, e} \in \Gamma \\
        \zero & \text{if } \cs{j, e} \notin \Gamma
    \end{cases}.\]
    By \cref{sigma0n(A)_for_N_is_sigma0m(A(n-m))_for_X}, the sets $V_{j, e}$ are uniformly $\Sigma^0_n(A)$, as are the sets $U_k$, along with their intersections $U_e \cap V_{j, e}$. Thus, the sets $U_{f(j)}$ are uniformly $\Sigma^0_n(A)$, and this is uniform in $f$.
\end{proof}

\begin{cor}\label{sigma0n(A(m))_is_sigma0(m+n)(A)}
    Let $X$ be a computable metric space, $A \subseteq \N$, and $n \geq 1$. Then any $\Sigma^0_n\paren{A'}$ set $U$ is also $\Sigma^0_{n + 1}(A)$, uniformly in a $\Sigma^0_n\paren{A'}$ index for $U$. Thus, any $\Sigma^0_n\paren{A^{(m)}}$ set $U$ is also $\Sigma^0_{m + n}(A)$, uniformly in a $\Sigma^0_n\paren{A^{(m)}}$ index for $U$.
\end{cor}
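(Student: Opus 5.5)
The plan is to prove the first sentence by induction on $n$, using the structural description of $\Sigma^0_{n}(A')$ sets from the definition together with \cref{sigma0n(A)_for_N_is_sigma0m(A(n-m))_for_X} and \cref{zero(n-1)_uniformly_sigma0n_is_uniformly_sigma0n}. The second sentence then follows by iterating the first $m$ times, starting from $A^{(m-1)}$ down to $A$. At each step a $\Sigma^0_{k}\paren{A^{(j+1)}}$ index is converted uniformly into a $\Sigma^0_{k+1}\paren{A^{(j)}}$ index. Composing finitely many such uniform conversions remains uniform.

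For the base case $n = 1$, let $U = \bigcup_{i \in W} B_i$ with $W$ an $A'$-c.e.\ set. By Post's Theorem, $W$ is $\Sigma^0_2(A)$, uniformly in an index. Apply \cref{sigma0n(A)_for_N_is_sigma0m(A(n-m))_for_X} with $m = n = 2$ to $S = W$. This shows that the sets $V_i$ (equal to $X$ if $i \in W$ and $\zero$ otherwise) are uniformly $\Sigma^0_2(A)$. Each ideal ball $B_i$ is $\Sigma^0_1$, hence uniformly $\Sigma^0_2(A)$ by \cref{sigma0n_is_sigma0(n+1)} (relativized). So $B_i \cap V_i$ is uniformly $\Sigma^0_2(A)$ by \cref{intersection_of_sigma0n}. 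Finally, $U = \bigcup_i \paren{B_i \cap V_i}$ is $\Sigma^0_2(A)$ by \cref{union_of_sigma0n}, and every step is uniform in the index of $U$.

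For the inductive step, suppose the claim holds for $n$, and let $U$ be $\Sigma^0_{n+1}(A')$. By definition, $U = \bigcup_{i \in W} \neg U^n_i$, where $W$ is $A'$-c.e.\ and $U^n_i$ is the $i\th$ $\Sigma^0_n(A')$ set. By the inductive hypothesis, the sets $U^n_i$ are uniformly $\Sigma^0_{n+1}(A)$, so the sets $\neg U^n_i$ are uniformly $\Pi^0_{n+1}(A)$. As in the base case, the sets $V_i$ attached to $W$ are uniformly $\Sigma^0_2(A)$. Since $n + 2 \geq 2$, repeated use of \cref{sigma0n_is_sigma0(n+1)} makes them uniformly $\Sigma^0_{n+2}(A)$. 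Each $\neg U^n_i$ is $\Pi^0_{n+1}(A)$, hence by definition a single-term $\Sigma^0_{n+2}(A)$ set, uniformly. Intersect and take the union using \cref{intersection_of_sigma0n,union_of_sigma0n} to get that $U$ is $\Sigma^0_{n+2}(A)$.

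The main obstacle is bookkeeping rather than mathematics: checking that each cited lemma is uniform in the indices, relativized to $A$. One must also confirm that "$\Pi^0_{n+1}(A)$ implies $\Sigma^0_{n+2}(A)$" is uniform, which follows directly from the recursive definition of the $\Sigma^0_{n+2}(A)$ sets as c.e.\ unions of complements of $\Sigma^0_{n+1}(A)$ sets. An alternative route for the inductive step would be \cref{zero(n-1)_uniformly_sigma0n_is_uniformly_sigma0n}. However, the direct decomposition above avoids needing it.
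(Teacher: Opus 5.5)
Your proof is correct. The base case matches the paper's argument step for step, and the second sentence is obtained by iterating the first in both.

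The difference is in the inductive step.

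\begin{itemize}
\item The paper rewrites $U=\bigcup_k U_k$ with the $U_k$ being $A'$-uniformly $\Pi^0_n(A')$. It applies the inductive hypothesis to get $A'$-uniformly $\Pi^0_{n+1}(A)$ sets, and then cites \cref{zero(n-1)_uniformly_sigma0n_is_uniformly_sigma0n} to make the choice of indices computable.
\item You keep the $A'$-c.e.\ index set $W$ and encode it by the indicator sets $V_i$, exactly as in the base case. Then $U=\bigcup_i(\neg U^n_i\cap V_i)$ is assembled from uniformly $\Sigma^0_{n+2}(A)$ pieces using only the finite-intersection and union lemmas.
\end{itemize}

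The paper's version is shorter once that corollary is available. Yours is more self-contained and has two small advantages:
\begin{itemize}
\item It handles $W=\zero$ uniformly, without enumerating $W$ by an $A'$-computable function or padding it with a dummy index.
\item It avoids the slight mismatch in the paper's citation. The corollary is stated for $A^{(n-1)}$-uniformly $\Sigma^0_n(A)$ sets, whereas here one has $A'$-uniformly $\Pi^0_{n+1}(A)$ sets, so one must pass to complements and use $A'\le_T A^{(n)}$.
\end{itemize}

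The corollary's own proof uses the same indicator-set trick, applied to the graph of the index function. So the two arguments are really the same idea packaged differently.
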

\begin{proof}
    First consider the case $n = 1$. If $U$ is $\Sigma^0_1\paren{A'}$, then $U = \bigcup_{i \in W} B_i$ for some set $W \subseteq \N$ which is $A'$-c.e., i.e., $\Sigma^0_2(A)$ by Post's Theorem. Thus, we can write
    \[U = \bigcup_i \ \paren{B_i \cap V_i},\quad\text{where}\quad V_i = \begin{cases}
        X & \text{if } i \in W \\
        \zero & \text{if } i \notin W
    \end{cases}.\]
    By \cref{sigma0n(A)_for_N_is_sigma0m(A(n-m))_for_X}, the sets $V_i$ are uniformly $\Sigma^0_2(A)$, as are the sets $B_i$ by \cref{sigma0n_is_sigma0(n+1)}, along with their intersections $B_i \cap V_i$. Thus, $U$ is $\Sigma^0_2(A)$, uniformly in a $\Sigma^0_1\paren{A'}$ index for $U$.

    Now assume the claim holds for some $n \geq 1$, and consider any $\Sigma^0_{n + 1}\paren{A'}$ set $U$. Write $U = \bigcup_k U_k$, where the sets $U_k$ are $A'$-uniformly $\Pi^0_n\paren{A'}$. Then by the inductive hypothesis, the sets $U_k$ are $A'$-uniformly $\Pi^0_{n + 1}(A)$ and hence uniformly $\Pi^0_{n + 1}(A)$ by \cref{zero(n-1)_uniformly_sigma0n_is_uniformly_sigma0n}. Thus, $U$ is $\Sigma^0_{n + 2}(A)$, uniformly in a $\Sigma^0_{n + 1}\paren{A'}$ index for $U$. 

    This implies that the claim holds for all $n \geq 1$. By induction, it follows that any $\Sigma^0_n\paren{A^{(m)}}$ set $U$ is also $\Sigma^0_{m + n}(A)$, uniformly in a $\Sigma^0_n\paren{A^{(m)}}$ index for $U$.
\end{proof}

\subsection{Computable probability spaces}

We now impose more structure on a computable metric space by adding a computable probability measure.

\subsubsection{Definition}

\begin{defn}
    Let $(X, d, S)$ be a computable metric space and $B_i$ denote the $i\th$ ideal ball of $X$. A Borel probability measure $\mu$ on $X$ is \textbf{computable} if $\mu\paren{\bigcup_{i \in F} B_i}$ is uniformly lower-semicomputable in $F \in \PP_{<\N}(\N)$. We then call $(X, d, S, \mu)$ a \textbf{computable probability space}, or just $(X, \mu)$ for brevity.
\end{defn}

For example, Cantor space can be considered a computable probability space when paired with the Lebesgue measure $\lambda$, which is defined by $\lambda\cyl\sigma = 2^{-|\sigma|}$ for each $\sigma \in 2^{<\N}$.

Since the supremum of a sequence $\paren{r_k}_k$ of uniformly lower-semicomputable real numbers is also lower-semicomputable (uniformly in the sequence), the definition above immediately yields the following result.

\begin{lem}\label{measure_of_sigma01_lower_semicomputable}
    Let $(X, \mu)$ be a computable probability space. Then the measures of $\Sigma^0_1$ sets are uniformly lower-semicomputable.
\end{lem}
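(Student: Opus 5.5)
The plan is to reduce everything to the definition of a computable measure: a $\Sigma^0_1$ set is a c.e.\ union of ideal balls, and its measure is the supremum of the measures of finite sub-unions. Concretely, let $U = \bigcup_{i \in W} B_i$ with $W = W_e$ c.e., and let $W_s$ denote the finite stage-$s$ approximation to $W$. Since $W = \bigcup_s W_s$ is an increasing union of finite sets, continuity of measure from below gives
\[\mu(U) = \lim_{s \to \infty} \mu\paren{\bigcup_{i \in W_s} B_i} = \sup_s \mu\paren{\bigcup_{i \in W_s} B_i}.\]

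Next, the definition of a computable probability measure says that $\mu\paren{\bigcup_{i \in F} B_i}$ is uniformly lower-semicomputable in $F \in \PP_{<\N}(\N)$. The map $(e, s) \mapsto W_{e,s}$ (as a canonical index of a finite set) is computable, so the reals $r_{e,s} := \mu\paren{\bigcup_{i \in W_{e,s}} B_i}$ are uniformly lower-semicomputable in $(e, s)$. I would then invoke the cited fact that a supremum of a uniformly lower-semicomputable sequence is lower-semicomputable, uniformly in the sequence. Explicitly, $\mu(U) > q$ holds iff there exist $s$ and a stage of the approximation of $r_{e,s}$ witnessing a value above $q$. This is a c.e.\ condition in $(e, q)$, which gives uniform lower-semicomputability in the index $e$.

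There is no serious obstacle here. The only point needing care is checking that the uniformity is genuinely in the $\Sigma^0_1$ index $e$: the finite sets $W_{e,s}$ must be produced computably from $e$ and $s$, and the lower-semicomputability witness for finite unions must be applied uniformly across them. Both follow directly from the definitions.
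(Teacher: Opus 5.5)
Your proposal is correct and is essentially the paper's own argument. The paper simply notes that $\mu(U)$ is the supremum of the measures of finite sub-unions of ideal balls, each uniformly lower-semicomputable by the definition of a computable measure, and that a supremum of uniformly lower-semicomputable reals is lower-semicomputable. Your stage-wise approximation $W_{e,s}$ with continuity from below just makes that one-line argument explicit.
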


\subsubsection{Almost decidable sets}

Cantor space is particularly nice because of its clopen basis of cylinder sets. Not every computable probability space has such a basis, but if we consider a slightly broader class of sets called ``almost decidable,'' it turns out that we can get a similar property. These were studied by \cite{Hoyrup2009}.

\begin{defn}
    Let $(X, \mu)$ be a computable probability space. Then a Borel set $A$ is \textbf{almost decidable} if there exist $\Sigma^0_1$ sets $U \subseteq A$ and $V \subseteq \neg A$ such that $U \cup V$ is dense and has full measure. (An index for $A$ is then a pair of $\Sigma^0_1$ indices for $U$ and $V$.)
\end{defn}

It is simple to show that almost decidable sets satisfy the following properties.

\begin{lem}\label{finite_union_of_almost_decidable_sets}
    Let $(X, \mu)$ be a computable probability space. Then a finite union of almost decidable sets is almost decidable, uniformly given indices for the almost decidable sets.
\end{lem}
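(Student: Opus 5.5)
Let $A_1, \dots, A_m$ be almost decidable. For each $i$ I will fix witnessing $\Sigma^0_1$ sets $U_i \subseteq A_i$ and $V_i \subseteq \neg A_i$ such that $U_i \cup V_i$ is dense and has full measure. For $A := \bigcup_{i \le m} A_i$ the natural candidates are
\[U := \bigcup_{i \le m} U_i \subseteq A \quad\text{and}\quad V := \bigcap_{i \le m} V_i \subseteq \bigcap_{i \le m} \neg A_i = \neg A.\]
Both are $\Sigma^0_1$, uniformly in the given indices: $U$ by \cref{union_of_sigma0n} and $V$ by \cref{intersection_of_sigma0n}. So the pair of indices for $U$ and $V$ can be computed from the indices of the $A_i$. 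By induction it also suffices to treat $m = 2$, but the direct argument works for any finite $m$.

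Next I will show that $U \cup V$ has full measure. Its complement is contained in $\bigcup_i \neg(U_i \cup V_i)$. To see this, take $x \notin U \cup V$. Since $x \notin V$, there is some $i$ with $x \notin V_i$, and since $x \notin U$, also $x \notin U_i$. Each $\neg(U_i \cup V_i)$ is null, so a finite union of them is null.

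The remaining step, which needs some care, is density. The set $W := \bigcap_i (U_i \cup V_i)$ is a finite intersection of dense open sets, so it is dense and open; this is the elementary Baire-type fact for finitely many sets. Given a nonempty open $O$, I intersect successively with $U_1 \cup V_1$, $U_2 \cup V_2$, and so on. Each time the result stays open and nonempty because each set is dense and open. By the inclusion from the previous paragraph, $W \subseteq U \cup V$: a point of $W$ either lies in some $U_i$, hence in $U$, or lies in every $V_i$, hence in $V$. Therefore $U \cup V$ is dense. This completes the verification that $A$ is almost decidable with index $(U, V)$, obtained uniformly.
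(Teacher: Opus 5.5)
Your proof is correct. The paper gives no proof of its own and calls the lemma ``simple to show''; your argument is the natural one: take $U=\bigcup_i U_i$ and $V=\bigcap_i V_i$, and get both full measure and density from $\bigcap_i (U_i\cup V_i)\subseteq U\cup V$.
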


\begin{lem}\label{almost_decidable_has_computable_measure}
    Let $(X, \mu)$ be a computable probability space and $A$ be almost decidable with $\Sigma^0_1$ sets $U \subseteq A$ and $V \subseteq \neg A$ such that $U \cup V$ is dense with full measure. Then
    \[\mu(U) = \mu(A) = \mu(\neg V).\]
    Thus, almost decidable sets have computable measure, uniformly given an index for the almost decidable set.
\end{lem}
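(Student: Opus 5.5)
The plan is to prove the three equalities $\mu(U) = \mu(A) = \mu(\neg V)$ by monotonicity together with the full-measure condition, and then to get computability of $\mu(A)$ by pairing a lower approximation with an upper one.

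First, the equalities. Since $U \subseteq A \subseteq \neg V$, monotonicity gives $\mu(U) \leq \mu(A) \leq \mu(\neg V)$. For the reverse inequality, $U$ and $V$ are disjoint, because $U \subseteq A$ and $V \subseteq \neg A$. Hence
\[\mu(U) + \mu(V) = \mu(U \cup V) = 1,\]
and so $\mu(U) = 1 - \mu(V) = \mu(\neg V)$. This collapses the chain of inequalities and gives all three equalities.

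Next, computability. By \cref{measure_of_sigma01_lower_semicomputable}, $\mu(U)$ and $\mu(V)$ are lower-semicomputable, uniformly in their $\Sigma^0_1$ indices. Therefore $\mu(A) = \mu(U)$ is lower-semicomputable. Also $\mu(A) = 1 - \mu(V)$, which is upper-semicomputable because $\mu(V)$ is lower-semicomputable.

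A real that is both lower- and upper-semicomputable, uniformly, is computable. To see this, run the two approximations in parallel until the lower and upper bounds lie within $2^{-k}$ of each other, and output any value between them. Since an index for $A$ is just the pair of indices for $U$ and $V$, this procedure is uniform in the index for $A$.

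There is no real obstacle here. The only point needing care is the disjointness of $U$ and $V$, which follows immediately from $U \subseteq A$ and $V \subseteq \neg A$. Density of $U \cup V$ is not needed for this lemma.
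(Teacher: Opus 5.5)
Your proposal is correct. The paper gives no proof of this lemma and only remarks that it is simple to show. Your argument is the evident intended one: the sandwich $U \subseteq A \subseteq \neg V$, disjointness giving $\mu(U)+\mu(V)=1$, and then squeezing $\mu(A)$ between the lower-semicomputable $\mu(U)$ and the upper-semicomputable $1-\mu(V)$, uniformly in the pair of indices.
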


The ideal balls in a computable probability space are not necessarily almost decidable. The following result, however, due to \cite{Hoyrup2009}, says that if we instead consider balls with radii coming from a particular computable sequence of real numbers, we do get an almost decidable basis.

This relies on the Computable Baire Category Theorem, which allows one to find uniformly computable radii $r_n > 0$ that are distinct from all distances between ideal points such that the spheres $\set{x \in X: d\paren{x, s_i} = r_n}$ have measure zero. These two properties ensure that the union of the $\Sigma^0_1$ sets $B\paren{s_i, r_n}$ and $\neg\cl B\paren{s_i, r_n}$ is dense with full measure, implying that $B\paren{s_i, r_n}$---along with $\cl B\paren{s_i, r_n}$---is almost decidable.

\begin{thm}[Lemma 5.1.1 in \cite{Hoyrup2009}]\label{almost_decidable_basis}
    Let $(X, \mu)$ be a computable probability space. Write
    \[B(s, r) = \{x \in X: d(x, s) < r\}\quad\text{and}\quad\cl B(s, r) = \{x \in X: d(x, s) \leq r\}.\]
    Then there is a dense uniformly computable sequence $\paren{r_n}_n$ in $\R_{> 0}$ such that the open balls $B\paren{s_i, r_n}$ and the closed balls $\cl B\paren{s_i, r_n}$ are uniformly almost decidable.
\end{thm}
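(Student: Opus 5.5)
We follow the sketch given just before the statement. The plan is to fix a computable probability space $(X,d,S,\mu)$ and produce computable radii $r_n$ for which both the open balls $B(s_i,r_n)$ and the closed balls $\cl B(s_i,r_n)$ are almost decidable, uniformly in $i$ and $n$.

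The first step is to identify the bad radii. For each $i$, let $D_i$ be the set of $r>0$ such that $\mu\paren{\set{x: d(x,s_i)=r}}>0$. Since the spheres around $s_i$ are disjoint and $\mu$ is a probability measure, $D_i$ is countable. Let $E$ be the countable set $\set{d(s_i,s_j): i,j\in\N}$. We want uniformly computable radii, dense in $\R_{>0}$, that avoid every $D_i$ and $E$.

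The second step turns this avoidance into an effective Baire category argument. Fix an interval $(a,b)$ with rational endpoints and $a>0$. For each $i$ and $k$, the set of $r\in(a,b)$ with $\mu\paren{\set{x: r-2^{-m}<d(x,s_i)<r+2^{-m}}}<2^{-k}$ for some $m$ is effectively open in $r$, uniformly. The annulus is $\Sigma^0_1$, its measure is lower-semicomputable, and the condition is witnessed using upper bounds obtained from complements, e.g. by approximating the closed annulus with $\Pi^0_1$ sets via \cref{closed_balls_closed}. I expect making this measure bound genuinely $\Sigma^0_1$ in $r$ to be the main obstacle. The workaround I would try first is to use the monotone function $F_i(r)=\mu\paren{B(s_i,r)}$. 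It is lower-semicomputable, and $\mu$ of the closed ball is upper-semicomputable as $1-\mu(\text{outside})$. The condition $\mu(\cl B(s_i,q'))-\mu(B(s_i,q))<2^{-k}$ for rationals $q<r<q'$ is then c.e., because it asks for an upper bound on the first term and a lower bound on the second. Each of these open sets is dense in $(a,b)$, since only countably many atoms exist and all else has arbitrarily small annulus measure. Removing the single points $d(s_i,s_j)$, each of which is a co-c.e.-type condition, also leaves dense effectively open sets. The Computable Baire Category Theorem then yields a computable $r\in(a,b)$ in the intersection of all these sets, uniformly in $(a,b)$. Enumerating all rational intervals gives the dense sequence $\paren{r_n}_n$.

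The third step verifies almost decidability. Take $U=B(s_i,r_n)$ and $V=\set{x: d(x,s_i)>r_n}$. Both are $\Sigma^0_1$ uniformly, $V$ by the remark before \cref{closed_balls_closed}. Their union misses exactly the sphere, which is null, so the union has full measure. For density, suppose some ideal ball lies inside the sphere. Then some ideal point $s_j$ would satisfy $d(s_i,s_j)=r_n$, contradicting $r_n\notin E$. So the union is dense. Hence $B(s_i,r_n)$ is almost decidable with witnesses $(U,V)$, and $\cl B(s_i,r_n)$ is almost decidable with the same witnesses, since $U\subseteq\cl B$ and $V=\neg\cl B$. All of this is uniform in $i$ and $n$.
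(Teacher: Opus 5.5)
Your proposal is correct and follows the same route as the paper's sketch, which defers to Hoyrup--Rojas. Both use the Computable Baire Category Theorem to pick computable radii that avoid every distance $d(s_i,s_j)$ and every radius with a non-null sphere, and then take $U = B(s_i,r_n)$ and $V = \neg\cl B(s_i,r_n)$ as the witnesses for both the open and the closed ball. Your closed-annulus workaround (the sets $\set{x: q \leq d(x,s_i) \leq q'}$ are $\Pi^0_1$, so their measures are upper-semicomputable) is exactly what makes the null-sphere condition effectively open, so you can present it directly instead of as a fallback to the open-annulus description.
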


Using this almost decidable basis, we can strengthen \cref{approximate_sigma01_or_pi01_cms} as follows when working inside a computable probability space.

\begin{lem}\label{approximate_sigma01_or_pi01}
    Let $(X, \mu)$ be a computable probability space. Then:
    \begin{itemize}
        \item For any $\Sigma^0_1$ set $U$, there is an increasing uniformly $\Pi^0_1$ (and uniformly almost decidable) sequence $\paren{P_s}_s$ such that $U = \bigcup_s P_s$ and $\mu\paren{P_s}$ is uniformly computable in $s$. Moreover, indices for $P_s$ and $\mu\paren{P_s}$ can be computed from $U$ and $s$.
        \item For any $\Pi^0_1$ set $P$, there is a nested uniformly $\Sigma^0_1$ (and uniformly almost decidable) sequence $\paren{U_s}_s$ such that $P = \bigcap_s U_s$ and $\mu\paren{U_s}$ is uniformly computable in $s$. Moreover, indices for $U_s$ and $\mu\paren{U_s}$ can be computed from $P$ and $s$.
    \end{itemize}
\end{lem}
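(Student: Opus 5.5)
The plan is to use the almost decidable basis from \cref{almost_decidable_basis} in place of the ideal balls. Call the almost decidable open balls $B\paren{s_i, r_n}$ and closed balls $\cl B\paren{s_i, r_n}$ \emph{basic}.

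\textbf{First bullet.} Given a $\Sigma^0_1$ set $U$, I would enumerate the pairs $(i,n)$ such that $\cl B\paren{s_i, r_n} \subseteq U$ has been certified. Certification means finding an ideal ball $B_m$ enumerated into $U$ with $d\paren{s_i, s_m} + r_n < q_m$, which is c.e.\ since the $r_n$ are uniformly computable. Because the radii $r_n$ are dense in $\R_{>0}$ and the $s_i$ are dense in $X$, every $x \in U$ lies in some such certified closed basic ball. Next, let $P_s$ be the union of the first $s$ certified closed balls; this sequence is increasing and $U = \bigcup_s P_s$.

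Each $P_s$ is a finite union of uniformly $\Pi^0_1$ sets (\cref{closed_balls_closed}), hence $\Pi^0_1$. Each $P_s$ is also almost decidable by \cref{finite_union_of_almost_decidable_sets}, since the closed basic balls are uniformly almost decidable. Therefore $\mu\paren{P_s}$ is computable by \cref{almost_decidable_has_computable_measure}, uniformly in $s$, and all the indices are obtained effectively from $U$ and $s$.

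One wrinkle needs care: the enumeration might be finite at some stage, or even empty if $U = \zero$. I would handle this by letting $P_s$ be the union of whatever has been enumerated by stage $s$, which keeps the indices uniform; the empty set is trivially almost decidable with measure $0$.

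\textbf{Second bullet.} For a $\Pi^0_1$ set $P$, I would apply the first bullet to the $\Sigma^0_1$ set $\neg P$, obtaining an increasing sequence $Q_s$ of almost decidable $\Pi^0_1$ sets. Then set $U_s = \neg Q_s$. These sets are nested and satisfy $P = \bigcap_s U_s$, and $\mu\paren{U_s} = 1 - \mu\paren{Q_s}$ is computable. Each $U_s$ is almost decidable because the complement of an almost decidable set is almost decidable: swap the witnesses $U$ and $V$ in the definition.

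It remains to see that $U_s$ is $\Sigma^0_1$. The complement of a closed basic ball $\cl B\paren{s_i,r_n}$ is the set $\{x : d(x,s_i) > r_n\}$. This set is $\Sigma^0_1$ uniformly by the remark before \cref{closed_balls_closed}, since $r_n$ is computable. Hence a finite intersection of such complements is $\Sigma^0_1$ by \cref{intersection_of_sigma0n}.

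The main obstacle I expect is the certification step in the first bullet, which is where the construction actually has to be justified. It requires two things: that inclusion of a closed basic ball in an enumerated ideal ball is semi-decidable, and that these certified balls really exhaust $U$. I would verify exhaustion by taking $x \in B_m \subseteq U$, choosing an ideal point $s_i$ very close to $x$, and then choosing $r_n$ small but larger than $d(x,s_i)$, using density of the radii. Everything else is bookkeeping.
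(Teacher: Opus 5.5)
Your proposal is correct and follows the route the paper indicates. The paper gives no written proof, only the remark that one writes $U$ as a union of closed balls, now with the almost decidable radii $r_n$ of \cref{almost_decidable_basis}, takes finite unions, and passes to complements for the $\Pi^0_1$ case. The one simplification available is that $U_s = \neg Q_s$ is uniformly $\Sigma^0_1$ directly from the definition of a $\Pi^0_1$ set as the complement of a $\Sigma^0_1$ set, so your separate argument via exteriors of closed balls is valid but unnecessary.
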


\subsection{Approximation of functions and sets}

We will now consider a hierarchy of computable real-valued functions on computable metric and probability spaces, along with how they can be used for approximation.

\subsubsection{Computable real-valued functions}

The following definitions appear to be new but are closely related to the notion of $\SIGMA^0_n$-computability defined by \cite{Brattka}. Here we write $\cl\R$ for the extended real numbers, i.e., $[-\infty, \infty]$.

\begin{defn}\label{n_computability_definition}
    Let $X$ be a computable metric space and $n \geq 1$.
    \begin{itemize}
        \item We say $f: \ \subseteq \! X \to \cl\R$ is \textbf{$n$-lower-semicomputable} (or \textbf{lower-semicomputable} if $n = 1$) if $\{x \in X: f(x) > q\}$ is $\Sigma^0_n$ in $\dom f$, uniformly in $q \in \Q$.
        \item We say $f: \ \subseteq \! X \to \cl\R$ is \textbf{$n$-upper-semicomputable} (or \textbf{upper-semicomputable} if $n = 1$) if $\{x \in X: f(x) < q\}$ is $\Sigma^0_n$ in $\dom f$, uniformly in $q \in \Q$.
        \item We say $f: \ \subseteq \! X \to \cl\R$ is \textbf{$n$-computable} (or \textbf{computable} if $n = 1$) if $f$ is both $n$-lower-semicomputable and $n$-upper-semicomputable.
    \end{itemize}
\end{defn}

For example, the characteristic function of a $\Sigma^0_n$ set is always $n$-lower-semicomputable. The following lemmas are both simple to show.

\begin{lem}\label{n_lower_semicomputable_implies_(n+1)_computable}
    Let $X$ be a computable metric space and $f: \ \subseteq \! X \to \cl\R$ be either $n$-lower-semicomputable or $n$-upper-semicomputable. Then $f$ is also $(n + 1)$-computable, uniformly given an index for $f$.
\end{lem}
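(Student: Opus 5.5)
We prove the statement for an $n$-lower-semicomputable $f$; the $n$-upper-semicomputable case is symmetric (or follows by applying the first case to $-f$). Since $f$ is $n$-lower-semicomputable, the sets $\{x: f(x) > q\}$ are already uniformly $\Sigma^0_n$ in $\dom f$. By \cref{sigma0n_is_sigma0(n+1)}, they are therefore uniformly $\Sigma^0_{n+1}$ in $\dom f$. This gives $(n+1)$-lower-semicomputability immediately, and it remains only to show that the sets $\{x: f(x) < q\}$ are uniformly $\Sigma^0_{n+1}$ in $\dom f$.

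For this, write
\[\{x \in \dom f: f(x) < q\} = \bigcup_{r \in \Q,\ r < q} \{x \in \dom f: f(x) \leq r\}.\]
Each set $\{x \in \dom f: f(x) \leq r\}$ is the complement within $\dom f$ of $\{f > r\} = V_r \cap \dom f$, where $V_r$ is a uniformly $\Sigma^0_n$ set. Hence it equals $\neg V_r \cap \dom f$ with $\neg V_r$ uniformly $\Pi^0_n$. A uniform countable union of $\Pi^0_n$ sets is $\Sigma^0_{n+1}$ by the definition of the hierarchy, together with \cref{union_of_sigma0n}. We conclude that $\{f < q\} = \paren{\bigcup_{r<q} \neg V_r} \cap \dom f$ is $\Sigma^0_{n+1}$ in $\dom f$, uniformly in $q$ and in an index for $f$.

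The only point needing care is bookkeeping of uniformity. The indices for the sets $V_r$ must be obtained computably from $r$ and an index for $f$, and the $\Sigma^0_{n+1}$ index of the union must be computed from these. This is routine from the recursive definition of $\Sigma^0_{n+1}$ sets as unions over a c.e. index set of complements of $\Sigma^0_n$ sets. No step is a real obstacle.
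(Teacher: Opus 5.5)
Your proof is correct. The paper gives no proof of this lemma, calling it ``simple to show,'' and your argument is exactly the routine one intended: use \cref{sigma0n_is_sigma0(n+1)} to upgrade the given side, and write $\{f<q\}$ as the union $\bigcup_{r\in\Q,\, r<q}\paren{\neg V_r}\cap \dom f$ of $\Pi^0_n$ sets for the other side, which correctly handles the values $\pm\infty$ and is uniformly $\Sigma^0_{n+1}$ in $\dom f$ since $\{r \in \Q : r < q\}$ is decidable.
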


\begin{lem}\label{max_of_computable_functions}
    Let $(X, d, S)$ be a computable metric space and $f, g: \ \subseteq \! X \to \cl\R$ be both $n$-lower-semicomputable or both $n$-upper-semicomputable. Then $\max\{f, g\}$ is also $n$-lower-semicomputable or $n$-upper-semicomputable, respectively, uniformly in $f$ and $g$.
\end{lem}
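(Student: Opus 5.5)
The argument splits by which semicomputability hypothesis holds and then works directly from the definition: for each rational $q$, we must show that $\{x : \max\{f,g\}(x) > q\}$, respectively $\{x : \max\{f,g\}(x) < q\}$, is $\Sigma^0_n$ in $\dom \max\{f,g\}$, uniformly in $q$. The domain of $\max\{f,g\}$ is $\dom f \cap \dom g$, since the maximum is only defined where both functions are. We will use the pointwise identities
\[\{\max\{f,g\} > q\} = \{f > q\} \cup \{g > q\}\quad\text{and}\quad \{\max\{f,g\} < q\} = \{f < q\} \cap \{g < q\},\]
both read inside $\dom f \cap \dom g$.

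\emph{Lower-semicomputable case.} By hypothesis there are $\Sigma^0_n$ sets $U_q, V_q \subseteq X$, uniformly in $q$, with $\{f > q\} = U_q \cap \dom f$ and $\{g > q\} = V_q \cap \dom g$. Then
\[\{\max\{f,g\} > q\} = (U_q \cup V_q) \cap \dom f \cap \dom g,\]
and $U_q \cup V_q$ is $\Sigma^0_n$ uniformly in $q$ by \cref{union_of_sigma0n}, applied to a two-element family.

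\emph{Upper-semicomputable case.} Take $\Sigma^0_n$ sets $U'_q, V'_q$ with $\{f < q\} = U'_q \cap \dom f$ and $\{g < q\} = V'_q \cap \dom g$. Then
\[\{\max\{f,g\} < q\} = (U'_q \cap V'_q) \cap \dom f \cap \dom g,\]
and $U'_q \cap V'_q$ is $\Sigma^0_n$ uniformly in $q$ by \cref{intersection_of_sigma0n}.

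Uniformity in indices for $f$ and $g$ follows in both cases because the witnessing sets are produced effectively from those indices, and the two cited lemmas are uniform in the indices they are given. The only point needing care is the bookkeeping about domains, namely that intersecting with $\dom f \cap \dom g$ commutes correctly with the union or intersection. This holds since each identity is verified pointwise on the common domain, so I expect no real obstacle.
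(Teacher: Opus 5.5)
Your argument is correct and is the routine one the paper intends; the paper gives no proof, only calling the lemma simple to show. Superlevel sets of $\max\{f,g\}$ are unions, and sublevel sets are intersections, of the corresponding sets for $f$ and $g$, restricted to $\dom f \cap \dom g$. The union and finite-intersection lemmas for $\Sigma^0_n$ sets then give the result uniformly. Your domain bookkeeping is also right: for example, $\bigl((U_q \cap \dom f) \cup (V_q \cap \dom g)\bigr) \cap \dom f \cap \dom g = (U_q \cup V_q) \cap \dom f \cap \dom g$.
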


An important example of a computable real-valued function is the distance from any ideal point.

\begin{lem}\label{distance_is_computable}
    Let $(X, d, S)$ be a computable metric space. Then the functions $d(\cdot, s_i)$ are uniformly computable.
\end{lem}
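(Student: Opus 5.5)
The claim is that $d(\cdot, s_i)$ is computable uniformly in $i$. By the definition of computability of real-valued functions, this means two things, uniformly in $i$ and $q \in \Q$: the set $\{x : d(x, s_i) > q\}$ is $\Sigma^0_1$, and the set $\{x : d(x, s_i) < q\}$ is $\Sigma^0_1$. Since $d(\cdot, s_i)$ is total, "in $\dom f$" simply means "in $X$". I will treat the two halves separately, writing each set as an effective union of ideal balls.

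\emph{The lower half.} The set $\{x : d(x, s_i) < q\}$ is empty when $q \leq 0$, which is trivially $\Sigma^0_1$. When $q > 0$, it is exactly the ideal ball $B(s_i, q)$, so it is $\Sigma^0_1$. Its index is obtained computably from $i$ and $q$, since we only need the index $j$ with $q_j = q$ in the fixed enumeration of $\Q_{>0}$.

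\emph{The upper half.} This needs a small argument. I claim that
\[\{x : d(x, s_i) > q\} = \bigcup\set{B(s_k, q_\l) : d(s_i, s_k) > q + q_\l}.\]
For $\supseteq$: if $d(x, s_k) < q_\l$ and $d(s_i, s_k) > q + q_\l$, then the triangle inequality gives $d(x, s_i) \geq d(s_i, s_k) - d(x, s_k) > q$. For $\subseteq$: suppose $d(x, s_i) > q$ and set $\eps = d(x, s_i) - q > 0$. Choose $\l$ with $q_\l < \eps/3$, and use density of $S$ to choose $k$ with $d(x, s_k) < q_\l$. Then
\[d(s_i, s_k) > d(x, s_i) - q_\l = q + \eps - q_\l > q + 2q_\l > q + q_\l.\]
The index set $\{\cs{k, \l} : d(s_i, s_k) > q + q_\l\}$ is c.e. uniformly in $i$ and $q$, because $d(s_i, s_k)$ is a uniformly computable real and strict inequalities between a computable real and a rational are semi-decidable. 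So this union is $\Sigma^0_1$ uniformly in $i$ and $q$. This is exactly the observation the paper uses before \cref{closed_balls_closed}.

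The only mildly delicate step is the $\subseteq$ inclusion in the upper half, where the radius must be chosen small enough to leave slack for the strict inequality. Everything else is routine bookkeeping about uniformity.
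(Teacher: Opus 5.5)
Your proof is correct and is essentially the argument the paper intends: the paper states this lemma without proof, but the triangle-inequality characterization you verify for $\{x : d(x, s_i) > q\}$ is exactly the observation it records just before \cref{closed_balls_closed}, while $\{x : d(x, s_i) < q\}$ is simply empty or an ideal ball. One cosmetic point: in the paper's terminology, $\Sigma^0_1$-ness of $\{f > q\}$ is lower-semicomputability and of $\{f < q\}$ is upper-semicomputability, so your ``lower'' and ``upper'' labels are swapped; this does not affect the argument.
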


\subsubsection{Approximating functions from below}

Often it is useful to approximate more complicated real-valued functions using simpler ones. We achieve this through the following lemmas, where we use the notation $f_k \upto f$ as $k \to \infty$ to mean that $\paren{f_k}_k$ is an increasing sequence of functions that converges to $f$.

\begin{lem}\label{approximate_characteristic_function_of_ideal_ball_from_below}
    Let $(X, d, S)$ be a computable metric space with $i\th$ ideal ball $B_i$. Then there is a sequence $\paren{f_{i, j, k}}_{i, j, k}$ of uniformly computable functions $f_{i, j, k}: X \to [0, 1]$ such that $f_{i, j, k} \upto \one_{B_{\cs{i, j}}}$ as $k \to \infty$ for each $i$ and $j$.
\end{lem}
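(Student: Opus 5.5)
We will build the $f_{i,j,k}$ as truncated distance functions, in the standard way for approximating the indicator of an open ball from below. Write $B_{\cs{i, j}} = \set{x \in X: d(x, s_i) < q_j}$. For $k \geq 1$, set
\[f_{i, j, k}(x) = \min\set{1, \ \max\set{0, \ k\paren{q_j - d(x, s_i)}}}.\]
Each such function is a continuous function of $d(\cdot, s_i)$, and takes values in $[0, 1]$. Put $f_{i, j, 0} = 0$ so that the sequence starts at $0$.

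\emph{Computability.} By \cref{distance_is_computable}, the functions $d(\cdot, s_i)$ are uniformly computable. For rational $q$, the set $\set{x: f_{i, j, k}(x) > q}$ is handled by cases:
\begin{itemize}
    \item if $q < 0$, it is $X$;
    \item if $q \geq 1$, it is $\zero$;
    \item if $0 \leq q < 1$, it is $\set{x: d(x, s_i) < q_j - q/k}$.
\end{itemize}
Symmetrically, $\set{x: f_{i, j, k}(x) < q}$ is $X$ for $q > 1$, empty for $q \leq 0$, and otherwise $\set{x: d(x, s_i) > q_j - q/k}$. Since $q_j - q/k$ is rational and uniformly computable, each of these sets is uniformly $\Sigma^0_1$ in $i, j, k, q$. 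The case split on $q$ is decidable, so the whole family is uniformly computable in the sense of \cref{n_computability_definition}. An alternative is to observe that $\min$, $\max$ and multiplication by $k$ preserve computability, uniformly; this is in the spirit of \cref{max_of_computable_functions}.

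\emph{Monotone convergence.} Fix $x$ and let $a = q_j - d(x, s_i)$.
\begin{itemize}
    \item If $a \leq 0$, i.e.\ $x \notin B_{\cs{i, j}}$, then $f_{i, j, k}(x) = 0$ for every $k$, which equals $\one_{B_{\cs{i, j}}}(x)$.
    \item If $a > 0$, then $k \mapsto \min\set{1, ka}$ is nondecreasing and equals $1$ once $k \geq 1/a$.
\end{itemize}
Hence $f_{i, j, k} \upto \one_{B_{\cs{i, j}}}$ pointwise as $k \to \infty$.

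None of these steps is a real obstacle. The only point requiring care is uniformity: the sets $\set{f > q}$ must be produced as $\Sigma^0_1$ indices computably from $(i, j, k, q)$, including the degenerate cases $q < 0$ and $q \geq 1$. Handling those cases by the explicit split above is what makes the construction uniform.
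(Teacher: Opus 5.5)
Your proposal is correct and is essentially the paper's proof. Both use the piecewise-linear ramp $\min\{1, \max\{0, c_k(q_j - d(x, s_i))\}\}$ with slope growing in $k$: the paper takes $c_k = 2^k$ and you take $c_k = k$. Your explicit case analysis of the sets $\{f > q\}$ and $\{f < q\}$ supplies the verification that the paper leaves as ``straightforward.''
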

\begin{proof}
    Let $f = \one_{B_{\cs{i, j}}}$, i.e.,
    \[f(x) = \begin{cases}
        1 & \text{if } d(x, s_i) < q_j \\
        0 & \text{if } d(x, s_i) \geq q_j
    \end{cases}.\]
    Then it is straightforward to verify that the functions
    \[f_{i, j, k}(x) = \begin{cases}
        1 & \text{if } d\paren{x, s_i} < q_j - 2^{-k} \\
        2^k\paren{q_j - d\paren{x, s_i}} & \text{if } q_j - 2^{-k} \leq d\paren{x, s_i} \leq q_j \\
        0 & \text{if } d\paren{x, s_i} > q_j        
    \end{cases}\]
    satisfy the desired property.
\end{proof}

\begin{lem}\label{approximate_lower_semicomputable_from_below}
    Let $X$ be a computable metric space. If $f: \ \subseteq \! X \to [0, \infty]$ is lower-semicomputable, then there are an increasing computable sequence $\paren{M_k}_k$ in $\Q_{\geq 0}$ and a sequence of uniformly computable functions $f_k: X \to [0, M_k]$ such that $f_k \upto f$ on $\dom f$ as $k \to \infty$. Moreover, indices for $\paren{f_k}_k$ and $\paren{M_k}_k$ can be computed from an index for $f$.
\end{lem}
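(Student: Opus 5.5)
The plan is to build the approximants from the structure of the sets $\{x : f(x) > q\}$, which are $\Sigma^0_1$ in $\dom f$ uniformly in $q \in \Q$. Write $\{x \in \dom f: f(x) > q\} = \dom f \cap \bigcup_{i \in W_q} B_i$ for a uniformly c.e.\ family of index sets $W_q$. Enumerate all pairs $(q, i)$ with $q \in \Q_{> 0}$ and $i \in W_q$ as $(q_m, i_m)_m$; this enumeration is computable from an index for $f$. If there are no such pairs, the function $f$ is identically $0$ on $\dom f$, and we may take $f_k = 0$ and $M_k = 0$. A finite-stage modification, padding with zero terms, makes the enumeration total in any case.

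Each pair should contribute the function $q_m \one_{B_{i_m}}$. The key observation is that
\[f(x) = \sup_m q_m \one_{B_{i_m}}(x) \quad\text{for } x \in \dom f.\]
For $\leq$: if $f(x) > q > 0$ with $q$ rational, some ball $B_i \ni x$ with $i \in W_q$ appears in the enumeration, so the supremum exceeds any rational below $f(x)$; this also covers the case $f(x) = \infty$. For $\geq$: if $x \in B_{i_m}$ with $x \in \dom f$, then $f(x) > q_m$.

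Next I replace each indicator by the uniformly computable approximants $f_{i,j,k} \upto \one_{B_{\cs{i,j}}}$ from \cref{approximate_characteristic_function_of_ideal_ball_from_below}. Writing $g_{m,k}$ for the approximant of $\one_{B_{i_m}}$, define
\[f_k = \max_{m \leq k} q_m\, g_{m,k}, \qquad M_k = \max\Bigl(\{0\} \cup \{q_m : m \leq k\}\Bigr).\]
The sequence $(M_k)_k$ is computable, increasing, and rational, and $f_k$ maps into $[0, M_k]$. Each $f_k$ is a finite maximum of computable functions, so it is computable uniformly in $k$ by \cref{max_of_computable_functions}, and all indices are obtained effectively from an index for $f$. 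Monotonicity in $k$ holds because both the index range $m \leq k$ and each $g_{m,k}$ increase with $k$.

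For convergence on $\dom f$: since $g_{m,k} \leq \one_{B_{i_m}}$, we get $f_k \leq \sup_m q_m \one_{B_{i_m}} = f$. Conversely, fix $m$ with $x \in B_{i_m}$. Then $g_{m,k}(x) \to 1$, so $\liminf_k f_k(x) \geq q_m$. Taking the supremum over such $m$ gives $f_k(x) \upto f(x)$, including when $f(x) = \infty$.

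The only delicate point I anticipate is the bookkeeping of uniformity: the enumeration of $W_q$ must be dovetailed, and the "$\Sigma^0_1$ in $\dom f$" representation is used only on $\dom f$. Outside $\dom f$ the $f_k$ are simply defined as above, which is harmless because the statement only requires convergence on $\dom f$.
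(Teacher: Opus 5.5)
Your proof is correct and follows essentially the same route as the paper. Both express $f$ on $\dom f$ as a supremum of rational multiples $q\cdot(\text{approximants of }\one_{B_i})$ over $q \in \Q_{>0}$ and $i \in W_q$, pad the enumeration with a zero term so it is total, and take running maxima, with $M_k$ the running maximum of the $q$'s. The only cosmetic difference is how the approximation index is handled: you diagonalize it, using $g_{m,k}$ inside $\max_{m \le k}$ and relying on monotonicity of $g_{m,k}$ in $k$, whereas the paper enumerates all triples $\langle q, i, j\rangle$ and sets $f_k = \max_{s \le k} g_{h(s)}$.
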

\begin{proof}
    Since $f$ is lower-semicomputable, for all $q \in \Q$, we have
    \[\{x \in X: f(x) > q\} = U_q \cap \dom f\]
    for some uniformly $\Sigma^0_1$ sets $U_q$. Write $U_q = \bigcup_{i \in W_q} B_i$ for some uniformly c.e.\ sets $W_q \subseteq \N$. By \cref{approximate_characteristic_function_of_ideal_ball_from_below}, there are uniformly computable functions $f_{i, j}: X \to [0, 1]$ such that $f_{i, j} \upto \one_{B_i}$ as $j \to \infty$ for all $i$. Let
    \[g(x) = \sup_{\substack{q \in \Q_{> 0} \\ i \in W_q \\ j \in \N}} \paren{q \cdot f_{i, j}(x)},\]
    where we say that $\sup\zero = 0$. Then $f = g$ on $\dom f$. Now consider the nonempty c.e.\ set
    \[W = \set{\cs{q, i, j}: q \in \Q_{> 0}, i \in W_q, j \in \N} \cup \{\cs{0, 0, 0}\},\]
    which is the range of some computable function $h$. Let $g_{\cs{q, i, j}}(x) = q \cdot f_{i, j}(x)$. Then the functions $f_k = \max_{s \leq k} g_{h(s)}$ are uniformly computable by \cref{max_of_computable_functions} and satisfy $f_k \upto f$ on $\dom f$ as $k \to \infty$. Moreover, if we let $\pi: \N \to \Q_{\geq 0}$ be the computable function satisfying $\pi\paren{\cs{q, i, j}} = q$, then $f_k \leq \max_{s \leq k} \pi(h(s)) =: M_k$.
\end{proof}

\begin{lem}\label{approximate_(n+1)_lower_semicomputable_from_below}
    Let $X$ be a computable metric space and $n \geq 1$. If $f: \ \subseteq \! X \to [0, \infty]$ is $(n + 1)$-lower-semicomputable, then there are an increasing computable sequence $\paren{M_k}_k$ in $\Q_{> 0}$ and a sequence of uniformly $n$-upper-semicomputable functions $f_k: X \to [0, M_k]$ such that $f_k \upto f$ on $\dom f$ as $k \to \infty$. Moreover, indices for $\paren{f_k}_k$ and $\paren{M_k}_k$ can be computed from an index for $f$.
\end{lem}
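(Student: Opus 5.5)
We need to prove: if $f$ is $(n+1)$-lower-semicomputable into $[0,\infty]$, then there are increasing computable rationals $M_k$ and uniformly $n$-upper-semicomputable $f_k: X\to[0,M_k]$ with $f_k \upto f$ on $\dom f$.

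The plan mirrors the proof of \cref{approximate_lower_semicomputable_from_below}, with step functions on $\Pi^0_n$ sets replacing the tent functions on ideal balls. Since $f$ is $(n+1)$-lower-semicomputable, write $\{x: f(x)>q\} = U_q\cap\dom f$ with $U_q$ uniformly $\Sigma^0_{n+1}$. By definition of $\Sigma^0_{n+1}$, $U_q = \bigcup_{i\in W_q} P_i$, where the $P_i$ are uniformly $\Pi^0_n$ (complements of the $i$th $\Sigma^0_n$ sets) and the $W_q$ are uniformly c.e. The building blocks will be the functions $g_{\cs{q,i}} = q\cdot\one_{P_i}$ for $q\in\Q_{>0}$ and $i\in W_q$.

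Each $g_{\cs{q,i}}$ is $n$-upper-semicomputable, uniformly. Indeed, $\{x: q\one_{P_i}(x) < r\}$ equals $X$ if $r>q$, equals $\neg P_i$ (which is $\Sigma^0_n$) if $0<r\le q$, and equals $\zero$ if $r\le 0$. This case distinction on rationals is decidable, so the set is uniformly $\Sigma^0_n$.

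Next, let $W=\{\cs{q,i}: q\in\Q_{>0},\ i\in W_q\}\cup\{\cs{0,0}\}$, where $g_{\cs{0,0}}=0$. This set is c.e. and nonempty, so it is the range of a computable $h$. Define
\[f_k=\max_{s\le k} g_{h(s)}\quad\text{and}\quad M_k=\max_{s\le k}\pi(h(s)),\]
where $\pi(\cs{q,i})=q$. To keep $M_k$ in $\Q_{>0}$ and strictly increasing, replace it by $M_k+k+1$ if needed. By \cref{max_of_computable_functions}, the $f_k$ are uniformly $n$-upper-semicomputable, increasing in $k$, and bounded by $M_k$. All indices are computed uniformly from an index for $f$.

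It remains to show $f_k\upto f$ on $\dom f$. For $x\in\dom f$, each term satisfies $g_{\cs{q,i}}(x)\le f(x)$: if $x\in P_i$ with $i\in W_q$, then $x\in U_q\cap\dom f$, so $f(x)>q$. Conversely, for any rational $0<q<f(x)$ we have $x\in U_q$, so $x\in P_i$ for some $i\in W_q$, and then $g_{\cs{q,i}}(x)=q$. Hence $\sup_k f_k(x)=f(x)$, including the cases $f(x)=\infty$ and $f(x)=0$.

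The main obstacle is the uniformity bookkeeping: checking that "$n$-upper-semicomputable" is read as $\Sigma^0_n$ in $X$ (the $f_k$ are total). The $\one_{P_i}$ computation above handles this directly, so I expect no real difficulty.
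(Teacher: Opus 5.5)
Your proof is correct and follows essentially the same route as the paper: split each $U_q$ into uniformly $\Pi^0_n$ pieces, use the step functions $q\one_{P}$ (which are $n$-upper-semicomputable), and take running maxima with the bound $M_k$ read off from the rational coefficients. The differences are cosmetic. The paper writes $U_q=\bigcup_j P_{q,j}$ over all $j$ and uses a computable bijection $\Q_{>0}\times\N\to\N$, rather than enumerating a c.e.\ index set via $h$ as you do, and your three-case check of $\{q\one_{P_i}<r\}$ (which includes $r\le 0$) is slightly more careful than the paper's.
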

\begin{proof}
    Since $f$ is $(n + 1)$-lower-semicomputable, for all $q \in \Q$, we have
    \[\{x \in X: f(x) > q\} = U_q \cap \dom f\]
    for some uniformly $\Sigma^0_{n + 1}$ sets $U_q$. Let
    \[g(x) = \sup\{q \in \Q_{> 0}: x \in U_q\},\]
    where we say that $\sup\zero = 0$. Then $f = g$ on $\dom f$.
    
    Since $U_q$ is uniformly $\Sigma^0_{n + 1}$ in $q$, we can write $U_q = \bigcup_j P_{q, j}$ for some uniformly $\Pi^0_n$ sets $P_{q, j}$. Fix a computable bijection $\cs{\cdot, \cdot}: \Q_{> 0} \times \N \to \N$. Note that the functions $g_{\cs{q, j}} = q\one_{P_{q, j}}$ for $q \in \Q_{> 0}$ are uniformly $n$-upper-semicomputable since $\set{x \in X: g_{\cs{q, j}}(x) < r}$ is $\neg P_{q, j}$ if $r \leq q$ and $X$ if $r > q$.
    
    Then by \cref{max_of_computable_functions}, the functions $f_k = \max_{s \leq k} g_s$ are likewise uniformly $n$-upper-semicomputable and satisfy $f_k \upto f$ on $\dom f$ as $k \to \infty$. Moreover, if we let $\pi: \N \to \Q_{> 0}$ be the computable function satisfying $\pi\paren{\cs{q, i}} = q$, then $f_k \leq \max_{s \leq k} \pi(s) =: M_k$.
\end{proof}

\subsubsection{Integrals of functions}

The following two results due to \cite{Hoyrup2009} demonstrate that integrals of (lower-semi-)computable functions are (lower-semi-)computable.

\begin{lem}[Corollary 4.3.2 in \cite{Hoyrup2009}]\label{integral_of_computable}
    Let $(X, \mu)$ be a computable probability space and $M \geq 0$ be computable. If $f: \ \subseteq \! X \to [-M, M]$ is computable, defined almost everywhere, and integrable, then $\int f \dmu$ is also computable, uniformly given an index for $f$ and its bound $M$.
\end{lem}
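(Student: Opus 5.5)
The plan is to prove separately that $\int f \dmu$ is lower-semicomputable and upper-semicomputable, uniformly in an index for $f$ and $M$. Upper-semicomputability will follow from lower-semicomputability applied to $-f$, which is again computable, bounded by $M$ and defined almost everywhere. So everything comes down to one direction, and for that I will use the layer-cake formula together with \cref{measure_of_sigma01_lower_semicomputable}.

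\textbf{Setup.} First, from the computable real $M$, compute a rational $N \geq M$. Then $g := f + N$ takes values in $[0, 2N]$ on $\dom f$, and
\[\int f \dmu = \int_0^{2N} \mu\paren{\set{x : g(x) > t}} \, dt - N.\]
For rational $t$, the set $\set{x : g(x) > t} = \set{x : f(x) > t - N}$ equals $U_{t - N} \cap \dom f$, where the $U_q$ are uniformly $\Sigma^0_1$ sets supplied by the lower-semicomputability of $f$. Since $\dom f$ has full measure, $\mu\paren{U_{t-N} \cap \dom f} = \mu\paren{U_{t-N}}$. By \cref{measure_of_sigma01_lower_semicomputable}, these measures are therefore uniformly lower-semicomputable in the rational $t$.

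\textbf{Approximation by Riemann sums.} The function $\phi(t) := \mu\set{g > t}$ is nonincreasing on $[0, 2N]$. For $m \geq 1$, take the rational grid $t_i = 2Ni/m$ with $i \leq m$, and form the right-endpoint sums
\[L_m := \frac{2N}{m}\sum_{i = 1}^{m} \phi(t_i).\]
Because $\phi$ is nonincreasing, each $L_m$ is a lower bound for $\int_0^{2N}\phi \, dt$. Moreover, $L_m$ differs from the corresponding left-endpoint upper sum by at most $\frac{2N}{m}\paren{\phi(0) - \phi(2N)} \leq 2N/m$, so $L_m$ converges to the integral. Each $L_m$ is a finite positive combination of uniformly lower-semicomputable reals, so it is lower-semicomputable uniformly in $m$. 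Hence $\int_0^{2N}\phi \, dt = \sup_m L_m$ is lower-semicomputable, and subtracting the rational $N$ gives that $\int f \dmu$ is lower-semicomputable.

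\textbf{Conclusion.} Running the same argument for $-f$ shows that $-\int f \dmu$ is lower-semicomputable. Thus $\int f \dmu$ is computable, and every step is uniform in the indices for $f$ and $M$.

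\textbf{Main obstacle.} The only delicate points are the two places where $f$ is partial. The sets $\set{f > q}$ are only $\Sigma^0_1$ relative to $\dom f$, which is handled by the full-measure observation above. The layer-cake identity needs $g \geq 0$ only almost everywhere, which holds for the same reason. I expect no step to be a genuine obstacle beyond checking these carefully.
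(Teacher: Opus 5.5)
Your proof is correct, and it is essentially the standard argument behind the cited result. The paper gives no proof of its own; Hoyrup and Rojas obtain this corollary by applying lower-semicomputability of integrals of nonnegative lower-semicomputable functions (\cref{integral_of_lower_semicomputable}) to $f + M$ and $M - f$, which matches your two-sided scheme, except that you prove the lower-semicomputable case inline (layer-cake formula, monotone Riemann sums over the effectively open level sets $\set{f > q}$) and harmlessly replace $M$ by a rational upper bound $N$.
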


\begin{lem}[Proposition 4.3.1 in \cite{Hoyrup2009}]\label{integral_of_lower_semicomputable}
    Let $(X, \mu)$ be a computable probability space. If $f: \ \subseteq \! X \to [0, \infty]$ is lower-semicomputable, defined almost everywhere, and integrable, then $\int f \dmu$ is also lower-semicomputable, uniformly in an index for $f$.
\end{lem}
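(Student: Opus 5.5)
We need to prove: $f$ lower-semicomputable on a computable probability space, defined a.e., integrable, nonnegative; then $\int f\,d\mu$ is lower-semicomputable, uniformly in an index for $f$.

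The plan is to write $f$ on $\dom f$ as an increasing limit of bounded computable functions and use the Monotone Convergence Theorem. By \cref{approximate_lower_semicomputable_from_below}, from an index for $f$ we compute an increasing computable sequence $\paren{M_k}_k$ in $\Q_{\geq 0}$ and uniformly computable functions $f_k: X \to [0, M_k]$ with $f_k \upto f$ on $\dom f$. Each $f_k$ is total, so it is certainly defined almost everywhere. Being bounded, it is integrable. By \cref{integral_of_computable}, applied with bound $M_k$, the integrals $\int f_k \dmu$ are computable reals, uniformly in $k$ and in the index for $f$.

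Since $\dom f$ has full measure and $f_k \upto f$ there, the Monotone Convergence Theorem gives
\[\int f \dmu = \lim_{k \to \infty} \int f_k \dmu = \sup_k \int f_k \dmu.\]
A supremum of a uniformly computable (in particular uniformly lower-semicomputable) sequence of reals is lower-semicomputable, uniformly in the sequence, as noted before \cref{measure_of_sigma01_lower_semicomputable}. So $\int f \dmu$ is lower-semicomputable, uniformly in an index for $f$. Integrability is only needed to guarantee the value is finite; the argument works verbatim in $[0,\infty]$.

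The main obstacle is the input \cref{integral_of_computable}: computing integrals of bounded computable functions. If that result had to be proved rather than cited, I would do it as follows. Approximate $\int f_k \dmu$ from below by $\sum_i q_i\,\mu(\set{f_k > q_i} \setminus \set{f_k > q_{i+1}})$-type layer-cake sums, $\int f_k \dmu = \int_0^{M_k} \mu\set{f_k > t}\,dt$. The sets $\set{f_k > t}$ are $\Sigma^0_1$, so their measures are lower-semicomputable by \cref{measure_of_sigma01_lower_semicomputable}; Riemann lower sums over rational $t$ then give lower-semicomputability directly. That alone already suffices for the present lemma, so computability of each $\int f_k\dmu$ is more than needed. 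Either route closes the argument.
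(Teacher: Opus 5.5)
The paper gives no proof of this lemma. It is imported from Hoyrup and Rojas (their Proposition 4.3.1), so there is no internal argument to match.

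Relative to the other results the paper states, your main argument is valid:
\begin{itemize}
\item \cref{approximate_lower_semicomputable_from_below} gives total computable $f_k \colon X \to [0,M_k]$ with $f_k \upto f$ on the full-measure set $\dom f$;
\item \cref{integral_of_computable} makes the $\int f_k \dmu$ uniformly computable;
\item monotone convergence gives $\int f \dmu = \sup_k \int f_k \dmu$;
\item a supremum of uniformly computable reals is lower-semicomputable.
\end{itemize}
As a proof of the lemma, however, this runs backwards. \cref{integral_of_computable} is Corollary 4.3.2 of the same source. It follows at once from the present statement by applying it to $M+f$ and $M-f$, and I believe that is how the source obtains it. So this route shows that the two imported facts are compatible, but it does not reduce the lemma to anything more basic.

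Your fallback is the argument that stands on its own. It is also the natural one for the paper's definition of a computable probability space, which is phrased via lower-semicomputable measures of finite unions of ideal balls. It does not need the $f_k$ at all. For rational $t$, lower-semicomputability of $f$ gives uniformly $\Sigma^0_1$ sets $U_t$ with $\set{f > t} = U_t \cap \dom f$. Hence $\mu(U_t) = \mu\set{f > t}$ is uniformly lower-semicomputable by \cref{measure_of_sigma01_lower_semicomputable}. Since $t \mapsto \mu\set{f>t}$ is nonincreasing,
\[\int f \dmu = \int_0^\infty \mu\set{f > t}\,dt = \sup\set{\sum_{i=1}^m (t_i - t_{i-1})\,\mu\paren{U_{t_i}} : m \in \N,\ 0 = t_0 < t_1 < \cdots < t_m \text{ rational}}.\]
Each such sum is a nonnegative rational combination of uniformly lower-semicomputable reals. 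So the supremum is lower-semicomputable, uniformly in an index for $f$, and integrability is used only to make the value finite.

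One caution about your first phrasing. Terms like $q_i\,\mu\paren{\set{f_k > q_i} \setminus \set{f_k > q_{i+1}}}$ are differences of lower-semicomputable reals, so they are not lower-semicomputable term by term. The sum becomes lower-semicomputable only after Abel summation rewrites it as a nonnegative combination of the $\mu\set{f_k > q_i}$, and that requires the top set to be empty. Your Riemann-lower-sum formulation already avoids this problem.
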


There is a similar result for $n$-lower-semicomputable functions.

\begin{lem}\label{integral_of_n_lower_semicomputable}
    Let $(X, \mu)$ be a computable probability space and $n \geq 1$. If $f: \ \subseteq \! X \to [0, \infty]$ is $n$-lower-semicomputable, defined almost everywhere, and integrable, then $\int f \dmu$ is $\zero^{(n - 1)}$-lower-semicomputable, uniformly in an index for $f$.
\end{lem}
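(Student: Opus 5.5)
We prove the statement by induction on $n$, with the base case $n = 1$ being \cref{integral_of_lower_semicomputable}. The induction step for $n \geq 2$ reduces the level of the function by one through the approximation result \cref{approximate_(n+1)_lower_semicomputable_from_below}, and then recovers $\int f \dmu$ as a supremum of integrals.

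For $n \geq 2$, apply \cref{approximate_(n+1)_lower_semicomputable_from_below}, with $n - 1$ in place of $n$, to $f$. This produces an increasing computable sequence of rationals $M_k > 0$ and uniformly $(n-1)$-upper-semicomputable functions $f_k: X \to [0, M_k]$ such that $f_k \upto f$ on $\dom f$. Since $\dom f$ has full measure, the Monotone Convergence Theorem gives
\[\int f \dmu = \sup_k \int f_k \dmu.\]
A supremum of uniformly $\zero^{(n-1)}$-lower-semicomputable reals is again $\zero^{(n-1)}$-lower-semicomputable. So it suffices to show that $\int f_k \dmu$ is uniformly $\zero^{(n-1)}$-lower-semicomputable, and for that it is enough to show it is uniformly $\zero^{(n-2)}$-upper-semicomputable.

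To get this, consider $g_k = M_k - f_k$. This function maps into $[0, M_k]$, is $(n-1)$-lower-semicomputable, and is defined wherever $f_k$ is; since $f_k$ is total, $g_k$ is defined everywhere. By the induction hypothesis, $\int g_k \dmu$ is $\zero^{(n-2)}$-lower-semicomputable, uniformly in $k$. Hence
\[\int f_k \dmu = M_k - \int g_k \dmu\]
is uniformly $\zero^{(n-2)}$-upper-semicomputable, and in particular $\zero^{(n-1)}$-computable, which is the strong form claimed in the proof of \cref{OWnR_Birkhoff}.

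The main obstacle is the uniformity bookkeeping. We need the induction hypothesis to hold uniformly in an index for the function, and we need the passage from $f_k$ to $M_k - f_k$ to preserve semicomputability at the right level, uniformly in $k$. The latter holds because $\{M_k - f_k > q\} = \{f_k < M_k - q\}$, which is uniformly $\Sigma^0_{n-1}$ by the definition of $(n-1)$-upper-semicomputability. A second point to watch is that the base case uses \cref{integral_of_lower_semicomputable}, which requires integrability. For the $g_k$ this is immediate, since they are bounded.
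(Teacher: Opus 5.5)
Your proof is correct and follows the paper's argument essentially step for step, with the index shifted by one. Both use the lower-semicomputability of integrals of lower-semicomputable functions as the base case. Both then approximate $f$ from below by bounded $(n-1)$-upper-semicomputable $f_k$ and apply the inductive hypothesis to $M_k - f_k$, which makes $\int f_k \dmu$ uniformly $\zero^{(n-2)}$-upper-semicomputable and hence $\zero^{(n-1)}$-computable. Finally, both apply the Monotone Convergence Theorem on the full-measure set $\dom f$.
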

\begin{proof} 
    The case $n = 1$ is given by \cref{integral_of_lower_semicomputable}, so assume this holds for some $n \geq 1$ and consider any $(n + 1)$-lower-semicomputable function $f: \ \subseteq \! X \to [0, \infty]$. By \cref{approximate_(n+1)_lower_semicomputable_from_below}, there are an increasing computable sequence $\paren{M_k}_k$ in $\Q_{\geq 0}$ and a sequence of uniformly $n$-upper-semicomputable functions $f_k: X \to [0, M_k]$ such that $f_k \upto f$ on $\dom f$ as $k \to \infty$.
    
    Since $M_k - f_k$ is nonnegative, uniformly $n$-lower-semicomputable, and integrable, $\int\paren{M_k - f_k} \dmu$ is uniformly $\zero^{(n - 1)}$-lower-semicomputable by the inductive hypothesis. This implies that $\int f_k \dmu$ is uniformly $\zero^{(n - 1)}$-upper-semicomputable and hence uniformly $\zero^{(n)}$-computable. It also converges to $\int f \dmu$ by the Monotone Convergence Theorem since $\dom f$ has full measure, so $\int f \dmu$ is $\zero^{(n)}$-lower-semicomputable. Thus, the claim holds for all $n \geq 1$.
\end{proof}

By considering characteristic functions of $\Sigma^0_n$ sets, the following corollary is immediate, although it could also be proven directly.

\begin{cor}\label{measure_of_sigma0n}
    Let $(X, \mu)$ be a computable probability space and $n \geq 1$. Then the measures of $\Sigma^0_n$ sets are uniformly $\zero^{(n - 1)}$-lower-semicomputable.
\end{cor}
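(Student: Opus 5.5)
The statement to prove is \cref{measure_of_sigma0n}: the measures of $\Sigma^0_n$ sets are uniformly $\zero^{(n-1)}$-lower-semicomputable. The plan is to reduce it directly to \cref{integral_of_n_lower_semicomputable} via characteristic functions, since $\mu(U) = \int \one_U \dmu$.

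Given a $\Sigma^0_n$ set $U$, consider $f = \one_U : X \to [0,\infty]$, which is total. The first step is to check that $f$ is $n$-lower-semicomputable, uniformly in an index for $U$. For rational $q$, the set $\{x : f(x) > q\}$ is $X$ if $q < 0$, $U$ if $0 \leq q < 1$, and $\zero$ if $q \geq 1$. Both $X$ and $\zero$ are $\Sigma^0_n$ with indices computable outright; for instance, they are $\Sigma^0_1$ and hence $\Sigma^0_n$ by \cref{sigma0n_is_sigma0(n+1)}. Since the case distinction on $q$ is decidable, this yields a $\Sigma^0_n$ index for $\{f > q\}$ uniformly in $q$ and the index of $U$.

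Next, $f$ is defined everywhere and bounded by $1$, so it is integrable. \cref{integral_of_n_lower_semicomputable} then gives that $\int f \dmu = \mu(U)$ is $\zero^{(n-1)}$-lower-semicomputable, uniformly in the index of $f$, and hence uniformly in the index of $U$.

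The only point needing care is uniformity. The index for $f$ must be produced computably from the index for $U$, which the case split above does, and the uniformity clause of \cref{integral_of_n_lower_semicomputable} then transfers to the family of all $\Sigma^0_n$ sets. I expect no real obstacle; the substantive work is already contained in \cref{integral_of_n_lower_semicomputable}.
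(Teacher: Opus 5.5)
Your proposal is correct and is essentially the paper's own argument: the paper obtains this corollary immediately by applying \cref{integral_of_n_lower_semicomputable} to the characteristic function $\one_U$ of a $\Sigma^0_n$ set $U$. Your explicit check that $\one_U$ is integrable and is $n$-lower-semicomputable uniformly in an index for $U$ simply fills in the details the paper leaves implicit.
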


\subsubsection[Approximating Σ0n sets]{Approximating $\Sigma^0_n$ sets}

We have just seen that the measures of $\Sigma^0_n$ sets are always $\zero^{(n - 1)}$-lower-semicomputable. Sometimes they are also $\zero^{(n - 1)}$-upper-semicomputable, and fortunately, this property is preserved by intersections and unions.

\begin{lem}\label{intersection_of_sigma0n_with_computable_measure}
    Let $(X, \mu)$ be a computable probability space and $n \geq 1$. Let $U$ and $V$ be $\Sigma^0_n$ subsets of $X$ such that $\mu(U)$ and $\mu(V)$ are $\zero^{(n - 1)}$-computable. Then $\mu(U \cap V)$ and $\mu(U \cup V)$ are also $\zero^{(n - 1)}$-computable, uniformly given $U$, $V$, $\mu(U)$, and $\mu(V)$.
\end{lem}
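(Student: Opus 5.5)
The plan is to reduce the statement to the union case and then obtain the intersection from inclusion--exclusion. Since $\mu(U \cap V) = \mu(U) + \mu(V) - \mu(U \cup V)$, it suffices to show that $\mu(U \cup V)$ is $\zero^{(n-1)}$-computable, uniformly in the given data. Lower-semicomputability is immediate. By \cref{union_of_sigma0n} (or \cref{intersection_of_sigma0n} for the finite case), $U \cup V$ is $\Sigma^0_n$, uniformly in indices for $U$ and $V$. Then \cref{measure_of_sigma0n} makes $\mu(U \cup V)$ $\zero^{(n-1)}$-lower-semicomputable. The same reasoning shows $U \cap V$ is $\Sigma^0_n$, so $\mu(U \cap V)$ is also $\zero^{(n-1)}$-lower-semicomputable, uniformly.

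Upper-semicomputability comes from the identity. We have
\[\mu(U \cup V) = \mu(U) + \mu(V) - \mu(U \cap V).\]
Here $\mu(U)$ and $\mu(V)$ are $\zero^{(n-1)}$-computable by hypothesis, and $\mu(U \cap V)$ is $\zero^{(n-1)}$-lower-semicomputable. Subtracting a lower-semicomputable real from computable ones yields an upper-semicomputable real. So $\mu(U \cup V)$ is $\zero^{(n-1)}$-upper-semicomputable, and combined with the first step it is $\zero^{(n-1)}$-computable. By the symmetric argument, $\mu(U \cap V) = \mu(U) + \mu(V) - \mu(U \cup V)$ is $\zero^{(n-1)}$-upper-semicomputable, since $\mu(U \cup V)$ is lower-semicomputable. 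Together with its lower-semicomputability, $\mu(U \cap V)$ is $\zero^{(n-1)}$-computable.

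The argument uses no case split on $n$ and no approximation machinery; every step is uniform in the indices for $U$, $V$ and the names of $\mu(U)$, $\mu(V)$. The only point needing care is uniformity. \cref{measure_of_sigma0n} must be applied uniformly in the $\Sigma^0_n$ indices of $U \cup V$ and $U \cap V$, and those indices must be computed effectively from the indices of $U$ and $V$. Both facts are exactly what \cref{union_of_sigma0n,intersection_of_sigma0n,measure_of_sigma0n} provide, so I expect no real obstacle here.
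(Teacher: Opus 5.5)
Your proposal is correct and is essentially the paper's own proof. Both note that $U \cap V$ and $U \cup V$ are $\Sigma^0_n$, so their measures are uniformly $\zero^{(n-1)}$-lower-semicomputable, and then use inclusion--exclusion to turn the lower-semicomputability of each measure into upper-semicomputability of the other.
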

\begin{proof}
    Since $U \cap V$ and $U \cup V$ are $\Sigma^0_n$, by \cref{measure_of_sigma0n} their measures are $\zero^{(n - 1)}$-lower-semicomputable, uniformly given $U$ and $V$. On the other hand, we can write
    \[\mu(U \cap V) = \mu(U) + \mu(V) - \mu(U \cup V),\]
    where $\mu(U)$ and $\mu(V)$ are $\zero^{(n - 1)}$-computable and $\mu(U \cup V)$ is $\zero^{(n - 1)}$-lower-semicomputable. Hence $\mu(U \cap V)$ is $\zero^{(n - 1)}$-upper-semicomputable, uniformly given $U$, $V$, $\mu(U)$, and $\mu(V)$. Rearranging the equation above shows that $\mu(U \cup V)$ is likewise $\zero^{(n - 1)}$-upper-semicomputable, uniformly given $U$, $V$, $\mu(U)$, and $\mu(V)$.
\end{proof}

Recall from \cref{sigma0n(A(m))_is_sigma0(m+n)(A)} that every $\Sigma^0_1\paren{\zero^{(n - 1)}}$ set is also $\Sigma^0_n$. The converse is not true in general, but we can always approximate a $\Sigma^0_n$ set from the outside arbitrarily well with a $\Sigma^0_1\paren{\zero^{(n - 1)}}$ set. The following result extends Theorem 6.8.3(i) from \cite{DowneyHirschfeldt} to any computable probability space while also preserving $\zero^{(n - 1)}$-computability of the measure.

\begin{lem}\label{approximate_sigma0n_with_sigma01(zero(n-1))}
    Let $(X, \mu)$ be a computable probability space and $n \geq 1$. Then for any $\Sigma^0_n$ set $U$ and rational $\eps > 0$, there exists a $\Sigma^0_1\paren{\zero^{(n - 1)}}$ set $V \supseteq U$ such that $\mu(V) < \mu(U) + \eps$. Moreover, an index for $V$ can be computed from $U$ and $\eps$.

    If in addition $\mu(U)$ is $\zero^{(n - 1)}$-computable, then $\mu(V)$ will also be $\zero^{(n - 1)}$-computable, uniformly given $U$, $\mu(U)$, and $\eps$.
\end{lem}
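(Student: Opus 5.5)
I would prove this by induction on $n$, using the representation of a $\Sigma^0_n$ set as an effective union of $\Pi^0_{n-1}$ sets and the fact that the measures of $\Pi^0_{n-1}$ sets can be approximated from above by measures of $\Sigma^0_{n-1}$ sets, which the oracle $\zero^{(n-1)}$ can handle.

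For $n = 1$ there is nothing to do: take $V = U$, which is $\Sigma^0_1$ with the same (possibly computable) measure. For $n \geq 2$, write $U = \bigcup_j P_j$ with $P_j$ uniformly $\Pi^0_{n-1}$, and write each $P_j = \bigcap_k W_{j,k}$ with $W_{j,k}$ nested and uniformly $\Sigma^0_{n-2}$ (for $n = 2$, use \cref{approximate_sigma01_or_pi01} to get $\Sigma^0_1$ sets with computable measures). By \cref{measure_of_sigma0n}, $\mu(W_{j,k})$ is uniformly $\zero^{(n-3)}$-lower-semicomputable when $n \geq 3$, hence uniformly $\zero^{(n-1)}$-computable. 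Likewise, $\mu(P_j \cap \bigcup_{i<j} \dots)$-type quantities are available, but a simpler route is as follows. Since $\mu(W_{j,k}) \downarrow \mu(P_j)$, and $\mu(P_j)$ is $\zero^{(n-2)}$-upper-semicomputable (as the complement of a $\Sigma^0_{n-1}$ set), both limits are visible to $\zero^{(n-1)}$. Hence $\zero^{(n-1)}$ can compute $\mu(P_j)$ and find $k(j)$ with $\mu(W_{j,k(j)}) < \mu(P_j) + 2^{-j-2}\eps$.

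Next I apply the induction hypothesis, relativized and uniform, to each $\Sigma^0_{n-2}$ set $W_{j,k(j)}$. Viewed as $\Sigma^0_{n-1}$, it yields a $\Sigma^0_1(\zero^{(n-2)})$ superset with excess at most $2^{-j-2}\eps$; these are $\zero^{(n-1)}$-uniformly $\Sigma^0_1(\zero^{(n-1)})$. Their union $V$ contains $U$, is $\Sigma^0_1(\zero^{(n-1)})$, and satisfies
\[
\mu(V \setminus U) \leq \sum_j \mu(V_j \setminus P_j) \leq \sum_j 2^{-j-1}\eps < \eps.
\]
So $\mu(V) < \mu(U) + \eps$. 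An index is computable from $U$ and $\eps$ because every step is uniform.

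For the second claim, I would also control the measure computability. With $\mu(U)$ $\zero^{(n-1)}$-computable, $\mu(V)$ is $\zero^{(n-1)}$-lower-semicomputable by \cref{measure_of_sigma0n} relativized, so the remaining task is an upper bound. I would use the tails: $\mu(V) \leq \mu(U) + \sum_{j \geq J} 2^{-j-1}\eps + \mu\bigl(\bigcup_{j<J} V_j \setminus U\bigr)$. The last term is the hard part. I expect the main obstacle here, since $\mu(\bigcup_{j<J}V_j)$ is only lower-semicomputable, so we are missing an upper bound. To fix this, I would first arrange that each $V_j$ has $\zero^{(n-1)}$-computable measure by shrinking it via \cref{approximate_sigma01_or_pi01} relativized, intersecting with a suitable almost decidable union, or by choosing $V_j$ among balls with almost decidable boundaries. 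Then finite unions have computable measure by \cref{intersection_of_sigma0n_with_computable_measure}, and the bound
\[
\mu(V) - \mu\Bigl(\bigcup_{j<J} V_j\Bigr) \leq \mu(U) + \eps_J - \mu\Bigl(\bigcup_{j<J} V_j\Bigr)
\]
still needs $\mu(U \setminus \bigcup_{j<J} V_j) \to 0$ computably. That quantity equals $\mu(U) - \mu(U \cap \bigcup_{j<J}V_j)$, and I would bound it from above using $\mu(U)$ and the computable measures $\mu(\bigcup_{j<J}P_j)$-approximants, giving $\zero^{(n-1)}$-computability of $\mu(V)$ as a limit with computable modulus.
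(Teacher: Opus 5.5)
Your route is the paper's. You use the same decomposition $U=\bigcup_j P_j$, $P_j=\bigcap_k W_{j,k}$, a $\zero^{(n-1)}$-computable choice of $k(j)$, the induction hypothesis applied to $W_{j,k(j)}$, and $V=\bigcup_j V_j$. The first half is sound with one small slip. Since $\sum_j 2^{-j-1}\eps=\eps$ exactly, the strict inequality $\mu(V)<\mu(U)+\eps$ must come from the strict per-term bounds $\mu(V_j\setminus P_j)<2^{-j-1}\eps$, not from the sum.

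The second half misdiagnoses the difficulty, and should be corrected as follows.

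\emph{There is no obstacle with $\mu\bigl(\bigcup_{j<J}V_j\bigr)$.} Each $V_j$ is $\Sigma^0_1\paren{\zero^{(n-2)}}$, so the finite union is $\Sigma^0_1\paren{\zero^{(n-2)}}$, with index $\zero^{(n-1)}$-computable from $J$. By \cref{measure_of_sigma0n} (relativized) its measure is $\zero^{(n-2)}$-lower-semicomputable, hence uniformly $\zero^{(n-1)}$-computable. That one spare jump is exactly what the paper exploits.

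\emph{Do not modify the $V_j$.} In particular, ``shrinking'' them to get computable measure could destroy $V_j\supseteq P_j$, and with it $V\supseteq U$.

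\emph{Your displayed inequality is misstated.} As written it reduces to $\mu(V)\le\mu(U)+\eps_J$, which is false in general when $\eps_J$ is the tail $2^{-J}\eps$. The correct estimate comes from $V\setminus\bigcup_{j<J}V_j\subseteq\bigl(U\setminus\bigcup_{j<J}P_j\bigr)\cup\bigcup_{j\ge J}(V_j\setminus P_j)$, which gives
\[
\mu(V)\le\mu\Bigl(\bigcup_{j<J}V_j\Bigr)+\mu(U)-\mu\Bigl(\bigcup_{j<J}P_j\Bigr)+2^{-J}\eps .
\]
Here $\bigcup_{j<J}P_j$ is $\Pi^0_{n-1}$, so its measure is $\zero^{(n-2)}$-upper-semicomputable, hence uniformly $\zero^{(n-1)}$-computable, and it increases to $\mu(U)$.

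\emph{Finishing.} These are uniformly $\zero^{(n-1)}$-computable upper bounds converging to $\mu(V)$, so $\mu(V)$ is $\zero^{(n-1)}$-upper-semicomputable. Combined with the lower semicomputability you already noted, $\mu(V)$ is $\zero^{(n-1)}$-computable, and no separate modulus is needed. That is the paper's argument. Your use of $\bigcup_{j<J}P_j$, rather than a single $P_J$ as the paper writes, is in fact the correct form, since the $P_j$ need not be nested.
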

\begin{proof}
    Assume the result holds some $n \geq 1$, which it trivially does for $n = 1$ by taking $V = U$.
    
    Let $U$ be $\Sigma^0_{n + 1}$ and $\eps > 0$ be rational. Then we can write $U = \bigcup_i P_i$ for some uniformly $\Pi^0_n$ sets $P_i$. We now write $P_i = \bigcap_j U_{i, j}$ according to one of the two cases below:
    \begin{itemize}
        \item $n = 1$. In this case, use \cref{approximate_sigma01_or_pi01} to write $P_i = \bigcap_j U_{i, j}$ for some uniformly $\Sigma^0_1$ sets $U_{i, j}$ with uniformly computable measures.
        \item $n \geq 2$. In this case, write $P_i = \bigcap_j U_{i, j}$ for some uniformly $\Sigma^0_{n - 1}$ sets $U_{i, j}$, which have uniformly $\zero^{(n - 2)}$-lower-semicomputable measures.
    \end{itemize}
    Since $\mu(P_i)$ and $\mu\paren{U_{i, j}}$ are both uniformly $\zero^{(n)}$-computable and $\mu\paren{U_{i, j}} \to \mu\paren{P_i}$ as $j \to \infty$ for each $i$, there is a $\zero^{(n)}$-computable function $i \mapsto j_i$ such that $\mu\paren{U_{i, j_i}} < \mu\paren{P_i} + 2^{-(i + 2)}\eps$.
    
    Now define sets $V_{i, j} \supseteq U_{i, j}$ according to one of the two cases below:
    \begin{itemize}
        \item $n = 1$. In this case, just take $V_{i, j} = U_{i, j}$.
        \item $n \geq 2$. In this case, since the sets $U_{i, j}$ are uniformly $\Sigma^0_{n - 1}$, we can use the inductive hypothesis to get uniformly $\Sigma^0_1\paren{\zero^{(n - 2)}}$ sets $V_{i, j} \supseteq U_{i, j}$ such that $\mu\paren{V_{i, j}} < \mu\paren{U_{i, j}} + 2^{-(i + 2)}\eps$.
    \end{itemize}
    Then the sets $V_{i, j_i}$ are uniformly $\Sigma^0_1\paren{\zero^{(n)}}$, along with their union $V := \bigcup_i V_{i, j_i}$, uniformly in $U$ and $\eps$.

    Since $V_{i, j_i} \supseteq U_{i, j_i} \supseteq P_i$ for all $i$, we clearly have $V \supseteq U$. Moreover, since
    \[V = \bigcup_i V_{i, j_i} = U \cup \bigcup_i \paren{V_{i, j_i} \setminus P_i},\]
    where
    \[\mu\paren{V_{i, j_i} \setminus P_i} = \mu\paren{V_{i, j_i} \setminus U_{i, j_i}} + \mu\paren{U_{i, j_i} \setminus P_i} < 2^{-(i + 2)}\eps + 2^{-(i + 2)}\eps = 2^{-(i + 1)}\eps,\]
    we have
    \[\mu(V) \leq \mu(U) + \sum_i \mu\paren{V_{i, j_i} \setminus P_i} < \mu(U) + \sum_i 2^{-(i + 1)}\eps = \mu(U) + \eps.\]
    Now assume moreover that $\mu(U)$ is $\zero^{(n)}$-computable. Let $V_k = \bigcup_{i \leq k} V_{i, j_i}$, which is uniformly $\Sigma^0_1\paren{\zero^{(n)}}$ with uniformly $\zero^{(n)}$-computable measure in $k$ by \cref{intersection_of_sigma0n_with_computable_measure} because it is a finite union of uniformly $\Sigma^0_1\paren{\zero^{(n)}}$ sets with uniformly $\zero^{(n)}$-computable measures. Then since $P_k \subseteq V_k$, we have
    \[V \setminus V_k = \paren{U \setminus V_k} \cup \paren{\paren{\bigcup_i \paren{V_{i, j_i} \setminus P_i}} \setminus V_k} \subseteq \paren{U \setminus P_k} \cup \bigcup_{i > k} \paren{V_{i, j_i} \setminus P_i}\]
    and hence
    \[\mu\paren{V \setminus V_k} \leq \mu\paren{U \setminus P_k} + \sum_{i > k} \mu\paren{V_{i, j_i} \setminus P_i} < \mu\paren{U \setminus P_k} + \sum_{i > k} 2^{-(i + 1)}\eps = \mu(U) - \mu\paren{P_k} + 2^{-k}\eps.\]
    The quantity on the right-hand side is uniformly $\zero^{(n)}$-computable in $k$ and approaches 0 as $k \to \infty$. Thus, $\mu(V)$ is $\zero^{(n)}$-computable, uniformly in $U$, $\mu(U)$, and $\eps$.

    By induction, the claim holds for all $n \geq 1$.
\end{proof}

Later on we will also want the ability to approximate $\Sigma^0_1$ sets from the inside with compact $\Pi^0_1$ sets, which the following result shows is always possible. (In fact, we could upgrade this to \emph{computably compact} $\Pi^0_1$ sets, as studied by \cite{Downey2023}, but this will not be necessary for our purposes.)

\begin{lem}\label{approximate_sigma01_inside_with_compact}
    Let $(X, \mu)$ be a computable probability space, $U$ be $\Sigma^0_1$ and $\eps > 0$ be rational. Then there exists a compact $\Pi^0_1$ set $K \subseteq U$ such that $\mu(K) > \mu(U) - \eps$. If in addition $\mu(U)$ is computable, then an index for $K$ can be computed from $U$, $\mu(U)$, and $\eps$.
\end{lem}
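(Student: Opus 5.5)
Since $\mu$ is a Radon measure on the complete separable metric space $X$, it is tight. The plan is to combine a compactness argument built from finitely many closed ideal balls with the approximation results already established. The proof proceeds in three steps.

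\emph{Step 1: tightness.} For each $m$, the closed balls $\cl B(s_i, 2^{-m})$ cover $X$, and their measures are uniformly upper-semicomputable by \cref{closed_balls_closed} and \cref{measure_of_sigma01_lower_semicomputable} applied to complements. So for each $m$ we can search for some $N_m$ with
\[\mu\paren{\bigcup_{i < N_m} \cl B(s_i, 2^{-m})} > 1 - 2^{-(m + 2)}\eps.\]
Such an $N_m$ exists by continuity of measure, and it can be found effectively because the left side is lower-semicomputable uniformly in $N$, being $1$ minus the measure of a $\Sigma^0_1$ set. Let
\[L = \bigcap_m \bigcup_{i < N_m} \cl B(s_i, 2^{-m}).\]
This set is closed and totally bounded, hence compact since $X$ is complete. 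It is $\Pi^0_1$ by \cref{closed_balls_closed} together with \cref{union_of_sigma0n,intersection_of_sigma0n}, and it satisfies $\mu(L) > 1 - \eps/2$.

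\emph{Step 2: a closed subset of $U$.} By \cref{approximate_sigma01_or_pi01}, write $U = \bigcup_s P_s$ with $(P_s)_s$ increasing, uniformly $\Pi^0_1$, and with uniformly computable measures. Choose $s$ with $\mu(P_s) > \mu(U) - \eps/2$, and set $K = L \cap P_s$. Then $K$ is a $\Pi^0_1$ closed subset of the compact set $L$, so $K$ is compact, and $K \subseteq U$. Moreover,
\[\mu(K) \geq \mu(P_s) - \mu(\neg L) > \mu(U) - \eps.\]

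\emph{Step 3: uniformity.} When $\mu(U)$ is computable, $s$ can be found effectively: since $\mu(P_s)$ is computable uniformly in $s$, we search for an $s$ with $\mu(P_s) > \mu(U) - \eps/2$, using rational approximations of both quantities. The choice of the $N_m$ in Step 1 is always effective. Hence an index for $K$ is computable from $U$, $\mu(U)$, and $\eps$. In the non-uniform case, such an $s$ exists simply by continuity of measure.

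The main obstacle is making Step 1 effective. The strict inequality and the lower-semicomputability of $\mu$ of finite unions of closed balls (equivalently, the upper-semicomputability of the measure of the $\Sigma^0_1$ complement) must be handled carefully. If this becomes an issue, I would instead use the almost decidable radii from \cref{almost_decidable_basis}, so that the finite unions have computable measure.
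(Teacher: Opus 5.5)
Your overall plan is sound, but the effectivity argument in Step 1 is backwards. The complement of $\bigcup_{i<N}\cl B(s_i,2^{-m})$ is $\Sigma^0_1$, so the measure of this union is $1$ minus a lower-semicomputable number. That makes it \emph{upper}-semicomputable, as you correctly say one sentence earlier, not lower-semicomputable. Upper approximations can never certify the strict lower bound $\mu\bigl(\bigcup_{i<N}\cl B(s_i,2^{-m})\bigr)>1-2^{-(m+2)}\eps$, so the search for $N_m$ as described never halts with a verified answer.

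This matters beyond uniformity. $L$ is $\Pi^0_1$ only because $(N_m)_m$ is computable. For a non-effective choice of $(N_m)_m$, $L$ need not be $\Pi^0_1$ (it is only $\Pi^0_1$ relative to $(N_m)_m$), so even your non-uniform existence claim rests on this step.

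The repair is immediate: certify the bound with the \emph{open} balls. The balls $B(s_i,2^{-m})$ also cover $X$, and the measure of $\bigcup_{i<N}B(s_i,2^{-m})$ is lower-semicomputable uniformly in $N$ and $m$. So you can effectively find $N_m$ with $\mu\bigl(\bigcup_{i<N_m}B(s_i,2^{-m})\bigr)>1-2^{-(m+2)}\eps$. Since each open ball lies inside the corresponding closed ball, the same bound holds for the closed-ball union defining $L$. This open-balls-for-the-measure, closed-balls-for-the-set device is exactly what the paper uses. Your almost decidable fallback would also work but is heavier than necessary. With this change, Steps 2 and 3 go through as you wrote them.

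Once repaired, your route genuinely differs from the paper's and is arguably cleaner. The paper never builds a compact set of large measure in all of $X$. It covers $U$ directly: it chooses $j_n$ with $\mu\bigl(U\cap\bigcup_{i<j_n}B(s_i,2^{-n}\eps)\bigr)>\mu(U)-2^{-(n+2)}\eps$, then intersects $\bigcap_n\bigcup_{i<j_n}\cl B(s_i,2^{-n}\eps)$ with a $\Pi^0_1$ set $P\subseteq U$ of measure greater than $\mu(U)-\eps/2$. Finding the $j_n$ requires $\mu(U)$ to be computable. The paper therefore handles the general case by a separate reduction, replacing $U$ with a finite union of almost decidable balls inside $U$ whose measure is computable and close to $\mu(U)$.

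You instead prove effective tightness of $\mu$ on $X$, where the target value $\mu(X)=1$ is known. So $L$ is $\Pi^0_1$ independently of $U$ and could be reused for every $U$. All dependence on $\mu(U)$ is isolated in the single number $s$. In the non-uniform case, that one number can simply be chosen non-effectively without affecting the fact that $K=L\cap P_s$ is $\Pi^0_1$.
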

\begin{proof}
    We assume without loss of generality that $\mu(U)$ is computable. (If this is not the case, then one can use \cref{almost_decidable_basis} to write $U$ as a c.e.\ union of almost decidable open balls. Some finite subcollection of these will have union $V$ with computable measure $\mu(V) > \mu(U) - \eps$. Then one can apply the result with $V$ and any rational $\delta$ such that $0 < \delta < \mu(V \setminus U) + \eps$.)
    
    For $s \in X$ and $r > 0$, let
    \[B(s, r) = \{x \in X: d(x, s) < r\}\quad\text{and}\quad\cl B(s, r) = \{x \in X: d(x, s) \leq r\}.\]
    Let $\paren{s_i}_i$ be the sequence of ideal points in $X$. Then for each $n$, we have
    \[X = \bigcup_i B\paren{s_i, 2^{-n}\eps}.\]
    Define sets
    \[U_{n, j} = U \cap \bigcup_{i < j} B(s_i, 2^{-n}\eps),\]
    which are uniformly $\Sigma^0_1$ and hence have uniformly lower-semicomputable measures. Also, for each $n$, the sequence $\paren{U_{n, j}}_j$ is increasing and satisfies $\bigcup_j U_{n, j} = U$. Consequently, there is a computable function $n \mapsto j_n$ such that
    \[\mu\paren{U_{n, j_n}} > \mu(U) - 2^{-(n + 2)}\eps.\]
    Also, by \cref{approximate_sigma01_or_pi01}, there is a $\Pi^0_1$ set $P \subseteq U$ such that
    \[\mu(P) > \mu(U) - \frac{\eps}{2}.\]
    Then define
    \[K = P \cap \bigcap_n \bigcup_{i < j_n} \cl B\paren{s_i, 2^{-n}\eps},\]
    which is a $\Pi^0_1$ set satisfying
    \[P \cap \bigcap_n U_{n, j_n} \subseteq K \subseteq U.\]
    Note that $K$ is totally bounded since for each $n$, it is contained within finitely many open balls of radius $2^{-n + 1}\eps$. Since $K$ is also closed and $X$ is a complete metric space, it follows that $K$ is compact.
    
    Finally, observe that
    \[U \setminus K \subseteq (U \setminus P) \cup \bigcup_n \paren{U \setminus U_{n, j_n}},\]
    which implies
    \[\mu(U \setminus K) \leq \mu(U \setminus P) + \sum_n \mu\paren{U \setminus U_{n, j_n}} < \frac{\eps}{2} + \sum_n 2^{-(n + 2)}\eps = \frac{\eps}{2} + \frac{\eps}{2} = \eps,\]
    as desired. Moreover, an index for $K$ can be computed from $U$, $\mu(U)$, and $\eps$.
\end{proof}

\subsection{Notions of Randomness}

We make use of four kinds of randomness when discussing effective ergodic theorems.

\subsubsection{Definitions}

\begin{defn}
    Let $(X, \mu)$ be a computable probability space, $n \geq 1$ and $A \subseteq \N$.
    \begin{itemize}
        \item A point $x \in X$ is \textbf{weakly $n$-random relative to $A$} (or \textbf{Kurtz random relative to $A$} if $n = 1$) if $x$ does not lie in any null $\Pi^0_n(A)$ set.
        \item If $A = \zero$, we just say \textbf{weakly $n$-random} (or \textbf{Kurtz random} if $n = 1$).
    \end{itemize}
\end{defn}

\begin{defn}
    Let $(X, \mu)$ be a computable probability space, $n \geq 1$, and $A \subseteq \N$.
    \begin{itemize}
        \item A \textbf{Schnorr $n$-test relative to $A$} is a sequence $\paren{U_k}_k$ of uniformly $\Sigma^0_n(A)$ sets such that $\mu\paren{U_k} \leq 2^{-k}$ for all $k$ and $\mu\paren{U_k}$ is uniformly $A^{(n - 1)}$-computable in $k$.
    
        \item A point $x \in X$ is \textbf{Schnorr $n$-random relative to $A$} if $x \notin \bigcap_k U_k$ for all Schnorr $n$-tests $\paren{U_k}_k$ relative to $A$.
        
        \item If $A = \zero$, we just say \textbf{Schnorr $n$-test} and \textbf{Schnorr $n$-random}. We also omit ``$n$-'' if $n = 1$.
    \end{itemize}
\end{defn}

\begin{defn}
    Let $(X, \mu)$ be a computable probability space, $n \geq 1$, and $A \subseteq \N$.
    \begin{itemize}
        \item An \textbf{$n$-test relative to $A$} is a sequence $\paren{U_k}_k$ of uniformly $\Sigma^0_n(A)$ sets such that $\mu\paren{U_k} \leq 2^{-k}$ for all $k$.
    
        \item A point $x \in X$ is \textbf{$n$-random relative to $A$} if $x \notin \bigcap_k U_k$ for all $n$-tests $\paren{U_k}_k$ relative to $A$.

        \item If $A = \zero$, we just say \textbf{$n$-test} and \textbf{$n$-random}. We also say \textbf{Martin-L\"of test} and \textbf{Martin-L\"of random} if $n = 1$.
    \end{itemize}
\end{defn}

\begin{defn}
    Let $(X, \mu)$ be a computable probability space, $n \geq 1$, and $A \subseteq \N$.
    \begin{itemize}
        \item An \textbf{Oberwolfach $n$-test relative to $A$} is a sequence $\paren{U_k}_k$ of uniformly $\Sigma^0_n(A)$ sets such that for some sequence $\paren{\beta_k}_k$ of uniformly $A^{(n - 1)}$-computable real numbers with $\beta := \sup_k \beta_k < \infty$, we have $\mu\paren{U_k} \leq \beta - \beta_k$ for all $k$.
    
        \item A point $x \in X$ is \textbf{Oberwolfach $n$-random relative to $A$} if $x \notin \bigcap_k U_k$ for all Oberwolfach $n$-tests $\paren{U_k}_k$ relative to $A$. 

        \item If $A = \zero$, we just say \textbf{Oberwolfach $n$-test} and \textbf{Oberwolfach $n$-random}. We also omit ``$n$-'' if $n = 1$.
    \end{itemize}
\end{defn}

\subsubsection{Basic relationships and properties}

The following lemma is straightforward to show. (Note, however, that when $n = 1$, implication (d) requires \cref{approximate_sigma01_or_pi01}.)

\begin{lem}
    In any computable probability space, for any $n \geq 1$,
    \begin{center}
        weakly $(n + 1)$-random $\myimplies{(a)}$ Oberwolfach $n$-random $\myimplies{(b)}$ $n$-random \\
        $\myimplies{(c)}$ Schnorr $n$-random $\myimplies{(d)}$ weakly $n$-random.
    \end{center}
\end{lem}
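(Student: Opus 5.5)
We prove the four implications (a)--(d) separately, in each case by starting from a test or null set that captures $x$ and producing a test of the weaker kind that also captures $x$.

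\emph{(d) Schnorr $n$-random implies weakly $n$-random.} Suppose $x$ lies in a null $\Pi^0_n$ set $P$. For $n\ge 2$, write $P=\bigcap_k V_k$ with $V_k$ uniformly $\Sigma^0_{n-1}$. We may replace $V_k$ by $\bigcap_{j\le k}V_j$ to make the sequence nested. By \cref{measure_of_sigma0n}, each $\mu(V_k)$ is $\zero^{(n-2)}$-lower-semicomputable, and hence uniformly $\zero^{(n-1)}$-computable. Since $\mu(V_k)\to 0$, the oracle $\zero^{(n-1)}$ computes a function $g$ with $\mu(V_{g(m)})\le 2^{-m}$. The sequence $(V_{g(m)})_m$ is $\zero^{(n-1)}$-uniformly $\Sigma^0_{n-1}$, so it is uniformly $\Sigma^0_n$ by \cref{zero(n-1)_uniformly_sigma0n_is_uniformly_sigma0n}. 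It is therefore a Schnorr $n$-test containing $x$.

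For $n=1$ we instead use \cref{approximate_sigma01_or_pi01}. This writes $P=\bigcap_s U_s$ with the $U_s$ nested, uniformly $\Sigma^0_1$, and of uniformly computable measure tending to $0$. We then computably select a subsequence with measures at most $2^{-m}$.

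\emph{(c) $n$-random implies Schnorr $n$-random.} This is immediate, because every Schnorr $n$-test is an $n$-test.

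\emph{(b) Oberwolfach $n$-random implies $n$-random.} Let $(U_k)$ be an $n$-test. Set $V_k=\bigcup_{j>k}U_{2j}$, which is uniformly $\Sigma^0_n$ and has measure at most $\sum_{j>k}4^{-j}$. Put $\beta_k=\sum_{j\le k}4^{-j}$, which is computable, and $\beta=\sum_j 4^{-j}$. Then $\mu(V_k)\le\beta-\beta_k$, and $\bigcap_kV_k\supseteq\bigcap_kU_k$, since the $U_{2j}$ are all present and a point in $\bigcap U_k$ lies in every $U_{2j}$. So $(V_k)$ is an Oberwolfach $n$-test covering the original test.

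\emph{(a) Weakly $(n+1)$-random implies Oberwolfach $n$-random.} This is the main point. Let $(U_k)$ be an Oberwolfach $n$-test. Then $\bigcap_kU_k$ is $\Pi^0_{n+1}$, using \cref{sigma0n_is_sigma0(n+1)}. Its measure is at most $\inf_k(\beta-\beta_k)=0$, because $\beta=\sup\beta_k$. So it is a null $\Pi^0_{n+1}$ set, and it contains no weakly $(n+1)$-random point.

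The only subtle step is (d) for $n=1$. There the approximation must come from \cref{approximate_sigma01_or_pi01}, since we need computable measures rather than merely lower-semicomputable ones.
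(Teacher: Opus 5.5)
Your proof is correct and matches the paper's intended argument: the paper calls the lemma straightforward and flags the same subtle point you did, namely that (d) for $n=1$ needs \cref{approximate_sigma01_or_pi01} to get computable measures, and your $n\ge 2$ argument for (d) is essentially \cref{loose_Schnorr_test}. Two minor remarks: in (a), the fact that $\Sigma^0_n$ sets are uniformly $\Pi^0_{n+1}$ follows directly from the definition rather than from \cref{sigma0n_is_sigma0(n+1)}, and in (b) you could simply take $V_k=U_k$ with $\beta_k=1-2^{-k}$.
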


The next lemma tells us that having an explicit bound on the measures of the $\Sigma^0_n$ sets in a Schnorr $n$-test is not needed, only that the intersection is null.

\begin{lem}\label{loose_Schnorr_test}
    Let $(X, \mu)$ be a computable probability space and $n \geq 1$. Then a point $x \in X$ is Schnorr $n$-random if and only if $x \notin \bigcap_k U_k$ whenever $\paren{U_k}_k$ is a sequence of uniformly $\Sigma^0_n$ sets such that $\mu\paren{U_k}$ is uniformly $\zero^{(n - 1)}$-computable in $k$ and $\mu\paren{\bigcap_k U_k} = 0$.
\end{lem}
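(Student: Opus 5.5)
The forward direction is immediate: a Schnorr $n$-test $\paren{U_k}_k$ consists of uniformly $\Sigma^0_n$ sets with uniformly $\zero^{(n - 1)}$-computable measures, and $\mu\paren{\bigcap_k U_k} \leq \inf_k 2^{-k} = 0$. So if $x$ avoids every intersection of the looser kind described in the statement, it avoids every Schnorr $n$-test, i.e., it is Schnorr $n$-random. The real work is the converse. We assume $x$ is Schnorr $n$-random and take uniformly $\Sigma^0_n$ sets $U_k$ with uniformly $\zero^{(n - 1)}$-computable measures and $\mu\paren{\bigcap_k U_k} = 0$. From these we build a genuine Schnorr $n$-test whose intersection contains $\bigcap_k U_k$.

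First we make the sequence nested. Set $V_k = \bigcap_{j \leq k} U_j$. These sets are uniformly $\Sigma^0_n$ by \cref{intersection_of_sigma0n}, and they have uniformly $\zero^{(n - 1)}$-computable measures by iterating \cref{intersection_of_sigma0n_with_computable_measure}. The uniformity there is in the given indices and measures, so finite iteration stays uniform in $k$. We still have $\bigcap_k V_k = \bigcap_k U_k$, and since the sequence is now decreasing, continuity of measure gives $\mu\paren{V_k} \to 0$.

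Next we speed up the convergence. Because $\mu\paren{V_k}$ is uniformly $\zero^{(n - 1)}$-computable and tends to $0$, a $\zero^{(n - 1)}$-computable search finds, for each $m$, some $k_m$ with $\mu\paren{V_{k_m}} < 2^{-m}$. We search for a $k$ whose rational approximation to $\mu\paren{V_k}$ within $2^{-m-2}$ is below $2^{-m-1}$; such a $k$ must exist. Set $W_m = V_{k_m}$. Then $\mu\paren{W_m} \leq 2^{-m}$, the measures are uniformly $\zero^{(n - 1)}$-computable, and $\bigcap_m W_m \supseteq \bigcap_k V_k$.

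The main obstacle is that the sets $W_m$ are only $\zero^{(n - 1)}$-uniformly $\Sigma^0_n$, because the selection $m \mapsto k_m$ requires the oracle. This is exactly what \cref{zero(n-1)_uniformly_sigma0n_is_uniformly_sigma0n} handles: it upgrades the sequence to a uniformly $\Sigma^0_n$ one. Hence $\paren{W_m}_m$ is a Schnorr $n$-test, so $x \notin \bigcap_m W_m \supseteq \bigcap_k U_k$, which completes the converse. For $n = 1$ no oracle is involved, and the step is just ordinary computable selection.
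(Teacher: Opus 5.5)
Your proof is correct and is essentially the paper's argument: make the sequence nested by taking finite intersections (whose measures stay uniformly $\zero^{(n-1)}$-computable), use $\zero^{(n-1)}$ to pick indices $k_m$ with measure below $2^{-m}$, and then upgrade the resulting $\zero^{(n-1)}$-uniform sequence to a uniformly $\Sigma^0_n$ one, which gives a Schnorr $n$-test. Your search through rational approximations is slightly more careful than the paper's choice of the least $k$ with $\mu(U_k) < 2^{-j}$: a strict inequality against a computable real is only semi-decidable, so that least index is not obviously $\zero^{(n-1)}$-computable.
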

\begin{proof}
    $\LA$ (contrapositive). Assume $x$ is not Schnorr $n$-random. Then there is a sequence $\paren{U_k}_k$ is of uniformly $\Sigma^0_n$ sets such that $\mu\paren{U_k} \leq 2^{-k}$ for all $k$, $\mu\paren{U_k}$ is uniformly $\zero^{(n - 1)}$-computable in $k$, and $x \in \bigcap_k U_k$. Consequently, $\mu\paren{\bigcap_k U_k} = 0$.

    $\RA$ (contrapositive). Assume there is a sequence $\paren{U_k}_k$ of uniformly $\Sigma^0_n$ sets such that $\mu\paren{U_k}$ is uniformly $\zero^{(n - 1)}$-computable in $k$, $\mu\paren{\bigcap_k U_k} = 0$, and $x \in \bigcap_k U_k$. By \cref{intersection_of_sigma0n_with_computable_measure}, we may assume without loss of generality that $\paren{U_k}_k$ is nested, in which case $\mu\paren{U_k} \to 0$ as $k \to \infty$.
    
    Define $k_j = \min\{k: \mu(U_k) < 2^{-j}\}$ and $V_j = U_{k_j}$. Then $x \in \bigcap_k U_k \subseteq \bigcap_j V_j$, where $\mu\paren{V_j} < 2^{-j}$ for all $j$. Since $\mu\paren{U_k}$ is uniformly $\zero^{(n - 1)}$-computable, the function $j \mapsto k_j$ is also $\zero^{(n - 1)}$-computable. Hence $\paren{V_j}_j$ is uniformly $\Sigma^0_n$ by \cref{zero(n-1)_uniformly_sigma0n_is_uniformly_sigma0n}, so $x$ is not Schnorr $n$-random.
\end{proof}

We can also show that replacing $\Sigma^0_n$ sets in the definitions above with $\Sigma^0_1\paren{\zero^{(n - 1)}}$ sets does not change these notions of randomness. (One direction uses \cref{sigma0n(A(m))_is_sigma0(m+n)(A)} to say that uniformly $\Sigma^0_1\paren{\zero^{(n - 1)}}$ sets are uniformly $\Sigma^0_n$, and the other uses \cref{approximate_sigma0n_with_sigma01(zero(n-1))} to approximate $\Sigma^0_n$ sets from the outside using $\Sigma^0_1\paren{\zero^{(n - 1)}}$ sets.)

\begin{lem}\label{relativizing_randomness}
    Let $X$ be a computable probability space, $n \geq 1$, and $x \in X$. Then:
    \begin{enumerate}[(a)]
        \item $x$ is Schnorr $n$-random if and only if $x$ is Schnorr random relative to $\zero^{(n - 1)}$.
        \item $x$ is $n$-random if and only if $x$ is Martin-L\"of random relative to $\zero^{(n - 1)}$.
        \item $x$ is Oberwolfach $n$-random if and only if $x$ is Oberwolfach random relative to $\zero^{(n - 1)}$.
    \end{enumerate}
\end{lem}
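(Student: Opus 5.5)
The plan is to prove each of the three equivalences by two inclusions of test classes. The direction ``relativized test $\Rightarrow$ $n$-test'' uses \cref{sigma0n(A(m))_is_sigma0(m+n)(A)}. The direction ``$n$-test $\Rightarrow$ relativized test'' uses \cref{approximate_sigma0n_with_sigma01(zero(n-1))} to enlarge each $\Sigma^0_n$ set slightly to a $\Sigma^0_1\paren{\zero^{(n-1)}}$ set. Throughout, ``Schnorr random relative to $\zero^{(n-1)}$'' means Schnorr $1$-random relative to the oracle $\zero^{(n-1)}$, and likewise for the other two notions. In that setting the measures of the test sets must be $\zero^{(n-1)}$-computable, which matches the $n$-level requirement exactly.

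\emph{First direction.} Suppose $x$ fails a Schnorr (resp.\ Martin-L\"of, Oberwolfach) test $(V_k)_k$ relative to $\zero^{(n-1)}$. By \cref{sigma0n(A(m))_is_sigma0(m+n)(A)}, the sets $V_k$ are uniformly $\Sigma^0_n$, since the index transformation is uniform. The measure conditions carry over verbatim: $\mu(V_k)\le 2^{-k}$, uniformly $\zero^{(n-1)}$-computable measures, or $\mu(V_k)\le\beta-\beta_k$ with $\beta_k$ uniformly $\zero^{(n-1)}$-computable. Hence $(V_k)_k$ is a Schnorr $n$-test (resp.\ $n$-test, Oberwolfach $n$-test) capturing $x$.

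\emph{Second direction.} Suppose $x\in\bigcap_k U_k$ for an $n$-level test $(U_k)_k$. Apply \cref{approximate_sigma0n_with_sigma01(zero(n-1))} uniformly in $k$ with a tolerance $\eps_k$ to get $\Sigma^0_1\paren{\zero^{(n-1)}}$ sets $V_k\supseteq U_k$ with $\mu(V_k)<\mu(U_k)+\eps_k$. The indices are computable from $k$, so the $V_k$ are uniformly $\Sigma^0_1\paren{\zero^{(n-1)}}$, and clearly $x\in\bigcap_k V_k$. Then treat each notion separately.
\begin{itemize}
\item For (b), take $\eps_k=2^{-k}$ and use $(V_{k+1})_k$, which has $\mu(V_{k+1})<2^{-k}$.
\item For (a), the same choice works. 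The second clause of \cref{approximate_sigma0n_with_sigma01(zero(n-1))} gives that $\mu(V_k)$ is uniformly $\zero^{(n-1)}$-computable, since $\mu(U_k)$ is.
\item For (c), we need $\mu(V_k)\le\beta'-\beta'_k$ for some uniformly $\zero^{(n-1)}$-computable $\beta'_k$ with finite supremum. Choose $\eps_k=2^{-k}$ and set $\beta'_k=\beta_k-2^{-k}$. Then $\sup_k\beta'_k=\beta$, because $\beta_k\to\beta$ is forced by $\beta=\sup_k\beta_k$ only along a subsequence. To handle this, first replace $\beta_k$ by $\max_{j\le k}\beta_j$, which only strengthens the bound and makes the sequence increasing. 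Then $\mu(V_k)<\beta-\beta_k+2^{-k}=\beta-\beta'_k$.
\end{itemize}

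The main obstacle is the bookkeeping in (c). The supremum of the new sequence must be the same $\beta$, and monotonizing $\beta_k$ fixes this. One must also check that the uniformity of \cref{approximate_sigma0n_with_sigma01(zero(n-1))} in $U$ and $\eps$ indeed yields uniform indices. For (a), the computability of $\mu(V_k)$ depends on the moreover-clause, which requires $\mu(U_k)$ to be uniformly $\zero^{(n-1)}$-computable as input, and this is exactly the Schnorr hypothesis.
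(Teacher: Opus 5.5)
\textbf{Gap in the second direction of (c).} Your plan matches the paper's, which only sketches this lemma: one direction via \cref{sigma0n(A(m))_is_sigma0(m+n)(A)}, the other via the outer approximation of \cref{approximate_sigma0n_with_sigma01(zero(n-1))}. Parts (a), (b) and the first direction of (c) are fine as written. The problem is the step replacing $\beta_k$ by $\tilde\beta_k := \max_{j \le k}\beta_j$. This does strengthen the bound, but that is exactly why you cannot use it. Since $\tilde\beta_k \geq \beta_k$, the requirement $\mu(U_k) \leq \beta - \tilde\beta_k$ is stronger than the hypothesis $\mu(U_k) \leq \beta - \beta_k$ and does not follow from it.

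Concretely, let $U_0$ be any null $\Sigma^0_n$ set with $\beta_0 = 1$, and let $U_k = X$ with $\beta_k = 0$ for $k \geq 1$. This is an Oberwolfach $n$-test, but after monotonizing you would need $\mu(X) \leq 0$. The same example shows that the difficulty you were trying to avoid is real. When $\sup_k \beta_k$ is attained, no subsequence need approach it. Here $\sup_k(\beta_k - 2^{-k}) = 0 < 1 = \beta$, so no $V_k \supseteq X$ can satisfy $\mu(V_k) \leq 0 - (\beta_k - 2^{-k}) = 2^{-k}$ for $k \geq 1$.

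\textbf{A uniform repair is to double-index.}
\begin{itemize}
\item For each pair $(k, m)$, apply the outer approximation lemma to $U_k$ with tolerance $2^{-m}$. This gives a $\Sigma^0_1\paren{\zero^{(n-1)}}$ set $V_{k,m} \supseteq U_k$.
\item Set $\beta'_{\cs{k,m}} = \beta_k - 2^{-m}$. These are uniformly $\zero^{(n-1)}$-computable.
\item $\sup_{k,m}\beta'_{\cs{k,m}} = \sup_k\beta_k = \beta$, because for fixed $k$ the supremum over $m$ is already $\beta_k$.
\item $\mu(V_{k,m}) < \beta - \beta_k + 2^{-m} = \beta - \beta'_{\cs{k,m}}$.
\item $x \in U_k \subseteq V_{k,m}$ for all $k$ and $m$.
\end{itemize}

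\textbf{Alternatively, split into cases.} If the supremum is not attained, your unmonotonized choice $\beta'_k = \beta_k - 2^{-k}$ already has supremum $\beta$, since infinitely many $\beta_k$ come within any $\delta$ of $\beta$. If $\beta_{k_0} = \beta$ for some $k_0$, then $x$ lies in the null $\Sigma^0_n$ set $U_{k_0}$. Its outer approximations with tolerances $2^{-m}$ have $\zero^{(n-1)}$-computable measures by the moreover clause, so they form a Schnorr test relative to $\zero^{(n-1)}$. That is an Oberwolfach test relative to $\zero^{(n-1)}$ with $\beta_m = 1 - 2^{-m}$.
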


Arguably the most robust notion of randomness is $n$-randomness because there is always a universal $n$-test.

\begin{thm}[Theorem 2.0.4 in \cite{Hoyrup2009a} relative to $\zero^{(n - 1)}$]\label{universal_n_test}
    Let $(X, \mu)$ be a computable probability space. Then there is an $n$-test $\paren{U_k}_k$ such that $x \in X$ is $n$-random if and only if $x \notin \bigcap_k U_k$.
\end{thm}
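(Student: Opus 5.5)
The plan is to reduce to the existence of a universal Martin-L\"of test relative to $\zero^{(n - 1)}$ and then unrelativize using the appendix lemmas. Concretely, by \cref{relativizing_randomness}(b), $x$ is $n$-random if and only if $x$ is Martin-L\"of random relative to $\zero^{(n - 1)}$. So I would first invoke Theorem 2.0.4 of \cite{Hoyrup2009a} (the case $n = 1$, a universal Martin-L\"of test in any computable probability space) and observe that its proof relativizes verbatim to the oracle $\zero^{(n - 1)}$. That proof enumerates all tests and takes $U_k = \bigcup_e U^e_{k + e + 1}$ after trimming each candidate test. Relativizing it yields a sequence $\paren{V_k}_k$ of uniformly $\Sigma^0_1\paren{\zero^{(n - 1)}}$ sets with $\mu(V_k) \leq 2^{-k}$ such that $x$ is Martin-L\"of random relative to $\zero^{(n - 1)}$ exactly when $x \notin \bigcap_k V_k$.

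Next, I would convert $\paren{V_k}_k$ into an $n$-test in the sense of the definition, namely uniformly $\Sigma^0_n$ sets. By \cref{sigma0n(A(m))_is_sigma0(m+n)(A)} with $A = \zero$ and $m = n - 1$, every $\Sigma^0_1\paren{\zero^{(n - 1)}}$ set is $\Sigma^0_n$, uniformly in its index. Since the indices of the $V_k$ are computable in $k$, the $V_k$ are uniformly $\Sigma^0_n$, and the measure bound is unchanged. So $U_k := V_k$ is an $n$-test.

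For the characterization, suppose $x \in \bigcap_k U_k$. Then $x$ fails this $n$-test and so is not $n$-random. Conversely, suppose $x$ is not $n$-random. Then it is not Martin-L\"of random relative to $\zero^{(n - 1)}$, so by universality it lies in $\bigcap_k V_k = \bigcap_k U_k$.

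The main obstacle is making sure the relativized universal test really is universal for all $\Sigma^0_1\paren{\zero^{(n - 1)}}$ tests in a general computable probability space. The measures of $\Sigma^0_1\paren{\zero^{(n - 1)}}$ sets are only $\zero^{(n - 1)}$-lower-semicomputable, so the standard trimming step has to be done relative to the oracle. That step enumerates balls only while the running measure stays below $2^{-k}$, which works because lower-semicomputability lets one detect when the bound would be exceeded. I would check that \cref{measure_of_sigma01_lower_semicomputable} relativizes, which it clearly does since it rests only on the definition of a computable measure. With that in place the rest is bookkeeping.
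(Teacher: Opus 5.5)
Your argument is correct and is the same reduction the paper relies on: the paper simply cites the Hoyrup--Rojas universal Martin-L\"of test relativized to $\zero^{(n-1)}$, and your use of \cref{sigma0n(A(m))_is_sigma0(m+n)(A)} and \cref{relativizing_randomness}(b) makes explicit the passage back to uniformly $\Sigma^0_n$ sets and to $n$-randomness. One correction to your last paragraph, which does not affect the proof because that step is inside the cited theorem: lower-semicomputability of $\mu$ only confirms \emph{after the fact} that a bound has been exceeded, so it cannot drive the trimming; the construction must instead certify measures from above, for instance via the upper-semicomputable measures of finite unions of slightly shrunken closed balls (cf.\ \cref{closed_balls_closed}) or via the almost decidable basis of \cref{almost_decidable_basis}, whose finite unions have computable measure.
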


When considering Cantor space with the Lebesgue measure, $n$-random points have the following interesting property. (In order to speak of the measure of a set being $n$-random here, one must consider the interval $[0, 1]$ a computable probability space in its own right with the Lebesgue measure.)

\begin{thm}[Theorem 3.2.35 in \cite{Nies} relative to $\zero^{(n - 1)}$]\label{nonempty_pi01(zero(n-1))_subset_of_nR_has_nR_measure}
    Consider Cantor space with the Lebesgue measure. Then a nonempty $\Pi^0_1\paren{\zero^{(n - 1)}}$ set consisting only of $n$-random points has $n$-random measure.
\end{thm}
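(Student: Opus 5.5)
The plan is to prove the case $n = 1$ for Martin-L\"of randomness and then relativize the whole argument to $\zero^{(n - 1)}$. By \cref{relativizing_randomness}, this gives the statement for $n$-randomness, both in Cantor space and in $[0, 1]$.

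So let $P \neq \zero$ be $\Pi^0_1$ and consist only of Martin-L\"of random points, and put $r = \lambda(P)$. Suppose for contradiction that $r$ is not Martin-L\"of random. Write $P = \bigcap_s P_s$, where each $P_s$ is a finite union of cylinders of a common length $s$ and the sequence $\paren{P_s}_s$ is computable and decreasing. Then $\lambda(P_s)$ is a computable sequence of rationals decreasing to $r$. The aim is to build a Martin-L\"of test capturing one specific point of $P$, namely its \emph{leftmost} point $z$, which exists because $P$ is nonempty and closed. Since $z \in P$ is random, this is the contradiction.

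To build the test, use the prefix-free complexity characterization of non-randomness. For every $k$ there is an $m$ with $K(r \prefix m) < m - k$. For fixed $k$, enumerate the c.e.\ set of pairs $(\tau, m)$ with $|\tau| = m$ and $K(\tau) < m - k$. For each such $\tau$, search for a stage $s$ with $\lambda(P_s) < 0.\tau + 2^{-m}$. When such an $s$ appears, take the leftmost portion of $P_s$: let $C_\tau$ be the union of the lexicographically first cylinders of $P_s$, taken in order until their total measure first reaches $2^{-m}$. Then $\lambda(C_\tau) \leq 2^{-m} + 2^{-s}$, and choosing $s \geq m$ gives $\lambda(C_\tau) \leq 2^{-m + 1}$. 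Let $U_k = \bigcup_\tau C_\tau$, which is uniformly $\Sigma^0_1$. By Kraft's inequality, $\lambda(U_k) \leq \sum_\tau 2^{-m + 1} \leq 2\sum_\tau 2^{-K(\tau) - k} \leq 2^{-k + 1}$, so after reindexing $\paren{U_k}_k$ is a Martin-L\"of test.

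It remains to show $z \in U_k$ for every $k$. Take $m$ with $\tau = r \prefix m$ and $K(\tau) < m - k$. Then $0.\tau \leq r < 0.\tau + 2^{-m}$, so a suitable stage $s$ exists. At that stage $\lambda(P_s \setminus P) = \lambda(P_s) - r < 2^{-m}$. The main care goes into the leftmost-cut step. Since $C_\tau \subseteq P_s$ and $\lambda(C_\tau) \geq 2^{-m} > \lambda(P_s \setminus P)$, the set $C_\tau \cap P$ is nonempty. Moreover, $C_\tau$ consists of all points of $P_s$ lexicographically to the left of some cut, and $P \subseteq P_s$. Hence if any point of $P$ lies in $C_\tau$, so does the leftmost point $z$. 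Thus $z \in C_\tau \subseteq U_k$ for every $k$, so $z$ is not Martin-L\"of random, contradicting $z \in P$.

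Everything above relativizes verbatim to the oracle $\zero^{(n - 1)}$: $P$ is $\Pi^0_1\paren{\zero^{(n - 1)}}$, complexity is taken relative to $\zero^{(n - 1)}$, and the test is $\Sigma^0_1\paren{\zero^{(n - 1)}}$. This yields the theorem.
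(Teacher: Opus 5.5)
Your proof is correct. Structurally it matches the paper, which gives no argument of its own: it cites the unrelativized result (Nies, Theorem 3.2.35) and relativizes to $\zero^{(n-1)}$, using that $n$-randomness is Martin-L\"of randomness relative to $\zero^{(n-1)}$. What you add is a self-contained proof of the cited $n = 1$ case, and it checks out.

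Once $\lambda(P_s) < 0.\tau + 2^{-m}$ with $\tau = r \prefix m$, you get $\lambda(P_s \setminus P) < 2^{-m}$. So the leftmost measure-$2^{-m}$ block of $P_s$ meets $P$. Being lexicographically downward closed in $P_s$, it therefore contains the leftmost point $z$. Kraft's inequality then gives $\lambda(U_k) \le 2^{1-k}$.

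Two trivial edge cases should be stated. First, if $\lambda(P_s) < 2^{-m}$, then $C_\tau = P_s \supseteq P$ contains $z$ outright; your inequality $\lambda(C_\tau) \ge 2^{-m}$ fails there but is not needed. Second, if $r$ is a dyadic rational, you must take its expansion ending in zeros so that $0.\tau \le r < 0.\tau + 2^{-m}$ holds.

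Your explicit route also buys something over the bare citation. The hypothesis that $P$ consists only of random points is used only at the single point $z$. So you have actually shown that a nonempty $\Pi^0_1\paren{\zero^{(n-1)}}$ set whose leftmost point is $n$-random has $n$-random measure.
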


\subsection{Computable measure-preserving systems}

Ergodic theory studies statistical properties of measure-preserving systems. We now describe what these are and what it means to effectivize them.

\subsubsection{Measure-preserving systems and ergodicity}

\begin{defn}
    Let $(X, \B, \mu)$ be a probability space. A $\B$-measurable map $T: \ \subseteq \! X \to X$ is called a \textbf{measure-preserving transformation (m.p.t.)} if $\mu\paren{T^{-1}(A)} = \mu(A)$ for every $A \in \B$ and the domain of $T$ is invariant, i.e., $x \in \dom T$ implies $T(x) \in \dom T$. Then we call $(X, \B, \mu, T)$ a \textbf{measure-preserving system (m.p.s.)}.
\end{defn}

Because classical ergodic theory ignores measure-zero sets, the domain of an otherwise measure-preserving transformation can always be made invariant by restricting to the full-measure set $\bigcap_k T^{-k}(\dom T)$. However, we choose to make this a part of the definition above so that all iterates of $T$ will be defined without taking away any null sets that may be relevant when dealing with algorithmic randomness.

An important example of a measure-preserving system is Cantor space with the Lebesgue measure $\lambda$ and the shift map.

\begin{defn}
    The \textbf{shift map} is the transformation $T: 2^\N \to 2^\N$ which simply deletes the first bit of the input sequence:
    \[T\paren{x_0x_1x_2\dots} = x_1x_2\dots.\]
\end{defn}

The following property is instrumental in many results of ergodic theory. It essentially says that there are no non-trivial $T$-invariant sets, so the space cannot be meaningfully decomposed with respect to the action of $T$.

\begin{defn}
    A measure-preserving system $(X, \B, \mu, T)$ is \textbf{ergodic} if whenever a set $A \in \B$ satisfies $T^{-1}(A) = A$, it must have measure 0 or 1. We also may say that $T$ is ergodic with respect to $\mu$ or vice versa.
\end{defn}

The shift map, for example, is ergodic with respect to the Lebesgue measure on Cantor space. In fact, the shift satisfies the following stronger property, which essentially means that the history of each set under $T$ is asymptotically independent of any other set.

\begin{defn}
    A measure-preserving system $(X, \B, \mu, T)$ is \textbf{mixing} if for all $A, B \in \B$,
    \[\lim_{k \to \infty} \mu\paren{T^{-k}(A) \cap B} = \mu(A)\mu(B).\]
\end{defn}

In this paper, we consider effective versions of the following two foundational results in ergodic theory.

\begin{thm}[Poincar\'e Recurrence Theorem, Theorems 1.4 and 1.5 in \cite{Walters}]\label{poincare_recurrence_theorem}
    Let $(X, \B, \mu, T)$ be a measure-preserving system and take $A \in \B$ with $\mu(A) > 0$. Then for $\mu$-a.e.\ $x \in A$, we have
    \[T^k(x) \in A\quad\text{for some}\quad k > 0. \tag{$*$}\label{poincare}\]
    If $T$ is moreover ergodic, then (\ref{poincare}) holds for $\mu$-a.e.\ $x \in X$ (not just in $A$).
\end{thm}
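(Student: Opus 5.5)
The statement just before this point is the classical Poincar\'e Recurrence Theorem (\cref{poincare_recurrence_theorem}). It is cited from Walters, so a proof only needs to recover the standard argument, adapted to partially defined $T$ with invariant domain $D = \dom T$ of full measure.

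For the first part, I would set
\[B = A \cap \bigcap_{k > 0} T^{-k}(\neg A),\]
the set of points of $A$ that never return, and show $\mu(B) = 0$. The key observation is that the preimages $T^{-m}(B)$, $m \geq 0$, are pairwise disjoint. Suppose $y \in T^{-m}(B) \cap T^{-m'}(B)$ with $m < m'$. Then $T^m(y) \in B \subseteq A$, and $T^{m' - m}\paren{T^m(y)} = T^{m'}(y) \in B \subseteq A$. This contradicts the definition of $B$ applied to $T^m(y)$. Invariance of $D$ guarantees all these iterates are defined. Since $T$ is measure-preserving, each $T^{-m}(B)$ has measure $\mu(B)$. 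Disjointness inside a probability space then gives $\sum_m \mu(B) \leq 1$, forcing $\mu(B) = 0$. Hence almost every $x \in A$ satisfies (\ref{poincare}).

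For the ergodic part, I would let
\[E = \bigcup_{k > 0} T^{-k}(A),\]
the set of points that ever enter $A$ at a positive time. Then $T^{-1}(E) = \bigcup_{k > 1} T^{-k}(A) \subseteq E$. Since the measures agree, $\mu\paren{E \setminus T^{-1}(E)} = 0$, so $E$ is invariant only modulo null sets. To get a strictly invariant set I would pass to $E' = \bigcap_{m \geq 0} T^{-m}(E)$. This set satisfies $T^{-1}(E') \supseteq E'$, and in fact equals $\bigcap_{m \geq 1} T^{-m}(E) = E'$, because the sets $T^{-m}(E)$ are decreasing in $m$. It also differs from $E$ by a null set.

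By the first part, $A \setminus E$ is null, so $\mu(E') = \mu(E) \geq \mu(A) > 0$. Ergodicity then forces $\mu(E') = 1$, so $\mu(E) = 1$, which is the claim.

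The only delicate step is this passage from almost-invariance to strict invariance, since the paper defines ergodicity with exact equality $T^{-1}(E) = E$. I expect the intersection trick above to handle it, and I do not foresee any other obstacle.
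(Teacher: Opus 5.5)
Your proof is correct, including the adaptation to a partial $T$ with invariant domain. The paper gives no proof of its own and only cites Walters, so the comparison is with the textbook argument there.

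\emph{First part.} Walters does not use disjoint preimages. He sets $E_N = \bigcup_{n \geq N} T^{-n}(A)$ and notes $T^{-N}(E_0) = E_N$, so all $E_N$ have equal measure. Since $A \subseteq E_0$, he concludes that $A \cap \bigcap_N E_N$ has full measure in $A$. This gives infinitely many returns in one step. Your wandering-set argument gives a single return, which is all the statement claims. Infinitely many returns then need one more observation: points of $A$ returning only finitely often lie in $\bigcup_j T^{-j}(B)$, which is null. This is the induction the paper alludes to right after the statement.

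\emph{Ergodic part.} Here you follow Walters's route (ergodic $\Rightarrow$ almost-invariant sets are trivial $\Rightarrow$ $\mu(\bigcup_{k>0} T^{-k}A) = 1$) essentially verbatim. His strictly invariant replacement $\bigcap_N \bigcup_{n \geq N} T^{-n}(E)$ reduces to your $E'$ because $T^{-1}(E) \subseteq E$. The one difference is how positivity is obtained. He uses $T^{-1}(A) \subseteq E$ directly, giving $\mu(E) \geq \mu(T^{-1}A) = \mu(A) > 0$. That is simpler than invoking the first part.

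\emph{Bookkeeping point.} Your $B$ is contained in $D$, so the set of points of $A$ failing $(*)$ is $B \cup (A \setminus D)$, not just $B$. It is null because $\mu(D) = \mu(T^{-1}(X)) = 1$. You noted that $D$ has full measure, but you should invoke it explicitly at that step.
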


By induction, it easily follows that infinitely many iterates must lie in $A$. When the system is moreover ergodic, the following theorem with $f = \one_A$ says that almost every point visits $A$ with a positive frequency.

\begin{thm}[Birkhoff Ergodic Theorem, Theorem 1.14 in \cite{Walters}]\label{Birkhoff_ergodic_theorem}
    Let $(X, \B, \mu, T)$ be a measure-preserving system and $f: X \to \cl\R$ be integrable. Then for $\mu$-a.e.\ $x \in X$,
    \[\lim_{n \to \infty}\frac{1}{n}\sum_{k < n} \paren{f \circ T^k}(x) \quad \text{exists}.\]
    If $T$ is moreover ergodic, then this limit is equal to the constant $\int f \dmu$ for $\mu$-a.e.\ $x \in X$.
\end{thm}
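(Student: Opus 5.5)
The quickest route uses the upcrossing inequality (\cref{upcrossing_inequality}), which the paper already states. Since $f$ is integrable, $f$ is finite almost everywhere. We replace $\dom T$ by the invariant full-measure set $D=\bigcap_k T^{-k}(\dom T\cap\{|f|<\infty\})$, on which all the ergodic averages $A_nf=\frac1n\sum_{k<n}f\circ T^k$ are defined and finite.

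For each pair of rationals $\alpha<\beta$, \cref{upcrossing_inequality} says that $\tau_{\alpha,\beta}f$ is integrable, so $\tau_{\alpha,\beta}f<\infty$ almost everywhere. Take the countable union over all rational pairs of the null sets where $\tau_{\alpha,\beta}f=\infty$. Off this union, no rational pair separates $\liminf_n A_nf(x)$ from $\limsup_n A_nf(x)$. Hence the two coincide, and $A_nf(x)$ converges in $\cl\R$ for almost every $x$.

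It remains to see that the limit $\bar f$ is finite almost everywhere. Fatou's lemma and invariance of $\mu$ give
\[\int|\bar f|\dmu\le\liminf_n\int|A_nf|\dmu\le\int|f|\dmu<\infty.\]
So $\bar f\in L^1$ and the limit exists (and is finite) almost everywhere. This proves the first assertion.

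For the ergodic case, first note that $A_nf\circ T=\frac{n+1}{n}A_{n+1}f-\frac1n f$. Letting $n\to\infty$ shows $\bar f\circ T=\bar f$ almost everywhere. After modifying on a null set (replace $\bar f$ by $\limsup_n A_nf$, which is genuinely $T$-invariant on $D$), every superlevel set $\{\bar f>q\}$ is $T$-invariant. Ergodicity then forces each such set to have measure $0$ or $1$, so $\bar f$ equals some constant $c$ almost everywhere.

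To identify $c=\int f\dmu$, I would first treat bounded $f$: then $|A_nf|\le\|f\|_\infty$, and dominated convergence together with $\int A_nf\dmu=\int f\dmu$ gives $c=\int f\dmu$. For general $f$, approximate by truncations $g=\max(\min(f,M),-M)$. The map $f\mapsto\bar f$ is linear, and by the Fatou bound above $\int|\overline{f-g}|\dmu\le\int|f-g|\dmu$. Therefore
\[\Bigl|c-\int f\dmu\Bigr|\le 2\int|f-g|\dmu,\]
which tends to $0$ as $M\to\infty$.

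The main obstacle is the bookkeeping for extended-real-valued $f$ and partial $T$. One must check that the upcrossing count and the Fatou step remain valid when $f$ is merely integrable into $\cl\R$ and defined only on the invariant domain. Beyond that, the whole weight of the argument rests on \cref{upcrossing_inequality}; if that were not available, I would instead prove the maximal ergodic inequality via the Garsia trick.
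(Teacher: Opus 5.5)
Your proof is correct, but it takes a different route from the one the paper relies on. The paper gives no proof of its own and cites Walters, whose argument goes through the maximal ergodic theorem (stated here as \cref{maximal_ergodic_theorem}). There one shows that $\limsup_n A_nf$ and $\liminf_n A_nf$ are invariant, then applies the maximal inequality on each invariant set $\{\liminf_n A_nf<\alpha<\beta<\limsup_n A_nf\}$ to show it is null.

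You instead put all the weight on Bishop's upcrossing inequality (\cref{upcrossing_inequality}), from which a.e.\ convergence in $\cl\R$ follows in one line. Fatou's lemma, the invariance identity and truncation then finish the job.

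The gain is that your convergence step is the classical version of the argument the paper effectivizes in \cref{nR_Birkhoff}, where the sets $\{\tau_{\alpha,\beta}f>2^jq\}$ become an $n$-test. The cost is self-containedness. The upcrossing inequality is a quantitative strengthening of the ergodic theorem, and its proof is appreciably harder than the few-line Garsia proof of the maximal inequality. So nearly all the difficulty sits in the cited black box, whereas the maximal-inequality route is elementary.

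One minor bookkeeping correction concerns your claim that $\limsup_n A_nf$ is ``genuinely $T$-invariant on $D$.'' The set $\{x\in D:\limsup_nA_nf(x)>q\}$ is invariant only modulo the null set of $x\in\dom T\setminus D$ with $T(x)\in D$, and such $x$ have $|f(x)|=\infty$. The fix is to first redefine $f$ to be $0$ on $\{|f|=\infty\}$. This alters the averages only on the null set $\bigcup_kT^{-k}\{|f|=\infty\}$. After that, $\limsup_nA_nf$ is exactly $T$-invariant on the invariant set $\dom T$, and the paper's strict definition of ergodicity applies verbatim.
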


Closely related is the Maximal Ergodic Theorem, which is useful in proving the Birkhoff Ergodic Theorem.

\begin{thm}[Maximal Ergodic Theorem, Corollary 1.16.1 in \cite{Walters}]\label{maximal_ergodic_theorem}
    Let $(X, \B, \mu, T)$ be a measure-preserving system and $f: X \to \cl\R$ be integrable. Then for all $\alpha > 0$,
    \[\mu\set{x \in X: \sup_{n > 0} \frac{1}{n}\sum_{k < n} \paren{f \circ T^k}(x) > \alpha} \leq \frac{1}{\alpha}\int |f| \dmu.\]
\end{thm}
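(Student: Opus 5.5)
This is the Maximal Ergodic Theorem as stated in \cite{Walters}, so the plan is to derive it from the maximal inequality in its standard Garsia form. Let $f_n = \sum_{k<n} f\circ T^k$ and $F_N = \max_{0\le n\le N} f_n$, where $f_0 = 0$. The Garsia lemma states that
\[\int_{\{F_N>0\}} f \dmu \ge 0 \quad\text{for every } N.\]

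To prove the Garsia lemma, I would first note that $F_N \ge f_n$ for every $n \le N$. This gives $f + F_N\circ T \ge f_{n+1}$ for all $n \le N$. Hence on the set $\{F_N>0\}$ we have $F_N = \max_{1\le n\le N} f_n \le f + F_N\circ T$. Integrating over $\{F_N>0\}$ gives
\[\int_{\{F_N>0\}} f \ge \int_{\{F_N>0\}} (F_N - F_N\circ T) \ge \int F_N - \int F_N\circ T = 0.\]
The second inequality uses $F_N \ge 0$ and $F_N\circ T \ge 0$. The final equality uses that $T$ preserves $\mu$. Integrability of $F_N$ holds because it is bounded by $\sum_{n\le N}|f_n|$.

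Next, apply the lemma to $g = f - \alpha$ and set $E_N = \{\max_{1\le n\le N} f_n/n > \alpha\}$. Then $E_N = \{G_N > 0\}$, where $G_N$ is the maximal function built from $g$. The lemma gives $\int_{E_N} f \dmu \ge \alpha\,\mu(E_N)$. Since $\int_{E_N} f \le \int |f|$, we get $\mu(E_N) \le \frac{1}{\alpha}\int|f|\dmu$.

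The sets $E_N$ increase in $N$ to the set appearing in the statement. Continuity of measure from below then finishes the proof.

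Two points need care. The first is the partial domain of $T$. This is harmless because $\dom T$ is invariant and of full measure, so all integrals can be computed on $\bigcap_k T^{-k}(\dom T)$. The second is the possibility that $f$ takes the values $\pm\infty$. Integrability forces $f$ to be finite almost everywhere, so we may modify $f$ on a null set, and this changes neither side of the inequality.

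The only genuinely delicate step is the Garsia inequality. The key there is the pointwise bound $F_N \le f + F_N\circ T$ on $\{F_N>0\}$, which holds because positivity excludes the index $n=0$ from the maximum.
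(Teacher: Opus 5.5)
Your proof is correct, and it follows the same route as the source the paper cites; the paper itself gives no proof of its own. Walters proves $\int_{\{F_N>0\}} f\,d\mu \ge 0$ from exactly your pointwise bound $F_N \le f + F_N\circ T$ on $\{F_N>0\}$, then obtains Corollary 1.16.1 by applying it to $f-\alpha$ and letting $N\to\infty$. Your handling of the invariant full-measure domain and of infinite values of $f$ is fine; the one implicit point is that modifying $f$ on a null set $Z$ leaves the left-hand side unchanged because $\bigcup_k T^{-k}(Z)$ is still null.
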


\subsubsection{Computable transformations}

The transformations relevant to our study will all be computable, which is a straightforward restriction of continuity for computable metric spaces.

\begin{defn}
    Let $X$ and $Y$ be computable metric spaces and $B_i$ be the $i\th$ ideal ball of $Y$. A partial transformation $T: \ \subseteq \! X \to Y$ is \textbf{computable} if the sets $T^{-1}\paren{B_i}$ are uniformly $\Sigma^0_1$ in $\dom T$.
\end{defn}

Then we can say what it means for a measure-preserving system to be computable.

\begin{defn}
    $(X, \mu, T)$ is a \textbf{computable measure-preserving system (c.m.p.s.)} if $(X, \mu)$ is a computable probability space and $T: \ \subseteq \! X \to X$ is a computable measure-preserving transformation.
\end{defn}

The shift map, for example, is a computable measure-preserving transformation on Cantor space with the Lebesgue measure.

It is simple to show from the definition that preimages under computable transformations behave well for all $\Sigma^0_n$ sets, not just ideal balls.

\begin{lem}\label{preimage_of_sigma0n}
    Let $X$ and $Y$ be computable metric spaces and $T: \ \subseteq \! X \to Y$ be computable. If $U$ is a $\Sigma^0_n$ subset of $Y$, then $T^{-1}(U)$ is $\Sigma^0_n$ in $\dom T$, uniformly given an index for $U$.
\end{lem}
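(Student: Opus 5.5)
We argue by induction on $n$, working throughout with the definitions of the arithmetical hierarchy on $X$ and of computability of $T$. Write $D = \dom T$.

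\emph{Base case $n = 1$.} Let $U = \bigcup_{i \in W} B_i$ for a c.e.\ set $W \subseteq \N$, uniformly in an index for $U$. Since $T$ is computable, there are uniformly $\Sigma^0_1$ sets $V_i$ with $T^{-1}(B_i) = V_i \cap D$. Then
\[T^{-1}(U) = \bigcup_{i \in W} T^{-1}(B_i) = D \cap \bigcup_{i \in W} V_i,\]
and $\bigcup_{i \in W} V_i$ is $\Sigma^0_1$, uniformly in an index for $U$, by \cref{union_of_sigma0n}. The only point to check is that a c.e.\ union of uniformly $\Sigma^0_1$ sets is $\Sigma^0_1$. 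This holds because we can enumerate the pairs $(i,j)$ with $i \in W$ and $j$ in the c.e.\ index set for $V_i$.

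\emph{Inductive step.} Assume the claim for $n$, uniformly. Let $U$ be $\Sigma^0_{n+1}$, so $U = \bigcup_{i \in W} \neg U_i^n$ with $W$ c.e. By the inductive hypothesis there are $\Sigma^0_n$ sets $V_i$, uniformly in $i$, with $T^{-1}(U_i^n) = V_i \cap D$. Complements within the domain behave well:
\[T^{-1}(\neg U_i^n) = D \setminus T^{-1}(U_i^n) = D \cap \neg V_i,\]
and $\neg V_i$ is $\Pi^0_n$, uniformly. Therefore
\[T^{-1}(U) = D \cap \bigcup_{i \in W} \neg V_i.\]
Here the set $\bigcup_{i \in W} \neg V_i$ is a c.e.\ union of uniformly $\Pi^0_n$ sets. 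By definition it is $\Sigma^0_{n+1}$, with an index computable from the index of $U$: we take a c.e.\ set listing the indices of the $V_i$ for $i \in W$.

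The main care point is uniformity. The induction must produce, computably from an index for $U$, an index for the witnessing set $V$. Once we carry along uniformity in the inductive hypothesis and use the fact that complements relative to $D$ commute with preimages, the argument is routine. No genuine obstacle remains beyond this bookkeeping of indices, including the convention in the definition that a $\Sigma^0_{n+1}$ set is indexed by a c.e.\ set of $\Sigma^0_n$ indices.
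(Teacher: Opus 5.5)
Your proof is correct, and it follows essentially the paper's approach: the paper omits the proof, remarking only that the result ``is simple to show from the definition,'' and your induction on $n$ is exactly that routine argument. You handle the two points that matter. First, for a partial map $T^{-1}(\neg S) = \dom T \setminus T^{-1}(S)$. Second, the inductive hypothesis gives a computable $g$ with $V_i = U^n_{g(i)}$, so $\{g(i): i \in W\}$ is c.e. and $\bigcup_{i \in W} \neg V_i$ is literally $\Sigma^0_{n+1}$ by definition.
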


\begin{cor}\label{iterated_preimage_of_sigma0n}
    Let $X$ be a computable metric space, $T: \ \subseteq \! X \to X$ be computable with $T$-invariant domain, and $n \geq 1$. If $U$ is a $\Sigma^0_n$ subset of $X$, then $T^{-k}(U)$ is $\Sigma^0_n$ in $\dom T$, uniformly given $k$ and an index for $U$.
\end{cor}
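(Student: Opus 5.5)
We argue by induction on $k$. The case $k = 0$ is trivial, since $T^0$ is the identity and $U$ itself is $\Sigma^0_n$. For the step, suppose that $T^{-k}(U) = D \cap V_k$ for some $\Sigma^0_n$ set $V_k$ whose index is computable from $k$ and an index for $U$, where $D = \dom T$.

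The key observation uses invariance of the domain. For $x \in D$ we have $T(x) \in D$, so
\[T^{-(k+1)}(U) = T^{-1}\paren{T^{-k}(U)} = T^{-1}(D \cap V_k) = T^{-1}(V_k),\]
because every $x \in \dom T$ already satisfies $T(x) \in D$. By \cref{preimage_of_sigma0n}, $T^{-1}(V_k)$ is $\Sigma^0_n$ in $D$. That is, it equals $D \cap V_{k+1}$ for some $\Sigma^0_n$ set $V_{k+1}$, and an index for $V_{k+1}$ is computable from an index for $V_k$.

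Uniformity follows because the map sending an index of $V_k$ to an index of $V_{k+1}$ is a single computable function supplied by the uniformity clause of \cref{preimage_of_sigma0n}. Iterating this function $k$ times, starting from the index of $U$, yields an index for $V_k$ computably in $k$ and the index of $U$.

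The only point needing care is the step where invariance of $\dom T$ removes the intersection with $D$ inside the preimage. Without invariance, we would have to handle $T^{-1}(D)$, which need not be $\Sigma^0_n$-definable in a useful way. Everything else is routine bookkeeping of indices.
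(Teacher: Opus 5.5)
Your induction is correct and is the argument the paper leaves implicit when it states this as a corollary of \cref{preimage_of_sigma0n}: invariance of $\dom T$ gives $T^{-1}(D \cap V_k) = T^{-1}(V_k)$, and iterating the uniform index map from that lemma gives uniformity in $k$. One cosmetic nit is that at $k = 0$ the hypothesis $T^{-0}(U) = D \cap V_0$ fails literally when $U \not\subseteq D$, since $T^0$ is the identity on $X$; either start the induction at $k = 1$ or state the hypothesis as $T^{-k}(U) \cap D = D \cap V_k$, and the inductive step goes through unchanged.
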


We can easily use \cref{preimage_of_sigma0n} to infer that all of the randomness notions we have considered are preserved by computable measure-preserving transformations. This allows us to go from saying that $T^k(x) \in A$ for some $k > 0$ to saying that $T^k(x) \in A$ for infinitely many $k$ when stating effective versions of the Poincar\'e Recurrence Theorem.

\begin{lem}\label{preservation_of_randomness}
    Let $(X, \mu, T)$ be a computable measure-preserving system and $n \geq 1$. Then respectively, if $x \in \dom T$ is
    \begin{center}
    		weakly $n$-random/Schnorr $n$-random/$n$-random/Oberwolfach $n$-random,
    	\end{center}
    	so is $T(x)$.
\end{lem}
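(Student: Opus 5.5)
The plan is to argue by contrapositive, class by class. In each case we take a test (or null set) witnessing that $T(x)$ fails the randomness notion, pull it back along $T$, and check that the pulled-back object witnesses the same failure for $x$. The two tools are \cref{preimage_of_sigma0n}, which says preimages of $\Sigma^0_n$ sets are $\Sigma^0_n$ in $D := \dom T$ uniformly in an index, and measure preservation, which gives $\mu(T^{-1}(U)) = \mu(U)$. Since $D$ has full measure, intersecting with $D$ or discarding it never changes a measure.

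\emph{Weakly $n$-random.} Suppose $T(x) \in P$ with $P$ a null $\Pi^0_n$ set. Apply \cref{preimage_of_sigma0n} to $\neg P$, so that $T^{-1}(\neg P) = D \cap V$ for some $\Sigma^0_n$ set $V$. Then $T^{-1}(P) = D \cap \neg V$. Put $Q = \neg V$; this is $\Pi^0_n$ and contains $x$. Its measure is $\mu(Q) = \mu(D \cap Q) = \mu(T^{-1}(P)) = 0$, so $x$ lies in a null $\Pi^0_n$ set.

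\emph{$n$-random and Oberwolfach $n$-random.} Let $(U_k)_k$ be an $n$-test (respectively an Oberwolfach $n$-test with reals $\beta_k$) with $T(x) \in \bigcap_k U_k$. Uniformity in \cref{preimage_of_sigma0n} gives uniformly $\Sigma^0_n$ sets $V_k$ with $T^{-1}(U_k) = D \cap V_k$. Then $\mu(V_k) = \mu(T^{-1}(U_k)) = \mu(U_k)$. So $(V_k)_k$ satisfies the same measure bounds, with the same $\beta_k$ in the Oberwolfach case, and it captures $x$.

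\emph{Schnorr $n$-random.} Here we must also check that $\mu(V_k) = \mu(U_k)$ is uniformly $\zero^{(n-1)}$-computable, which it is because the two measures are equal. This step is the only place any care is needed: \cref{preimage_of_sigma0n} only describes $T^{-1}(U_k)$ inside $D$. The fix is that the extension $V_k$ is an honest $\Sigma^0_n$ subset of $X$, so $\mu(V_k)$ is a genuine, not merely essential, value; equality with $\mu(U_k)$ then follows because $D$ is conull, which holds for a measure-preserving transformation. With this, $(V_k)_k$ is a Schnorr $n$-test containing $x$, and the contrapositive is complete in all four cases.
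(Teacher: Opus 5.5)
Your proposal is correct and is exactly the argument the paper has in mind: the paper gives no separate proof and only remarks that the lemma follows easily from \cref{preimage_of_sigma0n}. That means pulling a witnessing null $\Pi^0_n$ set or test back along $T$, and using that $\dom T = T^{-1}(X)$ is conull so that the $\Sigma^0_n$ extensions $V_k$ have the same measures as the $U_k$; your case-by-case check, including the uniform $\zero^{(n-1)}$-computability of $\mu(V_k)$ in the Schnorr case, fills in those details correctly.
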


Because $n$-lower-semicomputable functions are closed uniformly under $\Q_{> 0}$-linear combinations, we can also use \cref{preimage_of_sigma0n} to conclude the following result, which is helpful when studying effective versions of the Birkhoff Ergodic Theorem.

\begin{lem}\label{ergodic_averages_computable}
    Let $(X, \mu)$ be computable probability space, $T: \ \subseteq \! X \to X$ be a computable transformation, and $f: \ \subseteq \! X \to \cl\R$ be $n$-lower-semicomputable. Then the ergodic averages
    \[A_Nf := \frac{1}{N}\sum_{k < N} f \circ T^k\]
    of $f$ are likewise $n$-lower-semicomputable on $\bigcap_k T^{-k}(\dom f)$, uniformly in $N > 0$.
\end{lem}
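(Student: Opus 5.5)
The plan is to reduce the claim to two closure properties of $n$-lower-semicomputable functions on a fixed domain: closure under precomposition with $T^k$, and closure under finite sums. Dividing by $N$ is then trivial, since $\set{x: A_Nf(x) > q} = \set{x: \sum_{k<N} f(T^k x) > Nq}$ and $Nq$ is rational, uniformly in $N$ and $q$. Throughout, write $D = \bigcap_k T^{-k}(\dom f)$. As in the paper, $\dom T$ is $T$-invariant, so every point of $D$ lies in $\dom T^k$ for all $k$ for which the iterate is needed.

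\emph{Step 1 (composition).} Fix $q \in \Q$ and write $\set{x: f(x) > q} = U_q \cap \dom f$ with $U_q$ uniformly $\Sigma^0_n$. Then for $x \in D$ we have
\[f(T^k x) > q \iff x \in T^{-k}(U_q).\]
By \cref{iterated_preimage_of_sigma0n}, $T^{-k}(U_q) = V_{k,q} \cap \dom T$ for some $\Sigma^0_n$ sets $V_{k,q}$, uniformly in $k$ and $q$. Since $D \subseteq \dom T$ (or $D \subseteq \dom T^k$ in the relevant sense), it follows that
\[\set{x \in D: f(T^kx) > q} = V_{k,q} \cap D.\]
So the functions $g_k := f \circ T^k$ are uniformly $n$-lower-semicomputable on $D$.

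\emph{Step 2 (finite sums).} For $x \in D$ I would show
\[\sum_{k<N} g_k(x) > r \iff \exists\, q_0,\dots,q_{N-1} \in \Q \text{ with } \sum_k q_k > r \text{ and } g_k(x) > q_k \text{ for all } k < N.\]
The backward direction is immediate. For the forward direction, if every $g_k(x)$ is finite, choose rationals slightly below each $g_k(x)$. If some $g_k(x) = +\infty$, take $q_k$ very large and the other $q_j$ just below the finite values $g_j(x)$.

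The only delicate point is the extended-real arithmetic. Terms equal to $-\infty$ make the sum $-\infty$ (or undefined if $+\infty$ also occurs). I would adopt the convention under which $A_Nf$ is defined, note that a $-\infty$ term makes the right-hand side fail (as it should), and otherwise restrict to the intended case, e.g.\ $f$ nonnegative as in all applications. Hence
\[\set{x: A_Nf(x) > r} = D \cap \bigcup_{\substack{\vec q \in \Q^N \\ \sum_k q_k > Nr}} \ \bigcap_{k<N} V_{k,q_k}.\]
This set is $\Sigma^0_n$ in $D$, uniformly in $N$ and $r$, by \cref{intersection_of_sigma0n} for the finite intersections and \cref{union_of_sigma0n} for the c.e.\ union over the decidable set of tuples $\vec q$.

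I expect no real obstacle. The only care needed is bookkeeping of domains, namely that $T^{-k}(U_q)$ is only "$\Sigma^0_n$ in $\dom T$" and that intersecting with $D$ absorbs this, together with the $\pm\infty$ conventions in Step 2.
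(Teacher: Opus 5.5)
Your proposal is correct and is essentially the paper's argument: the paper justifies this lemma in one line, using \cref{preimage_of_sigma0n} for the compositions $f \circ T^k$ together with closure of $n$-lower-semicomputable functions under positive rational linear combinations. Your Steps 1 and 2 simply spell out those two closure properties. Your treatment of the $\pm\infty$ convention, restricting to points where the sum is defined, also fills in a detail the paper leaves implicit; note that this issue is not confined to nonnegative $f$, since \cref{nR_Birkhoff} applies the lemma to $\cl\R$-valued $f$ and $-f$.
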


\subsubsection{Images under the shift map}

Recall that when considering Cantor space with the Lebesgue measure, the shift map is a computable ergodic measure-preserving transformation. Thus, the preimage of any $\Sigma^0_1$ or $\Pi^0_1$ set is $\Sigma^0_1$ or $\Pi^0_1$, respectively. Sometimes, however, we need this to be true for images as well.

First we consider images of $\Sigma^0_1$ sets under the shift.

\begin{lem}\label{shift_of_sigma01}
    Consider the shift map $T: 2^\N \to 2^\N$ on Cantor space. Let $A \subseteq \N$ and $U$ be a $\Sigma^0_1(A)$ set. Then $T(U)$ is also $\Sigma^0_1(A)$, uniformly in $U$. If $U$ moreover has $A$-computable Lebesgue measure $\lambda(U)$, then so does $T(U)$, uniformly in $U$ and $\lambda(U)$.
\end{lem}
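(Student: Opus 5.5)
The plan is to compute $T(U)$ cylinder by cylinder. First, the shift maps a cylinder onto a cylinder: $T\cyl{\zero} = 2^\N$, and for a nonempty string $b\sigma$ (with $b$ a bit) we have $T\cyl{b\sigma} = \cyl\sigma$. Next, write $U = \cyl W$ for some $A$-c.e.\ set $W \subseteq 2^{<\N}$. Since images commute with unions,
\[T(U) = \bigcup_{\tau \in W} T\cyl\tau = \cyl{W'}, \qquad W' = \set{\sigma: b\sigma \in W \text{ for some bit } b} \cup \set{\zero: \zero \in W}.\]
The set $W'$ is $A$-c.e., uniformly in an index for $W$. So $T(U)$ is $\Sigma^0_1(A)$, uniformly in $U$.

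For the measure statement, split $U$ into its two halves. Put $U_b = U \cap \cyl b$ for $b \in \{0, 1\}$. Then $T(U) = T(U_0) \cup T(U_1)$. Define
\[V_b = \set{y: by \in U}.\]
This set is $\Sigma^0_1(A)$, uniformly, since $V_b = \cyl{\set{\sigma: b\sigma \in W}}$ (together with $2^\N$ if $\zero \in W$). It satisfies $T(U_b) = V_b$ and $\lambda(V_b) = 2\lambda(U_b)$. The identity $T(U) = V_0 \cup V_1$ then gives
\[\lambda(T(U)) = \lambda(V_0) + \lambda(V_1) - \lambda(V_0 \cap V_1).\]
The remaining task is to show that each of $\lambda(V_0)$, $\lambda(V_1)$ and $\lambda(V_0 \cup V_1)$ is $A$-computable.

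First, $\cyl b$ is clopen with computable measure. So $\lambda(U \cap \cyl b)$ is $A$-computable by \cref{intersection_of_sigma0n_with_computable_measure} relativized to $A$ with $n = 1$. For this, use that $\lambda(U)$ is $A$-computable and that $\lambda(\cyl b) = 1/2$. Hence $\lambda(V_b) = 2\lambda(U_b)$ is $A$-computable, uniformly in $U$ and $\lambda(U)$. Applying the same lemma to the $\Sigma^0_1(A)$ sets $V_0$ and $V_1$, both of which now have $A$-computable measure, makes $\lambda(V_0 \cup V_1) = \lambda(T(U))$ $A$-computable, uniformly in all the data.

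I expect no real obstacle. The main care point is the measure claim: the raw image $T(U)$ is only $A$-lower-semicomputable a priori. That is why the decomposition into $V_0$ and $V_1$ matters: it lets the union/intersection lemma supply the missing upper approximation.
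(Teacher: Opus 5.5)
Your proof is correct and follows the paper's argument. You split $U$ into $U\cap\cyl 0$ and $U\cap\cyl 1$, show $\lambda(T(U\cap\cyl b)) = 2\lambda(U\cap\cyl b)$, and apply \cref{intersection_of_sigma0n_with_computable_measure} (relativized to $A$) twice. The differences are cosmetic: the paper gets the factor $2$ by making $W$ prefix-free and summing $2^{-|\sigma|}$, whereas you use that $y\mapsto by$ halves measure; and your inclusion--exclusion step is unnecessary and slightly garbled ($\lambda(V_0\cap V_1)$ becomes $\lambda(V_0\cup V_1)$), though harmless since the lemma gives $\lambda(V_0\cup V_1)$ directly.
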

\begin{proof}
    Write $U = \bigcup_{\sigma \in W} \cyl\sigma$, where $W \subseteq 2^{<\N}$ is $A$-c.e. We may assume without loss of generality that $W$ does not contain the empty string and is \emph{prefix-free}, i.e., that whenever $\sigma, \tau \in W$ with $\sigma \subseteq \tau$, necessarily $\sigma = \tau$.
    
    For each $i \in \{0, 1\}$, let
    \[W_i = \set{\sigma \in 2^{<\N}: i\sigma \in W}\quad\text{and}\quad U_i = \bigcup_{\sigma \in W_i} \cyl{i\sigma}.\]
    Then $U_i = U \cap \cyl i$ for each $i$, so $U = U_0 \cup U_1$. Observe that
    \[T\paren{U_i} = \bigcup_{\sigma \in W_i}\cyl\sigma\]
    is $\Sigma^0_1(A)$, uniformly in $U$, because $W_i$ is $A$-c.e., uniformly in $W$. Thus,
    \[T(U) = T\paren{U_0} \cup T\paren{U_1}\]
    is also $\Sigma^0_1(A)$, uniformly in $U$.

    Now assume also that $U$ has $A$-computable measure. Since $W_i$ is prefix-free,
    \[\lambda\paren{T\paren{U_i}} = \sum_{\sigma \in W_i} 2^{-|\sigma|} = 2\sum_{\sigma \in W_i} 2^{-|i\sigma|} = 2\lambda(U_i) = 2\lambda\paren{U \cap \cyl i}\]
    is $A$-computable by \cref{intersection_of_sigma0n_with_computable_measure} relativized to $A$, and this is uniform in $U$ and $\lambda(U)$. Thus,
    \[\lambda(T(U)) = \lambda\paren{T\paren{U_0} \cup T\paren{U_1}}\]
    is also $A$-computable by \cref{intersection_of_sigma0n_with_computable_measure} relativized to $A$, and this is uniform in $U$ and $\lambda(U)$.
\end{proof}

Similarly, we consider images of $\Pi^0_1$ sets under the shift.

\begin{lem}\label{shift_of_pi01}
    Consider the shift map $T: 2^\N \to 2^\N$ on Cantor space. Let $A \subseteq \N$ and $P$ be a $\Pi^0_1(A)$ set. Then $T(P)$ is also $\Pi^0_1(A)$, uniformly in $P$. If $P$ moreover has $A$-computable Lebesgue measure $\lambda(P)$, then so does $T(P)$, uniformly in $P$ and $\lambda(P)$.
\end{lem}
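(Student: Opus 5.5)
The image $T(P)$ is the union of two simpler images. Writing $P_i = P \cap \cyl i$ for $i \in \{0,1\}$, we have $T(P) = T(P_0) \cup T(P_1)$, and $T(P_i) = \set{y : iy \in P}$. This is simply the preimage of $P$ under the computable map $y \mapsto iy$, so it is $\Pi^0_1(A)$ uniformly in $P$. Concretely, if $\neg P = \cyl W$ for an $A$-c.e.\ set $W$, then $\neg T(P_i) = \cyl{W_i}$, where $W_i = \set{\sigma : i\sigma \in W}$, with $\cyl{\zero}$ added if the empty string lies in $W$. A finite union of $\Pi^0_1(A)$ sets is $\Pi^0_1(A)$ uniformly by \cref{intersection_of_sigma0n}, which gives the first claim.

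For the measure claim, I first compute $\lambda(T(P_i))$. Since $y \mapsto iy$ is a bijection onto $\cyl i$ that halves measure, $\lambda(T(P_i)) = 2\lambda(P \cap \cyl i)$. The quantity $\lambda(P \cap \cyl i)$ is $A$-computable uniformly in $P$ and $\lambda(P)$. To see this, note that $\neg P$ is $\Sigma^0_1(A)$ with $A$-computable measure $1 - \lambda(P)$, and $\cyl{1-i}$ is a clopen $\Sigma^0_1$ set with computable measure. By \cref{intersection_of_sigma0n_with_computable_measure} relativized to $A$, the union $\neg P \cup \cyl{1-i} = \neg(P \cap \cyl i)$ has $A$-computable measure. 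So each $T(P_i)$ is a $\Pi^0_1(A)$ set with $A$-computable measure.

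It remains to handle the union $T(P_0) \cup T(P_1)$, which is the only delicate step: the two images can overlap, so the measures do not simply add. I would pass to complements. The set $\neg T(P) = \neg T(P_0) \cap \neg T(P_1)$ is an intersection of two $\Sigma^0_1(A)$ sets whose measures $1 - \lambda(T(P_i))$ are $A$-computable. Applying \cref{intersection_of_sigma0n_with_computable_measure} relativized to $A$ once more, this intersection has $A$-computable measure uniformly. Hence $\lambda(T(P)) = 1 - \lambda(\neg T(P))$ is $A$-computable, uniformly in $P$ and $\lambda(P)$. Throughout, all the uniformity claims come straight from the uniformity clauses of the cited lemmas.
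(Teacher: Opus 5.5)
Your proof is correct and follows the paper's argument. You split $T(P) = T(P_0) \cup T(P_1)$ with $P_i = P \cap \cyl i$, use $\lambda(T(P_i)) = 2\lambda(P \cap \cyl i)$, and apply the relativized \cref{intersection_of_sigma0n_with_computable_measure} twice. The only differences are cosmetic: you identify $T(P_i)$ as the preimage of $P$ under $y \mapsto iy$ (with complement $\cyl{W_i}$) rather than through the paper's representation $\bigcap_{F \in W} \cyl{F_i}$, and you spell out the passage to complements that the paper leaves implicit when it applies that $\Sigma^0_n$ lemma to $\Pi^0_1$ sets.
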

\begin{proof}
    Consider any $\Pi^0_1(A)$ set $P = \bigcap_{F \in W} \cyl F$, where $W \subseteq \PP_{<\N}\paren{2^{<\N}}$ is $A$-c.e. Assume without loss of generality that no set in $W$ contains the empty string, that $W$ is closed under intersections, and that each set in $W$ is prefix-free. Given $F \subseteq 2^{<\N}$ and $i \in \{0, 1\}$, define
    \[F_i = \set{\sigma \in 2^{<\N}: i\sigma \in F}.\]
    For each $i \in \{0, 1\}$, let
    \[P_i = \bigcap_{F \in W} \bigcup_{\sigma \in F_i} \cyl{i\sigma}.\]
    Then $P_i = P \cap \cyl i$ for each $i$, so $P = P_0 \cup P_1$. Observe that
    \[T\paren{P_i} = \bigcap_{F \in W} \cyl{F_i}\]
    is $\Pi^0_1(A)$, uniformly in $P$, because $W_i := \set{F_i: F \in W}$ is $A$-c.e., uniformly in $W$. Thus,
    \[T(P) = T\paren{P_0} \cup T\paren{P_1}\]
    is also $\Pi^0_1(A)$, uniformly in $P$.

    Now assume also that $P$ has $A$-computable measure. Since $W$ is closed under intersections, so is $W_i$. Together with the fact that each set in $W$ is prefix-free, this implies that
    \[\lambda\paren{T\paren{P_i}} = \inf_{F \in W} \lambda\cyl{F_i} = \inf_{F \in W} \sum_{i\sigma \in F} 2^{-|\sigma|} = 2\inf_{F \in W} \sum_{i\sigma \in F} 2^{-|i\sigma|} = 2\lambda\paren{P_i} = 2\lambda\paren{P \cap \cyl i}\]
    is $A$-computable by \cref{intersection_of_sigma0n_with_computable_measure} relativized to $A$, and this is uniform in $P$ and $\lambda(P)$. Thus,
    \[\lambda(T(P)) = \lambda\paren{T\paren{P_0} \cup T\paren{P_1}}\]
    is also $A$-computable by \cref{intersection_of_sigma0n_with_computable_measure} relativized to $A$, and this is uniform in $P$ and $\lambda(P)$.
\end{proof}

\cref{shift_of_sigma01,relativizing_randomness} can be used to show that the shift map preserves non-randomness of points.

\begin{cor}\label{shift_preserves_non_randomness}
    Consider Cantor space with the Lebesgue measure $\lambda$ and the shift map $T: 2^\N \to 2^\N$. Then respectively, if $x \in 2^\N$ is not
    \begin{center}
    		Schnorr $n$-random/$n$-random/Oberwolfach $n$-random,
    	\end{center}
    	neither is $T(x)$.
\end{cor}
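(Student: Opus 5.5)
By \cref{relativizing_randomness}, each of the three notions for $n \geq 1$ coincides with the corresponding $n = 1$ notion relativized to the oracle $Z := \zero^{(n - 1)}$. So I will work throughout with tests consisting of uniformly $\Sigma^0_1(Z)$ subsets of $2^\N$, and translate back at the end using the same lemma. The plan is a single template applied three times. Given a test $\paren{U_k}_k$ with $x \in \bigcap_k U_k$, I push it forward to $\paren{T(U_k)}_k$, or to a reindexed version of it. Then $T(x) \in \bigcap_k T(U_k)$, and it remains to check that the pushed-forward sequence is again a test of the same kind.

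Two facts from the proof of \cref{shift_of_sigma01} do all the work. First, the sets $T(U_k)$ are uniformly $\Sigma^0_1(Z)$, uniformly in an index for $\paren{U_k}_k$. Second, writing $U = U_0 \cup U_1$ with $U_i = U \cap \cyl i$, we have $\lambda(T(U_i)) = 2\lambda(U_i)$, and hence
\[\lambda(T(U)) \leq \lambda(T(U_0)) + \lambda(T(U_1)) = 2\lambda(U).\]
The lemma also says that if $\lambda(U)$ is $Z$-computable, then so is $\lambda(T(U))$, uniformly in $U$ and $\lambda(U)$.

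The three cases then go as follows.
\begin{itemize}
\item \emph{$n$-random.} Given a Martin-L\"of test $\paren{U_k}_k$ relative to $Z$ capturing $x$, take $V_k := T\paren{U_{k + 1}}$. Then $\lambda(V_k) \leq 2 \cdot 2^{-(k + 1)} = 2^{-k}$, and $T(x) \in \bigcap_k V_k$.
\item \emph{Schnorr $n$-random.} Use the same $V_k$. Its measure is uniformly $Z$-computable by the second part of \cref{shift_of_sigma01}, applied uniformly in $k$.
\item \emph{Oberwolfach $n$-random.} Suppose $\lambda(U_k) \leq \beta - \beta_k$, where the $\beta_k$ are uniformly $Z$-computable and $\beta = \sup_k \beta_k < \infty$. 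Set $V_k := T(U_k)$ and $\beta'_k := 2\beta_k$. The $\beta'_k$ are uniformly $Z$-computable with $\sup_k \beta'_k = 2\beta < \infty$, and $\lambda(V_k) \leq 2(\beta - \beta_k) = 2\beta - \beta'_k$. So $\paren{V_k}_k$ is an Oberwolfach test relative to $Z$ containing $T(x)$.
\end{itemize}
In each case $T(x)$ fails the relativized notion, and hence, by \cref{relativizing_randomness}, fails the $n$-level notion.

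The only step needing care is the measure bound $\lambda(T(U)) \leq 2\lambda(U)$ for an arbitrary open $U$, rather than for a single cylinder, together with the uniformity of the computability of $\lambda(T(U_k))$ in the Schnorr case. Both come from the prefix-free decomposition in the proof of \cref{shift_of_sigma01}. I expect no real obstacle beyond checking that the relativized forms of \cref{shift_of_sigma01} and \cref{intersection_of_sigma0n_with_computable_measure} are applied uniformly in $k$.
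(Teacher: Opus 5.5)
Your proposal is correct and takes the route the paper indicates: relativize to $\zero^{(n-1)}$ via \cref{relativizing_randomness}, then push each test forward under the shift using \cref{shift_of_sigma01}. The factor-of-two loss $\lambda(T(U)) \leq 2\lambda(U)$ comes from that lemma's prefix-free decomposition; you absorb it with $V_k = T(U_{k+1})$ in the Martin-L\"of and Schnorr cases and by doubling the $\beta_k$ in the Oberwolfach case, which fleshes out the paper's one-line remark.
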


\end{document}